\numberwithin{equation}{section}
\title[Hamilton decompositions of regular expanders]{Hamilton decompositions of regular expanders: a proof of Kelly's conjecture for large tournaments}
\date{\today}
\author{Daniela K\"uhn and Deryk Osthus}
\thanks{The authors were supported by the EPSRC, grant no.~EP/J008087/1. D. K\"uhn was also supported by the ERC, grant no.~258345.} 
\newtheorem{firstthm}{Proposition}[section]
\newtheorem{theorem}[firstthm]{Theorem}
\newtheorem{prop}[firstthm]{Proposition}
\newtheorem{lemma}[firstthm]{Lemma}
\newtheorem{cor}[firstthm]{Corollary}
\newtheorem{obs}[firstthm]{Observation}
\def\noproof{{\unskip\nobreak\hfill\penalty50\hskip2em\hbox{}\nobreak\hfill%
       $\square$\parfillskip=0pt\finalhyphendemerits=0\par}\goodbreak}
\def\endproof{\noproof\bigskip}
\newdimen\margin   
\def\textno#1&#2\par{%
   \margin=\hsize
   \advance\margin by -4\parindent
          \setbox1=\hbox{\sl#1}%
   \ifdim\wd1 < \margin
      $$\box1\eqno#2$$%
   \else
      \bigbreak
      \hbox to \hsize{\indent$\vcenter{\advance\hsize by -3\parindent
      \it\noindent#1}\hfil#2$}%
      \bigbreak
   \fi}
\def\proof{\removelastskip\penalty55\medskip\noindent{\bf Proof. }}
\begin{document}

\def\COMMENT#1{}
\def\TASK#1{}

\def\eps{{\varepsilon}}
\newcommand{\ex}{\mathbb{E}}
\newcommand{\pr}{\mathbb{P}}
\newcommand{\cB}{\mathcal{B}}
\newcommand{\cS}{\mathcal{S}}
\newcommand{\cF}{\mathcal{F}}
\newcommand{\cC}{\mathcal{C}}
\newcommand{\cP}{\mathcal{P}}
\newcommand{\cQ}{\mathcal{Q}}
\newcommand{\cR}{\mathcal{R}}
\newcommand{\cK}{\mathcal{K}}
\newcommand{\cD}{\mathcal{D}}
\newcommand{\cI}{\mathcal{I}}
\newcommand{\cV}{\mathcal{V}}
\newcommand{\cT}{\mathcal{T}}
\newcommand{\eul}{{\rm e}}

\begin{abstract} \noindent
A long-standing conjecture of Kelly states that every regular tournament on $n$ vertices can be decomposed
into $(n-1)/2$ edge-disjoint Hamilton cycles. We prove this conjecture for large $n$.
In fact, we prove a far more general result, based on our recent concept of robust expansion
and a new method for decomposing graphs.
We show that every sufficiently large regular digraph $G$ on $n$ vertices whose degree is linear in $n$ and which is
a robust outexpander has a decomposition into edge-disjoint Hamilton cycles. 
This enables us to obtain numerous further results, e.g.~as a special case we confirm a conjecture of Erd\H{o}s
on packing Hamilton cycles in random tournaments. 
As corollaries to the main result, we also obtain several results on packing Hamilton cycles in undirected graphs,
giving e.g.~the best known result on a conjecture of Nash-Williams.
We also apply our result to solve a problem on the domination ratio of the Asymmetric Travelling Salesman problem,
which was raised e.g.~by Glover and Punnen as well as Alon, Gutin and Krivelevich.
\end{abstract}
\maketitle


\section{Introduction} \label{intro}

\subsection{Kelly's conjecture}
A graph or digraph $G$ has a Hamilton decomposition if it contains a set of edge-disjoint Hamilton cycles which together
cover all the edges of $G$.
The study of Hamilton decompositions is one of the oldest and most natural problems in Graph Theory.
For instance, in 1892 Walecki  showed that the complete graph $K_n$ on $n$ vertices has a
Hamilton decomposition if $n$ is odd (see e.g.~\cite{alspach,abs,lucas}). 
Tillson~\cite{till} solved the corresponding problem for complete digraphs.
Here every pair of vertices is joined by an edge in each direction, and there is a Hamilton decomposition unless the number
of vertices is 4 or 6.

However, though there are several deep conjectures in the area, little progress has been made so far in proving results 
on Hamilton decompositions for general classes of graphs. Possibly the most well known problem in this direction is Kelly's  conjecture from 1968
(see e.g.~the monographs and surveys~\cite{bang,bondy,KOsurvey,moon}),
which states that every regular tournament has a Hamilton decomposition. Here a tournament is an orientation of a complete (undirected) graph.
It is regular if the indegree of every vertex equals its outdegree. This condition is clearly necessary for a Hamilton decomposition.
Here, we prove this conjecture for all large tournaments. In fact, it turns out that we can prove a much stronger result -- 
we can obtain a Hamilton decomposition of any regular orientation of a sufficiently dense graph.
More precisely, an oriented graph $G$ is obtained by orienting the edges of an undirected graph.
So it contains no cycles of length two (whereas in a digraph this is permitted). 

\begin{theorem}\label{orientcor}
For every $\eps>0$ there exists $n_0$ such that every $r$-regular oriented graph $G$ on $n\ge n_0$
vertices with $r\ge 3n/8+\eps n$ has a Hamilton decomposition. In particular, there exists $n_0$ such that
every regular tournament on $n\ge n_0$ vertices has a Hamilton decomposition.
\end{theorem}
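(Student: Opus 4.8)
The plan is to derive Theorem~\ref{orientcor} from our main result, which provides a Hamilton decomposition for every sufficiently large regular digraph whose degree is linear in $n$ and which is a robust $(\nu,\tau)$-outexpander. Since every oriented graph is in particular a digraph, it suffices to verify that an $r$-regular oriented graph $G$ on $n$ vertices with $r\ge 3n/8+\eps n$ meets these hypotheses. It is $r$-regular by assumption, and its degree is linear in $n$ since $r\ge 3n/8$; so everything comes down to showing that $G$ is a robust $(\nu,\tau)$-outexpander for suitable constants $0<\nu\ll\tau\ll\eps$. As $G$ is $r$-regular, its minimum semidegree $\delta^0(G)$ equals $r$, so this is an instance of the following claim: every oriented graph on $n$ vertices with $\delta^0(G)\ge(3/8+\eps)n$ is a robust $(\nu,\tau)$-outexpander.

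To prove this claim I would argue by contradiction. If $G$ is not a robust $(\nu,\tau)$-outexpander, there is a set $S\subseteq V(G)$ with $\tau n\le |S|\le(1-\tau)n$ whose robust out-neighbourhood $RN$ --- the set of all vertices having at least $\nu n$ in-neighbours in $S$ --- satisfies $|RN|<|S|+\nu n$. By definition every vertex outside $RN$ receives fewer than $\nu n$ edges from $S$, so counting edges out of $S$ (and using that each vertex of $S$ has out-degree at least $(3/8+\eps)n$, and symmetrically for in-degrees) forces the bipartite subgraphs of $G$ between the few relevant parts to be almost empty or almost complete. A stability argument then shows that $G$ must differ in at most $o(n)$ edges at each vertex from one of the extremal configurations for Häggkvist's Hamiltonicity threshold --- essentially a blow-up, on a bounded number of vertex classes, of a short orientation admitting no robust expansion --- and each such configuration has minimum semidegree strictly below $3n/8$, contradicting $\delta^0(G)\ge(3/8+\eps)n$. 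This stability step, which relies on the known structure of the near-extremal oriented graphs, is the one genuinely substantial part of the argument; the constant $3/8$ is best possible, since there exist oriented graphs with minimum semidegree $3n/8-o(n)$ that are not robust outexpanders.

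The ``in particular'' assertion is then immediate: a regular tournament on $n$ vertices is an $r$-regular oriented graph with $r=(n-1)/2$, and $(n-1)/2\ge 3n/8+n/16$ for all large $n$, so applying the first part of Theorem~\ref{orientcor} with $\eps:=1/16$ yields the required Hamilton decomposition.
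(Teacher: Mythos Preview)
Your overall reduction is correct and matches the paper: one applies Theorem~\ref{decomp} with $\alpha=3/8$, and the only thing to check is that an oriented graph with $\delta^0(G)\ge(3/8+\eps)n$ is a robust $(\nu,\tau)$-outexpander for suitable $\nu\ll\tau\ll\eps$. The ``in particular'' part is also handled correctly.

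The gap is in your proof of the robust expansion claim. You propose to argue via stability --- that failure of robust expansion forces $G$ to be close to an extremal configuration for the $3n/8$ threshold --- but you do not actually carry this out, and the sketch you give (``forces the bipartite subgraphs \dots\ to be almost empty or almost complete'', then invoking ``known structure of near-extremal oriented graphs'') is not a proof. A genuine stability analysis here would be substantial and is not available off the shelf in the form you need.

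In fact no stability is required. The paper proves the claim directly (Lemma~\ref{38robust}): assuming a bad set $X$ with $|RN^+_\nu(X)|<|X|+\nu n$, one partitions $V(G)$ into $A=X\cap RN$, $B=RN\setminus X$, $C=V(G)\setminus(X\cup RN)$, $D=X\setminus RN$, and by simple averaging/edge-counting obtains three inequalities: $|A|+|B|+|D|\ge 2\delta^+(G)-o(n)$, $|B|+|C|+|D|\ge 2\delta^-(G)-o(n)$, and $|A|+|B|+|C|\ge \delta(G)-o(n)$. Summing these and using $|B|<|D|+\nu n$ gives $3n\ge 3|A|+4|B|+3|C|+2|D|-o(n)\ge 2(\delta^+ + \delta^- + \delta)-o(n)$, which contradicts $\delta^+ + \delta^- + \delta\ge 4r\ge (3/2+4\eps)n$. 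This elementary argument replaces your stability step entirely.
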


It is not clear whether the lower bound on $r$ in Theorem~\ref{orientcor} is best possible.
However, as discussed below, there are oriented graphs whose in- and outdegrees are all very close to $3n/8$ but which 
do not contain even a single Hamilton cycle.
Moreover, for $r < (3n-4)/8$, it is not even known whether an $r$-regular oriented graph contains a single 
Hamilton cycle (this is related to a conjecture of Jackson, see the survey~\cite{KOsurvey} for a more detailed discussion).
Both these facts indicate that any improvement in the lower bound on $r$ would be extremely difficult to obtain.

Regular tournaments obviously exist only if $n$ is odd, but we still obtain an interesting corollary in the even case.
Suppose that $G$ is a tournament on $n$ vertices where $n$ is even and which is as regular as possible, i.e.~the in- and outdegrees differ by~$1$.
Then Theorem~\ref{orientcor} implies that $G$ has a decomposition into edge-disjoint Hamilton paths.
Indeed, add an extra vertex to $G$ which sends an edge to all vertices of $G$ whose indegree is below $(n-1)/2$ and which receives an edge from all others.
The resulting tournament $G'$ is regular, and a Hamilton decomposition of $G'$ clearly corresponds to a decomposition of $G$ into Hamilton paths.

The difficulty of Kelly's conjecture is illustrated by the fact that even the existence of two edge-disjoint Hamilton cycles in a regular 
tournament is not obvious. The first result in this direction was proved by
Jackson~\cite{jackson}, who showed that
every regular tournament on at least 5 vertices contains a Hamilton cycle and a Hamilton path which are edge-disjoint.
Zhang~\cite{zhang} then demonstrated the existence of two edge-disjoint Hamilton cycles.
These results were improved by considering Hamilton cycles in oriented graphs of large in- and outdegree by
Thomassen~\cite{tom1}, H\"aggkvist~\cite{HaggkvistHamilton},
H\"aggkvist and Thomason~\cite{haggtom} as well as Kelly, K\"uhn and Osthus~\cite{kelly}.
Keevash, K\"uhn and Osthus~\cite{keevashko} then showed that every sufficiently large oriented graph~$G$ on $n$ vertices
whose in- and outdegrees are all at least $(3n-4)/8$ contains a Hamilton cycle. This bound on the degrees is best possible
and confirmed a conjecture of 
H\"aggkvist~\cite{HaggkvistHamilton} (as mentioned above, there are extremal constructions which are almost regular). 
Note that this result implies that every sufficiently large 
regular tournament on $n$ vertices contains at least $n/8$ edge-disjoint Hamilton cycles,
whereas Kelly's conjecture requires $(n-1)/2$ edge-disjoint Hamilton cycles. 
The conjecture has also been proved for small values of $n$ and for several special classes of tournaments
(see e.g.~\cite{abs,bt} for somewhat outdated surveys).

Recently, K\"uhn, Osthus and Treglown~\cite{KOTkelly}
proved an approximate version of Theorem~\ref{orientcor} by showing that every $r$-regular oriented graph $G$ on $n\ge n_0(\eps)$
vertices with $r\ge 3n/8+\eps n$ has an approximate Hamilton decomposition
(i.e.~a set of edge-disjoint Hamilton cycles covering almost all edges).


\subsection{Robust outexpanders}
In fact, we prove a theorem which is yet more general than Theorem~\ref{orientcor}.
Moreover, rather than being based on a degree condition, it uncovers an underlying structural property which 
guarantees a Hamilton decomposition. 
As discussed in the next subsection, this property is shared by several well known classes of digraphs.
Roughly speaking, this notion of `robust expansion' is defined as follows:
for any set $S$ of vertices, its robust outneighbourhood is the set of vertices which receive many edges from $S$.
A digraph is a robust outexpander if for every set $S$ which is not too small and not too large, its robust outneighbourhood is at least
a little larger than $S$. This notion was introduced explicitly in~\cite{KOTchvatal}, 
and was already used implicitly in the earlier papers~\cite{keevashko,kelly}. In these papers, we proved approximate and exact versions of
several conjectures on Hamilton cycles in digraphs.

More precisely, let $0<\nu\le  \tau<1$. Given any digraph~$G$ on $n$ vertices and
$S\subseteq V(G)$, the \emph{$\nu$-robust outneighbourhood~$RN^+_{\nu,G}(S)$ of~$S$}
is the set of all those vertices~$x$ of~$G$ which have at least $\nu n$ inneighbours
in~$S$. $G$ is called a \emph{robust $(\nu,\tau)$-outexpander}
if 
$$
|RN^+_{\nu,G}(S)|\ge |S|+\nu n \ \mbox{ for all } \
S\subseteq V(G) \ \mbox{ with } \ \tau n\le |S|\le  (1-\tau)n.
$$
Our main result states that every sufficiently large regular robust outexpander has a Hamilton decomposition.%
   \COMMENT{What we are actually proving is: For all $\alpha$ there exists $\tau_0$ such that for all $\tau\le \tau_0$ there exists
$\nu_0$ such that for all $\nu\le \nu_0$ there exists $n_0(\alpha,\tau,\nu)$ such that.... But this is equivalent to
the statement: it does not make sense to choose $\tau\le \tau_0$ since a robust $(\nu,\tau)$-outexpander is
also a robust $(\nu,\tau_0)$-outexpander.}

\begin{theorem} \label{decomp}
For every $ \alpha >0$ there exists $\tau>0$ such that for all $\nu>0$ there exists $n_0=n_0 (\alpha,\nu,\tau)$ 
for which the following holds. Suppose that
\begin{itemize}
\item[{\rm (i)}] $G$ is an $r$-regular digraph on $n \ge n_0$ vertices, where $r\ge \alpha n$;
\item[{\rm (ii)}] $G$ is a robust $(\nu,\tau)$-outexpander.
\end{itemize}
Then $G$ has a Hamilton decomposition.
Moreover, this decomposition can be found in time polynomial in $n$.
\end{theorem}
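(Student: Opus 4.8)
The plan is to split the Hamilton decomposition into two parts of very different character: an \emph{approximate} Hamilton decomposition of $G$, covering all but a small proportion of the edges at each vertex, which is essentially supplied by the result of K\"uhn, Osthus and Treglown quoted above; and an \emph{exact} treatment of the sparse regular ``leftover'' that such an approximate decomposition produces. The whole difficulty is concentrated in the second part, since a sparse regular digraph need not have a Hamilton decomposition, nor even be strongly connected. We get around this by reserving, \emph{before} the approximate decomposition is carried out, a suitably sparse spanning subgraph $G^{\mathrm{rob}}\subseteq G$ that is ``robustly decomposable'': adding to it any sufficiently sparse regular digraph that is compatible with a fixed reference structure produces a digraph with a Hamilton decomposition.

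In more detail, I would proceed in three steps. \emph{Step 1 (reserve the absorber).} Using that $G$ is a robust $(\nu,\tau)$-outexpander, construct an $r^{\mathrm{rob}}$-regular spanning subgraph $G^{\mathrm{rob}}$ of $G$, with $\rho n\ll r^{\mathrm{rob}}\ll \nu n$, enjoying the following property: for \emph{every} $r'$-regular digraph $H$ on $V(G)$ that is edge-disjoint from $G^{\mathrm{rob}}$, has $r'\le \rho n$, and is compatible with a cluster partition fixed during the construction, the digraph $G^{\mathrm{rob}}\cup H$ has a Hamilton decomposition. This \emph{robust decomposition lemma} is the crux. \emph{Step 2 (approximately decompose the rest).} Set $G':=G-G^{\mathrm{rob}}$. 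Removing a $\le r^{\mathrm{rob}}$-regular subgraph shrinks each $\nu$-robust outneighbourhood by at most $r^{\mathrm{rob}}$ vertices, so $G'$ is $(r-r^{\mathrm{rob}})$-regular and still a robust $(\nu/2,\tau)$-outexpander. Apply the approximate-decomposition machinery for robust outexpanders --- the Regularity Lemma for digraphs to pass to a reduced digraph that inherits robust outexpansion, a random-like slicing of this reduced digraph into near-regular pieces, and the directed Blow-up Lemma together with a greedy absorbing step to promote near-spanning path systems to genuine Hamilton cycles --- to find a set $\mathcal{C}$ of edge-disjoint Hamilton cycles of $G'$ whose deletion leaves an $r'$-regular remainder $H$ with $r'\le\rho n$; one arranges regularity of $H$ (e.g.\ by pre-removing a quasirandom regular subgraph, or by balancing degrees during the extraction) and builds in compatibility of $H$ with $G^{\mathrm{rob}}$. \emph{Step 3 (absorb and assemble).} By Step 1, $G^{\mathrm{rob}}\cup H$ has a Hamilton decomposition $\mathcal{C}'$. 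Since $G=G'\cup G^{\mathrm{rob}}=\bigl(\bigcup\mathcal{C}\bigr)\cup H\cup G^{\mathrm{rob}}$ with the three parts pairwise edge-disjoint, $\mathcal{C}\cup\mathcal{C}'$ is a set of edge-disjoint Hamilton cycles covering all of $G$; a degree count shows it has exactly $r$ members, as any Hamilton decomposition of an $r$-regular digraph must.

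The main obstacle is Step 1. The construction of $G^{\mathrm{rob}}$ is carried out relative to a fixed partition of $V(G)$ into a bounded number of clusters chosen so that the associated reduced digraph is itself a robust outexpander, and $G^{\mathrm{rob}}$ is built from two kinds of sparse gadgets living on this partition. The first is a \emph{chord absorber}: a sparse regular subgraph such that, given a spanning union of paths and cycles that respects the clusters, any leftover collection of chords can be rerouted (``shortcut'') through the absorber and thereby swallowed into long cycles. The second is a \emph{parity/cycle-switching gadget}, which corrects the gap between the number of cycle components one has produced and the single cycle one wants, using that a robust-outexpander reduced digraph contains many edge-disjoint Hamilton cycles --- liftable to $G$ via the Blow-up Lemma --- along which components can be merged while parity is controlled. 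Performing these rerouting moves on an arbitrary admissible $H$ turns $G^{\mathrm{rob}}\cup H$ into a Hamilton decomposition. The delicate point is that $G^{\mathrm{rob}}$ must be fixed uniformly, \emph{before} $H$ is revealed; this is what forces the constant hierarchy $\rho\ll r^{\mathrm{rob}}/n\ll\nu\le\nu_0$, with $\tau$ chosen in terms of $\alpha$, and it is where the bulk of the work lies. Finally, the ``moreover'' clause follows because every ingredient above --- an algorithmic digraph Regularity Lemma, a constructive approximate decomposition, and the explicitly greedy rerouting inside the absorber --- runs in time polynomial in $n$.
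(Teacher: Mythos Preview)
Your three-step plan---reserve an absorber, approximately decompose the rest, then absorb the leftover---is exactly the paper's strategy, and you have correctly identified the chord absorber and the parity/cycle-switching mechanism as two of its ingredients. However, your description of $G^{\mathrm{rob}}$ as built from \emph{two} kinds of gadgets omits a third component that the paper treats as essential: a \emph{preprocessing graph} $PG$ whose role is to eliminate all edges of the leftover that are incident to the exceptional set $V_0$ arising from the regularity partition. The chord absorber $CA$ and the parity extended cycle absorber $PCA$ in the paper are both built on $V(G)\setminus V_0$ and only work once the leftover has no edges touching $V_0$; achieving that is the job of $PG$, which is applied first and replaces the raw leftover $H_0$ by a regular digraph on $V(G)\setminus V_0$. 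The paper is explicit (end of Section~2.2) that one cannot simply fold this into the chord-absorbing step by replacing exceptional edges of $H_0$ with ``fictitious'' edges, because one has too little control over $H_0$ to guarantee suitable exceptional covers exist inside it. So the absorber genuinely has three layers, $PG$, $CA$, $PCA$, applied in sequence, each transforming the leftover into something more structured (first: no exceptional edges; second: a blow-up of the Hamilton cycle $C$ in the reduced graph; third: empty).

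There is also a smaller omission: before applying the approximate decomposition, the paper spends $\eps n$ Hamilton cycles (via a contraction argument and Theorem~\ref{expanderthm}) to cover all edges of $G[V_0]$, so that $V_0$ is already independent in what remains. Your sketch does not address how edges inside $V_0$ are handled. Finally, your remark about ``arranging regularity of $H$'' is unnecessary: since one removes Hamilton cycles from a regular digraph, the leftover is automatically regular.
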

Since Lemma~\ref{38robust} states that every oriented graph $G$ on $n$ vertices with minimum in- and outdegree at least $3n/8+\eps n$
is a robust outexpander (provided that $n$ is sufficiently large compared to $\eps$), Theorem~\ref{decomp}
immediately implies Theorem~\ref{orientcor}.

Obviously, the condition that $G$ is regular is necessary. The robust expansion property can be viewed as a natural strengthening of this property:
Indeed, suppose that $\nu^{1/4}\le \tau\le \alpha$ and $|S|\ge \tau n$. Counting the edges from  $S$ to its $\nu$-robust outneighbourhood shows
that condition (i) already forces the $\nu$-robust outneighbourhood of $S$ to have size 
at least $(1-\sqrt{\nu})|S|$.%
    \COMMENT{$r(1-\sqrt{\nu})|S| \le r|S| -\nu n^2\le e(S,RN(S)) \le r|RN(S)|$ since $r\sqrt{\nu}|S|\ge \alpha  \sqrt{\nu}|S|n\ge \nu n^2$
(the latter inequality holds since $|S|\ge \tau n\ge \sqrt{\nu}n/\alpha$ as $\sqrt{\nu}\le \tau^2\le \tau\alpha$)}
Condition (ii) then ensures (amongst others) that $G$ is highly connected, which is obviously also necessary.

The following result of Osthus and Staden~\cite{OS}
gives an approximate version of Theorem~\ref{decomp} and will be used in its proof.
\begin{theorem}\label{approxdecomp}
For every $ \alpha >0$ there exists $\tau>0$ such that for all $\nu,\eta>0$ there exists $n_0=n_0 (\alpha,\nu,\tau,\eta)$ 
for which the following holds. Suppose that
\begin{itemize}
\item[{\rm (i)}] $G$ is an $r$-regular digraph on $n \ge n_0$ vertices, where $r\ge \alpha n$;
\item[{\rm (ii)}] $G$ is a robust $(\nu,\tau)$-outexpander.
\end{itemize}
Then $G$ contains at least $(1 -\eta)r$ edge-disjoint Hamilton cycles.
Moreover, this set of Hamilton cycles can be found in time polynomial in $n$.
\end{theorem}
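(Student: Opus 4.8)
The plan is to prove Theorem~\ref{approxdecomp} via the absorbing-type strategy combined with a regularity/expansion argument, roughly following the approach of K\"uhn, Osthus and Treglown on approximate Hamilton decompositions of robust outexpanders. The first step is to apply a directed version of Szemer\'edi's regularity lemma to $G$ to obtain a reduced digraph $R$ on the cluster set, keeping only $\eps$-regular pairs of density above some threshold; since $G$ is $r$-regular with $r\ge\alpha n$ and a robust $(\nu,\tau)$-outexpander, one shows that $R$ inherits (after cleaning) an analogous robust outexpansion property and is close to regular. The key point here is a stability/inheritance lemma: the robust outexpansion of $G$ passes down to $R$ with slightly worse parameters, because a set of clusters that failed to expand in $R$ would pull back to a set of vertices in $G$ violating condition~(ii).

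Next I would extract from $R$ a sparse but spanning structure that controls the tails of the Hamilton cycles. Concretely, using the robust outexpansion of $R$ one finds in $R$ a Hamilton cycle (or a small number of edge-disjoint ones, or a $1$-factor whose components are long), and more importantly a system of ``shifted walks'' or a balanced exceptional structure that allows one to incorporate the exceptional vertices (those not in any cluster, plus a small leftover from each cluster) into long paths. Then the bulk of the construction is a random greedy / Rödl nibble style argument: repeatedly pull out an almost spanning collection of vertex-disjoint long paths from the $\eps$-regular pairs guided by a Hamilton cycle of $R$, close each such path system into a Hamilton cycle of $G$ using the exceptional-vertex-absorbing structure, delete these edges, and iterate. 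Because each iteration removes roughly $d$ edges at every vertex for some small proportion, after $(1-\eta)r$ rounds one has used up all but an $\eta$-fraction of the degree at (almost) every vertex, and the regularity and expansion properties are maintained throughout by a standard ``super-regularity is robust under sparse deletions'' argument. Polynomial running time follows because the regularity lemma, the path extractions, and the connections are all algorithmic.

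The technical heart — and the step I expect to be the main obstacle — is handling the exceptional vertices uniformly across all $(1-\eta)r$ Hamilton cycles simultaneously. A single Hamilton cycle can absorb the exceptional set easily, but here each of the linearly many edge-disjoint Hamilton cycles must route through every exceptional vertex, and the ``absorbing edges'' at an exceptional vertex must not be reused. The resolution is to set aside, before the nibble, a sparse regular-ish ``reservoir'' digraph $G^*$ (of min degree a small constant times $n$) that is itself a robust outexpander, partition its edges appropriately, and prove that $G^*$ alone contains $(1-\eta)r$ edge-disjoint \emph{exceptional path systems} covering the exceptional vertices — this uses the expansion of $G^*$ and a defect-Hall / flow argument to match exceptional vertices to available entry and exit clusters in each round. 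Balancing the degree bookkeeping between $G^*$ and $G\setminus G^*$ so that both the reservoir and the main part remain robust outexpanders (and near-regular) after every deletion step is the delicate part; once that accounting is set up, the remaining estimates are routine applications of regularity and concentration inequalities.
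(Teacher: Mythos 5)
You should first be aware that the paper does not prove Theorem~\ref{approxdecomp} at all: it is quoted as a result of Osthus and Staden~\cite{OS} and used as a black box in the proof of Theorem~\ref{decomp} (the paper notes it generalizes the approximate decomposition result of~\cite{KOTkelly}, whose proof is non-algorithmic). So there is no in-paper proof to measure your proposal against; the comparison can only be with the strategy known from~\cite{KOTkelly} and~\cite{OS}. At that level your outline is pointing in the right direction: apply the directed regularity lemma, use the fact that robust outexpansion is inherited by the reduced digraph (this is exactly Lemma~\ref{robustR} here), find a Hamilton cycle in the reduced digraph via Theorem~\ref{expanderthm}, incorporate exceptional vertices via shifted-walk-type structures, and repeatedly extract Hamilton cycles while showing that (super)regularity and expansion survive sparse deletions.

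However, as a proof the proposal has genuine gaps rather than merely suppressed routine detail. The step you yourself flag as the heart of the matter --- routing every one of the $(1-\eta)r$ edge-disjoint Hamilton cycles through every exceptional vertex without reusing edges, while keeping both the ``reservoir'' and the remainder near-regular robust outexpanders after each deletion round --- is exactly the content of the theorem, and the sketch resolves it only by assertion (``a defect-Hall / flow argument'' and ``once that accounting is set up, the remaining estimates are routine''). In particular, you never specify how the degrees used at exceptional vertices are balanced against the degrees used inside the clusters so that the union of the extracted cycles is close to $r$-regular; this is where a naive nibble fails, since an exceptional vertex has only $r$ out-edges in total and each cycle consumes one, so the exceptional structure cannot be confined to a sparse reservoir of ``minimum degree a small constant times $n$'' --- it must use up essentially the full degree at $V_0$, which forces a much more careful splitting of $G$ than the proposal describes. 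Likewise the claim that expansion and regularity are ``maintained throughout'' needs an argument when a constant fraction of all edges (not an $o(1)$ fraction) has been removed, which is why the actual arguments in~\cite{KOTkelly,OS} work with explicit slices/blow-ups of a Hamilton cycle of the reduced digraph rather than iterating on the whole graph. Finally, the algorithmic clause requires derandomizing the probabilistic steps (cf.\ Theorem~\ref{derandom}); saying the ingredients ``are all algorithmic'' does not address this, and indeed the proof in~\cite{KOTkelly} is not algorithmic. So the proposal is a reasonable plan consistent with the literature, but it does not constitute a proof of the statement.
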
 
Note that Theorem~\ref{approxdecomp} is a generalization of the approximate version of 
Theorem~\ref{orientcor} proved in~\cite{KOTkelly} (which was already mentioned at the end of the previous subsection).
The approach in~\cite{KOTkelly} is not algorithmic though.
If we replace the use of Theorem~\ref{approxdecomp} in our main proof by the result in~\cite{KOTkelly}, 
this yields exactly Theorem~\ref{orientcor} (but not its algorithmic version).
However, this would not result in a shorter proof of Theorem~\ref{decomp}.

\subsection{Further applications}
In this section, we briefly discuss further applications of our main result.
\subsubsection{Regular digraphs and TSP tour domination}
As observed in Section~\ref{sec:proofs2}, it is very easy to check that regular digraphs of sufficiently large degree are robust outexpanders.
Together with Theorem~\ref{regdigraph} this implies the following result.

\begin{theorem}\label{regdigraph}
For every $\eps>0$ there exists $n_0$ such that every $r$-regular digraph $G$ on $n\ge n_0$ vertices with $r\ge (1/2+\eps)n$
has a Hamilton decomposition. Moreover, such a decomposition can be found in time polynomial in $n$.
\end{theorem}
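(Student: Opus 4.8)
The claim is that every $r$-regular digraph $G$ on $n$ vertices with $r \ge (1/2+\eps)n$ has a Hamilton decomposition, and that it can be found in polynomial time. The strategy is to deduce this from Theorem~\ref{decomp}, so the entire task reduces to checking the hypotheses of Theorem~\ref{decomp} for such $G$; regularity is given, so the only thing to verify is that $G$ is a robust $(\nu,\tau)$-outexpander for suitable small constants $\nu,\tau$ (depending only on $\eps$), after which Theorem~\ref{decomp} supplies both the decomposition and the polynomial-time algorithm.

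So first I would fix $\alpha = 1/2$ (or anything below $1/2+\eps$) and let $\nu_0,\tau$ be the constants returned by Theorem~\ref{decomp} for this $\alpha$; then shrink $\nu$ and, if necessary, $\tau$ further so that $\tau \le \eps/2$ and $\nu \le \eps/2$ (this is harmless, since a robust $(\nu,\tau)$-outexpander for small $\nu,\tau$ is still one for the values demanded by Theorem~\ref{decomp} — cf.\ the remark in the \COMMENT\ after the statement of Theorem~\ref{decomp}). The heart of the verification is a short counting argument. Take any $S \subseteq V(G)$ with $\tau n \le |S| \le (1-\tau)n$, and let $T = V(G) \setminus RN^+_{\nu,G}(S)$; by definition every vertex of $T$ receives fewer than $\nu n$ edges from $S$. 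Counting edges from $S$ into $T$ gives $e(S,T) < |T|\,\nu n$. On the other hand, each vertex of $S$ has outdegree exactly $r \ge (1/2+\eps)n$, so it sends at least $r - |V(G) \setminus T| = r - (n - |T|)$ edges into $T$; hence $e(S,T) \ge |S|\,(r - n + |T|)$. Combining, $|S|(r - n + |T|) < |T|\nu n \le |T| \cdot \tfrac{\eps}{2} n$.

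From this inequality I want to conclude $|RN^+_{\nu,G}(S)| = n - |T| \ge |S| + \nu n$, i.e.\ $|T| \le n - |S| - \nu n$. Suppose not, so $|T| > n - |S| - \nu n$; substituting the bound $r - n + |T| > r - |S| - \nu n \ge (1/2+\eps)n - |S| - \nu n$ into the left side and using $|T| \le n$ on the right, one gets $|S|\big((1/2+\eps)n - |S| - \nu n\big) < \eps n^2/2$. Since $\tau n \le |S| \le (1-\tau)n$ and $\nu \le \eps/2$, the left-hand side is bounded below by a positive multiple of $n^2$ that exceeds $\eps n^2/2$ once $\tau$ is small enough relative to $\eps$ — this is the one routine estimate to carry out, splitting into the cases $|S| \le n/2$ and $|S| > n/2$ and using that the function $x \mapsto x\big((1/2+\eps/2)n - x\big)$ is bounded below by $\approx (\eps/2) n \cdot |S|$ type quantities on the relevant range. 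This contradiction establishes the robust outexpansion, and Theorem~\ref{decomp} then finishes the proof, algorithmic part included.

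I do not expect a genuine obstacle here: all the content is in Theorem~\ref{decomp} (and ultimately in Theorems~\ref{approxdecomp} and the decomposition machinery), and the present statement is a clean corollary. The only thing requiring a moment's care is getting the quantifier order right — choosing $\nu,\tau$ small enough as functions of $\eps$ so that both the counting inequality above goes through and the constants are legitimate inputs to Theorem~\ref{decomp} — and checking the elementary quadratic estimate on the two ranges of $|S|$. Everything else is bookkeeping.
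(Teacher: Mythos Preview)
Your plan is essentially the paper's proof: verify robust outexpansion by a counting argument and then invoke Theorem~\ref{decomp}. The paper packages the counting as Lemma~\ref{regdiexpander}, which under the condition $\eps \ge 2\nu/\tau$ shows that $\delta^0(G)\ge (1/2+\eps)n$ forces robust $(\nu,\tau)$-outexpansion; Theorem~\ref{regdigraph} then follows in two lines.

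There is, however, a small glitch in the final estimate of your sketch. You bound the right-hand side by $|T|\nu n \le \nu n^2 \le (\eps/2)n^2$ and then claim the left-hand side $|S|\big((1/2+\eps/2)n-|S|\big)$ exceeds $\eps n^2/2$ ``once $\tau$ is small enough''. This is backwards: for $|S|$ near $\tau n$ the left-hand side is roughly $\tau n^2/2$, which tends to $0$ as $\tau\to 0$. What you actually need is $\nu$ small relative to $\tau$ (equivalently $\eps \ge 2\nu/\tau$, as in the paper), not $\tau$ small. The clean fix, which is exactly what the paper does, is to observe that for $|S|\ge n/2$ every vertex has at least $\eps n\ge \nu n$ inneighbours in $S$ so $RN^+_{\nu,G}(S)=V(G)$, while for $|S|\le n/2$ one writes
\[
(1/2+\eps)n|S|\le e(S,V(G))\le |S||RN^+_{\nu,G}(S)|+\nu n^2\le |S||RN^+_{\nu,G}(S)|+\tfrac{\nu}{\tau}n|S|,
\]
giving $|RN^+_{\nu,G}(S)|\ge (1/2+\eps-\nu/\tau)n\ge |S|+\nu n$. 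The rest of your plan (choice of $\alpha$, invoking Theorem~\ref{decomp}, algorithmic conclusion) is correct and matches the paper.
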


Surprisingly, this has an immediate application to the area of TSP tour domination.
More precisely, the Asymmetric travelling salesman problem (ATSP) asks for a Hamilton cycle of least weight in an 
edge-weighted complete digraph (where opposite edges are allowed to have different weight).
An algorithm $A$ for the ATSP has \emph{domination ratio} $p(n)$ if it has the following property. For any problem instance $I$
let $w(I)$ be the weight of the solution produced by $A$. Then for all $n$ and for all instances $I$ on $n$ vertices, 
there are at least $p(n) (n-1)!$ solutions to instance $I$ whose weight is also at least $w(I)$.
(Note that the total number of possible solutions is $(n-1)!$.)
This notion is of particular interest for the ATSP as it is not known whether there is a polynomial time algorithm for the ATSP whose
approximation ratio is bounded by an absolute constant. Several well known TSP algorithms achieve a domination ratio of $\Omega(1/n)$ for the ATSP
but no better results are known. In particular,
a long-standing open problem (see e.g.~Glover and Punnen~\cite{GP}, Gutin and Yeo~\cite{GutinYeo} as well as Alon, Gutin and Krivelevich~\cite{AGK}) asks whether
there is a polynomial time algorithm which achieves a constant domination ratio for the ATSP.
Gutin and Yeo~\cite{GutinYeo} proved that the existence of a polynomial time algorithm with domination ratio $1/2-\eps$ would follow from an algorithmic
proof of Theorem~\ref{regdigraph}. So the result of~\cite{GutinYeo} together with Theorem~\ref{regdigraph} yields the following.
\begin{cor}
For any $\eps>0$, there is a polynomial time algorithm for the ATSP whose domination ratio is $1/2-\eps$.
\end{cor}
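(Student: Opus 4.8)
The plan is that this corollary should require essentially no new work: it is a direct combination of the algorithmic form of Theorem~\ref{regdigraph}, which we have already established, with the reduction of Gutin and Yeo~\cite{GutinYeo}. So I would simply invoke that reduction. Fix $\eps>0$. For $n$ below the threshold $n_0$ past which Theorem~\ref{regdigraph} becomes available, the number of vertices is bounded, so an optimal Hamilton cycle can be found by exhaustive search in constant time; this trivially has domination ratio $1\ge 1/2-\eps$. Hence it suffices to exhibit a polynomial-time algorithm achieving domination ratio $1/2-\eps$ on instances with $n\ge n_0$ vertices, and we may assume $n$ is as large as needed.

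I would then recall the shape of the Gutin--Yeo reduction, which runs roughly as follows. Let $K^*_n$ denote the complete digraph on $n$ vertices, equipped with the given edge weights, and let $W$ be the sum of all $n(n-1)$ edge weights. One first extracts, in polynomial time by solving a transportation-type linear programme, a spanning $r$-regular subdigraph $D$ of $K^*_n$ with $r/n$ slightly larger than $1/2$ (say $r=\lceil(1/2+\eps/2)n\rceil$) and of \emph{minimum} total weight among all such subdigraphs; since a uniformly random $r$-regular spanning subdigraph has expected weight $\tfrac{r}{n-1}W$, this choice guarantees $w(D)\le \tfrac{r}{n-1}W$. As $D$ is $r$-regular with $r\ge(1/2+\eps/2)n$ and $n$ is large, Theorem~\ref{regdigraph} applies to $D$ and produces, in time polynomial in $n$, a decomposition of $D$ into $r$ edge-disjoint Hamilton cycles $C_1,\dots,C_r$. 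The algorithm outputs a lightest one, $C$, so that $w(C)\le w(D)/r\le W/(n-1)$. Finally, using that every edge of $K^*_n$ lies in exactly $(n-2)!$ of the $(n-1)!$ Hamilton cycles of $K^*_n$ (so that the total weight of all of them is $(n-2)!\,W$, i.e.\ the average weight is $W/(n-1)$), the counting argument of Gutin and Yeo shows that at least $(1/2-\eps)(n-1)!$ of these Hamilton cycles have weight at least $w(C)$. Each step above runs in polynomial time, which is exactly the assertion of the corollary.

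I do not expect any genuine obstacle here: all of the difficulty has been absorbed into Theorem~\ref{regdigraph} (hence into Theorem~\ref{decomp}), whose proof occupies the rest of the paper, and in particular into the fact that the Hamilton decomposition there is produced by a polynomial-time algorithm and not merely shown to exist. The only remaining points are bookkeeping: that extracting the light near-$(n/2)$-regular subdigraph $D$ is a polynomial-time task; that $D$ meets the hypotheses of Theorem~\ref{regdigraph} (immediate, as $D$ is regular of degree exceeding $n/2$ with $n$ large); and that the constants in the Gutin--Yeo count are matched so as to deliver $1/2-\eps$. Since none of this is difficult, in the write-up I would essentially cite~\cite{GutinYeo} as a black box and feed it the algorithmic Theorem~\ref{regdigraph}.
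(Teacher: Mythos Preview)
Your proposal is correct and matches the paper's approach exactly: the paper gives no separate proof, simply noting that the result of Gutin and Yeo~\cite{GutinYeo} together with the algorithmic Theorem~\ref{regdigraph} yields the corollary. In fact you supply more detail than the paper does by sketching the Gutin--Yeo reduction; this is fine, though note that the final domination count is genuinely being outsourced to~\cite{GutinYeo} (as you indicate), since merely having $w(C)\le W/(n-1)$ does not by itself give the $1/2-\eps$ bound without their more careful argument.
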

\subsubsection{Random tournaments}
Another application of Theorem~\ref{decomp} confirms a conjecture of Erd\H{o}s (see~\cite{thomassen79}) which can be regarded as a probabilistic version of Kelly's conjecture.
Given an oriented graph $G$, let $\delta^+(G)$ denote its minimum outdegree and $\delta^-(G)$ its minimum indegree.
Clearly, the minimum of these two quantities is an upper bound on the number of edge-disjoint Hamilton cycles that $G$ can have.
Erd\H{o}s conjectured that this bound is correct with high probability if $G$ is a random tournament and one can use Theorem~\ref{orientcor} to show
this is indeed the case. 
\begin{theorem} \label{erdosdecomp}
Almost all tournaments $G$ contain $\delta^0(G):=\min\{ \delta^+(G), \delta^-(G) \}$ edge-disjoint Hamilton cycles.
\end{theorem}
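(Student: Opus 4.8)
The plan is to deduce Theorem~\ref{erdosdecomp} from Theorem~\ref{orientcor} by showing that a random tournament $G$ on $n$ vertices is, with high probability, simultaneously (a) very close to regular, and (b) a robust outexpander (or more directly, dense enough that we can apply Theorem~\ref{orientcor} after a small modification). The first step is a standard concentration argument: in a random tournament each vertex has outdegree distributed as $\mathrm{Bin}(n-1,1/2)$, so by a Chernoff bound and a union bound over the $n$ vertices, with high probability every in- and outdegree lies within $O(\sqrt{n\log n})$ of $(n-1)/2$. In particular $\delta^0(G)=(n-1)/2 - O(\sqrt{n\log n})$ with high probability, and all degrees are $(1/2+o(1))n$, so in the language of robust expansion $G$ is easily a robust $(\nu,\tau)$-outexpander for suitable constants (this follows from the fact that dense quasirandom digraphs expand, cf.~Lemma~\ref{38robust} and the remarks on regular digraphs preceding Theorem~\ref{regdigraph}).

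The second step handles the discrepancy between $\delta^0(G)$ and the number of Hamilton cycles we want to extract. We cannot apply Theorem~\ref{orientcor} directly to $G$ because $G$ is not exactly regular. Instead, fix $k:=\delta^0(G)$. I would first peel off a small number of Hamilton cycles ``by hand'' (greedily, or using the approximate decomposition result Theorem~\ref{approxdecomp} applied to a regular spanning subdigraph of $G$) so as to reduce to a subdigraph $G'$ in which all vertices have the same outdegree and the same indegree, i.e.\ $G'$ is $r$-regular for some $r$, while having removed in total exactly $k-r$ Hamilton cycles. Concretely: since all out- and indegrees of $G$ lie in an interval of length $O(\sqrt{n\log n})$, one can remove a suitable set of edge-disjoint paths/near-Hamilton structures, or simply iteratively remove edge-disjoint Hamilton cycles from a convenient regular robustly-expanding subdigraph, to arrive at an $r$-regular robust outexpander $G'\subseteq G$ with $r$ as large as possible and $k-r = O(\sqrt{n\log n})$. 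The point is that each Hamilton cycle removed drops every in- and outdegree by exactly one, so after removing $k-r$ of them from an appropriate regular subdigraph of $G$, what remains is $r$-regular.

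The third step is to apply Theorem~\ref{orientcor} (via Theorem~\ref{decomp}) to $G'$: since $r=(1/2-o(1))n \ge 3n/8+\eps n$ for large $n$ and $G'$ inherits the robust outexpansion of $G$, $G'$ has a Hamilton decomposition into $r$ edge-disjoint Hamilton cycles. Adding back the $k-r$ Hamilton cycles removed in the second step yields $r+(k-r)=k=\delta^0(G)$ edge-disjoint Hamilton cycles in $G$, which is the claimed bound (and it is optimal, since $\delta^0(G)$ is trivially an upper bound).

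The main obstacle is the second step: carefully arranging the ``peeling'' so that the leftover digraph is genuinely regular while only a negligible number of Hamilton cycles are sacrificed, and simultaneously verifying that the leftover digraph is still a robust outexpander with the constants needed for Theorem~\ref{orientcor}. One clean way to organise this is to first extract a single spanning $k$-regular subdigraph $H\subseteq G$ (possible because the degrees are almost balanced — this is a flow/degree-sequence argument), note $H$ is still a robust outexpander, and then simply apply Theorem~\ref{decomp} to $H$ directly to get $k$ edge-disjoint Hamilton cycles; the degree concentration from the first step guarantees $k\ge 3n/8+\eps n$, and the existence of the $k$-regular spanning subdigraph follows from the fact that the out- and in-degree sequences of $G$ are within $O(\sqrt{n\log n})$ of $k$, so a Hall/max-flow type argument produces it. This avoids any delicate iterative peeling and reduces everything to one invocation of the main theorem plus routine probabilistic estimates.
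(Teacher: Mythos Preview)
Your final ``clean'' approach is exactly the one the paper takes: show that a random tournament $G$ whp contains a spanning $\delta^0(G)$-regular subdigraph $G'$, and then apply Theorem~\ref{orientcor} to $G'$. The paper even flags that the first step ``requires some work'' and defers it to the companion paper~\cite{KellyII}; so your instinct that a Hall/max-flow argument should produce the regular subdigraph is in the right direction, but be aware that it is not entirely routine --- the degree deficits $d^\pm(v)-\delta^0(G)$ range from $0$ up to $O(\sqrt{n\log n})$, so they are not within a $(1\pm\eps)$-factor of each other, and Lemma~\ref{regrobust} does not apply off the shelf.

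Your earlier ``peeling'' approach, by contrast, has a genuine gap: removing a Hamilton cycle lowers every in- and outdegree by exactly one, so it preserves all degree imbalances and can never turn a non-regular $G$ into a regular $G'$. The only way to regularise is to remove an irregular subdigraph, which brings you straight back to the clean approach. You seem to have realised this yourself by the end, so the pivot was the right call.
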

More precisely, the term `almost all' refers to the model where one considers the set $\cT_n$ of all tournaments on $n$ vertices and shows that the probability that 
a random tournament in $\cT_n$ has the required number of Hamilton cycles tends to $1$ as $n$ tends to infinity.
We prove Theorem~\ref{erdosdecomp} by showing that with high probability $G$ contains a $\delta^0(G)$-regular spanning subdigraph $G'$ and apply Theorem~\ref{orientcor} to $G'$
to find the required Hamilton cycles. As the first step requires some work, we defer this to a shorter companion paper~\cite{KellyII}.

The corresponding problem for the binomial random graph $G_{n,p}$ with edge probability $p$ has a long history, 
going back to a result of Bollob\'as and Frieze~\cite{BF85}, who showed that
a.a.s.~(asymptotically almost surely) $G_{n,p}$ contains $\lfloor \delta(G_{n,p})/2 \rfloor$ edge-disjoint Hamilton cycles in the range of $p$ where the minimum degree $\delta(G_{n,p})$
is a.a.s.~bounded.
A striking conjecture of Frieze and Krivelevich~\cite{FK08} asserts that this result extends to arbitrary edge probabilities $p$.
The range of $p$ was extended in several papers, in particular due to recent results of 
Knox, K\"uhn and Osthus~\cite{KnoxKO} as well as Krivelevich and Samotij~\cite{KrS}, the conjecture remains open only  in the (rather special) case when $p$ tends to  $1$ fairly quickly.
As we shall observe in~\cite{KellyII}, this case follows from Theorem~\ref{decomp} in a similar way as Theorem~\ref{erdosdecomp}.

\subsubsection{Undirected robust expanders}
In~\cite{KellyII}, we also derive an undirected version of Theorem~\ref{decomp},
where instead of the `robust outneighbourhood' we consider the `robust neighbourhood'.
As an immediate corollary of this undirected version, we obtain the following approximate version of the `Hamilton decomposition conjecture' of Nash-Williams~\cite{Nash-Williams71b}. 
\begin{theorem}\label{reggraph}
For every $\eps>0$ there exists $n_0$ such that every $r$-regular graph $G$ on $n\ge n_0$ vertices, where $r\ge (1/2+\eps)n$
is even, has a Hamilton decomposition.
\end{theorem}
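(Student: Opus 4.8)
The plan is to deduce Theorem~\ref{reggraph} from the undirected analogue of Theorem~\ref{decomp}, i.e.\ from the statement that, for every $\alpha>0$, every even-regular graph $G$ on $n$ vertices with degree at least $\alpha n$ which is a robust $(\nu,\tau)$-expander (where now the robust neighbourhood $RN_{\nu,G}(S):=\{x\in V(G):|N_G(x)\cap S|\ge \nu n\}$ plays the role of the robust outneighbourhood) has a Hamilton decomposition. Granting this, essentially only one thing needs checking, namely that an $r$-regular graph $G$ with $r\ge (1/2+\eps)n$ is a robust $(\nu,\tau)$-expander, where $\tau$ is the constant the analogue provides for $\alpha:=1/2$ and $\nu\le\nu_0$ is chosen small compared with $\eps$ and $\tau$. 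The evenness of $r$ then guarantees exactly $r/2$ edge-disjoint Hamilton cycles, and since these use $(r/2)n=e(G)$ edges they automatically form a Hamilton decomposition.

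To verify the expansion property, fix $S\subseteq V(G)$ with $\tau n\le |S|\le (1-\tau)n$. If $|S|>(1/2-\eps/2)n$, then every vertex of $G$ has at least $r+|S|-n>\eps n/2\ge\nu n$ neighbours in $S$, so $RN_{\nu,G}(S)=V(G)$ and hence $|RN_{\nu,G}(S)|=n\ge|S|+\nu n$ (as $|S|\le(1-\tau)n<(1-\nu)n$). If $|S|\le(1/2-\eps/2)n$, let $W:=RN_{\nu,G}(S)$; we may assume $|W|<r$. Counting the pairs $(u,v)$ with $u\in V(G)\setminus W$, $v\in S$ and $uv\in E(G)$ in two ways gives, on the one hand, fewer than $|V(G)\setminus W|\cdot\nu n\le\nu n^2$ such pairs (since each vertex outside $W$ has fewer than $\nu n$ neighbours in $S$) and, on the other hand, at least $|S|(r-|W|)$ such pairs. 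Hence $r-|W|<\nu n^2/|S|\le\nu n/\tau$, so $|W|>r-\nu n/\tau\ge(1/2+\eps/2)n\ge|S|+\nu n$ once $\nu/\tau<\eps/2$. Thus $G$ is a robust $(\nu,\tau)$-expander, and the analogue of Theorem~\ref{decomp} finishes the argument.

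I expect the main obstacle to be the undirected analogue of Theorem~\ref{decomp} itself, whose proof has to reproduce, in the undirected setting, the entire decomposition scheme used for Theorem~\ref{decomp} (the robustly decomposable/absorbing structures, the partition into pieces, and the appeal to Theorem~\ref{approxdecomp}); the loss of orientation introduces extra symmetry and, more importantly, parity constraints that must be tracked throughout so that the final tally of exactly $r/2$ Hamilton cycles comes out on the nose. One might instead hope to reduce directly to Theorem~\ref{decomp} by orienting $G$: by Petersen's $2$-factor theorem $G$ decomposes into $r/2$ spanning $2$-regular subgraphs, and orienting each of their cycles consistently yields an orientation $D$ of $G$ with $d^+_D(v)=d^-_D(v)=r/2$ for every $v$, so that a Hamilton decomposition of the $(r/2)$-regular digraph $D$ projects onto one of $G$. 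The snag — and the reason this shortcut does not obviously work — is that $D$ must also be a robust outexpander: for a fixed admissible $S$ a uniformly random choice of orientations makes this hold only with probability $1-e^{-c\eps n}$, which is far too weak to survive a union bound over the $2^n$ choices of $S$ when $\eps$ is small, so one would need to choose the orientation with considerable care, which is essentially as hard as the problem itself.
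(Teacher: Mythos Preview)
Your proposal is correct and matches the paper's approach exactly: the paper states Theorem~\ref{reggraph} as a corollary of the undirected analogue of Theorem~\ref{decomp} (which it defers to the companion paper~\cite{KellyII}), and the verification that a graph with $\delta(G)\ge(1/2+\eps)n$ is a robust expander is the direct undirected counterpart of Lemma~\ref{regdiexpander}, which you have reproduced. The paper itself contains no further argument for Theorem~\ref{reggraph}.

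Your discussion of the obstacles is accurate and worth a brief remark. The paper says the undirected version of Theorem~\ref{decomp} is \emph{derived} in~\cite{KellyII}, which suggests a reduction to the directed case rather than a wholesale reproof of the absorbing machinery; so your pessimism about having to redo everything may be somewhat overstated. On the other hand, your analysis of why a naive random Eulerian orientation fails is on point: with $\nu\ll 1$ the Chernoff bound $e^{-c\nu n}$ cannot beat the union bound over $2^n$ sets, so robust outexpansion of the orientation is not free. The actual reduction in~\cite{KellyII} does use extra structure (roughly, one works with the regularity partition and controls the orientation there rather than globally), but identifying that the difficulty lies precisely here is the right diagnosis.
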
 
The conjecture of Nash-Williams asserts that the $\eps n$ error term can be removed. 
Theorem~\ref{reggraph} improves results by Christofides, K\"uhn and Osthus~\cite{CKO} as well as Perkovic and Reed~\cite{PRedges}.

Finally, the undirected version of Theorem~\ref{decomp} easily implies that every even-regular dense quasi-random graph has a Hamilton decomposition.
An approximate version of this result was proved earlier by Frieze and Krivelevich~\cite{fk}.
Our undirected decomposition result implies e.g.~a recent result of Alspach, Bryant and Dyer~\cite{Paley} that every Paley graph has a Hamilton decomposition
(for the case of large graphs).
Our undirected decomposition result also implies that with high probability, dense random regular graphs of even degree have a Hamilton decomposition. 
(Hamilton decompositions of random regular graphs of bounded degree have already  been studied intensively.)
These and other related results 
are discussed in more detail in~\cite{KellyII}.

\subsubsection{The robust decomposition lemma}
In a sequence of four papers~\cite{paper2,paper3,paper1,paper4} we build on the results of the current paper to prove the long-standing
`$1$-factorization conjecture': Suppose  that $n$ is even and sufficiently large, and that $D\geq 2\lceil n/4\rceil -1$. 
Then every $D$-regular graph $G$ on $n$ vertices has a decomposition into perfect matchings.
(Equivalently, $\chi'(G)=D$.)
Moreover, we improve Theorem~\ref{reggraph} to  completely solve the Hamilton decomposition conjecture of Nash-Williams from~\cite{Nash-Williams71b}.
Finally, we also solve another problem of Nash-Williams on optimal packings of edge-disjoint Hamilton cycles in graphs of large minimum degree
(the latter also uses results from~\cite{KLOmindeg}).
The proofs are based on (the undirected version of) Theorem~\ref{decomp} as well as the `robust decomposition lemma',
which can be viewed as a version of Theorem~\ref{decomp} which is more technical but has the advantage of being more widely applicable
(see Lemma~\ref{hcdec} or~\ref{rdeclemma}).

In the next section, we give a brief outline of our methods. 
The approach is a very general one and we are certain that it will have significant further applications.

\section{A brief outline of the argument}

\subsection{The general approach}
The basic idea behind the proof of Theorem~\ref{decomp} can be described as follows.
Let $G$ be a robustly expanding digraph as in Theorem~\ref{decomp}.
Suppose that inside $G$ we can find a sparse regular digraph $H^{\rm rob}$ which is robustly decomposable in the sense that it still has a 
Hamilton decomposition if we add a few edges to it. More precisely, $H^{\rm rob}$ is \emph{robustly decomposable} if $H^{\rm rob} \cup H_0$ has a Hamilton decomposition
whenever $H_0$ is a very sparse regular digraph which is edge-disjoint from $H^{\rm rob}$ and such that $V(H)=V(H^{\rm rob})$. Then 
Theorem~\ref{decomp} would be an immediate consequence of the existence of such an $H^{\rm rob}$.
Indeed, first we remove the edges of $H^{\rm rob}$ from $G$ to obtain $G'$. Then we apply Theorem~\ref{approxdecomp} to $G'$ to find edge-disjoint Hamilton cycles
covering almost all edges of $G'$. Let 
$H_0$ be the leftover -- i.e.~the set of edges of $G'$ which are not covered by one of these Hamilton cycles. 
Now apply the fact that $H^{\rm rob}$ is robustly decomposable to obtain a Hamilton decomposition of $H^{\rm rob} \cup H_0$ and thus of $G$.
Essentially, this is what the `robust decomposition lemma' (Lemma~\ref{hcdec}) achieves.

Unfortunately, we do not know how to construct such a digraph $H^{\rm rob}$ directly -- the main problem is that we have almost no control over what $H_0$ might look like.
However, one key idea is that we can define and find several digraphs $H_i^{\rm rob}$ which together play the role of $H^{\rm rob}$.
Indeed, suppose that we remove the edges of several $H_i^{\rm rob}$ at the start and let $H_0$ be the leftover of the approximate decomposition as above. 
Then we can show that $H_1^{\rm rob} \cup H_0$ contains a set of edge-disjoint Hamilton cycles so that the resulting leftover $H_1$
has more structure than $H_0$ (i.e.~it has some useful properties). 
This in turn means that we can improve on the previous step and now find a set of edge-disjoint Hamilton cycles in $H_2^{\rm rob} \cup H_1$ so that the resulting leftover $H_2$
has even more structure than $H_1$.  After $\ell-1$ steps, $H_{\ell-1}$ will be a sufficiently `nice' digraph so that $H_{\ell}^{\rm rob} \cup H_{\ell-1}$ does have a Hamilton decomposition.
This very general approach was first introduced in~\cite{KnoxKO}, where we used it to find optimal packings of edge-disjoint Hamilton cycles in random graphs.
In~\cite{KnoxKO}, the aim was that successive $H_i$ become sparser. 
In our setting, the density is less relevant -- our aim is to obtain successively stronger structural properties for the $H_i$.

When finding Hamilton cycles in $H_i^{\rm rob} \cup H_{i-1}$, we usually proceed as follows (except for the final step, i.e.~when $i=\ell$).  
First we decompose $H_{i-1}$ into edge-disjoint $1$-factors (where a $1$-factor is a spanning union of vertex-disjoint cycles). 
Each of these $1$-factors is then split into a set of paths in a suitable way
(we call this a path system).
In particular, the number of edges in each path system is small compared to $n$.
Then we extend each path system into a suitable $1$-factor $F$ using edges of $H_i^{\rm rob}$.
Then we transform $F$ into a Hamilton cycle~$C$, again using edges of $H_i^{\rm rob}$.
The resulting set of Hamilton cycles then covers all edges of $H_{i-1}$.
In other words, $H_{i-1}$ is `absorbed' into the set of Hamilton cycles that we have constructed so far and the leftover
$H_i$ is a subgraph of $H_i^{\rm rob}$. In particular, $H_i$ inherits any structural properties that $H_i^{\rm rob}$ has.
When constructing the Hamilton cycle $C$, we will use the result of~\cite{KOTchvatal} that every robustly expanding digraph contains a 
Hamilton cycle, or one of its corollaries (see Section~\ref{sec:hamtools}).


\subsection{The main steps} 
The construction of the graphs $H^{\rm rob}_i$ involves Szemer\'edi's regularity lemma.
We apply this to $G$ to obtain a partition of its vertices into clusters $V_1,\dots, V_k$ and a small exceptional set $V_0$
so that almost all ordered pairs of clusters induce a pseudorandom subdigraph of $G$.
As is well known, one can then define a `reduced digraph' $R$ whose vertex set consists of the clusters $V_i$, with an edge from 
$V_i$ to $V_j$, if the subdigraph of $G$ induced by the edges of $G$ from $V_i$ to $V_j$ is pseudorandom and dense.
$R$ inherits many of the properties of $G$. In particular, $R$ is also a robust outexpander.
So it contains a Hamilton cycle $C$ by the result mentioned above -- without loss of generality $C=V_1V_2 \dots V_k$.

We can now define three digraphs playing the role of $H_1^{\rm rob},H_2^{\rm rob},H_3^{\rm rob}$ above (so we have $\ell =3$ in our setting):
\begin{itemize}
\item the preprocessing graph $PG$;
\item the chord absorber $CA$;
\item the parity extended cycle absorber $PCA$.
\end{itemize}
We now describe the purpose of these three digraphs (see also Figure~\ref{fig:diagr}).
\begin{figure}
\centering\footnotesize
\includegraphics[scale=0.35]{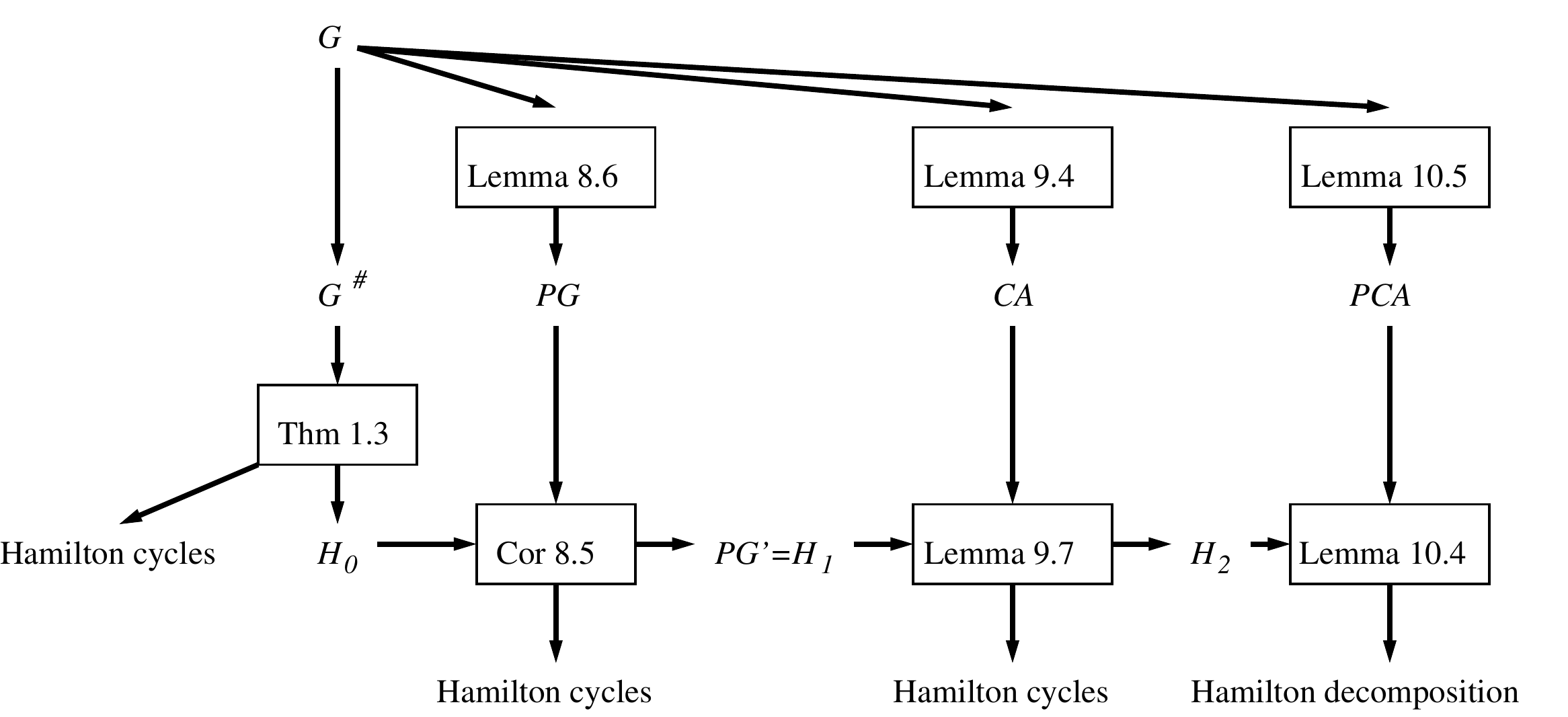}
\caption{An illustration of the structure of the proof of Theorem~\ref{decomp}. The purpose of Lemmas~\ref{findprepro},~\ref{findchordabs} and~\ref{find_pa}
is to find the preprocessing graph $PG$, the chord absorber $CA$ and the parity extended cycle absorber $PCA$ respectively.
The purpose of Corollary~\ref{preprocor}, Lemmas~\ref{absorballH} and~\ref{cycleparity} is to use these graphs to find suitable Hamilton cycles.
In Section~\ref{sec:rdeclemma}, we combine Lemmas ~\ref{findchordabs},~\ref{find_pa},~\ref{absorballH} and~\ref{cycleparity} into a single
`robust decomposition lemma' (see Lemma~\ref{hcdec} or~\ref{rdeclemma}).}
\label{fig:diagr}
\end{figure}

The \emph{preprocessing graph $PG$} has the following property: let $H_1$ be the leftover of $PG \cup H_0$ after removing suitably chosen edge-disjoint Hamilton cycles.
(So as discussed above, $H_1$ is a subdigraph of $PG$.) Then $H_1$ has no edges incident to $V_0$.
Thus $H_1$ is not a regular digraph (but it will be a regular subdigraph of $G- V_0$).
This degree irregularity will be compensated for by constructing the chord absorber $CA$ in a suitable way.

The \emph{chord absorber $CA$} has the following property: let $H_2$ be the leftover of $CA \cup H_1$ after removing suitably chosen edge-disjoint Hamilton cycles.
Then $H_2$ is a blow-up of $C$. In other words, for every edge $e$ of $H_2$, there is a $j$ so that the initial vertex of $e$ is in $V_j$ and the final one in $V_{j+1}$.
(So the edges of $H_2$ `wind around'~$C$.) $H_2$ will be a subdigraph of $CA$. 
But $CA$ itself will not only consist of a blow-up $\cB(C)$ of $C$, it will also contain a set of suitably chosen `chord edges' 
between clusters which are not adjacent on $C$. These chord edges lie in a digraph $\cB(U')$, which is a blow-up of a `universal walk' $U$ on the clusters
$V_i$.
To absorb $H_1$, we will split its edges into path systems $M$ as described above. We would like to extend each $M$ into a Hamilton cycle.
This may be impossible using the edges of $\cB(C)$ alone, e.g.~if $M$ contains an edge $e_1$ from $V_1$ to $V_3$ but no other edges.
So for each edge $e$ of such a path system $M$, we then choose a set of chord edges from $\cB(U')$
which `balance out' this edge $e$ to form a `locally balanced sequence'. 
We extend (and balance) $M$ in this way to obtain a path system $M'$.
We then further extend $M'$ to a Hamilton cycle using edges of $\cB(C)$.

As an example, suppose again that $M=\{e_1 \}$ with $e_1$ being an edge from $V_1$ to $V_3$.
It turns out that a simple way of balancing $e_1$ would be to add an edge $e_j$ from $V_j$ to $V_{j+2}$ for all $j$ with $1 < j  \le k$,
so that $e_1,e_2,\dots,e_k$ form a matching $M'$. 
It is easy to see that one can extend  $M'$ into a Hamilton cycle using edges from~$\cB(C)$, i.e.~edges which only wind around $C$.
Indeed, start by traversing $e_1$, then wind around $C$ to reach the initial vertex of $e_2$, then traverse $e_2$, then wind around $C$ again, 
and eventually traverse $e_k$. Before returning to the initial vertex of $e_1$,
wind around $C$ sufficiently many times to visit every vertex in 
every cluster, thus obtaining a Hamilton cycle which contains $M'$.
Unfortunately, we cannot guarantee the existence of such edges $e_2,\dots, e_k$ in an arbitrary robust outexpander.
So we will use sequences of balancing edges which involve more edges but can be found in any robust outexpander.
(They will be based on the concept of `shifted walks' which we introduced in~\cite{kelly}).

The crucial point is that we can carry out the balancing in such a way that we use up \emph{all} edges of $\cB(U')$ in the process
of balancing out the path systems of $H_1$. In particular, the surprising feature of the argument is that we can choose $\cB(U')$ in advance (i.e.~without knowing $H_1$)
so that it has this property.

In the above, we did not discuss edges incident to the exceptional set $V_0$. Obviously a Hamilton cycle has to contain these,
whereas neither of $H_1$, $\cB(C)$ and $\cB(U')$ have any edges incident to $V_0$.
To deal with this, we introduce the following trick:
suppose for example that $V_0$ contains a single exceptional vertex $x$.
We find an outneighbour $x^+$ of $x$ and an inneighbour $x^-$ in $V(G)\setminus V_0$. We can then define an `exceptional edge' $x^-x^+$ and add this edge 
$x^-x^+$ to the chord absorber $CA$. Then a Hamilton cycle of $(CA \cup H_1)-V_0$ containing this exceptional edge corresponds to a Hamilton cycle of $G$.
The systematic use of exceptional edges in this way allows us to ignore the exceptional set $V_0$ at many points of the argument.
It might seem natural to apply this trick directly to $H_0$ (i.e.~replace pairs of edges of $H_0$ incident to $V_0$ with exceptional edges)
with the aim of making the preprocessing step unnecessary.
However, this approach would run into considerable difficulties. It turns out that working inside $H_0$ (rather than the whole of $G$) when constructing the exceptional edges
might not give us enough choice to find suitable sets of exceptional edges.

Finally, the \emph{parity extended cycle absorber} $PCA$ has the following property:
let $H_2$ be the leftover of $CA \cup H_1$ after removing suitable Hamilton cycles.
Then $H_2 \cup PCA$ has a Hamilton decomposition.
We will find this Hamilton decomposition as follows:
first we decompose $PCA \cup H_2$ into carefully chosen $1$-factors (in particular, either half or all of the edges of each $1$-factor 
are contained in $PCA$). 
When finding this decomposition, we use the fact that $H_2$ is a blow-up of $C$. ($PCA$ will also be a blow-up of $C$.)
Our aim is then to turn this $1$-factorization of $PCA \cup H_2$ into 
a Hamilton decomposition of $PCA \cup H_2$ by successively switching edges between $1$-factors.
As an illustration, suppose that we are given two $1$-factors $F$ and $F'$ so that $F$ contains the edges $xx^+$ and $yy^+$.
Similarly, suppose that $F'$ contains the edges $xy^+$ and $yx^+$.
Note that these edges form an orientation of a four-cycle $C_4$. Now perform a `switch', which consists of
removing $xx^+$ and $yy^+$ from $F$, adding $xy^+$ and $yx^+$ to $F$ and proceeding similarly for $F'$.
This yields two new $1$-factors $F_{\rm new}$ and $F'_{\rm new}$. 
Suppose that $F$ consists of exactly two cycles and that $xx^+$ and $yy^+$ lie on different cycles. Then
$F_{\rm new}$ is a Hamilton cycle. The same holds for $F'$ (see Figure~\ref{fig:switch}).
\begin{figure}
\centering\footnotesize
\includegraphics[scale=0.45]{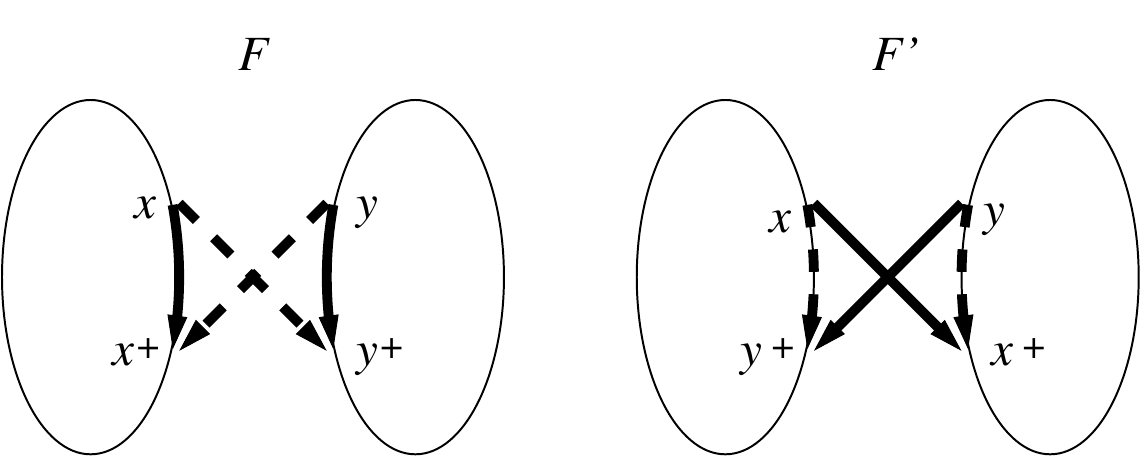}
\caption{Transforming the $1$-factors $F$ and $F'$ consisting of two cycles into Hamilton cycles by switching edges.}
\label{fig:switch}
\end{figure}
These switches will always involve edges from $PCA$ and not from $H_2$.
So if we ensure that $PCA$ has switches at the right places, we can eventually turn the $1$-factorization of $PCA\cup H_2$
into a Hamilton decomposition after several switches.

This paper is organized as follows: 
In the next section, we introduce some notation.
In Section~\ref{sec:regularity}, we collect tools which we will need in connection with Szemer\'edi's regularity lemma.
Similarly, in Section~\ref{sec:expand} we collect general properties of robustly expanding digraphs.
Section~\ref{sec:hamtools} is devoted to tools for finding Hamilton cycles (in robustly expanding digraphs).
In Section~\ref{sec:except}, we introduce a systematic and convenient way of dealing with exceptional vertices  which will be used
throughout the remainder of the paper. This will be based on the concept of exceptional edges and `balancing' edges via chord sequences.
Section~\ref{seccyclebreak} deals with the preprocessing step, which involves the preprocessing graph $PG$.
Then, in Section~\ref{sec:chordabsorb}, we define, find and use the chord-absorber $CA$.
Switches and the parity extended cycle absorber $PCA$ are then introduced in Section~\ref{sec:switches}.
In Section~\ref{sec:stopseln}, we put everything together to prove Theorem~\ref{decomp}.
In Section~\ref{sec:rdeclemma}, we state a standalone variant of the `robust decomposition lemma',
for use e.g.~in~\cite{paper2,paper1}.
Finally, in Section~\ref{sec:proofs2} we derive Theorems~\ref{orientcor} and~\ref{regdigraph} from Theorem~\ref{decomp}.

\section{Notation and probabilistic estimates} \label{notation}

\subsection{Notation}
Given a graph or digraph $G$, we write $V(G)$ for its vertex set, $E(G)$ for its edge set, $e(G):=|E(G)|$ for the number of its edges and $|G|$ for the number of its
vertices. Given $X\subseteq V(G)$, we write $G-X$ for the (di)graph obtained from $G$ by deleting all vertices in~$X$.
Given $F\subseteq E(G)$, we write $G\setminus F$ for (di)graph obtained from $G$ by deleting all edges in~$F$.
If $H$ is a sub(di)graph of $G$, we write $G\setminus H$ for $G\setminus E(H)$.

Suppose that $G$ is an undirected graph. We write $\delta(G)$ for the minimum degree of $G$ and $\Delta(G)$ for its maximum degree.
Whenever $X,Y\subseteq V(G)$, we write $e_G(X,Y)$ for the number of all those edges of $G$ which have one endvertex in $X$ and
the other endvertex in $Y$. If $X\cap Y=\emptyset$, we denote by $G[X,Y]$ the bipartite subgraph of $G$
with vertex sets $X$ and $Y$ whose edges are all the edges of $G$ between $X$ and $Y$. 
If $G$ is a bipartite graph with vertex classes $A$ and $B$, we often write $G=(A,B)$.

If $G$ is a digraph, we write $xy$ for an edge directed from $x$ to $y$. Unless stated otherwise, when we refer to paths and cycles in digraphs, we mean
directed paths and cycles, i.e.~the edges on these paths/cycles are oriented consistently. Given two vertices $x$ and $y$ on
a directed cycle~$C$, we write $xCy$ for the (directed) subpath of $C$ from~$x$ to~$y$.
If $x$ is a vertex of a digraph $G$, then $N^+_G(x)$ denotes the \emph{outneighbourhood} of $x$, i.e.~the
set of all those vertices $y$ for which $xy\in E(G)$. Similarly, $N^-_G(x)$ denotes the \emph{inneighbourhood} of $x$, i.e.~the
set of all those vertices $y$ for which $yx\in E(G)$. We write $d^+_G(x):=|N^+_G(x)|$ for the \emph{outdegree} of $x$ and
$d^-_G(x):=|N^-_G(x)|$ for its \emph{indegree}. We denote the \emph{minimum outdegree} of $G$ by $\delta^+(G):=\min_{x\in V(G)} d^+_G(x)$,
the \emph{minimum indegree} by $\delta^-(G):=\min_{x\in V(G)} d^-_G(x)$, the \emph{minimum degree} by
$\delta(G):=\min_{x\in V(G)} (d^+(x)+d^-(x))$ and the \emph{maximum degree} by $\Delta(G):=\max_{x\in V(G)} (d^+(x)+d^-(x))$.
The \emph{minimum semidegree} of $G$ is $\delta^0(G):=\min\{\delta^+(G),\delta^-(G)\}$.
Whenever $X,Y\subseteq V(G)$, we write $e_G(X,Y)$ for the number of all those edges of $G$ which have their initial vertex in $X$ and
their final vertex in $Y$. If $X\cap Y=\emptyset$, we denote by $G[X,Y]$ the bipartite subdigraph of $G$
with vertex sets $X$ and $Y$ whose edges are all the edges of $G$ directed from $X$ to $Y$.
In all these definitions we often omit the subscript $G$ if the graph or digraph $G$ is clear from the context.
A subdigraph $H$ of $G$ is an \emph{$r$-factor} of $G$ if the outdegree and the indegree of every vertex of $H$ is~$r$.
A \emph{path system} is the union of vertex-disjoint directed paths.

Given a digraph $R$ and a positive integer $r$, the {\em $r$-fold blow-up of
$R$} is the digraph $R \times E_r$ obtained from $R$ by replacing every vertex $x$ of $R$ by $r$ vertices
and replacing every edge $xy$ of $R$ by the oriented complete bipartite graph $K_{r,r}$ between the two
sets of $r$ vertices corresponding to $x$ and $y$ in which all the edges are oriented towards the $r$ vertices corresponding
to $y$. Now consider the case when $V_1,\dots,V_k$ is a partition of some set $V$ of vertices and
$R$ is a digraph whose vertices are $V_1,\dots,V_k$. Then a \emph{blow-up $\cB(R)$ of $R$} is obtained
from $R$ by replacing every vertex $V_i$ of $R$ by the vertices in $V_i$ and replacing every edge $V_iV_j$ of $R$ by
a certain bipartite graph with vertex classes $V_i$ and $V_j$ in which all the edges
are oriented towards the vertices in~$V_j$. 
Usually, these bipartite graphs will be $\eps$-regular or superregular
(as defined in Section~\ref{sec:regularity}).
If $R$ is a directed cycle, say $R=C=V_1\dots V_k$ and $G$ is a digraph with $V(G)\subseteq V=V_1\cup\dots\cup V_k$, we say that
\emph{(the edges of) $G$ wind(s) around~$C$} if for every edge $xy$ of $G$ there exists an index $j$ such that $x\in V_j$ and $y\in V_{j+1}$.
So if $V(G)=V$ then $G$ winds around $C$ if and only if $G$ is a blow-up of $C$.

In order to simplify the presentation, we omit floors and ceilings and treat large numbers as integers whenever this does
not affect the argument. The constants in the hierarchies used to state our results have to be chosen from right to left.
More precisely, if we claim that a result holds whenever $0<1/n\ll a\ll b\ll c\le 1$ (where $n$ is the order of the graph or digraph), then this means that
there are non-decreasing functions $f:(0,1]\to (0,1]$, $g:(0,1]\to (0,1]$ and $h:(0,1]\to (0,1]$ such that the result holds
for all $0<a,b,c\le 1$ and all $n\in \mathbb{N}$ with $b\le f(c)$, $a\le g(b)$ and $1/n\le h(a)$. 
We will not calculate these functions explicitly. Hierarchies with more constants are defined in a similar way.
Moreover, we will often assume that certain numbers involving these constants, e.g.~$b/a^2$ or $a n$ are integers. We will only make this
assumption if our hierarchy guarantees that these numbers are sufficiently large since then by adjusting the constants slightly one
can actually guarantee that these numbers are integers. However, all our results will hold if $n$ is sufficiently large, i.e.~we will make
no divisibility assumptions on $n$. (Note that if we assume that $an$ is an integer then this can be achieved by adjusting the constant $a$ slightly.)

\subsection{Probabilistic estimates, derandomization and algorithmic aspects}
We will use the following standard Chernoff type bound (see e.g.~Corollary 2.3 in~\cite{JLR} and Theorem~2.2 in~\cite{SrSt}).

\begin{prop} \label{chernoff} 
Suppose $X$ has binomial distribution and $0<a<1$. Then
$$
\mathbb{P}(X  \ge (1+a)\mathbb{E}X) \le \eul^{-\frac{a^2}{3}\mathbb{E}X}
\mbox{ and } \mathbb{P}(X  \le (1-a)\mathbb{E}X) \le \eul^{-\frac{a^2}{3}\mathbb{E}X}.
$$
\end{prop}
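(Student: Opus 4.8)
The plan is to use the exponential moment method (Bernstein's trick), the standard route to a Chernoff bound. Write $X\sim\mathrm{Bin}(m,p)$ as a sum $X=\sum_{i=1}^m X_i$ of independent Bernoulli variables with $\pr(X_i=1)=p$, and set $\mu:=\ex X=mp$. For any $t>0$, Markov's inequality applied to $\eul^{tX}$ gives $\pr(X\ge s)\le \eul^{-ts}\ex[\eul^{tX}]$, and by independence together with $1+x\le\eul^x$ we get $\ex[\eul^{tX}]=\prod_{i=1}^m(1+p(\eul^{t}-1))\le \eul^{mp(\eul^{t}-1)}=\eul^{\mu(\eul^{t}-1)}$. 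The same computation with $-t$ in place of $t$ yields $\ex[\eul^{-tX}]\le \eul^{\mu(\eul^{-t}-1)}$, which is what I would use for the lower tail.

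For the upper tail, take $s=(1+a)\mu$ and minimise the bound $\eul^{\mu(\eul^{t}-1)-t(1+a)\mu}$ over $t>0$; the optimal choice is $t=\ln(1+a)$, which gives $\pr(X\ge(1+a)\mu)\le\bigl(\eul^{a}/(1+a)^{1+a}\bigr)^{\mu}$. It then remains to check the elementary inequality $(1+a)\ln(1+a)-a\ge a^2/3$ for $0<a<1$, so that the exponent is at most $-a^2\mu/3$. This follows by putting $f(a):=(1+a)\ln(1+a)-a-a^2/3$, noting $f(0)=f'(0)=0$ and that $f'(a)=\ln(1+a)-2a/3$ stays non-negative on $(0,1)$ (it increases on $(0,1/2)$ and decreases on $(1/2,1)$, with $f'(1)=\ln 2-2/3>0$), whence $f\ge 0$ on $[0,1]$.

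For the lower tail the argument is symmetric: for $t>0$ one has $\pr(X\le s)\le\eul^{ts}\ex[\eul^{-tX}]\le\eul^{ts+\mu(\eul^{-t}-1)}$, and with $s=(1-a)\mu$ the choice $t=-\ln(1-a)>0$ gives $\pr(X\le(1-a)\mu)\le\bigl(\eul^{-a}/(1-a)^{1-a}\bigr)^{\mu}$; here the needed inequality is $(1-a)\ln(1-a)+a\ge a^2/3$, which holds since in fact $g(a):=(1-a)\ln(1-a)+a-a^2/2$ satisfies $g(0)=g'(0)=0$ and $g''(a)=a/(1-a)\ge 0$ on $[0,1)$. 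There is no genuine obstacle here — this is a textbook estimate — and the only mildly delicate point is the calculus inequality for the upper tail: one cannot replace $1/3$ by $1/2$ uniformly on $(0,1)$, since $(1+a)\ln(1+a)-a=a^2/2-a^3/6+\cdots$ already dips below $a^2/2$ well before $a=1$, whereas $1/3$ does work throughout.
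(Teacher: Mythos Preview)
Your argument is correct and is the standard exponential-moment (Bernstein/Chernoff) derivation; the calculus inequalities you verify are exactly what is needed to pass from the optimised bounds $(\eul^{a}/(1+a)^{1+a})^{\mu}$ and $(\eul^{-a}/(1-a)^{1-a})^{\mu}$ to the stated form with exponent $-a^2\mu/3$.

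The paper, however, does not prove this proposition at all: it is quoted as a standard Chernoff-type bound with references to Corollary~2.3 in Janson--\L uczak--Ruci\'nski and Theorem~2.2 in Srivastav--Stangier. So there is nothing to compare beyond noting that you have supplied a self-contained proof where the paper is content to cite the literature.
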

To obtain an algorithmic version of Theorem~\ref{decomp}, we need to `derandomize' our applications of Proposition~\ref{chernoff}.
This can be done via the well known `method of conditional probabilities', which is based on an idea of Erd\H{o}s and Selfridge,
and which was further developed e.g.~by Spencer as well as Raghavan.
The following result of Srivastav and Stangier (Theorem~2.10 in~\cite{SrSt}) 
is also based on this method.
Given a probabilistic existence proof of some structure based on polynomially many applications of Proposition~\ref{chernoff},
it guarantees an algorithm which finds this structure.

Suppose we are given $N$ independent $0/1$ random variables $X_1,\dots,X_N$ where $\pr (X_j=1)= p$ and $\pr (X_j =0)=1-p$
for some rational $0 \le p \le 1$. Suppose that $1 \le i \le m$. Let $w_{ij} \in \{0,1\}$.
Denote by $\phi_i$ the random variables $\phi_i:=\sum_{j=1}^N w_{ij}X_j$. Fix $\beta_i$ with $0 < \beta_i < 1$.
Now let $E_i^+$ denote the event that $\phi_i \ge (1+\beta_i)\ex [ \phi_i]$ and 
let $E_i^-$ denote the event that $\phi_i \le (1-\beta_i)\ex [ \phi_i]$.
Let $E_i$ be  either $E_i^+$ or $E_i^-$. 
Suppose that
\begin{equation} \label{eq:3.3}
\sum_{i=1}^m {\rm e}^{- \beta^2_i \ex(\phi_i)/3} \le 1/2.
\end{equation}
\begin{theorem}{\cite{SrSt}} \label{derandom}
Let $E_1,\dots,E_m$ be events such that~(\ref{eq:3.3}) holds. Then $$\pr \left(\bigcap_{i=1}^m E_i\right) \ge 1/2$$
and a vector $x \in  \bigcap_{i=1}^m E_i$ can be constructed in time $O (m N^2 \log (mN) )$.
\end{theorem}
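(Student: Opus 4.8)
Since Theorem~\ref{derandom} merely restates a result of Srivastav and Stangier, all that is needed is the standard argument behind the \emph{method of conditional probabilities}, which I would organise as follows. The probability bound is immediate: each $\phi_i=\sum_{j=1}^N w_{ij}X_j$ is binomially distributed (the $X_j$ share the parameter $p$ and every $w_{ij}\in\{0,1\}$) with mean $\ex[\phi_i]=p\sum_j w_{ij}$, so Proposition~\ref{chernoff} applied with $a=\beta_i$ gives $\pr(\overline{E_i})\le\eul^{-\beta_i^2\ex[\phi_i]/3}$ for $E_i=E_i^+$ as well as $E_i=E_i^-$, and a union bound together with~(\ref{eq:3.3}) yields $\pr\bigl(\bigcap_{i=1}^m E_i\bigr)\ge 1-\sum_{i=1}^m\eul^{-\beta_i^2\ex[\phi_i]/3}\ge 1/2$.

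For the algorithmic part I would derandomise by building \emph{pessimistic estimators} from the exponential moments underlying the Chernoff proof. For each $i$ put $t_i:=\ln(1+\beta_i)>0$ and $\mu_i:=(1+\beta_i)\ex[\phi_i]$ if $E_i=E_i^+$, and $t_i:=\ln(1-\beta_i)<0$ and $\mu_i:=(1-\beta_i)\ex[\phi_i]$ if $E_i=E_i^-$; these are exactly the parameters for which Markov's inequality applied to $\eul^{t_i\phi_i}$ reproduces the bound $\eul^{-\beta_i^2\ex[\phi_i]/3}$, using the elementary inequalities $\beta-(1+\beta)\ln(1+\beta)\le-\beta^2/3$ and $-\beta-(1-\beta)\ln(1-\beta)\le-\beta^2/3$ for $0<\beta<1$. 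For a partial assignment $x_1,\dots,x_\ell\in\{0,1\}$ of $X_1,\dots,X_\ell$ I would use
\[
U_i(x_1,\dots,x_\ell):=\eul^{-t_i\mu_i}\cdot\eul^{\,t_i\sum_{j\le\ell}w_{ij}x_j}\cdot\prod_{j=\ell+1}^{N}\bigl(1-p+p\,\eul^{t_i w_{ij}}\bigr),
\]
which by independence of the $X_j$ equals $\eul^{-t_i\mu_i}\,\ex\bigl[\eul^{t_i\phi_i}\mid X_1=x_1,\dots,X_\ell=x_\ell\bigr]$, and set $U:=\sum_{i=1}^m U_i$. Three facts then do all the work: (a) for the empty assignment $U_i\le\eul^{-\beta_i^2\ex[\phi_i]/3}$, so $U\le 1/2$ by~(\ref{eq:3.3}); (b) by Markov's inequality $U_i$ dominates the conditional probability of $\overline{E_i}$, so for a \emph{complete} assignment $U_i\ge 1$ whenever $\overline{E_i}$ holds, whence any complete assignment with $U<1$ lies in $\bigcap_{i=1}^m E_i$; and (c) by independence $U_i(x_1,\dots,x_\ell)=(1-p)\,U_i(x_1,\dots,x_\ell,0)+p\,U_i(x_1,\dots,x_\ell,1)$, so $\min_{b\in\{0,1\}}U(x_1,\dots,x_\ell,b)\le U(x_1,\dots,x_\ell)$. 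The algorithm then fixes $X_1,\dots,X_N$ one variable at a time, at each step choosing the value $b$ of the next variable minimising $U$; by (c) the value of $U$ never exceeds its initial bound $1/2<1$, so by (b) the resulting vector lies in $\bigcap_{i=1}^m E_i$.

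For the running time, at each of the $N$ steps one evaluates the two candidate values of $U$, each a sum of $m$ terms $U_i$ that are products of $O(N)$ cheaply computed factors; this is $O(mN)$ arithmetic operations per step, hence $O(mN^2)$ in total, and since $p$ is rational and the $\eul^{t_i}$ may be replaced by rational approximations the numbers involved have bit-length $O(\log(mN))$, giving $O(mN^2\log(mN))$. The conceptual content here is entirely routine; the one point that genuinely needs care is this last step — one must fix the precision of the rational approximations to $\eul^{t_i}$ (and carry enough guard digits) so that the two key inequalities, ``$U\le 1/2$ at the start'' and ``$U<1$ forces $\bigcap_i E_i$ at the end'', survive the rounding. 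This costs only a fixed constant of slack (for instance one may work with the threshold $3/4$ in place of $1/2$), and then it remains to check that $O(\log(mN))$ bits per operation keep the overall work within the stated bound; as this merely recovers the theorem of~\cite{SrSt}, I would refer there for the remaining details.
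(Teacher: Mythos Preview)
The paper does not prove Theorem~\ref{derandom} at all: it is simply quoted as Theorem~2.10 of Srivastav and Stangier~\cite{SrSt}, with no argument given beyond the remark that it is based on the method of conditional probabilities. Your sketch of the pessimistic-estimator derandomisation is the standard approach behind such results and is more than the paper itself supplies; there is nothing to compare against here beyond the citation.
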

In general, it will usually be clear that the proofs can be translated into polynomial time algorithms. 
Where this is not obvious, we will add a corresponding remark. We make no attempt to prove an explicit bound on the time needed to find the 
Hamilton decomposition (beyond the fact that it is polynomial in $n$). 


\section{Regularity}\label{sec:regularity}

\subsection{The Regularity Lemma}
We will use of a directed version of Szemer\'edi's regularity lemma. If
$G=(A,B)$ is an undirected bipartite graph with vertex classes $A$ and $B$, then the
\emph{density} of $G$ is defined as
$$ d(A, B) := \frac{e_G(A,B)}{|A||B|}.$$
For any $\eps >0$, we say that $G$ is \emph{$\eps$-regular} if for any $A'\subseteq A$
and $B' \subseteq B$ with $|A'| \geq \eps |A|$ and $|B'| \geq \eps |B|$
we have $|d(A',B') - d(A, B)| < \eps$. 
We say that $G$ is \emph{$(\eps, \ge d)$-regular} if
it is $\eps$-regular and has density $d'$ for some $d' \ge d-\eps$.

Given disjoint vertex sets $X$ and $Y$ in a digraph $G$, recall that $G[X,Y]$ denotes
the bipartite subdigraph of $G$ whose vertex classes are $X$ and $Y$ and whose
edges are all the edges of $G$ directed from $X$ to $Y$. We often view $G[X,Y]$ as
an undirected bipartite graph. In particular, we say $G[X,Y]$ is \emph{$\eps$-regular} or
\emph{$(\eps,\ge d)$-regular} if this holds when $G[X,Y]$ is viewed as an undirected graph.

Next we state
the degree form of the regularity lemma for digraphs. A regularity lemma for
digraphs was proved by Alon and Shapira~\cite{AS}. The degree form follows from
this in the same way as the undirected version (see~\cite{BCCsurvey}
for a sketch of the latter). An algorithmic version of the (undirected) regularity lemma was proved in~\cite{Aetal}.
An algorithmic version of the directed version can be proved in essentially the same way
(see~\cite{CKKO} for a sketch of the argument proving a similar statement).%
    \COMMENT{There the "moreover part" is replaced by "all but at most $\eps k^2$ pairs $i,j$ satisfy 
$G'[V_i,V_j]=G[V_i,V_j]$ or $d_G(V_i,V_j)<d$". The proof goes as follows: first find a regularity partition in
polytime as there (with $\eps'\ll \eps$). The problem is that we don't know which pairs are $\eps'$-regular.
But by checking the degrees and the codegrees we can decide (in polytime) whether a pair is not $\eps'$-regular
or whether it is $\eps''$-regular for some $\eps''$ with $\eps'\ll\eps''\ll \eps$. Now we do the tidying up, but
only for the pairs which are known to be $\eps''$-regular - in particular we make all the other pairs (as well as some
others) empty. This can be done in polytime as well. The difference to~\cite{CKKO} is that if $G'[V_i,V_j]$
is empty, then $G[V_i,V_j]$ might have density $>d$, but we don't care.}

\begin{lemma}[Regularity Lemma for digraphs] \label{regularity_lemma}
For all $\eps, M'>0$ there exist $M, n_0$ such that if~$G$ is a digraph on $n \geq
n_0$ vertices and $d \in [0, 1]$, then there exists a partition of $V(G)$ into
$V_0, \dots, V_k$ and a spanning subdigraph $G'$ of $G$ satisfying the following conditions:
\begin{itemize}
\item[{\rm (i)}] $M' \leq k \leq M$.
\item[{\rm (ii)}] $|V_0| \leq \eps n$.
\item[{\rm (iii)}] $|V_1| = \dots = |V_k| =: m$.
\item[{\rm (vi)}] $d^+_{G'}(x) > d^+_{G}(x) - (d+\eps) n$ for all vertices $x \in V(G)$.
\item[{\rm (v)}] $d^-_{G'}(x) > d^-_{G}(x) - (d+\eps) n$ for all vertices $x \in V(G)$.
\item[{\rm (vi)}] For all $i=1,\dots,k$ the digraph $G'[V_i]$ is empty.
\item[{\rm (vii)}] For all $1 \leq i, j \leq k$ with $i \neq j$ the pair $G'[V_i,V_j]$
is either empty or $\eps$-regular of density at least~$d$. Moreover, if $G'[V_i,V_j]$ is nonempty then
$G'[V_i,V_j]=G[V_i,V_j]$.
\end{itemize}
\end{lemma}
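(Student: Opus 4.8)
The statement to prove is the degree form of the directed regularity lemma (Lemma~\ref{regularity_lemma}). As the excerpt itself explains, this is a standard deduction from the ``crude'' directed regularity lemma of Alon and Shapira, following exactly the template of the undirected degree form. So the plan is to sketch that deduction.

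\medskip\noindent\textbf{Plan.}
First I would invoke the basic (non-degree) regularity lemma for digraphs: for a suitable small $\eps_1 \ll \eps$ and a suitable lower bound $M_1'$ on the number of parts, there is a partition $V(G) = V_0' \cup V_1' \cup \dots \cup V_k'$ with $M_1' \le k \le M_1$, $|V_0'| \le \eps_1 n$, $|V_1'| = \dots = |V_k'| =: m$, and such that all but at most $\eps_1 k^2$ of the ordered pairs $(V_i', V_j')$ with $i \ne j$ induce an $\eps_1$-regular bipartite digraph $G[V_i', V_j']$. (The directed version of this follows from Alon--Shapira~\cite{AS} applied in the standard way; for the algorithmic version one uses~\cite{Aetal} together with the codegree-checking trick sketched in the footnote of the excerpt to decide approximate regularity of pairs in polynomial time.) I would choose $M_1' \ge M'/ (1-\eps)$ or similar so that the final $k$ still satisfies $k \ge M'$, and $\eps_1$ small enough in terms of $\eps$ and $d$ that all the error terms below go through.

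\medskip\noindent\textbf{Cleaning up.}
The second step is the standard ``tidying'': build the spanning subdigraph $G'$ of $G$ by deleting edges, as follows. Delete all edges inside each $V_i'$ (i.e.\ in $G'[V_i']$), giving~(vi). Delete all edges incident to $V_0'$. Delete all edges of $G$ lying in an ordered pair $(V_i', V_j')$ which is not $\eps_1$-regular, and also all edges in pairs whose density is less than $d$; for the remaining pairs keep all edges of $G$, which ensures the ``moreover'' part of~(vii), namely $G'[V_i,V_j] = G[V_i,V_j]$ whenever it is nonempty, and that a nonempty pair is $\eps_1$-regular (hence $\eps$-regular, since $\eps_1 < \eps$) of density at least $d$. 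Setting $V_0 := V_0'$ and $V_i := V_i'$ for $i \ge 1$ gives (i)--(iii) and~(vi)--(vii) directly; note $|V_0| \le \eps_1 n \le \eps n$ for~(ii).

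\medskip\noindent\textbf{The degree bound.}
The one computation worth isolating is (iv)/(v): bounding how much the outdegree (resp.\ indegree) of a fixed vertex $x$ drops when we pass from $G$ to $G'$. An out-edge $xy$ of $G$ is deleted only if it falls into one of the three categories above. Edges into $V_0'$ account for at most $|V_0'| \le \eps_1 n$ of them. Edges inside $x$'s own cluster account for at most $m \le n/k \le n/M_1' \le \eps_1 n$ (choosing $M_1'$ large). Edges into irregular or low-density pairs: since $x$ lies in a single cluster $V_i'$, the pairs $(V_i', V_j')$ that are irregular number at most $\eps_1 k$ over all $i$ on average, but to bound things for \emph{every} $x$ one argues instead that the total number of out-edges of $x$ lying in \emph{all} low-density pairs together is at most $k \cdot d m \le d n$ (at most $dm$ out-neighbours of $x$ in each of the $\le k$ clusters of a low-density pair), and the out-edges of $x$ in irregular pairs number at most (number of irregular pairs containing $V_i'$) $\cdot m$. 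The subtlety is that a single $x$ could sit in a cluster $V_i'$ involved in many irregular pairs; however, since the total number of irregular pairs is $\le \eps_1 k^2$, at most $\sqrt{\eps_1}\, k$ clusters $V_i'$ can be involved in more than $\sqrt{\eps_1}\, k$ irregular pairs each. So one first discards those ``bad'' clusters (moving their vertices into $V_0$, which increases $|V_0|$ by at most $\sqrt{\eps_1}\, k \cdot m \le \sqrt{\eps_1}\, n$, still $\le \eps n$), and then for every $x$ in a surviving cluster the number of deleted out-edges in irregular pairs is at most $\sqrt{\eps_1}\, k \cdot m \le \sqrt{\eps_1}\, n$. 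Altogether $d^+_{G'}(x) \ge d^+_G(x) - (\eps_1 + \eps_1 + d + \sqrt{\eps_1})n > d^+_G(x) - (d+\eps)n$ once $\eps_1$ is small enough, which is~(iv); (v) is identical with in-degrees. The bookkeeping for the equal cluster sizes after moving bad clusters into $V_0$ is the usual one: re-partition the surviving vertices plus leftovers into equal clusters and absorb the remainder into $V_0$; this only costs another $o(n)$ into $V_0$.

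\medskip
The main obstacle is purely the degree-bound bookkeeping in the previous paragraph, in particular handling vertices whose cluster is involved in many irregular pairs --- resolved by the standard device of enlarging the exceptional set $V_0$ to swallow the (few) clusters that are ``locally irregular''. Everything else is a routine choice-of-constants exercise, and the algorithmic claim follows by using the algorithmic (undirected, then directed) regularity lemma of~\cite{Aetal} in place of~\cite{AS} in the first step; all the subsequent deletions and re-partitioning are plainly polynomial-time.
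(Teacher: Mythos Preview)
The paper does not actually prove this lemma: it is stated as a known result, with the remark that the degree form follows from the Alon--Shapira directed regularity lemma ``in the same way as the undirected version'' (citing~\cite{BCCsurvey} for a sketch), and that the algorithmic version follows~\cite{Aetal,CKKO}. Your sketch is precisely that standard derivation and is essentially correct.

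One small wrinkle: your final sentence about ``re-partition the surviving vertices plus leftovers into equal clusters'' is both unnecessary and misleading. When you discard the bad clusters (those involved in more than $\sqrt{\eps_1}\,k$ irregular ordered pairs) by moving them wholesale into $V_0$, the remaining clusters already have equal size $m$; no re-partitioning is needed, and indeed re-partitioning would destroy the $\eps_1$-regularity you have already secured. The only bookkeeping is that $k$ decreases by at most $2\sqrt{\eps_1}\,k$ (the factor $2$ since in the directed setting you must handle clusters bad as a first coordinate and as a second coordinate separately), so you need $M_1'$ large enough that $(1-2\sqrt{\eps_1})M_1' \ge M'$. With that correction your argument goes through.
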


We refer to $V_0$ as the \emph{exceptional set} and to $V_1, \dots, V_k$ as \emph{clusters}. 
$V_0,V_1, \dots, V_k$ as above is also called a \emph{regularity partition} for $G$.
Given a digraph $G$ on $n$ vertices, we form the \emph{reduced digraph $R$ of $G$
with parameters $\eps, d$ and $M'$} by applying the regularity lemma to $G$ with these
parameters to obtain $V_0, \dots, V_k$. $R$ is then the digraph whose vertices are
the clusters $V_1,\dots,V_k$ and whose edges are those (ordered) pairs $V_iV_j$ of clusters
for which $G'[V_i,V_j]$ is non-empty.

Given $d\in [0,1]$ and a bipartite graph $G=(A,B)$, we say that~$G$ is $(\eps,d)$-\emph{superregular}
if it is $\eps$-regular and furthermore $d_G(a)\ge (d-\eps) |B|$ for every $a\in A$ and
$d_G(b)\ge (d-\eps)|A|$ for every $b\in B$. (This is a slight variation of the standard definition
of $(\eps,d)$-superregularity where one requires $d_G(a)\ge d |B|$ and
$d_G(b)\ge d|A|$.) 

We say that a bipartite graph $G=(A,B)$ is \emph{$[\eps,d]$-superregular} if it is $\eps$-regular and $d_G(a)=(d\pm \eps)|B|$
for every $a\in A$ and $d_G(b)=(d\pm \eps)|A|$ for every $b\in B$.
So if $G$ is $[\eps,d]$-\emph{superregular}, then it is $(\eps,d)$-\emph{superregular}.
We say that $G$ is \emph{$[\eps, \ge d]$-superregular} if
it is  \emph{$[\eps, d']$-superregular} for some $d' \ge d$.
As for $\eps$-regularity, these definitions extend naturally to bipartite graphs where all edges
are oriented towards the same vertex class.

The following well known observation states that in an $\eps$-regular bipartite graph
almost all vertices have the expected degree and almost all pairs of vertices have the expected codegree
(i.e.~the expected number of common neighbours). Its proof follows immediately from the definition of regularity.%
   \COMMENT{To see the statement about the co-degrees, consider any vertex $a\in A$ with degree $(d\pm \eps)m$.
Since $G$ is $\eps$-regular, it follows that all but at most $2\eps m$ vertices in $A$ have 
$(d\pm \eps)|N(a)|=(d^2\pm \eps)m$ neighbours in $N(a)$. So this means that all but $2\cdot 2\eps m^2$ pairs
of distinct vertices in $A$ have the correct codegree.}

\begin{prop}\label{degcodeg}
Suppose that $0<\eps\le d\le 1$.
Let $G$ be an $\eps$-regular bipartite graph of density $d$ with vertex classes $A$ and $B$ of size $m$. Then the following conditions
hold.
\begin{itemize}
\item All but at most $2\eps m$ vertices in $A$ have degree $(d\pm \eps)m$.
\item All but at most $4\eps m^2$ pairs $a\neq a'$ of distinct vertices in $A$
satisfy $|N(a)\cap N(a')|=(d^2\pm \eps)m$.
\item The vertices in $B$ satisfy the analogues of these statements.
\end{itemize}
\end{prop}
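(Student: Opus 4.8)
The plan is to read off both statements directly from the definition of $\eps$-regularity applied to well-chosen subsets. First I would prove the degree statement. Let $A^- := \{a\in A : d_G(a) < (d-\eps)m\}$ and $A^+ := \{a\in A : d_G(a) > (d+\eps)m\}$. Suppose for contradiction that $|A^-|\ge \eps m$ (the case $|A^+|\ge \eps m$ is symmetric). Then, taking $B':=B$, both $|A^-|\ge\eps|A|$ and $|B'|\ge\eps|B|$, so $\eps$-regularity gives $|d(A^-,B) - d(A,B)| < \eps$, i.e.\ $d(A^-,B) > d-\eps$. But $d(A^-,B) = e_G(A^-,B)/(|A^-|m) = \frac{1}{|A^-|}\sum_{a\in A^-} d_G(a)/m < d-\eps$ by definition of $A^-$, a contradiction. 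Hence $|A^-| < \eps m$ and $|A^+| < \eps m$, so all but at most $2\eps m$ vertices of $A$ have degree $(d\pm\eps)m$.

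For the codegree statement, I would first fix a vertex $a\in A$ with $d_G(a) = (d\pm\eps)m$ (there are at least $(1-2\eps)m$ such $a$ by the first part; in fact one only needs, say, $d_G(a)\ge \eps m$ here). Apply $\eps$-regularity with $A':=A$ and $B':=N(a)$, which is allowed since $|N(a)|\ge\eps m$: it yields $|d(A, N(a)) - d(A,B)| < \eps$. Now run the same argument as above \emph{inside} $N(a)$: the set of $a'\in A$ with $|N(a')\cap N(a)| < (d-\eps)|N(a)|$ has size less than $\eps m$, and likewise for those with $|N(a')\cap N(a)| > (d+\eps)|N(a)|$. So all but at most $2\eps m$ vertices $a'\in A$ satisfy $|N(a')\cap N(a)| = (d\pm\eps)|N(a)| = (d\pm\eps)(d\pm\eps)m = (d^2\pm 3\eps)m$ (absorbing constants into $\eps$, using $d\le 1$). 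Summing the exceptional count over the at most $2\eps m$ `bad' $a$ (for which we make no claim) plus the at most $2\eps m$ bad $a'$ for each good $a$: the number of ordered pairs $(a,a')$ failing the bound is at most $2\eps m\cdot m + m\cdot 2\eps m = 4\eps m^2$, hence at most $4\eps m^2$ unordered pairs fail, after relabelling $\eps$ appropriately so that the exponent on the codegree error reads exactly $\eps$; since $0<\eps\le d\ll 1$ this rescaling is harmless.

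The statements for $B$ follow by exchanging the roles of $A$ and $B$, which is legitimate because $\eps$-regularity is symmetric in the two vertex classes. There is no real obstacle here: the only mildly delicate point is bookkeeping the constants so that the final error terms are literally $\eps$ and $\eps m$ rather than $3\eps$ and $2\eps m$, which is absorbed by the convention (stated in Section~\ref{notation}) of choosing the constants in the hierarchy appropriately and the fact that the hypothesis $\eps\le d\ll 1$ gives us room to shrink $\eps$. I would simply remark that replacing $\eps$ by $\eps/4$ at the outset yields the stated bounds verbatim.
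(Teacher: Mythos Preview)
Your approach is correct and essentially identical to the paper's, which merely states that the result ``follows immediately from the definition of regularity'' and sketches the codegree step in a comment (fix a good $a$, apply regularity with $B'=N(a)$, count bad $a'$). One small correction: you cannot ``replace $\eps$ by $\eps/4$ at the outset,'' since $G$ is $\eps$-regular for the given $\eps$ and the conclusion uses that same $\eps$; the right way to get $(d^2\pm\eps)m$ rather than $(d^2\pm 3\eps)m$ is to use the hypothesis $d\ll 1$ directly, as $(d\pm\eps)^2 = d^2 \pm (2d\eps+\eps^2)$ and $2d\eps+\eps^2\le \eps(2d+\eps)\le \eps$ once $2d+\eps\le 1$.
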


The following simple observation states that the removal of a small number of edges and vertices from 
a bipartite graph does not affect its $\eps$-regularity (and superregularity) too much.

\begin{prop} \label{superslice} 
Suppose that $0<1/m \ll \eps \le d' \le d \ll 1$. Let $G$ be a bipartite graph with vertex classes
$A$ and $B$ of size $m$. Suppose that $G'$ is obtained from $G$ by removing at most $d'm$
vertices from each vertex class and at most $d'm$ edges incident to each vertex from $G$.
\begin{itemize}
\item[(i)]If $G$ is $\eps$-regular of density at least $d$ then $G'$ is $2\sqrt{d'}$-regular
of density at least $d-2\sqrt{d'}$.
\item[(ii)] If $G$ is $(\eps,d)$-superregular then $G'$ is $(2\sqrt{d'},d)$-superregular.
\item[(iii)] If $G$ is $[\eps,d]$-superregular then $G'$ is $[2\sqrt{d'},d]$-superregular.
\end{itemize}
\end{prop}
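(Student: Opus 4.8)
The plan is to prove all three parts with the same device: control how densities and degrees change on passing from $G$ to $G'$, exploiting that the surviving vertex classes $A':=A(G')$ and $B':=B(G')$ satisfy $(1-d')m\le |A'|,|B'|\le m$, so that anything which is ``large'' inside $G'$ is still ``large'' inside $G$. Two elementary observations do all the work. First, if $X\subseteq A'$ and $Y\subseteq B'$ with $|X|\ge 2\sqrt{d'}|A'|$ and $|Y|\ge 2\sqrt{d'}|B'|$, then $|X|,|Y|\ge \sqrt{d'}m\ge \eps m$ (using $2(1-d')\ge 1$ and $\sqrt{d'}\ge d'\ge \eps$); in particular $X,Y,A',B'$ are all $\eps$-large in $G$. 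Second, for \emph{any} $X\subseteq A'$, $Y\subseteq B'$ one has $0\le e_G(X,Y)-e_{G'}(X,Y)\le d'm\min\{|X|,|Y|\}$, since the only edges of $G[X,Y]$ absent from $G'[X,Y]$ are edges deleted when forming $G'$ (no vertex of $X$ or $Y$ was deleted), and each vertex lost at most $d'm$ edges. Dividing by $|X||Y|$ yields $|d_{G'}(X,Y)-d_G(X,Y)|\le d'm/\max\{|X|,|Y|\}$.

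For (i), take $X\subseteq A'$, $Y\subseteq B'$ of the above sizes. By the first observation and $\eps$-regularity of $G$, both $d_G(X,Y)$ and $d_G(A',B')$ lie within $\eps$ of the density of $G$, hence differ by less than $2\eps$. Applying the second observation to $(X,Y)$ (error at most $d'm/|X|\le\sqrt{d'}$) and to $(A',B')$ (error at most $d'm/|A'|\le 2d'$) gives $|d_{G'}(X,Y)-d_{G'}(A',B')| < 2\eps+\sqrt{d'}+2d' < 2\sqrt{d'}$ once $d'$ is small, i.e.\ $G'$ is $2\sqrt{d'}$-regular. The same estimate for the single pair $(A',B')$, together with $d(A,B)\ge d$, gives $d_{G'}(A',B')\ge d-\eps-2d'\ge d-2\sqrt{d'}$, as claimed.

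For (ii) and (iii), the $2\sqrt{d'}$-regularity of $G'$ is precisely what was just proved (that argument used nothing about degrees), so only the degree conditions remain. The key point is that a vertex $a\in A'$ loses at most $d'm$ edges through edge-deletion and at most a further $d'm$ edges through the deletion of vertices from $B$, whence $d_G(a)-2d'm\le d_{G'}(a)\le d_G(a)$, and symmetrically for $b\in B'$. For (ii), $d_G(a)\ge(d-\eps)m$ yields $d_{G'}(a)\ge(d-\eps-2d')m\ge(d-2\sqrt{d'})m\ge(d-2\sqrt{d'})|B'|$, and likewise on the $B'$-side, so $G'$ is $(2\sqrt{d'},d)$-superregular. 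For (iii), the lower bound is identical, while for the upper bound $d_{G'}(a)\le d_G(a)\le(d+\eps)m\le(d+2\sqrt{d'})(1-d')m\le(d+2\sqrt{d'})|B'|$, valid since $\eps\le 2\sqrt{d'}(1-d')-d'd$ for $d'$ small; this and its $B'$-analogue give $[2\sqrt{d'},d]$-superregularity. The computations are all routine; the only points requiring care are that deleting vertices from one class destroys edges at the other class (so the degree loss is $2d'm$, not $d'm$), and that the various inequalities between $\eps$, $d'$, $\sqrt{d'}$ and $2\sqrt{d'}$ used above are all guaranteed by the hierarchy $0<1/m\ll\eps\le d'\le d\ll 1$. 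I expect no genuine obstacle beyond this bookkeeping.
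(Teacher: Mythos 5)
Your proof is correct and follows essentially the same approach as the paper's: bound the edge/degree loss in terms of $d'm$, use $\eps$-regularity of $G$ on the surviving sets, and observe that deleting vertices from one class costs a further $d'm$ at the degrees on the other side. Your route through the triangle inequality (comparing $d_{G'}$ to $d_G$ on both sides) is slightly different in presentation from the paper's direct two-sided bound on $e_{G'}(S,T)$, but the computations and conclusions are equivalent.
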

\proof
Let us first prove~(i). Let $d^*$ denote the density of $G$.
Let $A'\subseteq A$ and $B'\subseteq B$ denote the vertex classes of $G'$.
Suppose that $S\subseteq A'$, $T\subseteq B'$ are such that $|S|\ge 2\sqrt{d'}|A'|$
and $|T|\ge 2\sqrt{d'}|B'|$. So $|S|,|T| \ge  \sqrt{d'} m \ge \eps m$
and thus
\begin{align*}
e_{G'}(S,T) & \ge (d^*-\eps)|S||T| - |S|d'm   \ge (d^*-\eps)|S||T|-|S|d'\cdot |T|/\sqrt{d'}\\ &\ge (d^*-2\sqrt{d'})|S||T|.
\end{align*}
Since clearly $e_{G'}(S,T)\le e_{G}(S,T)\le (d^*+\eps)|S||T|$, (i) follows.
To see (ii) and the lower bound on the vertex degrees for (iii), note that in $G'$ the degrees of the vertices in $A'$ are still at least
$(d-\eps)m-2d'm \ge (d-\eps-2d')|B'|\ge (d-\sqrt{d'})|B'|$.%
   \COMMENT{need to check at the end that (ii) is still needed. Also, note that (ii) and (iii) are only applied with $d' \ll d$
but this need not be the case when applying (i),  which is the reason for the more precise density bound in (i)}
Similarly, the degrees in $G'$ of the vertices in $B'$ are still at least $(d-\sqrt{d'})|A'|$.

To see (iii), note that in $G'$ the degrees of the vertices in $A'$ are still at most
$(d+\eps)m\le (d+\eps)|B'|/(1-d')\le (d+\sqrt{d'})|B'|$. Similarly, the degrees in $G'$ of the vertices in
$B'$ are still at most $(d+\sqrt{d'})|A'|$.
\endproof

The following lemma is also well known in several variations. 

\begin{lemma}\label{superreg}
Suppose that $0<1/n\ll 1/k\ll \eps \ll d,1/\Delta\le 1$. Let $G$ be a digraph on $n$ vertices. Let
$\cP_0$ be a partition of $V(G)$ into $k$ clusters $V'_1,\dots,V'_k$ and an exceptional set $V'_0$ such that $m':=|V'_1|=\dots =|V'_k|$.
Let $R$ be a digraph whose vertices are $V'_1,\dots,V'_k$ and such that whenever $V'_iV'_j\in E(R)$ the pair $G[V'_i,V'_j]$ is
$(\eps,\ge d)$-regular. Let $H$ be a subdigraph of $R$ of maximum degree $\Delta$. 
Then there is a partition of $V(G)$ into $V_0,\dots,V_k$ such that the following holds:
\begin{itemize}
\item[(i)] For each $i=1,\dots,k$, $V_i$ is obtained from $V'_i$ by moving exactly $\sqrt{\eps} m'$ vertices into $V'_0$.
$V_0$ is then the set consisting of $V'_0$ and these additional vertices.
\item[(ii)] Whenever $V'_iV'_j\in E(H)$, the pair $G[V_i,V_j]$ is $[2\eps^{1/4},\ge d]$-superregular.
\end{itemize}
\end{lemma}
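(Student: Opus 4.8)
The plan is to delete vertices from each cluster to repair the degree imbalances at the two ends of every edge of $H$, so that superregularity is achieved on all the pairs of $H$ simultaneously. First I would reserve the parameter $\sqrt{\eps}m'$ as the budget of vertices to be removed from each cluster: since $\Delta(H)\le\Delta$ and $1/k\ll\eps$, each cluster $V'_i$ participates in at most $2\Delta$ edges of $H$, so I can afford to throw away a small set of "bad" vertices for each such edge while still removing exactly $\sqrt{\eps}m'$ in total (padding with arbitrary vertices if necessary). The point of deleting exactly $\sqrt{\eps}m'$ from every cluster — rather than varying amounts — is cosmetic but convenient: it keeps the clusters of equal size, as required by (i).

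The key step is the following. For a fixed edge $V'_iV'_j\in E(H)$, the pair $G[V'_i,V'_j]$ is $(\eps,\ge d)$-regular, say of density $d^*\ge d-\eps$. By Proposition~\ref{degcodeg} (the degree part), all but at most $2\eps m'$ vertices of $V'_i$ have outdegree $(d^*\pm\eps)m'$ into $V'_j$, and all but at most $2\eps m'$ vertices of $V'_j$ have indegree $(d^*\pm\eps)m'$ from $V'_i$. Call the exceptional vertices \emph{bad for the edge $V'_iV'_j$}. For each vertex $V'_i$ of $R$, let $B_i$ be the union, over all at most $2\Delta$ edges of $H$ at $V'_i$, of the sets of vertices of $V'_i$ that are bad for that edge; then $|B_i|\le 2\Delta\cdot 2\eps m' = 4\Delta\eps m'\le\sqrt{\eps}m'$ since $\eps\ll 1/\Delta$. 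Now choose $V_i\subseteq V'_i\setminus B_i$ with $|V'_i\setminus V_i|=\sqrt{\eps}m'$ (possible since $|B_i|\le\sqrt{\eps}m'$ and $\sqrt{\eps}m'< m'$), and let $V_0:=V'_0\cup\bigcup_{i=1}^k(V'_i\setminus V_i)$. This gives (i).

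It remains to verify (ii). Fix $V'_iV'_j\in E(H)$. Passing from $G[V'_i,V'_j]$ to $G[V_i,V_j]$ we have removed at most $\sqrt{\eps}m'$ vertices from each side and no edges; since $\sqrt{\eps}\ll d$, Proposition~\ref{superslice}(i) shows $G[V_i,V_j]$ is still $2\eps^{1/4}$-regular of density at least $d-2\eps^{1/4}\ge d-2\sqrt{\eps}$, say of density $d''$. For the degree condition: every vertex $x\in V_i$ was not bad for $V'_iV'_j$, so it had outdegree $(d^*\pm\eps)m'$ into $V'_j$; removing at most $\sqrt{\eps}m'$ vertices from $V'_j$ leaves it with outdegree at least $(d^*-\eps-\sqrt{\eps})m'$ and at most $(d^*+\eps)m'$ into $V_j$, and since $|V_j|=(1-\sqrt{\eps})m'$ and $d^*\ge d-\eps\ge d''$, this is $(d''\pm 2\eps^{1/4})|V_j|$, as required for $[2\eps^{1/4},\ge d]$-superregularity (recall this means density $d'''\ge d$ with degrees $(d'''\pm 2\eps^{1/4})|V_j|$; one takes $d'''$ to be the actual density $d''$ of $G[V_i,V_j]$, which is $\ge d-2\eps^{1/4}$, and absorbs the discrepancy into the $2\eps^{1/4}$ slack — here one should be a little careful and may prefer to phrase superregularity as "density $\ge d$" being replaced by "density $= d\pm$ something", i.e. use that $d''\ge d-2\eps^{1/4}$ together with the degree bounds to conclude $[2\eps^{1/4}, d'']$-superregularity with $d''$ possibly slightly below $d$, and note $[\eps,\ge d]$-superregular tolerates this via the definition of $[\eps,\ge d]$). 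The vertices of $V_j$ are handled symmetrically using the indegree statement of Proposition~\ref{degcodeg}.

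The main obstacle is essentially bookkeeping with the constants: one must check that the number of bad vertices accumulated over all $\le 2\Delta$ edges of $H$ at a cluster, namely $4\Delta\eps m'$, fits inside the removal budget $\sqrt{\eps}m'$, which is exactly where the hierarchy $\eps\ll 1/\Delta$ is used, and that after deleting a $\sqrt{\eps}$-fraction of each cluster the slack $2\eps^{1/4}$ genuinely dominates all the error terms ($\eps$, $\sqrt{\eps}$, the density drop $2\sqrt{\eps}$, and the effect of rescaling degrees by a factor $1/(1-\sqrt{\eps})$). None of these are deep, but getting the definition of $[2\eps^{1/4},\ge d]$-superregularity to match the conclusion of Proposition~\ref{superslice}(i) (which only guarantees density $\ge d-2\sqrt{\eps}$, not $\ge d$) requires a small argument — most cleanly, one starts the whole lemma from pairs that are $(\eps,\ge d+\text{something})$-regular, or one simply observes that the original pairs in applications have density bounded away from $d$; alternatively, since here we only claim $[2\eps^{1/4},\ge d]$ and the hypothesis gives $(\eps,\ge d)$-regularity, one rereads the definitions and notes $[\eps,\ge d]$-superregular only requires $d_G(a)=(d'\pm\eps)|B|$ for \emph{some} $d'\ge d$, so in fact one should take the hypothesis to be $(\eps,\ge d)$ with a tiny bit of room, which is how it will be applied.
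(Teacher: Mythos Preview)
Your proposal is correct and follows essentially the same approach as the paper: identify, for each cluster, the vertices whose degree into some neighbouring $H$-cluster deviates from the expected $(d_{ij}\pm\eps)m'$, bound their number by $O(\Delta\eps m')\le\sqrt{\eps}m'$, remove them (padding to exactly $\sqrt{\eps}m'$), and then invoke Proposition~\ref{superslice}(i) for regularity and the remaining degree control for superregularity. Your extended worry in the final paragraph about $d_{ij}\ge d-\eps$ rather than $d_{ij}\ge d$ is unnecessary: taking $d'=\max(d,d_{ij})$ in the definition of $[2\eps^{1/4},\ge d]$-superregular works, since the $2\eps^{1/4}$ slack comfortably absorbs the $\eps$ density discrepancy together with the $\sqrt{\eps}$ vertex-removal errors.
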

\proof
For each edge $V'_iV'_j$ of $H$, let $d_{ij}$ denote the density of $G[V'_i,V'_j]$.
So $d_{ij} \ge d-\eps$. Call a vertex $x$ in $V'_i$ \emph{bad} if at least one of the following two conditions hold:
\begin{itemize}
\item There is an edge $V'_iV'_j$ of $H$ so that the degree of $x$ in $G[V'_i,V'_j]$ is not $(1 \pm 2\eps)d_{ij}m'$.
\item There is an edge $V'_jV'_i$ of $H$ so that the degree of $x$ in $G[V'_j,V'_i]$ is not $(1 \pm 2\eps)d_{ji}m'$.
\end{itemize}
Note that $V'_i$ contains at most $2\Delta \eps m' \le \sqrt{\eps} m'$ bad vertices.  
Let $V_i$ be obtained from $V'_i$ by removing all bad vertices (and some additional ones if there are fewer than $\sqrt{\eps} m'$ of these).
Now suppose that $V'_iV'_j$ is an edge of $H$.
Then Proposition~\ref{superslice}(i) implies that $G[V_i,V_j]$ is $2\eps^{1/4}$-regular.
Together with the choice of $V_i$ and $V_j$ this implies that $G[V_i,V_j]$ is $[2 \eps^{1/4},\ge d]$-superregular.
\endproof

The following result is proved in Section~3.1~of~\cite{fk} by a simple application of the Max-Flow-Min-Cut theorem
(similarly to that in Lemma~\ref{regrobust} below).%
\COMMENT{Actually, they remove a sparse random subgraph first. Also, note that their definition of regularity is not quite equivalent to ours, 
which is why it is stated explicitly below.}
In particular, the subgraph of $G$ guaranteed by this result can be found in time polynomial in~$m$.

\begin{lemma}\label{regularsub}
Let $0 < 1/m\ll\eps  \ll d \le  1$. Suppose that $G$ is a bipartite graph with vertex classes $U$ and $V$ of size $m$ and
with minimum degree at least $dm$. Also suppose that $|d(A,B)-d| \le \eps m$ for all $A \subseteq U$ and $B \subseteq V$ with $|A|,|B| \ge \eps m$.
Define $d':= (1- 4\eps)d$. Then $G$ contains a spanning $d' m$-regular subgraph.
\end{lemma}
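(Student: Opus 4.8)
\textbf{Proof proposal for Lemma~\ref{regularsub}.}

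The plan is to use the Max-Flow-Min-Cut theorem, following the standard approach for extracting regular subgraphs from (super)regular bipartite graphs. First I would set up the flow network: add a source $s$ joined to every vertex $u \in U$ by an arc of capacity $d'm$, add a sink $t$ with an arc of capacity $d'm$ from every vertex $v \in V$ to $t$, and orient every edge of $G$ from $U$ to $V$ with capacity $1$. It suffices to show this network admits an integral flow of value $d'mn$... no, of value $d'm \cdot m$; such a flow saturates all source and sink arcs, and since all capacities are integral an integral maximum flow exists, so the saturated $G$-edges carrying one unit of flow form a $d'm$-regular spanning subgraph of $G$. Thus the whole task reduces to showing every $s$--$t$ cut has capacity at least $d'm^2$.

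The key step is the cut estimate. Let $(S,T)$ be a cut with $s \in S$, $t \in T$; write $A := U \cap T$ and $B := V \cap S$, and let $A' := U \setminus A$, $B' := V \setminus B$. The capacity of the cut is $d'm|A| + d'm|B| + e_G(A', B')$, since the cut must pay $d'm$ for each source arc into $A$, $d'm$ for each sink arc out of $B$, and $1$ for each $G$-edge from $A'$ to $B'$. So I need $d'm(|A| + |B|) + e_G(A',B') \ge d'm^2$, i.e.\ $e_G(A',B') \ge d'm(|A'| - |B|) = d'm(m - |A| - |B|)$. If $|A| + |B| \ge m$ this is immediate, so assume $|A'|, |B'| > m/2$, hence in particular $|A'|, |B'| \ge \eps m$ once $1/m \ll \eps$. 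Here I split into two cases according to whether $|B'| < \eps m$ is possible — but since $|B'| > m/2 \ge \eps m$ we are always in the regular regime. Apply the hypothesis $|d(A',B') - d| \le \eps m$: this gives $e_G(A',B') \ge (d - \eps m)|A'||B'|$. Hmm, the hypothesis as stated has $\eps m$ rather than $\eps$ in the density deviation, so I would instead use it together with the minimum degree bound. Actually the cleaner route: by the minimum degree condition each vertex of $A'$ has at least $dm$ neighbours in $V$, so $e_G(A', B') \ge e_G(A', V) - |A'||B| \ge dm|A'| - |A'||B| \ge |A'|(dm - |B|)$. If $|B| \le 4\eps dm = (d - d')m$ we are done since then $|A'|(dm - |B|) \ge |A'| d'm \ge d'm(m - |A| - |B|)$. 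Otherwise $|B| \ge 4\eps dm$, so $|B| \ge \eps m$, and combined with $|A'| \ge \eps m$ the regularity hypothesis $d(A', B) \ge d - \eps m$ applies to the pair $(A', B)$ to control $e_G(A', B)$; subtracting this from $e_G(A', V) \ge dm|A'|$ bounds $e_G(A',B') = e_G(A',V) - e_G(A',B)$ from below, and a short computation with $d' = (1 - 4\eps)d$ closes the case.

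The main obstacle is purely the bookkeeping in the cut estimate: one must handle the various size regimes for $|A|, |B|$ (small, medium, large) and make sure that the slack built into $d' = (1-4\eps)d$ exactly absorbs the $\eps m$-error in the density hypothesis and the boundary terms; there is no conceptual difficulty beyond choosing the case thresholds consistently. Finally, for the algorithmic statement I would invoke the fact that a maximum integral flow in a network with integer capacities, on $O(m)$ vertices and $O(m^2)$ edges, can be computed in time polynomial in $m$ (e.g.\ by the Ford--Fulkerson / Edmonds--Karp algorithm), and reading off the saturated edges of $G$ yields the desired $d'm$-regular subgraph in polynomial time.
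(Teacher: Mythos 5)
Your approach — reduce to max-flow with source/sink arc capacities $d'm$ and edge capacities $1$, then verify the cut condition — is the standard one and is exactly what the cited reference [fk] does; the paper itself gives no self-contained proof of this lemma. Your cut-capacity formula $d'm(|A|+|B|) + e_G(A',B')$ is correct, as is the reduction to proving $e_G(A',B') \ge d'm(|A'|+|B'|-m)$ for all $A' \subseteq U$, $B' \subseteq V$ (trivial when $|A'|+|B'|\le m$).

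The gaps are in the cut estimate itself. First, from $|A|+|B|<m$ you conclude ``$|A'|,|B'|>m/2$''; this is false --- take $|A|=0$, $|B|=m-1$, so $|B'|=1$ --- and all that follows is $|A'|+|B'|>m$, forcing only one of the two to be large. Since you later rely on $|A'|\ge\eps m$ to invoke the density hypothesis on $(A',B)$, this is not cosmetic. Second, from $|B|>4\eps dm$ you deduce $|B|\ge\eps m$; that needs $4\eps dm\ge\eps m$, i.e.\ $d\ge 1/4$, whereas the hypothesis is $\eps\ll d\le 1$ with $d$ allowed to be arbitrarily small (and Lemma~\ref{regularsub2} applies the present lemma precisely in that regime). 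So for small $d$ there is a window $4\eps dm<|B|<\eps m$ in which you can neither finish with the minimum-degree bound nor invoke the density hypothesis, and the unwritten ``short computation'' is left exactly where it would have to bridge this. (Also, to upper-bound $e_G(A',B)$ one needs $d(A',B)\le d+\eps$, not the written lower bound.) A safer split first handles $|A|\le 4\eps dm$ and $|B|\le 4\eps dm$ via minimum degree from the $B'$- resp.\ $A'$-side; then handles $|A'|<\eps m$ and $|B'|<\eps m$ by noting that $\eps\ll d$ forces $|A'|<\eps m<d'm$, which makes $|A'|(dm-|B|)-d'm(|A'|-|B|)=|A'|m(d-d')+|B|(d'm-|A'|)$ a sum of nonnegative terms; and only then uses the density hypothesis, applied to $(A',B')$ itself via $|A'||B'|\ge m(|A'|+|B'|-m)$. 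Finally, you rightly flag that the stated condition ``$|d(A,B)-d|\le\eps m$'' cannot be literal (a density is at most $1$); but note that the choice between reading it as $\le\eps$ or $\le\eps d$ is not innocent --- it determines whether the stated $d'=(1-4\eps)d$ is recoverable at all once $d<1/4$ --- so this needs to be pinned down rather than left as a hand-wave.
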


\begin{lemma}\label{regularsub2}
Let $0 < 1/m\ll\eps \ll  d\ll 1$. Let $d':=(1-12\eps)(d-\eps)$. 
Suppose that $G$ is a $[\eps , d]$-superregular bipartite graph with vertex classes of size $m$.  Then $G$ contains a spanning $d'm$-regular
subgraph which is also $[4\sqrt{\eps},d']$-superregular.
\end{lemma}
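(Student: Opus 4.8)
The plan is to combine the previous lemma, \text{Lemma~\ref{regularsub}}, with the slicing estimate of \text{Proposition~\ref{superslice}}. The idea is that a $[\eps,d]$-superregular bipartite graph $G=(A,B)$ with $|A|=|B|=m$ satisfies, by definition, a minimum degree of at least $(d-\eps)m$, and also $|d(A',B')-d|<\eps$ for all subsets $A'\subseteq A$, $B'\subseteq B$ with $|A'|,|B'|\ge \eps m$ (since $\eps$-regular of density within $\eps$ of $d$). In particular $|d(A',B')-(d-\eps)|\le 2\eps\le \eps' m$ for any reasonable rescaling, so the hypotheses of \text{Lemma~\ref{regularsub}} are met with density parameter $d-\eps$ in place of $d$ (after adjusting the regularity constant slightly). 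Applying that lemma produces a spanning $d''m$-regular subgraph $G''\subseteq G$ with $d'':=(1-4\eps')(d-\eps)$; choosing the constant in Lemma~\ref{regularsub} appropriately we can arrange $d'':=(1-12\eps)(d-\eps)=d'$, which already establishes the existence of the regular spanning subgraph.

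It then remains to check that $G''$ is also $[4\sqrt{\eps},d']$-superregular. The point is that $G''$ is obtained from $G$ by \emph{deleting} edges only: each vertex of $A$ loses at most $(d+\eps)m-d'm$ edges, and since $d'=(1-12\eps)(d-\eps)$ this number is at most, say, $13\eps\cdot dm\le \sqrt{\eps}\cdot m$ (with room to spare, as $13\eps d\ll\sqrt{\eps}$ for small $\eps$). So $G''$ is obtained from the $\eps$-regular graph $G$ by removing at most $\sqrt{\eps}m$ edges at each vertex (and no vertices). By \text{Proposition~\ref{superslice}(i)} with $d'$ there equal to $\sqrt{\eps}$, the graph $G''$ is $2\eps^{1/4}$-regular of density at least $d^*-2\eps^{1/4}$, in particular $2\eps^{1/4}\le 4\sqrt{\eps}$... wait, one must be slightly careful here: $2\eps^{1/4}$ is larger than $4\sqrt{\eps}$ for small $\eps$, so \text{Proposition~\ref{superslice}} as stated gives only $2\eps^{1/4}$-regularity, not $4\sqrt{\eps}$-regularity. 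Since $G''$ is in fact \emph{$d'm$-regular} as a graph, however, it is trivially $[0,d']$-superregular in the degree sense; the only thing \text{Proposition~\ref{superslice}} is needed for is the $\eps$-regularity in the statistical sense, and here one should re-examine the intended regularity parameter. I would therefore instead observe directly: $G''$ differs from $G$ by deleting at most $13\eps m\le \eps m$ edges per vertex (choosing $\eps$ small), and rerun the density computation in the proof of \text{Proposition~\ref{superslice}(i)} with $d':=\eps$ replaced by the sharper bound; this yields regularity parameter $\max\{2\eps^{1/2}, \text{something}\}$, and the clean bound $4\sqrt{\eps}$ follows once we note $13\eps\le \sqrt{\eps}$ so the deletion is at most $\sqrt{\eps}m$ per vertex and \text{Proposition~\ref{superslice}(i)} applied with its parameter ``$d'$'' set to $\eps$ (not $\sqrt{\eps}$) gives $2\sqrt{\eps}$-regularity $\le 4\sqrt{\eps}$.

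Putting this together: apply \text{Lemma~\ref{regularsub}} (with its $\eps$ chosen as $\eps$ here, its $d$ chosen as $d-\eps$, noting $[\eps,d]$-superregularity gives minimum degree $\ge(d-\eps)m$ and the approximate-density condition) to get a spanning $d'm$-regular subgraph $G''$ where $d'=(1-12\eps)(d-\eps)$ after the constant is tuned so $(1-4\eps)(d-\eps)$ becomes $(1-12\eps)(d-\eps)$ — this is a harmless weakening. Then $G''$ is $d'm$-regular, so the degree conditions for $[4\sqrt{\eps},d']$-superregularity hold with exact equality; and $G''$ arises from $G$ by deleting $\le (d+\eps)m - d'm \le 13\eps m\le \sqrt{\eps}m$ edges at each vertex and no vertices, so \text{Proposition~\ref{superslice}(i)} gives that $G''$ is $2\sqrt{\eps}$-regular $\le 4\sqrt{\eps}$-regular. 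Hence $G''$ is $[4\sqrt{\eps},d']$-superregular, as required.

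The only mild obstacle is bookkeeping of constants: matching the $(1-4\eps)$ factor coming out of \text{Lemma~\ref{regularsub}} to the stated $(1-12\eps)$ factor (trivial, since we may further delete edges to lower the regular degree, or simply absorb the discrepancy by applying \text{Lemma~\ref{regularsub}} with a slightly smaller internal $\eps$), and verifying that $(d+\eps)m-d'm\le\sqrt{\eps}m$ so that \text{Proposition~\ref{superslice}} applies with a good enough regularity parameter. Neither is serious; there is no real difficulty here, and the whole argument is two short invocations of earlier results plus an elementary degree count.
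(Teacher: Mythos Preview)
Your approach is exactly the paper's: apply Lemma~\ref{regularsub} to extract the spanning regular subgraph, then Proposition~\ref{superslice}(i) for the $\eps$-regularity. The one place you tie yourself in knots is the constant bookkeeping, and the paper's version is cleaner. Since $G$ is $[\eps,d]$-superregular, its density lies in $[d-\eps,d+\eps]$, so for all large subsets $|d(A',B')-(d-\eps)|\le |d(A',B')-d|+|d-(d-\eps)|\le \eps+2\eps=3\eps$; together with $\delta(G)\ge (d-\eps)m$ this means $G$ satisfies the hypotheses of Lemma~\ref{regularsub} with $3\eps$ in place of $\eps$ and $d-\eps$ in place of $d$, which is exactly why $d'=(1-4\cdot 3\eps)(d-\eps)=(1-12\eps)(d-\eps)$. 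For the second step, the number of edges removed at each vertex is at most $(d+\eps)m-d'm=(2\eps+12\eps(d-\eps))m\le 3\eps m$, where the last inequality uses $d\ll 1$ (so $12\eps d\le \eps$) --- your cruder bound $13\eps m$, which does not exploit $d\ll 1$, is precisely what forces you into the $2\eps^{1/4}$ dead end. With the $3\eps m$ bound you apply Proposition~\ref{superslice}(i) with its parameter $d'$ equal to $3\eps$, obtaining $2\sqrt{3\eps}\le 4\sqrt{\eps}$-regularity; the degree conditions for $[4\sqrt\eps,d']$-superregularity are automatic since $G''$ is exactly $d'm$-regular.
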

\proof
Note that the assumptions imply that $G$ satisfies the conditions of Lemma~\ref{regularsub} with%
    \COMMENT{The density of $G$ is $d\pm \eps$ and we have $|d(A,B)-d(U,V)| \le \eps m$.
So $|d(A,B)-(d-\eps)| \le |d(A,B)-d(U,V)|+|d(U,V)-(d-\eps)|\le \eps m+ 2\eps m$.}
$\eps$ replaced by $3\eps$ and $d$ replaced by $d-\eps$.
So we can apply Lemma~\ref{regularsub} to obtain a $d'm$-regular subgraph $G'$ with $d'=(1-12\eps)(d-\eps)$.
Note that $G'$ is obtained from $G$ by removing at most $3\eps m$ edges at every vertex.
Thus Proposition~\ref{superslice}(i) (with $3\eps$ playing the role of $d'$) implies that
$G'$ is also $[4\sqrt{\eps},d']$-superregular.
\endproof 


\subsection{Uniform refinements}

Let $G$ be a digraph and let $\cP$ be a partition of $V(G)$ into an exceptional set $V_0$ and clusters of equal size.
Suppose that $\cP'$ is another partition of $V(G)$ into an exceptional set $V'_0$ and clusters of equal size. We say
that $\cP'$ is an \emph{$\ell$-refinement of $\cP$} if $V_0=V'_0$ and if the clusters in $\cP'$ are obtained by
partitioning each cluster in $\cP$ into $\ell$ subclusters of equal size. (So if $\cP$ contains $k$ clusters
then $\cP'$ contains $k\ell$ clusters.) $\cP'$ is an \emph{$\eps$-uniform $\ell$-refinement}
of $\cP$ if it is an $\ell$-refinement of $\cP$ which satisfies the following condition:
\begin{itemize}
\item[(URef)] Whenever $x$ is a vertex of $G$, $V$ is a cluster in $\cP$ and $|N^+_G(x)\cap V|\ge \eps |V|$
then $|N^+_G(x)\cap V'|=(1\pm \eps)|N^+_G(x)\cap V|/\ell$ for each cluster $V'\in \cP'$ with $V'\subseteq V$.
The inneighbourhoods of the vertices of $G$ satisfy an analogous condition.
\end{itemize}

\begin{lemma} \label{randompartition}
Suppose that $0<1/m \ll 1/k,\eps \ll \eps', d,1/\ell \le 1$ and that $m/\ell\in\mathbb{N}$.
Suppose that $G$ is a digraph on $n\le 2km$ vertices and that $\cP$ is a partition of $V(G)$ into
an exceptional set $V_0$ and $k$ clusters of size $m$. Then there exists an $\eps$-uniform $\ell$-refinement of $\cP$. Moreover, any
$\eps$-uniform $\ell$-refinement $\cP'$ of $\cP$ automatically satisfies
the following conditions:
\begin{itemize}
\item[(i)] Suppose that $V$, $W$ are clusters in $\cP$ and $V',W'$ are clusters in $\cP'$ with $V'\subseteq V$ and
$W'\subseteq W$. If $G[V,W]$ is $[\eps,d']$-superregular for some $d'\ge d$ then $G[V',W']$ is $[\eps',d']$-superregular.
\item[(ii)] Suppose that $V$, $W$ are clusters in $\cP$ and $V',W'$ are clusters in $\cP'$ with $V'\subseteq V$ and
$W'\subseteq W$. If $G[V,W]$ is $(\eps, \ge d)$-regular  then $G[V',W']$ is $(\eps',\ge d)$-regular.
\end{itemize}
\end{lemma}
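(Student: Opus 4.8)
The plan is to prove existence of an $\eps$-uniform $\ell$-refinement by a straightforward random partition argument, and then to verify that conditions (URef), (i) and (ii) hold for \emph{any} such refinement by purely deterministic counting. For existence, partition each cluster $V$ of $\cP$ uniformly at random into $\ell$ subclusters $V^{(1)},\dots,V^{(\ell)}$ of equal size $m/\ell$ (this is where we use $m/\ell\in\mathbb N$; formally one may think of choosing a uniformly random equipartition, or equivalently assigning each vertex of $V$ an i.i.d.\ uniform label in $\{1,\dots,\ell\}$ and then correcting the class sizes, which only moves a negligible number of vertices). For a fixed vertex $x$ of $G$ and a fixed cluster $V$ of $\cP$ with $|N^+_G(x)\cap V|\ge\eps|V|=\eps m$, the quantity $|N^+_G(x)\cap V'|$ for each subcluster $V'\subseteq V$ is (up to the size-correction) a sum of independent $0/1$ variables with mean $|N^+_G(x)\cap V|/\ell\ge\eps m/\ell$, which is large since $1/m\ll 1/k,\eps,1/\ell$. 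Proposition~\ref{chernoff} applied with deviation parameter $\eps$ gives failure probability at most $2\,\eul^{-\eps^2|N^+_G(x)\cap V|/(3\ell)}\le 2\,\eul^{-\eps^3 m/(3\ell)}$, which is superpolynomially small in $m$. Taking a union bound over the at most $n\le 2km$ choices of $x$, the $k$ choices of $V$, the $\ell$ choices of $V'$, and over both out- and inneighbourhoods — a total of at most $4k\ell\cdot 2km\le m^2$ events for $m$ large — shows that with positive (indeed overwhelming) probability the resulting $\ell$-refinement satisfies (URef). Hence an $\eps$-uniform $\ell$-refinement exists. (For the algorithmic remarks elsewhere in the paper one would instead invoke Theorem~\ref{derandom} here, but that is not needed for this lemma.)

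It remains to show that an arbitrary $\eps$-uniform $\ell$-refinement $\cP'$ satisfies (i) and (ii); this is deterministic. For (ii), suppose $G[V,W]$ is $(\eps,\ge d)$-regular, say of density $d^*\ge d-\eps$, and let $V'\subseteq V$, $W'\subseteq W$ be subclusters. Given $S\subseteq V'$, $T\subseteq W'$ with $|S|\ge\eps'|V'|=\eps' m/\ell$ and $|T|\ge\eps' m/\ell$, we have $|S|,|T|\ge\eps m$ provided $\eps'/\ell\ge\eps$, which holds by the hierarchy $\eps\ll\eps',1/\ell$; hence $\eps$-regularity of $G[V,W]$ gives $|d(S,T)-d^*|<\eps<\eps'$. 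So $G[V',W']$ is $\eps'$-regular, and its density differs from $d^*$ by at most $\eps<\eps'$, so it is $(\eps',\ge d)$-regular. For (i), assume in addition $G[V,W]$ is $[\eps,d']$-superregular with $d'\ge d$; we must control vertex degrees. Here we use (URef): for $x\in V'$, since $G[V,W]$ is $[\eps,d']$-superregular we have $|N^+_G(x)\cap W|=(d'\pm\eps)m\ge\eps|W|$, so (URef) applies and gives
\[
|N^+_G(x)\cap W'|=(1\pm\eps)\frac{|N^+_G(x)\cap W|}{\ell}=(1\pm\eps)(d'\pm\eps)\frac{m}{\ell}=(d'\pm\eps')\frac{m}{\ell}=(d'\pm\eps')|W'|,
\]
using $\eps\ll\eps'$ to absorb the cross terms. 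The analogous computation with the inneighbourhood condition of (URef) bounds the degrees of vertices of $W'$ into $V'$. Combined with the $\eps'$-regularity of $G[V',W']$ established above, this shows $G[V',W']$ is $[\eps',d']$-superregular, proving (i).

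The only genuinely quantitative point is the chain of inequalities $1/m\ll 1/k,\eps\ll\eps',d,1/\ell$, which must be strong enough simultaneously for (a) the Chernoff bound in the existence step to beat the $\le m^2$ union bound — this needs $\eps^3 m/(3\ell)\gg 2\log m$, comfortably true once $1/m$ is small relative to $\eps$ and $1/\ell$ — and (b) the deductions in (i)--(ii), which only need $\eps'/\ell\ge\eps$ (so that $\eps'$-large sets in a subcluster are $\eps$-large in the original cluster) and $\eps$ small enough relative to $\eps'$ to absorb the $O(\eps)$ error terms into $\eps'$. I do not expect any real obstacle: the lemma is a routine "random slicing preserves (super)regularity" statement, and the mild subtlety is merely bookkeeping the size-correction when passing from i.i.d.\ labels to exact equipartitions, which affects only $O(\sqrt{m\log m})$ vertices per cluster and is swallowed by the slack between $\eps$ and $\eps'$.
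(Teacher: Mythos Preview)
Your proposal is correct and follows essentially the same approach as the paper: a random i.i.d.\ labelling of each cluster into $\ell$ parts together with Chernoff bounds for the existence step (with a size-correction at the end), and then a deterministic verification of (i) and (ii) using (URef) and the inequality $\eps'\ge\eps\ell$. The paper carries out the size-correction a little more explicitly (using deviation $\eps/2$ in the Chernoff step to leave room for moving at most $\eps^2 m/2\ell$ vertices per subcluster), but this is exactly the bookkeeping you anticipate in your final paragraph.
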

\proof
To prove the existence of an $\eps$-uniform $\ell$-refinement of $\cP$, let $\cP^*$
be a partition obtained by splitting each cluster $V\in \cP$ uniformly at random into $\ell$ subclusters.
More precisely, the probability that a vertex $x \in V$ is assigned to the $i$th subcluster is $1/\ell$, 
independently of all other vertices.
Consider a fixed vertex $x$ of $G$ and a cluster $V\in \cP$ with $d^+:=|N^+_G(x)\cap V|\ge \eps m$.
Given a cluster $V'\in\cP^*$ with $V'\subseteq V$, we say that $x$ is
\emph{out-bad for $V'$} if the outdegree of $x$ into $V'$ is not $(1\pm \eps/2) d^+/\ell$. 
Then Proposition~\ref{chernoff} implies that the probability that $x$ is out-bad for $V'$ is at most 
$2\eul^{-\eps^2 d^+/3 \cdot 4 \ell} \le 2 \eul^{-\eps^4m}$. Since $\cP^*$ contains 
$k \ell \le n$ clusters, the probability that $G$ contains some vertex which is out-bad for at least one cluster
$V'\in \cP^*$ is at most $n^2\eul^{-\eps^4m}<1/8$. We argue analogously for the inneighbourhoods of the vertices in
$G$ (by considering `in-bad' vertices). 

We now say that a cluster $V'$ of $\cP^*$ is \emph{good} if $|V'|=(1\pm \eps^2/2)m/\ell$.
A similar argument as above shows that the probability that $\cP^*$ has a cluster which is not good is at most $1/4$.
So with probability at least $1/2$, all clusters of $\cP^*$ are good, and no vertices are out-bad or in-bad.

Now obtain $\cP'$ from $\cP^*$ as follows: for each cluster $V$ of $\cP$, equalize the sizes of the corresponding $\ell$
subclusters in $\cP$ by moving at most $\eps^2m/2\ell$ vertices from one subcluster to another.
So whenever $x$ is a vertex of $G$, $V$ is a cluster in $\cP$ and $|N^+_G(x)\cap V|\ge \eps |V|$,
it follows that we have $$
|N^+_G(x)\cap V'|=(1\pm \eps/2)|N^+_G(x)\cap V|/\ell\pm \eps^2m/2\ell
$$ 
for each cluster $V'\in \cP'$ with $V'\subseteq V$.
The inneighbourhoods of the vertices of $G$ satisfy an analogous condition.
So (URef) holds and so $\cP'$ is an $\eps$-uniform $\ell$-refinement of $\cP$.

To prove (i), suppose that $\cP'$ is any $\eps$-uniform $\ell$-refinement of $\cP$ and that $G[V,W]$ is $[\eps,d']$-superregular
for some $d'\ge d$ (where $V$ and $W$ are clusters in $\cP$). Let $V'$ and $W'$ be clusters in $\cP'$ with $V'\subseteq V$
and $W'\subseteq W$. Then $G[V',W']$ is $\eps\ell$-regular and thus $\eps'$-regular.
Consider any $x\in V'$ and let $d^+:=|N^+_G(x)\cap W|$. Thus $d^+=(d'\pm \eps)m$ since
$G[V,W]$ is $[\eps,d']$-superregular. Together with the $\eps$-uniformity of $\cP'$ this implies that
$|N^+_G(x)\cap W'| =(1\pm \eps)d^+/\ell=(d'\pm \eps')m/\ell$. The inneighbourhoods in $V'$ of the vertices
in $W'$ satisfy the analogous property. Thus $G[V',W']$ is $[\eps',d']$-superregular.

The proof of (ii) is almost the same.
\endproof
Note that Theorem~\ref{derandom} (with $p=1/\ell$) implies that the above applications of Proposition~\ref{chernoff} can be derandomized
to find $\cP'$ in polynomial time.%
\COMMENT{Actually, this requires $\ell-1$ successive applications of the Proposition where we successively split off a subclusters of size
$|V|/\ell$. But it's probably better to gloss over this.}


\subsection{A sparse notion of $\eps$-regularity}

We will also use a `sparse' version of $\eps$-(super)-regularity, which is defined below.
In particular, this definition allows for $d<\eps$.  
We will need this notion mainly in Section~\ref{seccyclebreak}, where we will have to work with graphs for which we cannot 
guarantee $(\eps,\ge d)$-regularity with $\eps \le d$. 
In general, one useful consequence of $(\eps,\ge d)$-regularity is that sets of
size between $\eps m$ and $(1-\eps)m$ expand robustly. With our sparse version, we will also be able to guarantee that even sets of size less than $\eps m$ expand robustly.
This will follow from condition (Reg2) below.%
       \COMMENT{Here is a more precise explanation why we need the notion:
We need that the density of the leftover of the preprocessing graph is much smaller than $\eps$ in order to be
able to absorb it. So the pairs in the preprocessing graph cannot be $\eps$-regular in the usual sense, but we need to find them
inside our original blow-up of $C$. Lemma 10 from the Kelly paper guarantees that an $\eps$-regular pair of density $d$
contains an $\{ \eps,\eps/K \}$-regular pair of density about $d/K$. This is not enough for us since we wish to apply the contraction trick
(ie Lemma~\ref{mergecycles0}). So we need to find a Hamilton cycle in the contracted graph. The fix is to replace $\{ \eps,\eps/K \}$-(super-)regularity
by $(\eps,d,d^*,c)$-superregularity. This guarantees that the contracted graph is a robust outexpander and thus has a Hamilton cycle.}

More precisely, let $G$ be a bipartite graph with vertex classes $U$ and $V$, both of size $m$.
Given $0<\eps,d,c<1$, we say that $G$ is \emph{$(\eps,d,c)$-regular} if the following conditions are satisfied:
\begin{itemize}
\item[(Reg1)] Whenever $A\subseteq U$ and $B\subseteq V$ are sets of size at least $\eps m$, then
$d(A,B)=(1\pm \eps)d$.
\item[(Reg2)] For all $u,u'\in U$ we have $|N(u)\cap N(u')|\le c^2m$. Similarly, for all $v,v'\in V$ we have $|N(v)\cap N(v')|\le c^2m$.
\item[(Reg3)] $\Delta(G)\le cm$.
\end{itemize}
We say that $G$ is \emph{$(\eps,d,d^*,c)$-superregular} if it is $(\eps,d,c)$-regular and
in addition the following condition holds:
\begin{itemize}
\item[(Reg4)] $\delta(G)\ge d^*m$.
\end{itemize}

The next result gives an analogue of Proposition~\ref{superslice} for the above notion of (super)-regularity.
\begin{prop} \label{superslice5} 
Suppose that $0<1/m \ll d^*,d, \eps, c\ll 1$. Let $G$ be a bipartite graph with vertex classes
$U$ and $V$ of size $m$. Suppose that $G'$ is obtained from $G$ by removing 
at most $\eps^2 dm$ edges incident to each vertex from $G$.
\begin{itemize}
\item[(i)] If $G$ is $(\eps,d,c)$-regular then $G'$ is $(2\eps,d,c)$-regular.
\item[(ii)] If $G$ is $(\eps,d,d^*,c)$-superregular then $G'$ is $(2\eps,d,d^*-\eps^2 d,c)$-superregular.
\end{itemize}
\end{prop}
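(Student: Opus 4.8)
\textbf{Proof proposal for Proposition~\ref{superslice5}.}

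The plan is to mimic the proof of Proposition~\ref{superslice} but track each of the conditions (Reg1)--(Reg4) separately, since the sparse notion of regularity is a conjunction of four conditions rather than a single quantitative statement. Let $G'$ be obtained from $G$ by deleting at most $\eps^2 d m$ edges at each vertex (but no vertices, which makes the bookkeeping cleaner than in Proposition~\ref{superslice}). The key observation throughout is that $\eps^2 d m$ is tiny compared to all the relevant thresholds, because $1/m \ll d^*,d,\eps,c \ll 1$.

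First I would verify (Reg1) for $G'$ with parameter $2\eps$. Take $A \subseteq U$, $B \subseteq V$ with $|A|,|B| \ge 2\eps m \ge \eps m$. Then by (Reg1) for $G$ we have $e_G(A,B) = (1\pm\eps) d |A||B|$. Since we delete at most $\eps^2 d m$ edges at each of the $|A|$ vertices of $A$, we have $e_G(A,B) - |A|\eps^2 d m \le e_{G'}(A,B) \le e_G(A,B)$. Dividing by $|A||B|$ and using $|B| \ge 2\eps m$, the correction term is at most $\eps^2 d m / |B| \le \eps d /2$, so $d_{G'}(A,B) = (1\pm\eps) d \pm \eps d/2 = (1\pm 2\eps) d$, giving (Reg1) for $G'$ with parameter $2\eps$. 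Conditions (Reg2) and (Reg3) for $G'$ are immediate and require no change in parameters: passing to a subgraph can only decrease $|N(u)\cap N(u')|$ and can only decrease $\Delta$, so the bounds $c^2 m$ and $cm$ are inherited verbatim. This proves (i). For (ii), we additionally need (Reg4): every vertex of $G$ has degree at least $d^* m$, and we remove at most $\eps^2 d m$ edges at it, so every vertex of $G'$ has degree at least $(d^* - \eps^2 d) m$, giving $\delta(G') \ge (d^* - \eps^2 d)m$ as required.

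I do not expect a genuine obstacle here; the statement is deliberately a routine robustness lemma. The only point that needs a little care is getting the exact parameter $2\eps$ in (Reg1) right — one must check that the edge-deletion error $\eps^2 d m$ is absorbed into the slack between $\eps$ and $2\eps$ rather than into the multiplicative $(1\pm\eps)$ factor, which is why the hypothesis bounds the number of deleted edges by $\eps^2 d m$ (with the factor $d$) rather than, say, $\eps^2 m$. The analogous (but easier) point in (ii) is that the degradation in (Reg4) is additive of size $\eps^2 d$, which is exactly what is claimed. Everything else is monotone under taking subgraphs.
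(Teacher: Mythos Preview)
Your proposal is correct and follows essentially the same approach as the paper's proof: verify (Reg1) by bounding the edge-deletion error as a fraction of $|A||B|$, and note that (Reg2), (Reg3) are monotone under passing to subgraphs while (Reg4) loses exactly $\eps^2 d$. The only cosmetic difference is that the paper checks (Reg1) for sets of size at least $\eps m$ (using $\eps^2 d m \le \eps d |T|$ to get the correction $\eps d$), whereas you check it for sets of size at least $2\eps m$ (getting correction $\eps d/2$); both yield the required $(1\pm 2\eps)d$ bound.
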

\proof
Let us first prove~(i). Clearly $G'$ still satisfies (Reg2) and (Reg3). So we only need to check
that it also satisfies~(Reg1). Suppose that $S\subseteq U$, $T\subseteq V$ are such that $|S|,|T|\ge \eps m$.
Then
$$e_{G'}(S,T)\ge (1-\eps)d|S||T| - |S|\eps^2d m   \ge (1-\eps)d|S||T|-|S|\eps^2 d\cdot |T|/\eps = (1-2\eps)d |S||T|.
$$
Since clearly $e_{G'}(S,T)\le e_{G}(S,T)\le (1+\eps)d|S||T|$, (i) follows.
To see (ii) note that the degrees in $G'$ are still at least $d^*m-\eps^2d m$.
\endproof

We will construct sparse $(\eps,d,c)$-regular graphs in the proof of Lemma~\ref{randomregslice}.
To verify (Reg1) in the proof of Lemma~\ref{randomregslice}, we will use a variant of the well known characterization in terms of
codegrees of pairs of vertices which was proved as Lemma~3.2 in~\cite{Aetal} (the version in~\cite{Aetal} gives more precise bounds but the statement is not
suitable for sparse regularity).

Suppose that $G=(U,V)$ is a bipartite graph with vertex classes $U$ and $V$ of size~$m$. We say that $G$ is 
\emph{$\{\eps,d \}$-regular} if  for all $A\subseteq U$
and $B \subseteq V$ with $|A|, |B| \geq \eps m$
we have $d(A,B) = (1 \pm \eps)dm$. (So (Reg1) is equivalent to saying that $G$ is $\{\eps,d \}$-regular.)
Note that this notion allows for $d<\eps$. Moreover, it is stronger than $\eps$-regularity in the sense that
if $\eps<d\ll 1$ then every $\{\eps,d \}$-regular pair is also $\eps$-regular of density $(1 \pm \eps)d$.
 
Call a pair of distinct vertices in $V$ \emph{bad} if the number of common neighbours in $U$ is at least $(1+\eps)d^2m$.
\begin{lemma} \label{codegrees}
Suppose that $1/m \ll \eps,d \le 1/C \le 1$ and $\eps \ll 1/C$.
Let $G=(U,V)$ be a bipartite graph with vertex classes $U$ and $V$ of size $m$. Suppose that all but at most $\eps m$ vertices
in $V$ have degree at least  $(1 - \eps)dm$ and for all pairs of distinct vertices in $V$ the number of common neighbours is at most $Cd^2m$.
Suppose also that the number of bad pairs of distinct vertices in $V$ is at most $\eps m^2$. 
Then  $G$ is $\{\eps^{1/6},d\}$-regular.
\end{lemma}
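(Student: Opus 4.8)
The plan is to use a standard second-moment / variance argument to deduce $\{\eps^{1/6},d\}$-regularity from the codegree hypotheses. Fix $A\subseteq U$ and $B\subseteq V$ with $|A|,|B|\ge \eps^{1/6}m$, and suppose for contradiction that $d(A,B)$ deviates from $d$ by more than $\eps^{1/6}dm$, i.e.\ $e_G(A,B)$ is not $(1\pm\eps^{1/6})d|A||B|$. First I would discard the (at most $\eps m$) vertices of $V$ of small degree and the (at most $\eps m$) vertices of $V$ lying in a bad pair — since $|B|\ge\eps^{1/6}m\gg\eps m$, throwing these away changes $e_G(A,B)$ by at most $O(\eps m)\cdot\Delta_U$, where $\Delta_U$ is a crude upper bound on the degrees in $U$ coming from (Reg2)-type codegree information; in fact one gets $\Delta(G)=O(\sqrt{C}\,dm\cdot m^{1/2})$... more usefully, I will not need an explicit degree bound in $U$, only that the error is negligible compared to $d|A||B|=\Theta(d\eps^{1/3}m^2)$, which holds because $\eps\ll\eps^{1/3}$.

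The core estimate is a count of ordered pairs $(u,u')$ of vertices in $U$ together with a common neighbour in $B$. Let $X:=\sum_{v\in B}d_A(v)$ where $d_A(v)=|N(v)\cap A|$, so $X=e_G(A,B)$, and let $Y:=\sum_{v\in B}d_A(v)^2=\sum_{(u,u')\in A^2}|N(u)\cap N(u')\cap B|$. By Cauchy–Schwarz, $Y\ge X^2/|B|$. On the other hand, splitting the sum over $A^2$ into the diagonal and off-diagonal parts, and using that every off-diagonal pair in $V$ — wait, the codegree bound is for pairs in $V$, not $U$; I will instead run the whole argument with the roles of the estimate organized around pairs in $V$. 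So set $X:=\sum_{u\in A}d_B(u)=e_G(A,B)$ and $Y:=\sum_{u\in A}d_B(u)^2=\sum_{(v,v')\in B^2}|N(v)\cap N(v')\cap A|$. The diagonal contributes $\sum_{v\in B}|N(v)\cap A|\le |B|\cdot|A|$... no: $|N(v)\cap N(v)\cap A|=d_A(v)\le|A|$, so the diagonal is at most $|B||A|$, which is $\le m^2$ and hence $o(d^2m^4\cdot\text{stuff})$ — I need to compare against the main term $d^2|A||B|^2$, and indeed $|A||B|\ll d^2|A||B|^2$ fails in general, so instead I bound the diagonal by $\sum_{v\in B}d_A(v)=X$, and note $X\le m^2$ while the target main term is $\asymp d^2\eps^{1/3}m^2\cdot|A|\gg m^2$ only if $|A|\gg d^{-2}\eps^{-1/3}$, which holds since $|A|\ge\eps^{1/6}m\gg d^{-2}$. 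The off-diagonal pairs $(v,v')$ with $v\ne v'$: all but at most $\eps m^2$ of them are good, contributing at most $(1+\eps)d^2m$ each, hence at most $(1+\eps)d^2m\cdot|B|^2$ in total; the at most $\eps m^2$ bad pairs contribute at most $Cd^2m$ each (the global codegree bound), hence at most $Cd^2m\cdot\eps m^2=C\eps d^2m^3$ in total. Putting this together,
\[
\frac{X^2}{|B|}\le Y\le (1+\eps)d^2m|B|^2+C\eps d^2m^3+X,
\]
so, using $|B|\ge\eps^{1/6}m$ to absorb the $C\eps d^2m^3$ term into $(1+\eps)d^2m|B|^2$ (since $C\eps d^2m^3\le C\eps^{5/6}\cdot d^2m|B|^2\cdot\eps^{-1}$... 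I need $C\eps m^3\le \eps^{1/3}d^2m|B|^2/d^2$, i.e.\ $C\eps m^2\le \eps^{1/3}|B|^2$, which holds as $|B|^2\ge\eps^{1/3}m^2$ and $C\eps\le\eps^{1/3}$ for small $\eps$), one obtains $X^2\le (1+O(\eps^{1/3}))d^2m|B|^3+o(\cdots)$, hence $X\le (1+\eps^{1/6})d m|B|^{1/2}\cdot|B|^{1/2}$... let me be careful: $X\le (1+O(\eps^{1/3}))^{1/2}dm^{1/2}|B|^{3/2}$ — that is not the bound I want, because the number of $U$-vertices isn't entering. The issue is that the upper bound on $e_G(A,B)$ must come from $|A|$, not from $|B|$.

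So here is the corrected structure, which I expect to be the delicate point and hence the \textbf{main obstacle}: one must prove a \emph{lower} bound $e_G(A,B)\ge(1-\eps^{1/6})d|A||B|$ and an \emph{upper} bound $e_G(A,B)\le(1+\eps^{1/6})d|A||B|$ separately, and the codegree-in-$V$ hypothesis naturally controls $\sum_u d_B(u)^2$, which via Cauchy–Schwarz gives a \emph{lower} bound on the number of $v\in B$ that are ``well-covered'' and thus only directly yields information in one direction. The standard fix (as in Lemma~3.2 of~\cite{Aetal}) is: for the upper bound, suppose $e_G(A,B)\ge(1+\eps^{1/6})d|A||B|$; then by convexity $\sum_{u\in A}d_B(u)^2\ge |A|^{-1}(e_G(A,B))^2\ge(1+\eps^{1/6})^2d^2|A||B|^2$, while the codegree count gives $\sum_{u\in A}d_B(u)^2=\sum_{(v,v')\in B^2}|N(v)\cap N(v')\cap A|\le |B|^2(1+\eps)d^2m+C\eps d^2m^3+|B||A|$; since $|B|\le m$ and $|A|\le m$, the right side is at most $(1+\eps)d^2m^3+C\eps d^2m^3+m^2\le(1+2\eps+C\eps)d^2m^3$, whereas the left side is at least $(1+2\eps^{1/6})d^2|A||B|^2\ge(1+2\eps^{1/6})d^2\eps^{1/3}m^2\cdot|B|$ — and this is a contradiction once $|B|$ is large enough relative to $m$ and $\eps^{1/6}\gg\eps$, i.e.\ $(1+2\eps^{1/6})\eps^{1/3}m|B|\ge(1+2\eps^{1/6})\eps^{2/3}m^2>(1+3\eps)m^2$... which fails.

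The honest resolution, and what I would actually write, is the symmetric double-counting with pairs from \emph{both} sides but using only the $V$-side codegree bound twice over a dyadic/averaging argument, or — more simply — to invoke the quantitative form directly: the hypotheses say exactly that $\sum_{\{v,v'\}\subseteq V}\big(|N(v)\cap N(v')|-d^2m\big)_+$ is small (at most roughly $\eps d^2m^3+\eps m^2\cdot Cd^2m$), and a now-classical lemma (the deviation / ``defect'' form of the codegree characterisation, cf.~\cite{Aetal}) converts a bound of order $\eta m^3$ on $\sum_{v\ne v'}\big||N(v)\cap N(v')|-d^2m\big|$ together with $\sum_v\big|d(v)-dm\big|$ small into $\{O(\eta^{1/6}),d\}$-regularity. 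With $\eta=O(\eps)$ this gives $\{O(\eps^{1/6}),d\}$-regularity, and after rescaling the constant hidden in $O(\cdot)$ into the hierarchy $1/m\ll\eps,d\ll1/C$ one gets precisely $\{\eps^{1/6},d\}$-regular. The plan, then, is: (1) reduce to the case where all of $V$ has degree $\ge(1-\eps)dm$ and no bad pairs exist, at the cost of deleting $O(\eps m)$ vertices from $V$, checking this deletion is harmless since the target sets have size $\ge\eps^{1/6}m$; (2) bound $\sum_{v\ne v'}\big||N(v)\cap N(v')|-d^2m\big|\le O(\eps d^2m^3)$ using the no-bad-pairs condition on one side and the global bound $Cd^2m$ on the $\le\eps m^2$ exceptional pairs on the other, together with a lower-bound contribution controlled by $\sum_v d(v)^2\ge m^{-1}(\sum_v d(v))^2$; (3) feed this into the defect-codegree lemma to conclude. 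The one subtlety to get right in step~(2) — and the place I expect to spend the most care — is the \emph{lower} tail, i.e.\ bounding $\sum_{v\ne v'}\big(d^2m-|N(v)\cap N(v')|\big)_+$, since the hypotheses only directly bound the upper tail; this is handled by the usual trick that $\sum_{v,v'}|N(v)\cap N(v')|=\sum_u d(u)^2\ge(\sum_u d(u))^2/m=(\sum_v d(v))^2/m\ge(1-\eps)^2d^2m^3$, so the total mass is at least its ``expected'' value, which together with a bound on the upper tail forces the lower tail to be small as well.
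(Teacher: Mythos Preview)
Your instinct to use a second-moment/codegree argument is right and matches the paper, but the execution has two genuine gaps.

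First, the deletion step is wrong: having at most $\eps m^2$ bad pairs does \emph{not} imply that only $O(\eps m)$ vertices of $V$ lie in some bad pair --- the bad pairs could touch every vertex of $V$. The paper deletes nothing.

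Second, and more importantly, you work with the uncentered second moment $\sum_{u\in A}d_B(u)^2$, correctly notice this only yields one direction, then try to handle the two tails separately, and finally retreat to a black-box ``defect-codegree lemma'' from~\cite{Aetal}. That last step is essentially circular: the whole point of the present lemma is to establish a codegree characterisation valid in the sparse regime (where $d$ may be below $\eps$), which is exactly where the paper says the version in~\cite{Aetal} does not directly apply.

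The fix --- and what the paper does --- is to \emph{center} the second moment. For $X\subseteq U$, $Y\subseteq V$ with $|X|=|Y|=\eps_0 m$ (where $\eps_0:=\eps^{1/6}$), one bounds
\[
\sum_{x\in X}\bigl(|N(x)\cap Y|-d|Y|\bigr)^2
\]
from above. Extending the sum to all of $U$ and expanding, this is at most
\[
e(U,Y)+\sigma(Y)|Y|^2+2d^2|Y|^2m-2d|Y|\,e(U,Y),
\]
where $\sigma(Y):=|Y|^{-2}\sum_{y_1\neq y_2\in Y}\bigl(|N(y_1)\cap N(y_2)|-d^2m\bigr)$. Crucially, only an \emph{upper} bound on $\sigma(Y)$ is required, and this follows immediately from the hypotheses: the at most $\eps m^2$ bad pairs contribute at most $Cd^2m$ each and all other pairs at most $\eps d^2m$, giving $\sigma(Y)\le \eps_0^3 d^2m/3$. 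The degree hypothesis gives $e(U,Y)\ge (1-\eps_0^3/6)dm|Y|$, which makes $2d^2|Y|^2m-2d|Y|\,e(U,Y)$ small. Cauchy--Schwarz on the \emph{centered} sum then yields
\[
\bigl|d(X,Y)-d\bigr|^2\le \frac{1}{|X|}\cdot\eps_0^3 d^2m|Y|^2\cdot\frac{1}{|Y|^2}=\eps_0^2 d^2,
\]
handling the upper and lower deviations simultaneously. No lower bound on individual codegrees is ever needed, and no external lemma is invoked.
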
 
\proof
The proof follows the argument in~\cite{Aetal}. Let $\eps_0:=\eps^{1/6}$.
It is easy to see that it suffices to check that $d(X,Y)=(1 \pm \eps_0)d$ for all pairs $X\subseteq U$ and $Y\subseteq V$ with $|X|=|Y|=\eps_0 m$.
For any pair of vertices $y_1,y_2\in Y$, let $\sigma (y_1,y_2):=|N(y_1) \cap N(y_2)|-d^2 m$.
Then we always have $\sigma (y_1,y_2) \le Cd^2m$ and can improve this to $\sigma (y_1,y_2) \le \eps d^2m$ if the pair $y_1,y_2$ is not bad.
Also define 
$$
\sigma(Y):=\frac{1}{|Y|^2}\sum_{y_1,y_2 \in Y, \ y_1\neq y_2} \sigma(y_1,y_2).
$$
Then our assumption on $|Y|$ and on the number of bad pairs implies that 
\begin{equation} \label{sigma}
\sigma(Y) \le \frac{1}{(\eps_0 m)^2}\left( (\eps m^2)Cd^2m  + (\eps_0m)^2\eps d^2m \right) = C\eps_0^4 d^2m+ \eps d^2m \le \eps_0^3 d^2m/3.
\end{equation}
We claim that 
\begin{equation} \label{claim}
\sum_{x \in X}\left( |N(x) \cap Y| - d|Y|\right)^2 \le \eps_0^3 d^2m |Y|^2.
\end{equation}
To prove the claim, we use that the left hand side (as shown in~\cite{Aetal}) is at most 
$$
e(U,Y)+ \sigma(Y)|Y|^2+2d^2|Y|^2m -2e(U,Y)d|Y|.
$$
But our assumption on the vertex degrees implies that
$$e(U,Y) \ge (|Y|-\eps m)(1-\eps)dm =|Y|(1-\eps/\eps_0)(1-\eps)dm \ge |Y|(1-\eps_0^3/6)dm
$$ and so 
$$
2d^2|Y|^2 m -2e(U,Y)d|Y|
\le \eps_0^3 d^2|Y|^2m/3.
$$
So together with (\ref{sigma}) this implies that the left hand side of (\ref{claim}) is at most
$$
e(U,Y) + \frac{2}{3}\eps_0^3d^2m |Y|^2  \le |Y|m+\frac{2}{3}\eps_0^3d^2m |Y|^2 \le \eps_0^3d^2m |Y|^2.
$$
This proves the claim. On the other hand, the Cauchy-Schwarz inequality implies that
$$
\sum_{x \in X}\left( |N(x) \cap Y| - d|Y|\right)^2 
\ge \frac{1}{|X|} \left( \left( \sum_{x \in X} |N(x) \cap Y| \right) -d|X||Y| \right)^2.
$$
So together with~(\ref{claim}), this implies that
$$
\left( \left( \sum_{x \in X} |N(x) \cap Y| \right) -d|X||Y| \right)^2
\le |X| \left( \eps_0^3 d^2m |Y|^2  \right).
$$
Thus dividing both sides by $|X|^2|Y|^2$ yields
$$
|d(X,Y) -d|^2 \le \frac{1}{|X|} \left( \eps_0^3 d^2m \right) =\eps_0^2 d^2,
$$ 
as required.
\endproof

The first part of the following lemma implies that inside an $\eps$-regular pair we can find sparse subgraphs
which satisfy (Reg1)--(Reg3) with good bounds on the parameters. 
Assertion~(iii) will only be used in~\cite{KellyII,OS}.
\begin{lemma}\label{randomregslice}
Suppose that $0<1/m\ll \eps, d'\le d\le 1$ and $\eps \ll d$.
\begin{itemize}
\item[{\rm (i)}] If $G$ is an $\eps$-regular bipartite graph of density $d$ with vertex classes of size $m$, then
it contains an $(\eps^{1/12},d',3d'/2d)$-regular spanning subgraph.
\item[{\rm (ii)}] If $G$ is an $(\eps,d)$-superregular bipartite graph with vertex classes of size $m$, then it
contains an $(\eps^{1/12},d',d'/2,3d'/2d)$-superregular spanning subgraph.
\item[{\rm (iii)}] If $\eps\ll d'$ and $G$ is an $\eps$-regular bipartite graph of density $d$ with vertex classes of size $m$, then
it contains an $\{ \eps^{1/12},d' \}$-regular spanning subgraph $J$.
Moreover, if $x \in V(G)$ satisfies $d_G(x) =(d \pm \eps)m$, then $d_{J}(x) =(d' \pm \sqrt{\eps})m$.
\item[{\rm (iv)}] If $\eps\ll d'$ and $G$ is an $[\eps,d]$-superregular bipartite graph with vertex classes of size $m$, then it
contains an $[\eps^{1/12},d']$-superregular spanning subgraph.
\end{itemize}
\end{lemma}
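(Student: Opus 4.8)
The plan is to obtain all four parts by the same device: take a random sparse spanning subgraph $G'$ of $G$ by retaining each edge independently with probability $p:=d'/d$ (for (i),(ii)) or $p:=d'/d$ again but now with $d'$ bounded away from $0$ relative to $\eps$ (for (iii),(iv)), and then verify the relevant regularity conditions via concentration. For part (i), condition (Reg3) ($\Delta(G')\le 3d'/(2d)\cdot m$) follows from Proposition~\ref{chernoff} applied to each vertex degree, since $\ex[d_{G'}(x)]\le dm\cdot p = d'm$ and $d'm\gg \log m$; similarly (Reg2) follows by applying Proposition~\ref{chernoff} to $|N_{G'}(u)\cap N_{G'}(u')|$, whose expectation is $p^2|N_G(u)\cap N_G(u')|\le p^2 m$, so after a $(1+\eps)$-type deviation it is at most $(d'/d)^2 m\cdot(1+\eps)\le (3d'/2d)^2 m = c^2m$. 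For (Reg1) I would not verify the density condition directly on all large sets; instead I would use Lemma~\ref{codegrees}. That is, I check that after sparsification all but at most $\eps^{1/2}m$ vertices of each class have degree at least $(1-\eps^{1/2})d'm$ (again by Proposition~\ref{chernoff} on each vertex, since the expected degree is $d'm$), that every pair of vertices has at most $C(d')^2m$ common neighbours for a suitable absolute constant $C$ (immediate from (Reg2), which gives an even stronger bound), and that the number of \emph{bad} pairs—those with at least $(1+\eps^{1/2})(d')^2m$ common neighbours—is at most $\eps^{1/2}m^2$. This last point is the crux and I expand on it below. Granting it, Lemma~\ref{codegrees} (with $\eps^{1/2}$ in place of its $\eps$ and $d'$ in place of its $d$) yields that $G'$ is $\{(\eps^{1/2})^{1/6},d'\}$-regular, i.e.~$\{\eps^{1/12},d'\}$-regular, which is exactly (Reg1). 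Thus $G'$ is $(\eps^{1/12},d',3d'/2d)$-regular and (i) is proved.

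The bound on the number of bad pairs is the main obstacle, since the events ``$\{u,u'\}$ is bad'' for different pairs are not independent, so one cannot simply multiply probabilities. I would handle this by a first-moment / Markov argument: for a \emph{fixed} pair $u,u'\in U$, the random variable $|N_{G'}(u)\cap N_{G'}(u')|$ is a sum of independent indicator variables (one per common $G$-neighbour $w$, retained iff both $uw$ and $u'w$ survive, which happens with probability $p^2$), with expectation $p^2|N_G(u)\cap N_G(u')|$. Since $G$ is $\eps$-regular of density $d$, Proposition~\ref{degcodeg} tells us that all but at most $4\eps m^2$ pairs $u,u'$ already satisfy $|N_G(u)\cap N_G(u')|=(d^2\pm\eps)m$, so for such ``good'' pairs the expectation of the $G'$-codegree is $(1\pm\eps')(d')^2m$ for a small $\eps'$, and Proposition~\ref{chernoff} gives that the probability this exceeds $(1+\eps^{1/2})(d')^2m$ is at most $\eul^{-\Omega(\eps(d')^2m)}=o(1/m^2)$. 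Hence in expectation the number of bad pairs among the good ones is $o(1)$, and adding the $\le 4\eps m^2$ exceptional pairs from Proposition~\ref{degcodeg} (all of which we simply declare bad) the expected total number of bad pairs is at most $4\eps m^2 + o(1)\le \eps^{1/2}m^2$. By Markov, with probability at least $3/4$ there are at most $4\eps^{1/2}m^2$ bad pairs; absorbing the factor $4$ into the exponent $1/6$ of Lemma~\ref{codegrees} (or simply running the whole computation with $\eps/100$ in place of $\eps$ from the start) gives the required bound. Combining with the failure probabilities for (Reg2) and (Reg3) and the degree condition, a union bound shows a suitable $G'$ exists, and the same conditional-probabilities machinery as in Lemma~\ref{randompartition} (via Theorem~\ref{derandom}) makes this algorithmic.

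Part (ii) is identical except that we additionally need (Reg4), $\delta(G')\ge (d'/2)m$: since $G$ is $(\eps,d)$-superregular every vertex has $G$-degree at least $(d-\eps)m$, so the expected $G'$-degree is at least $(d-\eps)p m\ge (1-2\eps/d)d'm\ge (3/4)d'm$, and Proposition~\ref{chernoff} gives $d_{G'}(x)\ge (d'/2)m$ for all $x$ with probability $1-o(1)$; the upper bound $\Delta(G')\le (3d'/2d)m$ is as in (i). For parts (iii) and (iv), where now $\eps\ll d'$, one wants the \emph{standard} notions $(\eps^{1/12},d')$-regular and $[\eps^{1/12},d']$-superregular rather than the sparse ones. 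Here I would argue even more directly: keeping each edge with probability $p=d'/d$, for every pair $A\subseteq U$, $B\subseteq V$ with $|A|,|B|\ge \eps^{1/12}m$ we have $\ex[e_{G'}(A,B)]=p\,e_G(A,B)=(d'/d)(d\pm\eps)|A||B|=(d'\pm\eps d'/d)|A||B|$, and since $|A||B|\ge \eps^{1/6}m^2$ while the deviation we can afford is of order $\eps^{1/12}d'|A||B|\gg$ the Chernoff error, Proposition~\ref{chernoff} plus a union bound over the at most $4^m$ such pairs (the exponent $\eps^{1/6}d'm^2$ in the Chernoff bound easily beats $m$) shows $G'$ is $\eps^{1/12}$-regular of density $(1\pm\eps^{1/12})d'$, hence $(\eps^{1/12},d')$-regular; this proves (iii). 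For (iv), additionally apply Proposition~\ref{chernoff} to each vertex degree using that $G$ is $[\eps,d]$-superregular, so $d_G(x)=(d\pm\eps)m$ and thus $\ex[d_{G'}(x)]=(d'\pm\eps d'/d)m$, giving $d_{G'}(x)=(d'\pm\eps^{1/12})m$ for every $x$ with high probability; together with the regularity this is precisely $[\eps^{1/12},d']$-superregularity. In all four parts the only non-routine point is the bad-pair count for the sparse regime, which the first-moment argument above resolves.
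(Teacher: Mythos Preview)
Your argument for parts (i) and (ii) is essentially the paper's: random sparsification with probability $p=d'/d$, Chernoff for degrees and codegrees, and then Lemma~\ref{codegrees} to deduce (Reg1). (Your Markov detour for the bad-pair count is unnecessary---a direct union bound already gives that with high probability \emph{no} originally good pair becomes bad, leaving only the $4\eps m^2$ exceptional pairs from Proposition~\ref{degcodeg}---but it is not wrong.)

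For parts (iii) and (iv) you take a genuinely different route. You verify standard $\eps^{1/12}$-regularity by applying Chernoff directly to $e_{G'}(A,B)$ for every pair of large sets and union-bounding over all $4^m$ such pairs. This is a valid existence proof. The paper instead observes that (iii) is an immediate corollary of (i): once $\eps\ll d'$, an $(\eps^{1/12},d',3d'/2d)$-regular pair is automatically $(\eps^{1/12},d')$-regular in the standard sense, so no new argument is needed; and (iv) is handled just like (ii). The advantage the paper gains is algorithmic: its proof uses only polynomially many applications of Proposition~\ref{chernoff} (degrees and codegrees of individual vertices and pairs), so Theorem~\ref{derandom} applies and the subgraph can be found in polynomial time---which is needed for the algorithmic statement of Theorem~\ref{decomp}. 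Your exponential union bound for (iii) and (iv) forfeits this, as the paper explicitly remarks after its proof. If you want to preserve derandomizability, simply replace your direct argument for (iii) and (iv) by the observation that they follow from (i) and (ii) respectively.
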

\proof
We only prove~(i). Since a $(\eps^{1/12},d',3d'/2d)$-regular pair is $\{\eps^{1/12},d'\}$-regular, (iii) follows from~(i)
(and the `moreover' part follows from the proof of (i)).
The argument for~(ii) and~(iv) is similar to the proof of~(i).
So suppose that $G$ is $\eps$-regular of density $d$ with vertex classes $U$ and $V$ of size $m$.
Let $G'$ be the spanning subgraph obtained from $G$ by picking every edge of $G$ with probability $p:=d'/d$, independently from all other edges.
Consider any vertex $v\in V$ with $d_G(v)=(d\pm \eps)m$. Then the expected degree of $v$ in $G'$ is $p(d\pm \eps)m=(1\pm \sqrt{\eps}/2)d'm$.
So Proposition~\ref{chernoff} implies that
\begin{align*}
\pr\left(d_{G'}(v)\neq (1\pm \sqrt{\eps})d'm\right) 
& \le \pr\left(|d_{G'}(v)-\ex(d_{G'}(v))|\ge \frac{\sqrt{\eps}}{3}\ex(d_{G'}(v))\right)\\
& \le 2\eul^{-\eps\ex(d_{G'}(v))/27} \le 2\eul^{-\eps d'm/28}.
\end{align*}
Similarly, consider any vertex $x\in U\cup V$ (with no restriction on its degree $d_G(x)$).
If $d_{G}(x)\le 3d'm/2d$, then clearly $d_{G'}(x)\le 3d'm/2d$. So suppose that $d_{G}(x)\ge 3d'm/2d$.
Then $3(d')^2m/2d^2= p  \cdot 3d'm/2d \le \ex(d_{G'}(x))\le pm=d'm/d$ and so
\begin{align*}
\pr\left(d_{G'}(x)\ge \frac{3d'm}{2d}\right) & \le \pr\left( d_{G'}(x)\ge \frac{3}{2}\ex(d_{G'}(x))\right)\\
& \le \eul^{-\ex(d_{G'}(x))/12}\le \eul^{-(d')^2m/8d^2}.
\end{align*}
For the remainder of the proof, we let the  \emph{codegree} $d_G(x,x')$ of a pair $x,x'$ of vertices in $G$ be the number of common neighbours of $x$ and $x'$.
Consider any pair $v,v'\in V$ of distinct vertices with codegree $d_G(v,v')=(d^2\pm \eps)m$.
Then the expected codegree of $v,v'$ in $G'$ is $\ex(d_{G'}(v,v'))=p^2(d^2\pm \eps)m=(1\pm \sqrt{\eps}/2)(d')^2m$.
So Proposition~\ref{chernoff} implies that
\begin{align*}
\pr\left(d_{G'}(v,v')\ge (1\pm \sqrt{\eps})(d')^2m\right) 
& \le \pr\left( d_{G'}(v,v')\ge \left( 1+\sqrt{\eps}/3 \right)\ex(d_{G'}(v,v'))\right)\\
& \le \eul^{-\eps \ex(d_{G'}(v,v'))/27} \le \eul^{-\eps (d')^2m/28}.
\end{align*}
Similarly, consider any pair $x\neq x'$ of vertices in $G$ (with no restriction on the codegree $d_G(x,x')$).
If $d_{G}(x,x')\le 3(d')^2m/2d^2$, then clearly $d_{G'}(x,x')\le 3(d')^2m/2d^2$. So suppose that $d_{G}(x,x')\ge 3(d')^2m/2d^2$.
Then $3(d')^4m/2d^4\le \ex(d_{G'}(x,x'))\le p^2m=(d')^2m/d^2$ and so
\begin{align*}
\pr\left(d_{G'}(x,x')\ge \frac{3(d')^2m}{2d^2}\right) 
& \le \pr\left(d_{G'}(x,x')\ge \frac{3}{2}\ex (d_{G'}(x,x'))\right)\\
& \le \eul^{-\ex (d_{G'}(x,x'))/12}\le \eul^{-(d')^4m/8d^4}.
\end{align*}
Proposition~\ref{degcodeg} implies that $V$ contains at most $2\eps m$ vertices whose degree in $G$ is not $(d\pm \eps)m$
as well as at most $4\eps m^2$ pairs of distinct vertices whose codegree in $G$ is not $(d^2\pm \eps)m$.
Thus a union bound implies that with probability at least
$$
1-4m\eul^{-\eps d'm/28}+2m\eul^{-(d')^2m/8d^2}+m^2\eul^{-\eps (d')^2m/28}+m^2\eul^{-(d')^4m/8d^4} \ge 1/2
$$
all of the following properties are satisfied:
\begin{itemize}
\item All but at most $2\eps m$ vertices $v\in V$ satisfy $d_{G'}(v)=(1\pm \sqrt{\eps})d'm$.
\item All but at most $4\eps m^2$ pairs $v\neq v'$ of vertices in $V$
satisfy $d_{G'}(v,v')=|N_{G'}(v)\cap N_{G'}(v')| \le (1+ \sqrt{\eps})(d')^2m$.
\item All pairs $v\neq v'$ of vertices in $V$
satisfy $d_{G'}(v,v')\le 3(d')^2m/2d^2$.
\item $\Delta(G')\le 3d'm/2d$.
\end{itemize}
Thus we can choose  $G'$ to satisfy (Reg2) and~(Reg3). Moreover, Lemma~\ref{codegrees} (applied with $\sqrt{\eps}$, $d'$, $3/2d^2$ playing the roles
of $\eps$, $d$, $C$) together with the first two properties
implies that $G'$ also satisfies~(Reg1) with $\eps$ replaced by $\eps^{1/12}$.
\endproof

Theorem~\ref{derandom} implies that the applications of Proposition~\ref{chernoff} in the proof of Lemma~\ref{randomregslice} can
be derandomized to find the spanning subgraphs guaranteed by the lemma in polynomial time. Note that instead of using Lemma~\ref{codegrees}
to check~(Reg1) in our proof of Lemma~\ref{randomregslice}, one could have checked (Reg1) directly. But this would have involved a
union bound over exponentially many sets. So we would not have been able to apply Theorem~\ref{derandom}.

Let $G$ be a $(\eps,d,d^*,d/\mu)$-superregular bipartite graph with vertex classes $U=\{u_1,\dots,u_m\}$ and $V=\{v_1,\dots,v_m\}$.
The following lemma implies that the digraph obtained from $G$
by orienting all the edges from $U$ to $V$ and identifying $u_i$ and $v_i$ for all $i=1,\dots,m$ is a robust $(\nu,\tau)$-outexpander of minimum
semidegree at least $d^*m$. Together with Theorem~\ref{expanderthm} below this will imply that this `contracted' digraph has a Hamilton cycle,
which will in turn be used in the proof of Lemma~\ref{mergecycles}.

\begin{lemma}\label{regtoexpander}
Let $0<1/m\ll \nu\ll \tau\ll d\le \eps\ll \mu,\zeta \le 1/2$ and let $G$ be a
$(\eps,d,\zeta d,d/\mu)$-superregular bipartite graph with vertex classes $U$ and $V$ of size $m$. Let $A\subseteq U$ be such that
$\tau m\le |A|\le (1-\tau) m$. Let $B\subseteq V$ be the set of all those vertices in $V$ which have at least $\nu m$ neighbours in $A$.
Then $|B|\ge |A|+\nu m$.
\end{lemma}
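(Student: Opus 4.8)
The plan is to establish the desired expansion by a counting argument on edges, exploiting the near-regularity (condition (Reg4) together with the density condition (Reg1)) and the bounded-codegree condition (Reg2). First I would record the two basic estimates on $e_G(A,V)$: on the one hand, since $G$ is $(\eps,d,\zeta d,d/\mu)$-superregular, every vertex of $A$ has degree at least $\zeta d m$, so $e_G(A,V)\ge \zeta d m|A|$; on the other hand, since $|A|\ge \tau m\ge \eps m$, condition (Reg1) applied with $B=V$ gives $e_G(A,V)=d(A,V)|A|m=(1\pm\eps)d|A|m$ — in fact for the lower bound I only need $e_G(A,V)\ge (1-\eps)d|A|m$. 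All these edges go into $N(A):=\bigcup_{a\in A}N(a)$, which splits into the set $B$ of vertices with at least $\nu m$ neighbours in $A$ and the set $N(A)\setminus B$ of vertices with fewer than $\nu m$ neighbours in $A$.

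The heart of the argument is to bound the number of edges landing outside $B$. Write $e_G(A, N(A)\setminus B)\le \nu m\cdot|N(A)\setminus B|\le \nu m^2$, which is the crude bound; but this alone is too weak to beat the main term, so instead I would bound $e_G(A,B)$ from above directly using (Reg2) and (Reg3). The right way to do this is a second-moment / convexity estimate: count pairs consisting of a vertex $v\in B$ together with two (ordered, distinct) neighbours of $v$ in $A$. On the one hand this is $\sum_{v\in B}d_A(v)(d_A(v)-1)$ where $d_A(v):=|N(v)\cap A|$; on the other hand it is at most $\sum_{a\neq a'\in A}|N(a)\cap N(a')|\le |A|^2 c^2 m$ where $c=d/\mu$, i.e.\ at most $|A|^2 (d/\mu)^2 m$. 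Combined with (Reg3), which gives $d_A(v)\le d_G(v)\le cm=(d/\mu)m$, and with Cauchy--Schwarz in the form $\sum_{v\in B}d_A(v)^2\ge (\sum_{v\in B}d_A(v))^2/|B|=e_G(A,B)^2/|B|$, one obtains an inequality of the shape
\begin{equation*}
\frac{e_G(A,B)^2}{|B|}-\frac{d}{\mu}m\cdot e_G(A,B)\le \frac{d^2}{\mu^2}m|A|^2.
\end{equation*}
Since $e_G(A,B)\ge e_G(A,V)-\nu m^2\ge (1-\eps)d|A|m-\nu m^2\ge (1-2\eps)d|A|m$ (using $|A|\ge\tau m$ and $\nu\ll\tau d$), one can solve this quadratic inequality for $|B|$ and conclude $|B|\ge (1-O(\mu))|A|$ — wait, that is in the wrong direction, so the estimate must be run the other way.

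Let me re-plan the decisive step: rather than bounding $e_G(A,B)$ above, I would bound $e_G(A, N(A)\setminus B)$ above more cleverly, but in fact the cleanest route is to bound $|B|$ \emph{below} by pushing almost all of the $e_G(A,V)\ge (1-\eps)d|A|m$ edges into $B$ and then using $d_A(v)\le (d/\mu)m$ to force $|B|$ large: $|B|\ge e_G(A,B)/\max_v d_A(v)\ge (1-2\eps)d|A|m/((d/\mu)m)=(1-2\eps)\mu|A|$, which only gives $|B|\ge\mu|A|$ and is useless. The correct mechanism must therefore be that \emph{most} vertices of $V$ lie in $B$ with a near-average number of neighbours: set $X:=V\setminus B$ and use (Reg1) in contrapositive form — if $|X|\ge\eps m$ then $d(A,X)=(1\pm\eps)d$, so $e_G(A,X)\ge(1-\eps)d|A||X|$; but also $e_G(A,X)\le\nu m|X|$ by definition of $X$, forcing $(1-\eps)d|A|\le\nu m$, i.e.\ $|A|\le\nu m/((1-\eps)d)<\tau m$, a contradiction. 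Hence $|X|<\eps m$, so $|B|>m-\eps m\ge(1-\eps)m$; and since $|A|\le(1-\tau)m$ and $\eps\ll\tau$, we get $|B|\ge(1-\eps)m=|A|+((1-\eps)m-|A|)\ge|A|+(\tau-\eps)m\ge|A|+\nu m$. That is the whole proof: the only real content is the contrapositive application of (Reg1) with $B=X$, and the codegree/degree conditions (Reg2)--(Reg4) are essentially not needed beyond the minimum-degree bound guaranteeing $N(A)$ is nonempty — I would double-check whether the intended proof really uses them or whether, as this sketch suggests, pure $(\eps,d)$-density expansion suffices. The main obstacle is thus not any hard estimate but correctly identifying that the small set $X$ of "bad" vertices cannot be $\eps$-large without violating density, and keeping track of the hierarchy $\nu\ll\tau\ll d\le\eps\ll\mu$ so that $\nu m/((1-\eps)d)<\tau m$ and $(\tau-\eps)m\ge\nu m$ both hold.
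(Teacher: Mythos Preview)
Your argument has a genuine gap stemming from a misreading of the hierarchy. You write ``since $|A|\le(1-\tau)m$ and $\eps\ll\tau$'', but the hierarchy in the statement is $\tau\ll d\le\eps$, so in fact $\eps$ is \emph{larger} than $\tau$, not smaller. This is the sparse regularity setting: the density $d$ may be far below the regularity parameter $\eps$. Two consequences break your proof. First, (Reg1) requires \emph{both} sets to have size at least $\eps m$; since $|A|$ is only guaranteed to satisfy $|A|\ge\tau m$ with $\tau\ll\eps$, your application of (Reg1) to the pair $(A,X)$ is illegitimate when $\tau m\le|A|<\eps m$. Second, even in the range $|A|\ge\eps m$ where your density argument does yield $|B|\ge(1-\eps)m$, this is not enough once $|A|>(1-\eps-\nu)m$: in particular for $|A|$ near $(1-\tau)m$ you would need $(1-\eps)m\ge(1-\tau)m+\nu m$, i.e.\ $\tau\ge\eps+\nu$, which is false.

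The paper's proof handles exactly these two extreme regimes by a separate mechanism that genuinely uses (Reg2) and (Reg4), contrary to your suspicion that they are dispensable. It first proves a claim: for any $U'\subseteq U$ with $|U'|\ge\tau m/2$, the robust neighbourhood $RN(U')$ satisfies $|RN(U')|\ge\min\{10\eps m,\,|U'|/\sqrt{\eps}\}$. The proof of this claim counts paths of length two with midpoint in $U'$; the lower bound uses the minimum degree $\zeta dm$ from (Reg4), and the upper bound uses the codegree cap $(d/\mu)^2 m$ from (Reg2). This claim disposes of the small-$|A|$ range directly. For the large-$|A|$ range $(1-2\eps)m\le|A|\le(1-\tau)m$, the paper argues by contradiction: if $|B|<|A|+\nu m$ then $V':=V\setminus B$ has $\tau m/2\le|V'|\le 2\eps m$, and one applies the (symmetric) claim to $V'$ to get $|RN(V')|\ge\min\{10\eps m,|V'|/\sqrt\eps\}$; a short counting argument then shows $|RN(V')|\le 6\eps m$, contradicting both alternatives. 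Your density argument is essentially the paper's middle case $\eps m\le|A|\le(1-2\eps)m$, but the two boundary cases are where the real work lies.
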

\proof
Let us first prove the following claim.

\medskip

\noindent
\textbf{Claim.} \emph{Let $U'\subseteq U$ be such that $|U'|\ge \tau m/2$ and let $RN(U')$ be the set of all those vertices
in $V$ which have at least $\nu m$ neighbours in $U'$. Then $|RN(U')|\ge \min\{10\eps m, |U'|/\sqrt{\eps}\}$. Similarly,
let $V'\subseteq V$ be such that $|V'|\ge \tau m/2$ and let $RN(V')$ be the set of all those vertices
in $U$ which have at least $\nu m$ neighbours in $V'$. Then $|RN(V')|\ge \min\{10\eps m, |V'|/\sqrt{\eps}\}$.}

\smallskip

\noindent
We only prove the first part of the claim. The argument for the second part is identical.
Suppose that $|RN(U')|\le 10\eps m$. Given $V',V''\subseteq V$, let $f(V',V'')$ denote the number of paths of length two which
have their midpoint in $U'$, one endpoint in $V'$ and the other endpoint in $V''$. Since $\delta(G)\ge \zeta dm$ by (Reg4)
we have that
$$f(V,V)\ge |U'|\binom{\zeta d m}{2}\ge |U'|\frac{\zeta^2d^2m^2}{3}.
$$
On the other hand, $f(V\setminus RN(U'), V)\le \nu |V\setminus RN(U')|m^2$ since every vertex in $V\setminus RN(U')$ has at
most $\nu m$ neighbours in $U'$.%
    \COMMENT{To choose a 2-path with endpoint in $x\in V\setminus RN(U')$ one first chooses an edge from $x$ to some $y\in U'$ and
then there are at most $m$ choices for the other endpoint.} 
Thus
\begin{eqnarray*}
f(V,V)& = & f(RN(U'),RN(U'))+f(V\setminus RN(U'), V)\\
& \le & \sum_{v,v'\in RN(U'),\ v\neq v'} |N(v)\cap N(v')|+ \nu |V\setminus RN(U')| m^2\\
& \stackrel{({\rm Reg2})}{\le} & |RN(U')|^2\frac{d^2m}{\mu^2}+\nu m^3\le |RN(U')|\frac{10\eps d^2m^2}{\mu^2}+\nu m^3.
\end{eqnarray*}
(To obtain the final inequality we use that $|RN(U')|\le 10\eps m$.)
Altogether this implies that
\begin{align*}
|RN(U')| & \ge \frac{\mu^2}{10\eps d^2m^2}\left(|U'|\frac{\zeta^2d^2m^2}{3}-\nu m^3\right)\\
& \ge \frac{\mu^2}{10\eps d^2m^2}\cdot |U'|\frac{\zeta^2d^2m^2}{4}
=\frac{\mu^2\zeta^2}{40\eps}|U'|\ge \frac{|U'|}{\sqrt{\eps}},
\end{align*}
which proves the claim.

\medskip

\noindent In order to prove the lemma, we distinguish several cases according to the size of~$A$.
Suppose first that $\tau m\le |A|\le \eps m$. Then the claim implies that 
$B=RN(A)$ has size at least $\min\{10\eps m, |A|/\sqrt{\eps}\}\ge |A|+\nu m$, as required.%
     \COMMENT{Ok since $|A|/\sqrt{\eps}-|A|\ge |A|/2\sqrt{\eps}\ge \nu m$ as $|A|\ge \tau m\ge 2\sqrt{\eps}\nu m$.}

Suppose next that $\eps m\le |A|\le (1-2\eps)m$. Since
$$e(A,V\setminus B)\le \nu m |V\setminus B| <(1-\eps)d|A||V\setminus B|,$$
together with (Reg1) this
implies that $|V\setminus B|<\eps m$. Thus $|B|\ge (1-\eps)m \ge |A|+\nu m$.

Finally, consider the case when $(1-2\eps)m\le |A|\le (1-\tau)m$. Suppose for a contradiction that $|B|<|A|+\nu m$.
From the previous case it follows that $|B|\ge (1-2\eps)m+\nu m$. Let $V':=V\setminus B$.
Then
\begin{equation}\label{eq:sizeV'}
\tau m/2\le m-|A|-\nu m<m-|B|=|V'|\le 2\eps m.
\end{equation}
Let $A':=A\cap RN(V')$.
Since every vertex in $A'$ has at least $\nu m$ neighbours in $V'$, but
every vertex in $V'$ has less than $\nu m$ neighbours in $A\supseteq A'$, it follows that
$|A'|\nu m\le e(A',V')\le |V'|\nu m$. Thus $|A'|\le |V'|$ and so
$$|RN(V')|\le |U\setminus A|+|A'|\le m-|A|+|V'|\le 2|V'|+\nu m< 3|V'|\le 6\eps m.
$$
(Here we used~(\ref{eq:sizeV'}) in the final three inequalities.)
On the other hand, the claim implies that
$|RN(V')|\ge \min\{10\eps m, |V'|/\sqrt{\eps}\}$, a contradiction.
\endproof

We will also use the above lemma to show that bipartite graphs satisfying (Reg1)--(Reg3) have large matchings.
\begin{lemma}\label{PEmatching}
Suppose that $0<1/m\ll d'\ll 1/k\ll \eps\ll d\ll \zeta\le 1/2$ and let $G$ be a bipartite graph with vertex
classes $U$ and $V$of size $m$.
\begin{itemize}
\item[(i)] If $G$ is $(\eps,d'/k,d'/dk)$-regular, then 
it contains a matching of size at least $(1-\eps) m$.
\item[(ii)] If $G$ is $(\eps,d',\zeta d',d'/d)$-superregular, then 
it has a perfect matching.
\end{itemize}
\end{lemma}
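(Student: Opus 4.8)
Let me sketch how to prove Lemma~\ref{PEmatching}, parts (i) and (ii).

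The plan is to deduce both statements from Lemma~\ref{regtoexpander} (suitably applied) together with a Hall-type / König-type argument. First, for part (i), suppose for a contradiction that $G$ has no matching of size at least $(1-\eps)m$. By the König–Egerváry theorem there is a vertex cover $W = W_U \cup W_V$ with $W_U \subseteq U$, $W_V \subseteq V$ and $|W_U| + |W_V| < (1-\eps)m$. Set $A := U \setminus W_U$, so $|A| > \eps m$, and observe that every neighbour of $A$ lies in $W_V$, hence $|N_G(A)| \le |W_V| < m - |A|$, i.e. $|N_G(A)| < |U \setminus W_U|$... more usefully, $|A| + |N_G(A)| < (1-\eps)m + \text{(something)}$ — the clean statement to extract is that $|N_G(A)| \le |W_V| = |W| - |W_U| < (1-\eps)m - |W_U| \le (1-\eps)m - (m - |A|) = |A| - \eps m$, so $|N_G(A)| < |A|$ with a linear-sized deficiency, while $|A| \ge \eps m$. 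If additionally $|A| \le (1-\tau)m$ we are in a position to apply an expansion statement; the degenerate cases where $|A|$ is very close to $m$ (so that $|A| > (1-\tau)m$) must be handled separately, using that then $|W_U| < \tau m$ and almost all of $U$ must be matched into $V$, which forces $\delta(G)$ to be too small relative to $d'm/dk$ — a contradiction with (Reg1)/(Reg3) after a direct edge count.

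The core step is the expansion input. The graph in (i) is only $(\eps, d'/k, d'/dk)$-regular, which is a scaled-down version of the hypothesis of Lemma~\ref{regtoexpander} — there the graph is $(\eps,d,\zeta d, d/\mu)$-superregular. So to reuse Lemma~\ref{regtoexpander} directly in case (i) we would need a minimum-degree bound (Reg4), which we do not have in (i); instead, in case (i) one should argue more directly from (Reg1): if $|A| \ge \eps m$ and $B := V \setminus N_G(A)$ has $|B| \ge \eps m$, then $e_G(A,B) = 0$ contradicts (Reg1), which gives $d(A,B) = (1\pm\eps)d'/k > 0$. Hence $|V \setminus N_G(A)| < \eps m$, i.e. $|N_G(A)| > (1-\eps)m \ge |A|$, contradicting $|N_G(A)| \le |A| - \eps m$ from the König argument (once we've reduced to the range $\eps m \le |A| \le (1-\eps)m$, and dealt with $|A|$ near $m$ separately as above). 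For part (ii), $G$ is $(\eps, d', \zeta d', d'/d)$-superregular, so (Reg4) holds and we may invoke Lemma~\ref{regtoexpander} (with $d'$, $\zeta$, $d$ in the roles of $d$, $\zeta$, $\mu$, after checking the constant hierarchy $1/m \ll \nu \ll \tau \ll d' \le \eps \ll d, \zeta$ is consistent with what we have) to conclude a robust expansion property: for every $A \subseteq U$ with $\tau m \le |A| \le (1-\tau)m$, the set $B$ of vertices of $V$ with at least $\nu m$ neighbours in $A$ satisfies $|B| \ge |A| + \nu m$; in particular $|N_G(A)| \ge |A| + \nu m > |A|$. For the small and large ranges $|A| < \tau m$ and $|A| > (1-\tau)m$ one uses the minimum degree (Reg4) directly: if $|A| \le \tau m$ then $|N_G(A)| \ge \delta(G) \ge \zeta d' m > \tau m \ge |A|$, and the large range is symmetric by considering $U \setminus A$ and its non-neighbourhood together with a degree count. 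Thus Hall's condition $|N_G(A)| \ge |A|$ holds for all $A \subseteq U$, and $G$ has a perfect matching.

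The main obstacle I expect is the bookkeeping in the boundary ranges — when $|A|$ (or a König vertex cover part) is within $\tau m$ of $0$ or of $m$, the expansion lemma is silent and one must fall back on the minimum-degree and maximum-degree conditions (Reg3), (Reg4). In case (i) there is no lower-degree guarantee, so the argument there must avoid (Reg4) entirely and lean on (Reg1) together with the König–Egerváry reduction, accepting the weaker conclusion "matching of size $\ge (1-\eps)m$" rather than a perfect matching; the slack of $\eps m$ is exactly what lets the (Reg1) contradiction go through without needing to control the few unmatched vertices. A minor point to get right is the constant chase when applying Lemma~\ref{regtoexpander} in (ii): one needs $d' \le \eps$ and $\eps \ll d$, which is given, and one needs the output parameter $\nu$ to be positive, which only requires $m$ large; none of this is deep but it should be stated so the reader can check the hypotheses line up.
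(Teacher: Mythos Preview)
Your approach is essentially the same as the paper's: for (i) you use (Reg1) to show that any $A\subseteq U$ with $|A|\ge \eps m$ has $|N(A)|>(1-\eps)m$, which via K\"onig--Egerv\'ary (equivalently, the defect form of Hall's theorem used in the paper) yields a matching of size at least $(1-\eps)m$; for (ii) you verify Hall's condition using Lemma~\ref{regtoexpander} in the middle range and the minimum-degree bound (Reg4) at the two ends, exactly as the paper does. One remark: your worry about a separate ``$|A|$ near $m$'' case in (i) is unnecessary --- the (Reg1) argument you give applies uniformly to every $A$ with $|A|\ge\eps m$, and since K\"onig--Egerv\'ary already forces $|A|>\eps m$ and $|N(A)|<|A|-\eps m\le(1-\eps)m$, you get an immediate contradiction with $|N(A)|>(1-\eps)m$ without any boundary analysis or appeal to (Reg3).
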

Since maximum matchings can be found in polynomial time, this is also the case for the matchings guaranteed by the lemma.
\proof
To prove (i), note that (Reg1) implies that $|N(A)|\ge (1-\eps)m$
for every set $A\subseteq U$ with $|A|\ge \eps m$. Together with the defect version of Hall's theorem this implies
that $G$ has a matching of size at least $(1-\eps)m$.

To prove (ii), we use Hall's theorem. Lemma~\ref{regtoexpander} implies that Hall's condition holds for all sets
$A \subseteq U$ with $\tau m \le |A| \le (1-\tau)m$ (where we apply the lemma with 
$d'$, $d$ playing the roles of $d$, $\mu$ and with some $\nu$, $\tau$ satisfying $1/m \ll \nu \ll \tau\ll d'$).
(Reg4) implies that $G$ has minimum degree at least $\zeta d'm\ge \tau m$, so Hall's condition also holds
for sets $A \subseteq U$ with $|A| \le \tau m$ or $|A| \ge (1-\tau) m$.
\endproof

\subsection{The Blow-up Lemma}

We will use the blow-up lemma of Koml\'os, S\'ark\"ozy and Szemer\'edi~\cite{kss2} (see~\cite{algoblowup} for their proof of
an algorithmic version). 
Roughly speaking, it states that superregular pairs behave like complete bipartite graphs with respect to embedding
subgraphs of bounded degree. 

\begin{lemma}[Blow-up Lemma]\label{Blow-up}
Suppose that $0<1/m\ll \eps\ll 1/k,d,1/\Delta\le 1$. Let $R$ be a graph with vertex
set $\{1,\dots,k\}$. Let $V_1,\ldots,V_k$ be
pairwise disjoint sets of vertices, each of size $m$.
Let $R(m)$ be the graph on $V_1\cup\dots\cup V_k$ which is obtained from $R$ by replacing each edge $ij$ of $R$
with a complete bipartite graph $K_{m,m}$ between $V_i$ and $V_j$.
Let $G$ be a graph on $V_1\cup\dots\cup V_k$ which is obtained from $R$ by replacing each edge $ij$ of $R$
with some $(\eps,d)$-superregular pair $G[V_i,V_j]$. If a graph $H$ with maximum degree
$\Delta(H) \leq \Delta$ is embeddable into $R(m)$ then it is already
embeddable into $G$. 
\end{lemma}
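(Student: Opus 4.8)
The plan is to prove this by the randomized greedy embedding scheme of Komlós, Sárközy and Szemerédi. Fix an embedding $\phi_0$ of $H$ into $R(m)$; this assigns every vertex $x$ of $H$ to a cluster $V_{i(x)}$ so that $|\phi_0^{-1}(V_i)|\le m$ for all $i$, and the task is to ``realize'' $\phi_0$ inside $G$, i.e.\ to choose for each $x$ an actual vertex of $V_{i(x)}$, injectively within each cluster, so that every edge of $H$ is mapped to an edge of $G$. First I would single out a small \emph{buffer} set $B\subseteq V(H)$ which is independent in $H$ (or at least has bounded maximum degree) and contains about $\eps' m$ vertices in each cluster $V_i$; the vertices of $V(H)\setminus B$ are embedded first, one at a time in a suitable order, and the vertices of $B$ are embedded last in one batch. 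When the current vertex $x$ is processed, its image is picked uniformly at random from its \emph{candidate set} $C(x)$, the set of vertices of $V_{i(x)}$ that are still free and are adjacent in $G$ to all images already chosen for the neighbours of $x$.

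The key steps are the bookkeeping on the candidate sets. For each still-unembedded vertex $y$ one tracks $C(y)$, and using $\eps$-regularity of the pairs $G[V_i,V_j]$ one shows that, as long as only a small proportion of each cluster has been used up, a random placement of $x$ shrinks $C(y)$ by roughly a factor $d$ for each neighbour of $y$ that gets placed; hence $|C(y)|\ge (d-o(1))^{\Delta}m\gg \eps m$ is maintained. Here $\eps$-regularity is exactly what guarantees that when we place a random free vertex of $C(x)$, all but a $O(\eps)$-fraction of the neighbours $y$ of $x$ keep a large candidate set, and superregularity (the minimum-degree condition) is what controls the degrees of the \emph{free} vertices from below. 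The delicate point, and the reason the argument is not a routine greedy one, is that a bounded number of ``exceptional'' vertices may nevertheless acquire a candidate set that has dropped below $\eps m$, where $\eps$-regularity gives no information; for these one needs a re-embedding/swapping step, showing by a counting argument that there is still a suitable free vertex of the cluster available, possibly after undoing one previously placed vertex. The hard part will be to show that the set of such exceptional vertices never grows beyond a bounded number and that the local fixes never interfere — this is where the full strength of regularity, a concentration bound for the random choices (Proposition~\ref{chernoff}) and the careful choice of embedding order are all used.

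For the endgame, once all of $V(H)\setminus B$ has been embedded, I would embed the buffer vertices by a matching argument. Since $B$ is independent and each $y\in B$ has all its (few) neighbours already placed, the candidate set $C(y)$ is still of size $\ge (d/2)^{\Delta}m\gg \eps m$ and consists entirely of free vertices of $V_{i(y)}$. If $B$ was chosen so that each cluster $V_i$ contains exactly as many buffer vertices as there will be free vertices at this stage, then within each $V_i$ the bipartite graph between the buffer vertices assigned to $V_i$ and the free vertices of $V_i$ has minimum degree $\ge (d/2)^{\Delta}m$ on both sides, so Hall's condition holds and a perfect matching finishes the embedding; alternatively one verifies Hall's condition via the robust expansion guaranteed by (super)regularity, exactly as in the proof of Lemma~\ref{PEmatching}.

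Finally, since this is a classical and deep result, in this paper one does not reprove it: it suffices to invoke Komlós, Sárközy and Szemerédi~\cite{kss2}, together with the algorithmic version in~\cite{algoblowup} for the polynomial-time aspect.
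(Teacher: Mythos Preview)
Your final paragraph is exactly right and matches the paper: the Blow-up Lemma is not proved here but simply quoted from Koml\'os, S\'ark\"ozy and Szemer\'edi~\cite{kss2}, with~\cite{algoblowup} cited for the algorithmic version. The sketch you give of the randomized greedy embedding with buffer vertices and a final matching is a faithful outline of the original KSS argument, but for the purposes of this paper none of it is needed---the lemma is used as a black box.
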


The following proposition is a very special case of the blow-up lemma. 
It is also easy to prove it in the same way as Lemma~\ref{PEmatching}(ii).
\begin{prop} \label{perfmatch}
Suppose that $0<1/m \ll \eps \ll d\le 1$.
Suppose that $G$ is a $(\eps,d)$-superregular bipartite graph with vertex classes of size $m$.
Then $G$ contains a perfect matching.
\end{prop}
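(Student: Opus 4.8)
The plan is to verify Hall's condition by hand and then quote Hall's theorem. Since $G=(U,V)$ is balanced with $|U|=|V|=m$, it suffices to prove that $|N_G(A)|\ge |A|$ for every $A\subseteq U$. I would split the argument into three ranges of $|A|$, exactly as in the proof of Lemma~\ref{PEmatching}(ii): very small and very large sets are dealt with using the minimum degree guaranteed by superregularity, and sets of intermediate size are dealt with using $\eps$-regularity. (Note first that a $(\eps,d)$-superregular pair has density at least $d-\eps$, since every vertex has degree at least $(d-\eps)m$.)

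For $|A|\le \eps m$, if $A\neq\emptyset$ then any vertex $a\in A$ already satisfies $|N_G(A)|\ge |N_G(a)|\ge (d-\eps)m\ge |A|$, using $\eps\ll d$. For $|A|\ge (1-\eps)m$, every vertex $v\in V$ has at least $(d-\eps)m > \eps m \ge |U\setminus A|$ neighbours and hence a neighbour in $A$; thus $N_G(A)=V$ and $|N_G(A)|=m\ge |A|$. Finally, for $\eps m\le |A|\le (1-\eps)m$, suppose for a contradiction that $|N_G(A)|<|A|$ and set $B:=V\setminus N_G(A)$, so that $e_G(A,B)=0$ and $|B|=m-|N_G(A)|>m-|A|\ge \eps m$. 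Then $|A|,|B|\ge \eps m$, so by $\eps$-regularity $d_G(A,B)>(d-\eps)-\eps=d-2\eps>0$, contradicting $e_G(A,B)=0$. Hence Hall's condition holds in all cases and $G$ has a perfect matching.

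I do not expect a genuine obstacle here; the only mildly subtle point is to record that the density of a $(\eps,d)$-superregular pair is at least $d-\eps$, so that the $\eps$-regularity estimate forces a strictly positive density on the (edgeless) pair $(A,B)$, which is absurd. An alternative would be to apply the Blow-up Lemma (Lemma~\ref{Blow-up}) with $R=K_2$ and $H$ a perfect matching, but the direct Hall argument is shorter and self-contained.
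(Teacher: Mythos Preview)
Your proof is correct and matches the paper's approach. The paper does not spell out a proof of Proposition~\ref{perfmatch}; it simply remarks that it is a special case of the Blow-up Lemma and that ``it is also easy to prove it in the same way as Lemma~\ref{PEmatching}(ii)'', i.e.\ via Hall's theorem with the minimum-degree bound handling small and large sets and regularity handling intermediate sets --- exactly the three-range split you carry out (and you even note the Blow-up Lemma alternative).
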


The following consequence of the blow-up lemma states that we can link up arbitrary sets of vertices which 
are joined by a `blown-up' path.
\begin{cor} \label{linking}
Suppose that $0<1/m \ll \eps \ll d\le 1$ and that $q\ge 4$.
Let $V_1,\dots,V_q$ be pairwise disjoint sets of vertices, each of size $m$.
Let $G$ be a graph on $V_1\cup \dots\cup V_q$ such that $G[V_i,V_{i+1}]$ is $(\eps,d)$-superregular
for each $i=1,\dots,q-1$. Let $a_1,\dots,a_m$ be an arbitrary enumeration of the vertices in $V_1$ and let 
$b_1,\dots,b_m$ be an arbitrary enumeration of the vertices in $V_q$.
Then $G$ contains a set of $m$ vertex-disjoint paths connecting $a_i$ to $b_i$ for every $i$. 
\end{cor}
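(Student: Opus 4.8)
The plan is to reduce Corollary~\ref{linking} to a single application of the Blow-up Lemma (Lemma~\ref{Blow-up}). The key point is that finding $m$ vertex-disjoint paths $a_i\to b_i$ through the blown-up path $V_1V_2\dots V_q$ is the same as embedding a fixed bounded-degree graph $H$ into a superregular blow-up, once we encode the prescribed endpoints correctly. First I would note that $q\ge 4$ is needed precisely so that the images of $a_i$ and $b_i$ lie in clusters that are not adjacent, which gives us room to prescribe them.

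Here is the construction I would carry out. Let $H$ be the graph consisting of $m$ vertex-disjoint paths $P^{(1)},\dots,P^{(m)}$, each of length $q-1$ (i.e.\ with $q$ vertices), and fix an enumeration of the vertices of $H$ so that the $i$th vertex of the $j$th cluster-level is the $j$th vertex of $P^{(i)}$. Then $H$ is trivially embeddable into $R(m)$, where $R$ is the path $1\,2\,\dots\,q$: send the $j$th vertex of $P^{(i)}$ to $V_j$. Moreover $\Delta(H)=2$, so with $\Delta:=2$ the hypotheses $0<1/m\ll\eps\ll 1/k,d,1/\Delta\le 1$ of Lemma~\ref{Blow-up} are satisfied (taking $k:=q$; note $q$ is bounded since $1/q$ does not appear in the hierarchy, but one may simply absorb $q$ into the constant or invoke the lemma with $k=q$ directly as $1/k$ only needs to be bounded below). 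Applying Lemma~\ref{Blow-up} to $G$ and $H$ yields an embedding of $H$ into $G$, i.e.\ a set of $m$ vertex-disjoint paths in $G$ each winding along $V_1,\dots,V_q$.

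The remaining issue — and the main (only real) obstacle — is that the Blow-up Lemma as stated above only guarantees \emph{some} embedding, whereas we need the endpoints in $V_1$ and $V_q$ to be exactly the prescribed vertices $a_1,\dots,a_m$ and $b_1,\dots,b_m$, matched up in the right pairs. The standard way around this is to use the fact (part of the usual statement of the Koml\'os--S\'ark\"ozy--Szemer\'edi blow-up lemma, and derivable from the version above) that a bounded number of vertices of $H$ may be pre-assigned to target vertices, as long as each such vertex of $H$ lies in a cluster with few neighbouring clusters and the targets are distinct within each cluster. Here each $P^{(i)}$ has exactly two pre-assigned vertices, its two endpoints, lying in the \emph{end} clusters $V_1$ and $V_q$; since $q\ge 4$, each endpoint-vertex of $H$ has only one neighbour (the second or $(q-1)$st vertex of $P^{(i)}$) and that neighbour lies in cluster $V_2$ or $V_{q-1}$, which is distinct from both $V_1$ and $V_q$. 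Thus the pre-assignment is admissible, and the blow-up lemma delivers an embedding respecting it. Concretely, one would pre-assign the first endpoint of $P^{(i)}$ to $a_i\in V_1$ and the last endpoint of $P^{(i)}$ to $b_i\in V_q$; the resulting embedding gives the desired $m$ vertex-disjoint paths connecting $a_i$ to $b_i$.

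An alternative that avoids quoting the pre-assignment strengthening: first use superregularity of $G[V_1,V_2]$ and $G[V_{q-1},V_q]$ together with Proposition~\ref{perfmatch} (or Hall-type arguments) to peel off perfect matchings, but this does not immediately control \emph{which} vertex of $V_2$ each $a_i$ is matched to in a way compatible with a later global embedding, so the clean route really is the pre-assignment form of the blow-up lemma. I would therefore present the proof as: define $H$ and the target clusters as above, observe $\Delta(H)=2$ and the end-cluster vertices form a valid set of vertices to be pre-embedded (because their neighbourhoods avoid $V_1$ and $V_q$ thanks to $q\ge 4$), and invoke the blow-up lemma; the embedding is exactly the required family of paths.
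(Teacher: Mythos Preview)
Your approach has two genuine gaps. First, the Blow-up Lemma as stated in the paper (Lemma~\ref{Blow-up}) has no pre-assignment clause, and what you describe as ``a bounded number of vertices of $H$ may be pre-assigned'' does not cover your situation: you want to pre-embed \emph{all} $2m$ endpoint vertices, i.e.\ the entirety of two clusters. What would actually be needed is the image-restriction form of the Blow-up Lemma (delete $V_1$ and $V_q$, restrict the image of the second vertex of $P^{(i)}$ to $N(a_i)\cap V_2$ and of the $(q-1)$st vertex to $N(b_i)\cap V_{q-1}$), which is not stated in the paper and which you do not derive. Second, and more seriously, your claim that ``$q$ is bounded since $1/q$ does not appear in the hierarchy'' is backwards: the absence of $1/q$ from the hierarchy means $q$ is \emph{unconstrained}. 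In the later applications of this corollary (e.g.\ in the proof of Lemma~\ref{find_ca}) one links $V_{13}$ to $V_{3k/7+1}$, so $q$ is of order $k$, and the paper's hierarchy has $1/k\ll\eps$. Invoking Lemma~\ref{Blow-up} with $q$ clusters then violates $\eps\ll 1/k$.

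The paper's proof sidesteps both problems with a single trick. For $q=4$ it \emph{identifies} $a_i$ with $b_i$, turning the blown-up path $V_1V_2V_3V_4$ into a superregular blow-up of a triangle on just three clusters; a triangle factor found by the plain Blow-up Lemma (no pre-assignment needed) then unfolds to give exactly the paths $a_i\text{--}b_i$. For $q>4$ it first takes perfect matchings in $G[V_4,V_5],\dots,G[V_{q-1},V_q]$ via Proposition~\ref{perfmatch} to link each $b_i\in V_q$ to some vertex of $V_4$, and then applies the $q=4$ case to $V_1,V_2,V_3,V_4$ with the induced labelling of $V_4$. Thus the identification encodes the prescribed endpoints within the basic Blow-up Lemma, and the matching reduction keeps the number of clusters fed to the Blow-up Lemma equal to $3$ regardless of~$q$.
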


\proof
First suppose that $q=4$.
Consider the graph $G'$ which is obtained from $\bigcup_{i=1}^{q-1} G[V_i,V_{i+1}]$ by identifying the vertices $a_i$ and $b_i$.
Thus $G'$ can be viewed as the blow-up of a triangle. The blow-up lemma implies that $G'$
contains a set of disjoint triangles covering all vertices of $G'$ (and where each triangle has a vertex in each $V_i$).
This corresponds to the desired set of paths connecting the $a_i$ and $b_i$ for all $i=1,\dots,m$.

If $q>4$, we simply apply Proposition~\ref{perfmatch} $q-4$ times 
to get $m$ vertex-disjoint paths joining all vertices in $V_4$ to $V_q$.
Then we proceed as in the case when $q=4$.
\endproof


\section{Robust Outexpanders} \label{sec:expand}

The next result (Lemma~14 from~\cite{KOTchvatal}) implies that the property of a digraph~$G$ being a robust outexpander is `inherited'
by the reduced digraph of~$G$. For this, we need that~$G$
is a robust outexpander, rather than just an outexpander.
 
\begin{lemma}\label{robustR} 
Suppose that $0<1/n\ll  \varepsilon \ll d\ll \alpha,\nu,\tau < 1$ and $M'/n\ll 1$. Let~$G$ be a
digraph on $n$ vertices with $\delta ^0 (G) \ge \alpha n$ and such that~$G$
is a robust $(\nu,\tau)$-outexpander. Let~$R$ be the
reduced digraph of $G$ with parameters $\varepsilon$, $d$ and~$M'$. Then 
$\delta^0(R)\ge \alpha |R|/2$ and~$R$ is a robust $(\nu/2,2\tau)$-outexpander.
\end{lemma}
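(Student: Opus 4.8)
The plan is to apply the Regularity Lemma (Lemma~\ref{regularity_lemma}) to $G$ with parameters $\eps$, $d$ and $M'$, obtaining a partition $V_0,V_1,\dots,V_k$ of $V(G)$, the spanning subdigraph $G'$ and the reduced digraph $R$ on vertex set $\{V_1,\dots,V_k\}$. Thus $|V_0|\le\eps n$, $|V_1|=\dots=|V_k|=:m$ and $km=n-|V_0|$, so in particular $m\le n/k$ and $km\ge(1-\eps)n$. I would then prove the two assertions about $R$ separately. In both cases the decisive point is that the slack $\nu n$ in the robust expansion of $G$ (respectively the slack in $\delta^0(G)\ge\alpha n$) comfortably absorbs both the at most $(d+\eps)n$ edges that may be lost at a vertex when passing from $G$ to $G'$ and the at most $|V_0|\le\eps n$ vertices sitting in the exceptional set. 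This is precisely why \emph{robust} outexpansion is the right hypothesis: its defining inequality counts the in-neighbours a vertex has in $S$, a quantity that is insensitive to deleting a few edges or ignoring a few vertices, whereas tracking the ordinary out-neighbourhood would not survive these perturbations.

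\textbf{Minimum semidegree.} I would fix a cluster $V_i$ and an arbitrary $x\in V_i$. By the degree-preservation property of Lemma~\ref{regularity_lemma}, $d^+_{G'}(x)>d^+_G(x)-(d+\eps)n\ge(\alpha-d-\eps)n$. Discarding the at most $|V_0|\le\eps n$ out-neighbours of $x$ in $G'$ that lie in $V_0$, and using that $G'[V_i]$ is empty, $x$ has at least $(\alpha-d-2\eps)n$ out-neighbours in $G'$, all in clusters $V_j\neq V_i$. Since $|V_j|=m\le n/k$, these are spread over at least $(\alpha-d-2\eps)n/m\ge(\alpha-d-2\eps)k\ge\alpha k/2$ distinct clusters, each of which forms an edge $V_iV_j$ of $R$. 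Hence $d^+_R(V_i)\ge\alpha|R|/2$; the symmetric argument with in-degrees gives $d^-_R(V_i)\ge\alpha|R|/2$, so $\delta^0(R)\ge\alpha|R|/2$.

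\textbf{Robust outexpansion.} Given $\cS\subseteq V(R)$ with $2\tau k\le|\cS|\le(1-2\tau)k$, I would set $S:=\bigcup_{V_i\in\cS}V_i\subseteq V(G)$, so $|S|=|\cS|m$, and check, using $(1-\eps)n\le km\le n$, that $\tau n\le|S|\le(1-\tau)n$. Applying the robust outexpansion of $G$ to $S$ produces $T:=RN^+_{\nu,G}(S)$ with $|T|\ge|S|+\nu n$. The heart of the argument is the claim that every cluster $V_j$ ($1\le j\le k$) with $V_j\cap T\neq\emptyset$ lies in $RN^+_{\nu/2,R}(\cS)$: for $x\in V_j\cap T$, the vertex $x$ has at least $\nu n$ in-neighbours in $S$ in $G$, hence at least $(\nu-d-\eps)n$ in-neighbours in $S$ in $G'$, and since $G'[V_j]$ is empty these lie in clusters $V_i\in\cS$ with $V_iV_j\in E(R)$, of which there are at least $(\nu-d-\eps)n/m\ge(\nu-d-\eps)k\ge(\nu/2)k$. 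Since at most $|V_0|\le\eps n$ vertices of $T$ lie outside the clusters, $T$ meets at least $(|T|-\eps n)/m\ge(|\cS|m+(\nu-\eps)n)/m=|\cS|+(\nu-\eps)n/m$ clusters, and $(\nu-\eps)n/m\ge(\nu-\eps)k\ge(\nu/2)|R|$. Combining, $|RN^+_{\nu/2,R}(\cS)|\ge|\cS|+(\nu/2)|R|$, as required.

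I do not expect a genuine obstacle here: the only care needed is to confirm at each step that the errors $d$ and $\eps$ are negligible against $\alpha$, $\nu$, $\tau$, which is exactly what the hierarchy $\eps\ll d\ll\alpha,\nu,\tau$ provides (and $M'/n\ll1$ merely keeps the clusters from being too small). The real content is the conceptual observation that the $\nu n$ slack is engineered precisely to make the regularity losses invisible, so no finer control over the partition is required.
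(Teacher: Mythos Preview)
Your argument is correct and is the standard proof of this fact. Note that the paper does not actually prove Lemma~\ref{robustR}; it is quoted as Lemma~14 from~\cite{KOTchvatal}, so there is no in-paper proof to compare against. Your write-up matches the natural approach: lift a set $\cS$ of clusters to the union $S$ of their vertices, apply robust outexpansion in $G$, and then push back down to $R$ using that a single $G'$-edge from $V_i$ to $V_j$ already certifies $V_iV_j\in E(R)$, together with the degree-preservation clauses (iv)--(vii) of the Regularity Lemma to absorb the $(d+\eps)n$ loss and the $|V_0|\le\eps n$ exceptional vertices. The checks $\tau n\le|S|\le(1-\tau)n$, $(\nu-d-\eps)k\ge(\nu/2)k$, and $(\nu-\eps)n/m\ge(\nu/2)k$ all go through under the stated hierarchy.
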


The following result shows that in a robust outexpander, we can guarantee a spanning subdigraph with a given degree sequence
(as long as the required degrees are not too large and do not deviate too much from each other). We will state a more general version
of this lemma for multidigraphs, which will be used in~\cite{OS}. In the current paper, we will only use Lemma~\ref{regrobust} in the
case when $Q=G$. If $x$ is a vertex of a multidigraph $Q$, we write $d^+_Q(x)$ for the number of edges in $Q$ whose initial vertex is $x$
and $d^-_Q(x)$ for the number of edges in $Q$ whose final vertex is $x$.

\begin{lemma}\label{regrobust} Let $q\in\mathbb{N}$.
Suppose that $0<1/n\ll\eps\ll \nu \le \tau \ll \alpha<1$ and that $1/n \ll \xi\le q\nu^2/3$. Let~$G$ be a digraph on~$n$ vertices with
$\delta^0(G)\ge \alpha n$ which is a robust $(\nu,\tau)$-outexpander. Suppose that $Q$ is a multidigraph on $V(G)$ such that
whenever $xy\in E(G)$ then $Q$ contains at least $q$ edges from $x$ to $y$. For every vertex $x$ of $G$, let $n^+_x,n^-_x\in\mathbb{N}$
be such that $(1-\eps)\xi n\le n^+_x, n^-_x\le (1+\eps)\xi n$ and such that $\sum_{x\in V(G)} n^+_x=\sum_{x\in V(G)} n^-_x$.
Then $Q$ contains a spanning submultidigraph $Q'$ such that $d^+_{Q'}(x)=n^+_x$ and $d^-_{Q'}(x)=n^-_x$ for every $x\in V(G)=V(Q)$.
\end{lemma}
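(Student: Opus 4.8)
The plan is to translate the statement into a feasible-flow question on an auxiliary network, and then verify the corresponding min-cut condition using the robust outexpansion of $G$ precisely for the hardest range of cuts. First I would build a network $D$ with a source $s$, a sink $t$, and two vertices $x^+,x^-$ for each $x\in V(G)$; add an arc $sx^+$ of capacity $n^+_x$, an arc $x^-t$ of capacity $n^-_x$, and, for every $xy\in E(G)$, an arc $x^+y^-$ of capacity $q$. All capacities are integers and $\sum_x n^+_x=\sum_x n^-_x=:N$, so by max-flow-min-cut together with integrality of optimal flows, $D$ admits an integral $s$-$t$ flow of value $N$ (equivalently, one saturating every arc leaving $s$) precisely when every $s$-$t$ cut has capacity at least $N$. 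Given such a flow $f$, conservation at $x^+$ and at $y^-$ gives $\sum_y f(x^+y^-)=n^+_x$ and $\sum_x f(x^+y^-)=n^-_y$, while $0\le f(x^+y^-)\le q$ never exceeds the multiplicity of $xy$ in $Q$; so selecting $f(x^+y^-)$ of the parallel copies of $xy$ in $Q$ for each $xy\in E(G)$ yields the required $Q'$. Since a maximum flow is computable in polynomial time, so is $Q'$; this also proves the multidigraph version needed in~\cite{OS}.

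So it remains to bound the min cut. An $s$-$t$ cut corresponds to a pair $A,B\subseteq V(G)$, where $A=\{x:x^+$ lies on the source side$\}$ and $B=\{y:y^-$ lies on the sink side$\}$, and its capacity is $\sum_{x\notin A}n^+_x+\sum_{y\notin B}n^-_y+q\,e_G(A,B)$. Writing $\overline B:=V(G)\setminus B$, $\overline A:=V(G)\setminus A$ and using $\sum_x n^+_x=\sum_x n^-_x$, the condition ``capacity $\ge N$'' rearranges to
$$q\,e_G(A,B)\ \ge\ \sum_{x\in A}n^+_x-\sum_{y\in\overline B}n^-_y\ =\ \sum_{y\in B}n^-_y-\sum_{x\in\overline A}n^+_x\ =:\ \Sigma(A,B).$$
From $(1-\eps)\xi n\le n^{\pm}_x\le(1+\eps)\xi n$, the two expressions for $\Sigma(A,B)$ give $\Sigma(A,B)\le\xi n\big((1+\eps)|A|-(1-\eps)|\overline B|\big)$ and $\Sigma(A,B)\le\xi n\big((1+\eps)|B|-(1-\eps)|\overline A|\big)$. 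If either bound is non-positive the inequality is trivial, and the cases $A=\emptyset$, $B=\emptyset$ (where $e_G(A,B)=0$) follow directly from $\sum_x n^+_x=\sum_x n^-_x$; so I may assume both bounds are positive, which forces $|\overline B|<(1+3\eps)|A|$ and $|\overline A|<(1+3\eps)|B|$.

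Now I would split on $|A|$. If $\tau n\le|A|\le(1-\tau)n$, robust outexpansion gives $|RN^+_{\nu,G}(A)|\ge|A|+\nu n$, and every vertex of $RN^+_{\nu,G}(A)$ has at least $\nu n$ inneighbours in $A$; hence $e_G(A,B)\ge\nu n\,(|RN^+_{\nu,G}(A)|-|\overline B|)\ge\nu n\,(\nu n-3\eps|A|)\ge\nu(\nu-3\eps)n^2$. Since $\xi\le q\nu^2/3$ and $\eps\ll\nu$, this exceeds $(1+\eps)\xi n^2\ge\xi n(1+\eps)|A|\ge\Sigma(A,B)$. If $|A|>(1-\tau)n$, then $|\overline A|<\tau n\le\alpha n/2$, so $e_G(A,B)=\sum_{v\in B}|N^-_G(v)\cap A|\ge|B|(\alpha n-|\overline A|)\ge|B|\alpha n/2$; as $\xi\le q\nu^2/3<q\alpha/3$, this beats $\xi n(1+\eps)|B|\ge\Sigma(A,B)$. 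Finally if $|A|<\tau n$, then $|\overline B|<(1+3\eps)|A|<2\tau n\le\alpha n/2$, so $e_G(A,B)=\sum_{u\in A}|N^+_G(u)\cap B|\ge|A|(\alpha n-|\overline B|)\ge|A|\alpha n/2$, which again beats $\xi n(1+\eps)|A|\ge\Sigma(A,B)$. This establishes the cut condition and hence the lemma.

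The substantive step is the middle range $\tau n\le|A|\le(1-\tau)n$: there the contribution of $RN^+_{\nu,G}(A)$ to $e_G(A,B)$ is of order $\nu^2 n^2$ and must dominate a right-hand side of order up to $\xi n^2$ — which is exactly why the hypothesis $\xi\le q\nu^2/3$ is imposed, and why genuine robust \emph{out}expansion (rather than mere connectivity or a degree bound) is needed. The small- and large-$|A|$ ranges are routine minimum-degree counting; the only mild care required there, and in the degenerate cuts, is to exploit the $\eps$-slack in the prescribed degrees via the identity $\sum_x n^+_x=\sum_x n^-_x$ instead of the cruder bounds $n^{\pm}_x\le(1+\eps)\xi n$ alone.
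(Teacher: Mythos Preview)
Your proof is correct and follows essentially the same approach as the paper: both set up an auxiliary bipartite flow network with source/sink capacities $n^\pm_x$, reduce to verifying the min-cut bound, and handle the cut by a case split on the size of the ``source-side'' set, using robust outexpansion in the middle range $\tau n\le|A|\le(1-\tau)n$ and the minimum semidegree bound $\delta^0(G)\ge\alpha n$ for the extreme ranges. The only cosmetic differences are that the paper puts capacity~$1$ on each parallel edge of~$Q$ rather than capacity~$q$ on each edge of~$G$, and organises its first case as $|T'|\ge|S|+\nu n/2$ rather than via your rearranged quantity $\Sigma(A,B)\le 0$; these are equivalent formulations.
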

\proof
Our aim is to apply the Max-Flow-Min-Cut theorem.
So let $H$ be the (unoriented) bipartite multigraph whose vertex classes $A$ and $B$ are both copies of $V(G)$ and in which
$a\in A$ is joined to $b\in B$ once for every (directed) edge $ab$ of $Q$. Give every edge of $H$ capacity~1. Add a source $s^*$ which is
joined to every vertex $a\in A$ with an edge of capacity $n^+_a$. Add a sink $t^*$ which is
joined to every vertex $b\in B$ with an edge of capacity $n^-_b$. Let $r:=\sum_{x\in V(G)} n^+_x/n=\sum_{x\in V(G)} n^-_x/n$.
Note that an integer-valued $rn$-flow corresponds to the desired spanning submultigraph $Q'$ of $Q$. Thus by the
Max-Flow-Min-Cut theorem it suffices to show that every cut has capacity at least $rn$.

So consider a minimal cut $\mathcal{C}$. Let $S$ be the set of all those vertices $a\in A$ for which $s^*a\notin \mathcal{C}$.
Similarly, let $T$ be the set of all those vertices $b\in B$ for which $bt^*\notin \mathcal{C}$.
Let $S':=A\setminus S$ and $T':=B\setminus T$.
Thus the capacity of $\mathcal{C}$ is
$$ c:=\sum_{s\in S'} n^+_s+e_B(S,T)+ \sum_{t\in T'} n^-_t.
$$
Suppose first that $|T'|\ge |S|+\nu n/2$. But then
$$c\ge \sum_{s\in S'} n^+_s+\sum_{t\in T'} n^-_t
\ge (1-\eps)\xi n (|S'|+|T'|)\ge (1-\eps)\xi n (n+\nu n/2)\ge (1+\eps)\xi n^2\ge rn,$$
as required.

So suppose next that $|T'|\le |S|+\nu n/2$ and $\tau n<|S|<(1-\tau)n$. Then at least $\nu n/2$ vertices
from $T$ lie in $RN^+_{\nu,G}(S)$ and each such vertex receives at least $\nu n$ edges from $S$ (in the digraph~$G$).
Thus $$c\ge e_B(S,T) \ge q\cdot e_G(S,T)\ge q\nu^2 n^2/2\ge (1+\eps)\xi n^2\ge rn,$$ as required.

Now suppose that $|T'|\le |S|+\nu n/2$ and $|S|\le \tau n$. Then in $G$ every vertex in $S$ sends at least
$\alpha n/2$ edges to $T$ as $\delta^0(G)\ge \alpha n$. Thus in $Q$ every vertex in $S$ sends at least
$q\alpha n/2$ edges to $T$ and so
$$c\ge \sum_{s\in S'} n^+_s+|S|\cdot q\alpha n/2\ge \sum_{s\in S'} n^+_s +|S|(1+\eps)\xi n\ge rn,
$$
as required. 

Finally, suppose that $|S|\ge (1-\tau )n$. Then in $G$ every vertex in $T$ receives at least $\alpha n/2$ edges from $S$.
Thus in $Q$ every vertex in $T$ receives at least $q\alpha n/2$ edges from $S$ and so
$$c\ge \sum_{t\in T'} n^-_t+|T|\cdot q\alpha n/2\ge \sum_{t\in T'} n^-_t +|T|(1+\eps)\xi n\ge rn,
$$
as required. 
\endproof

Recall from Section~\ref{notation} that the $r$-fold blow-up of a digraph $G$ is obtained from $G$ by replacing every vertex $x$ of $G$ by $r$ vertices
and replacing every edge $xy$ of $G$ by the complete bipartite graph $K_{r,r}$ between the two
sets of $r$ vertices corresponding to $x$ and $y$ such that all the edges of $K_{r,r}$ are oriented towards the $r$ vertices corresponding
to $y$. 

\begin{lemma}\label{expanderblowup}
Let $r\ge 3$ and let $G$ be a robust $(\nu,\tau)$-outexpander with $0<3\nu\le \tau<1$. Let $G'$ be the $r$-fold blow-up of
$G$. Then $G'$ is a robust $(\nu^3,2\tau)$-outexpander.
\end{lemma}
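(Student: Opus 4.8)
The plan is to translate a robust outexpansion statement about $G$ directly into one about its blow-up $G'$ by working with projections of vertex sets back to $V(G)$. Let $V(G) = \{x_1, \dots, x_t\}$, so $G'$ has $n' = rt$ vertices, with $x_i$ blown up to a set $X_i$ of $r$ vertices. Given $S \subseteq V(G')$ with $2\tau n' \le |S| \le (1-2\tau)n'$, I would first set $a_i := |S \cap X_i|/r \in [0,1]$ and define the ``projected weight'' $\sigma := \sum_i a_i = |S|/r$, so that $2\tau t \le \sigma \le (1-2\tau)t$. The key quantity to control is, for each vertex $y \in X_j$, its number of inneighbours in $S$: since all edges into $X_j$ come from those $X_i$ with $x_i x_j \in E(G)$ and the bipartite graph between $X_i$ and $X_j$ is complete, we get $|N^-_{G'}(y) \cap S| = \sum_{i: x_i x_j \in E(G)} |S \cap X_i| = r \sum_{i: x_i x_j \in E(G)} a_i$. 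Crucially this does not depend on which vertex $y \in X_j$ we pick, so either all of $X_j$ lies in $RN^+_{\nu^3, G'}(S)$ or none of it does.

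Next I would compare this to robust outexpansion of $G$ by choosing an appropriate threshold. The natural move is to look at the set $A := \{x_i : a_i \ge \nu\} \subseteq V(G)$ of clusters that are ``substantially hit'' by $S$. Then $\sigma = \sum_i a_i \le |A| + \nu t$, and also $\sigma \le |A| \cdot 1 + (t - |A|)\nu$ trivially; more usefully, since $\sigma \ge 2\tau t$ we get $|A| \ge \sigma - \nu t \ge 2\tau t - \nu t \ge \tau t$ (using $3\nu \le \tau$), and similarly from $\sigma \le (1-2\tau)t$ one checks $|A| \le (1-\tau)t$ — so $A$ falls in the window where robust $(\nu,\tau)$-outexpansion of $G$ applies. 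Hence $|RN^+_{\nu, G}(A)| \ge |A| + \nu t$. For each $x_j \in RN^+_{\nu,G}(A)$, we have $|N^-_G(x_j) \cap A| \ge \nu t$, so $\sum_{i: x_i x_j \in E(G)} a_i \ge \sum_{i: x_i x_j \in E(G),\, x_i \in A} a_i \ge \nu \cdot |N^-_G(x_j)\cap A| \ge \nu^2 t$; therefore every $y \in X_j$ has $|N^-_{G'}(y) \cap S| = r\sum_{i:x_ix_j\in E(G)} a_i \ge \nu^2 r t = \nu^2 n' \ge \nu^3 n'$, so $X_j \subseteq RN^+_{\nu^3, G'}(S)$. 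Summing over the $\ge |A| + \nu t$ such clusters gives $|RN^+_{\nu^3, G'}(S)| \ge r(|A| + \nu t)$.

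Finally I must convert $r(|A| + \nu t) \ge |S| + \nu^3 n'$ into a genuine gain over $|S|$. We have $|S| = r\sigma \le r(|A| + \nu t)$ already, but I need a strict surplus of $\nu^3 n' = \nu^3 rt$. Here I would split into cases on how close $\sigma$ is to $|A|$. If $\sigma \le |A| - \nu t/2$ (say), then $r(|A|+\nu t) = r|A| + r\nu t \ge r\sigma + r\nu t/2 + r\nu t \ge |S| + \nu^3 n'$ comfortably, since $\nu \ge \nu^3$. Otherwise $\sigma > |A| - \nu t/2$, meaning the clusters outside $A$ contribute little; then I would instead use a slightly larger robust outneighbourhood bound, e.g. apply robust $(\nu,\tau)$-outexpansion to $A$ to get $|RN^+_{\nu,G}(A)| \ge |A| + \nu t \ge \sigma + \nu t/2$, whence $r(|A|+\nu t) \ge r\sigma + r\nu t/2 = |S| + \nu n'/2 \ge |S| + \nu^3 n'$. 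Either way the bound follows, completing the verification that $G'$ is a robust $(\nu^3, 2\tau)$-outexpander.

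The routine bookkeeping — exact constants in the case split and confirming $|A|$ lands in the expansion window — is straightforward. The main conceptual point, and the step I would be most careful about, is the reduction to $A$: one must choose the threshold defining $A$ so that (a) $A$ genuinely lies in the $[\tau t, (1-\tau)t]$ window, and (b) the ``lost'' weight $\sigma - |A|$ from near-threshold clusters is small enough (at most $\nu t$ by construction) that it does not swallow the expansion surplus, which is exactly why $\nu^3$ rather than $\nu$ appears in the conclusion and why the hypothesis $3\nu \le \tau$ is needed. If the clean two-case argument above turns out to have a gap at the boundary, the fallback is to pick the threshold for $A$ at $\nu^2$ instead of $\nu$ and re-run the estimates, trading a cleaner case analysis for slightly messier constants.
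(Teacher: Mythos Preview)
Your overall strategy---project $S$ to a threshold set $A \subseteq V(G)$, apply robust expansion there, and lift back using the observation that the in-degree from $S$ is constant on each fibre $X_j$---is exactly the paper's approach. The paper phrases it in terms of ``bad'' vertices (those whose fibre meets $S'$ in at most $\nu^2 r$ points), but unwinding this gives precisely your set $A$ with threshold $\nu^2$, i.e.\ your stated fallback.

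Two of your bookkeeping claims fail as written, though. First, the assertion that $\sigma \le (1-2\tau)t$ forces $|A| \le (1-\tau)t$ is false: take all $a_i = 1/2$ with $\tau$ small, so $|A| = t$. The paper handles the regime $|A| > (1-\tau)t$ as a separate case, applying expansion to a subset of size $(1-\tau)t$ to get $|RN^+_{\nu,G}(A)| \ge (1-\tau+\nu)t$, which is already enough since $r(1-\tau)t \ge |S| + r\tau t \ge |S| + \nu^3 n'$. Second, your case-2 inequality $|A| + \nu t \ge \sigma + \nu t/2$ does not follow from the hypothesis $\sigma > |A| - \nu t/2$; you only know $\sigma \le |A| + \nu t$, and this can be nearly tight (set $a_i$ just below $\nu$ for every $i \notin A$). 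The clean repair is exactly your fallback: with threshold $\nu^2$ one has $\sigma \le |A| + \nu^2 t$, so $r(|A|+\nu t) \ge r\sigma + r(\nu-\nu^2)t \ge |S| + \nu^3 n'$ directly, with no case split. Alternatively, keep threshold $\nu$ but use the sharper bound $\sigma \le |A| + (t-|A|)\nu$, giving surplus $r(|A|+\nu t) - r\sigma \ge r\nu|A| \ge r\nu\tau t \ge 3\nu^2 r t \ge \nu^3 n'$.
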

\begin{proof}
Let $n:=|G|$. So $|G'|=rn$. Call two vertices in $G'$ \emph{friends} if they correspond
to the same vertex of $G$. (In particular, every vertex is a friend of itself.) Consider any $S' \subseteq V(G')$ with $2\tau r n
\leq |S'| \leq (1 - 2\tau)rn$. Call a vertex $x\in S'$ \emph{bad} if $S'$ contains
at most $\nu^2 r$ friends of $x$. So if $\nu^2 r<1$ then no vertex in $S'$ is bad. Let $b$ denote the number of bad vertices in $S'$.
Then $S'$ contains a set $S^*$ of at least $b/\nu^2 r$ bad vertices corresponding to different vertices of $G$.
(So no two vertices in $S^*$ are friends.) But every $x\in S^*$ has at least $r-1-\nu^2 r\ge r/2$ friends%
    \COMMENT{$\nu<1/3$ and so $r-1-\nu^2 r\ge 8r/9-1\ge r/2$ since $r\ge 3$}
outside $S'$.  
Thus $$\frac{b}{\nu^2 r}\cdot \frac{r}{2}\le |S^*|\cdot \frac{r}{2}\le |G'|=rn$$
and so $b\le 2\nu^2 rn$. Let $S''\subseteq S'$ be the set of all those vertices in $S'$ which are not bad
and let $S$ be the set of all those vertices $x$ in $G$ for which $S''$ contains a copy of $x$.
Thus
\begin{equation}\label{eqS}
|S|\ge |S''|/r=(|S'|-b)/r \ge |S'|/2r\ge \tau n.
\end{equation}
Since $G$ is a robust $(\nu,\tau)$-outexpander, it follows that:
\begin{itemize}
\item [(i)] Either $|RN^+_{\nu,G} (S)| \geq  |S| + \nu n$;
\item [(ii)] or $|S| \geq (1 - \tau)n$, in which case (considering
a subset of $S$ of size $(1 - \tau)n$) we have
$|RN^+_{\nu,G}(S)| \geq (1 - \tau + \nu)n$.
\end{itemize}
Note that if a vertex $x$ of $G$ belongs to $RN^+_{\nu,G}(S)$, then any copy
$x'$ of $x$ in $G'$ has at least $\nu^2 r\cdot \nu n=\nu^3 |G'|$ inneighbours in $S''$ (since no vertex in $S''$ is bad) and
so $x'\in RN^+_{\nu^3,G'}(S')$. It follows that $|RN^+_{\nu^3,G'}(S')|
\geq r |RN^+_{\nu,G}(S)|$. Thus, in case (i) we have
\[ |RN^+_{\nu^3,G'}(S')| \geq r |RN^+_{\nu,G}(S)| \geq r|S| + r \nu n \stackrel{(\ref{eqS})}{\ge} |S'|-b+ r \nu n \geq |S'| + \nu^3 rn,\]
while in case (ii) we have
\[ |RN^+_{\nu^3,G'}(S')| \geq r |RN^+_{\nu,G}(S)| \geq (1-\tau)rn + \nu r n \geq (1-\tau)rn \geq |S'| + \nu^3 rn,\]
as required.
\end{proof}


\section{Tools for finding Hamilton cycles} \label{sec:hamtools}

The following well known observation states that every regular multidigraph $G$ has a 1-factorization, i.e.~the edges of
$G$ can be decomposed into edge-disjoint 1-factors. (In a multidigraph $G$ we allow multiple edges between any two vertices.
$G$ is $r$-regular if every vertex sends out $r$ edges and receives $r$ edges.)

\begin{prop}\label{1factor}
Let $G$ be a regular multidigraph. Then $G$ has a 1-factorization.
\end{prop}
\proof
Consider an auxiliary bipartite multigraph $G'$ whose vertex classes $A$ and $B$ are copies of the vertex set $V(G)$ of $G$
and in which the number of (undirected) edges between $a\in A$ and $b\in B$ equals the number of (directed) edges from $a$ to $b$ in $G$.
Then $G'$ is regular and so Hall's theorem (which holds for bipartite multigraphs as well) implies that $G'$ has a decomposition into
edge-disjoint perfect matchings. But every perfect matching in $G'$ corresponds to a $1$-factor of $G$.
\endproof
As mentioned earlier, there are several well known polynomial time algorithms for finding maximum matchings -- these can be applied repeatedly to find the above 
factorization.

The following result (Theorem~16 from~\cite{KOTchvatal}) guarantees a Hamilton cycle in a robust outexpander $G$, as long as
the minimum semidegree of $G$ is not too small. As shown in~\cite{CKKO}, this Hamilton cycle can be found in polynomial time. 
\begin{theorem}\label{expanderthm} 
Suppose that $0<1/n\ll\nu\le \tau\ll\alpha<1$. Let~$G$ be a digraph on~$n$ vertices with
$\delta^0(G)\ge \alpha n$ which is a robust $(\nu,\tau)$-outexpander. Then~$G$ contains a Hamilton cycle.
\end{theorem}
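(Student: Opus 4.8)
The plan is to first produce a $1$-factor of $G$ (a spanning union of vertex-disjoint directed cycles), and then to transform it into a single Hamilton cycle by repeatedly merging cycles, using robust outexpansion to supply the necessary `crossing' edges.

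\textbf{Finding a $1$-factor.} Consider the bipartite (multi)graph $H$ with vertex classes $A$ and $B$, both copies of $V(G)$, in which $a\in A$ is joined to $b\in B$ precisely when $ab\in E(G)$; a perfect matching of $H$ corresponds to a $1$-factor of $G$, so it suffices to verify Hall's condition. For $X\subseteq A$ with $\tau n\le |X|\le (1-\tau)n$ we have $N_H(X)\supseteq RN^+_{\nu,G}(X)$, so $|N_H(X)|\ge |X|+\nu n$ by robust outexpansion. For $|X|\le \tau n$, any single vertex of $X$ already has at least $\alpha n\ge \tau n\ge |X|$ outneighbours in $G$, so $|N_H(X)|\ge \alpha n\ge |X|$; and for $|X|\ge (1-\tau)n$, every $b\in B$ has an inneighbour in $X$ (since $\delta^-(G)\ge \alpha n>\tau n\ge |A\setminus X|$), so $N_H(X)=B$. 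Hence Hall's condition holds and $H$ has a perfect matching, giving a $1$-factor $F$ of $G$. (Alternatively one can use a Max-Flow-Min-Cut argument as in the proof of Lemma~\ref{regrobust}.)

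\textbf{Merging cycles.} Fix a $1$-factor $F$ of $G$ with the minimum possible number of cycles, and let $\phi$ be its successor map, so that $E(F)=\{v\phi(v):v\in V(G)\}$. Suppose for a contradiction that $F$ has at least two cycles. The key step is to find vertices $x,y$ on distinct cycles of $F$ with $x\phi(y)\in E(G)$ and $y\phi(x)\in E(G)$: then replacing the edges $x\phi(x)$, $y\phi(y)$ by $x\phi(y)$, $y\phi(x)$ yields a new $1$-factor whose successor permutation is $\phi$ composed with the transposition $(x\,y)$, so the two cycles through $x$ and $y$ are merged into one and the number of cycles strictly decreases --- contradicting minimality. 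To find such a pair, I would choose a set $S$ which is a union of entire cycles of $F$ with $\emptyset\neq S\neq V(G)$, arranged --- by grouping short cycles together, and treating separately the case where a single cycle has more than $(1-\tau)n$ vertices --- so that either $S$ or $S':=V(G)\setminus S$ has size in $[\tau n,(1-\tau)n]$; note $\phi(S)=S$ and $\phi(S')=S'$. Applying robust outexpansion to whichever of $S,S'$ lies in the admissible range, together with $\delta^0(G)\ge \alpha n$ to control the edges across $\{S,S'\}$ when one side is small, produces $\Omega(\nu n)$ choices of an edge from $S$ to $S'$ and $\Omega(\nu n)$ choices of an edge from $S'$ to $S$, and a short counting/pigeonhole argument then yields a compatible pair $x\in S$, $y\in S'$ as required. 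This merging step --- choosing $S$ correctly and guaranteeing the compatible pair of crossing edges --- is the main obstacle; everything else is routine. Finally, for the algorithmic assertion: the $1$-factor is found by bipartite matching, each merge is a local modification computable in polynomial time, and each merge strictly decreases the nonnegative integer number of cycles, so at most $n$ merges occur; a more efficient implementation is given in~\cite{CKKO}.
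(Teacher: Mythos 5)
Your first step --- constructing a $1$-factor $F$ of $G$ via Hall's theorem --- is correct, and ``find a $1$-factor, then merge its cycles'' is indeed the standard plan. The gap is in the merging step. Your claim that a pigeonhole argument yields $x\in S$, $y\in S'$ with both $x\phi(y)\in E(G)$ and $y\phi(x)\in E(G)$ cannot be repaired: robust outexpansion only guarantees that each of these two events holds for a $\Theta(\nu^2)$-fraction of the $|S||S'|$ pairs $(x,y)$, far below the density $1/2$ a pigeonhole would need. Worse, the compatible pair simply need not exist. Take $n$ with $n/2$ even, let $V=\{s_1,\dots,s_{n/2}\}\cup\{t_1,\dots,t_{n/2}\}=:S\cup S'$, put in every ordered pair inside $S$ and inside $S'$, and between the classes set $s_i\to t_j$ iff $i+j$ is even and $t_j\to s_i$ iff $i+j$ is odd. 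Then $\delta^0(G)=3n/4-1$ and $G$ is easily checked to be a robust $(\nu,\tau)$-outexpander for $\nu\ll\tau\ll 1$. The $1$-factor with cycles $s_1\dots s_{n/2}$ and $t_1\dots t_{n/2}$ has $\phi(s_i)=s_{i+1}$, $\phi(t_j)=t_{j+1}$; but $s_i\to\phi(t_j)=t_{j+1}$ forces $i+j$ odd while $t_j\to\phi(s_i)=s_{i+1}$ forces $i+j$ even, and the same parity clash kills the case $x\in S'$, $y\in S$. So this $1$-factor admits no single switch merging its two cycles, despite the graph being a robust outexpander of very high semidegree.

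Note also that the paper does not actually prove Theorem~\ref{expanderthm}: it quotes it as Theorem~16 of~\cite{KOTchvatal}, with~\cite{CKKO} for the algorithmic version. The standard way to repair your merging step is to replace a single switch by a longer detour --- after deleting one edge from each of the two target cycles, close the resulting long path via a walk through ``chord'' edges that may pass through several other cycles of $F$ before returning. This is precisely the shifted walk/chord-sequence machinery the paper recalls in Section~\ref{sec:shiftwalks}, and Lemma~\ref{buildchord} is the robust-expansion statement that guarantees short such detours. Implementing this cleanly usually goes via the regularity lemma, choosing $F$ so that it winds around a blown-up Hamilton cycle of the reduced digraph, which is what makes the detours realisable (compare Lemma~\ref{mergecycles0}); that is the structure of the argument in~\cite{KOTchvatal}. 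Your first paragraph can be kept as is; the merging step needs this more substantial replacement.
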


The \emph{$k$th power} of a cycle $C$ is obtained from $C$ by adding an edge between every pair of
vertices whose distance on $C$ is at most~$k$. We will also use the following result of K\'omlos,
S\'ark\"ozy and Szemer\'edi~\cite{KKSpower} (where they proved the so-called P\'osa-Seymour conjecture for sufficiently
large graphs).
We do not use the full strength of the result:
any bound of the form $(1-\eps)n$ instead of $10n/11$
would be sufficient for our proof. Moreover, we will only apply the result to a graph of bounded size
(in the proof of Lemma~\ref{splitinitcleanH}), so an algorithmic version is not necessary.

\begin{theorem}\label{10thpower}
There exists an $n_0\in\mathbb{N}$ such that every graph $G$ on $n\ge n_0$ vertices with minimum degree
at least $10n/11$ contains the $10$th power of a Hamilton cycle.
\end{theorem}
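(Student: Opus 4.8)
The statement is precisely the case $k=10$ of the P\'osa--Seymour conjecture, and the plan is to follow the regularity-method proof of Koml\'os, S\'ark\"ozy and Szemer\'edi~\cite{KKSpower}. Write $k=10$, so the hypothesis reads $\delta(G)\ge\frac{k}{k+1}n$. First I would apply Szemer\'edi's Regularity Lemma to $G$ with constants $1/M'\gg\eps\gg d\gg 1/n$, obtaining clusters $V_1,\dots,V_m$, an exceptional set $V_0$ with $|V_0|\le\eps n$, and a reduced graph $R$ on $\{V_1,\dots,V_m\}$ in which every edge $V_iV_j$ spans an $\eps$-regular bipartite graph of density at least $d$; a routine averaging argument gives $\delta(R)\ge(\frac{k}{k+1}-2\eps-d)m$, and after a standard cleaning (cf.\ Lemma~\ref{superreg}) we may assume that the pairs we will use are superregular.

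The combinatorial heart of the argument is to produce inside $R$ a spanning (or almost-spanning) structure into which a blown-up $k$th power of a Hamilton cycle can be embedded. Since $\delta(R)\ge(1-\tfrac{1}{k+1}-o(1))|R|$, an approximate version of the Hajnal--Szemer\'edi theorem yields a family $Q_1,\dots,Q_t$ of vertex-disjoint copies of $K_{k+1}$ in $R$ covering all but $o(m)$ of the clusters (the uncovered clusters are moved, one by one, into $V_0$). I would then fix a cyclic order of the $Q_j$ and, using the minimum degree of $R$ once more, insert bounded-length connecting segments between consecutive cliques so as to obtain a closed walk $W$ on the clusters with the property that every $k+1$ consecutive clusters along $W$ span a clique of $R$, each cluster occurring only a bounded number of times. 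Distributing the vertices of $G$ among the appearances of the clusters on $W$ in suitably balanced proportions --- this is where the sharp degree bound is spent --- and then applying the Blow-up Lemma (Lemma~\ref{Blow-up}) window by window along $W$ to the corresponding superregular pairs yields a $k$th power of a cycle through all of $V(G)\setminus V_0$.

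It remains to incorporate the vertices of $V_0$. Each $x\in V_0$ fails to be adjacent to at most $n/(k+1)$ vertices, so $x$ together with suitable clusters still spans enough edges to be spliced into one of the windows of $W$; by reserving, before the Blow-up Lemma step, a small buffer of flexible vertices inside each cluster one can insert every exceptional vertex into the power structure while preserving all the clique conditions. Arranging the buffers, the balancing of cluster sizes, and the main embedding so that they fit together consistently is the main obstacle, and it is exactly the part of the Koml\'os--S\'ark\"ozy--Szemer\'edi argument that needs the sharp constant and careful bookkeeping. I would add, however, that the present paper uses this theorem only with $(1-\eps)n$ in place of $10n/11$; with that much room $R$ is essentially complete, the Hajnal--Szemer\'edi and connecting steps become trivial, and the exceptional vertices can be absorbed greedily, so in that regime the whole proof collapses to a short application of the Regularity and Blow-up Lemmas.
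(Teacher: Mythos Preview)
The paper does not prove Theorem~\ref{10thpower} at all: it is quoted verbatim as a result of Koml\'os, S\'ark\"ozy and Szemer\'edi~\cite{KKSpower} and is used as a black box (only once, in the proof of Lemma~\ref{splitinitcleanH}, and there only for a graph of bounded size). So there is nothing in the paper to compare your argument against.

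Your sketch is a reasonable high-level account of the regularity/Blow-up Lemma strategy that \cite{KKSpower} actually uses, and your closing remark is exactly right and matches the paper's own comment: since the paper only needs the statement with $(1-\eps)n$ in place of $10n/11$, and applies it to a graph whose order depends only on the earlier constants (not on $n$), one could bypass the citation entirely with a much cruder argument. That said, as a proof of the full theorem your outline is genuinely only a plan: the connecting step (ensuring every $k+1$ consecutive clusters on the closed walk span a clique in $R$) and the absorption of $V_0$ at the sharp threshold $\frac{k}{k+1}n$ are where the real work in \cite{KKSpower} lies, and your sketch does not supply those details.
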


The next two lemmas will be used to turn a 1-regular digraph $F$ into a cycle on the same vertex set.
More precisely, suppose that $G$ is a blow-up of a cycle $C=V_1 \dots V_k$ so that for any successive clusters $V_i$, $V_{i+1}$
the subdigraph $G[V_i,V_{i+1}]$ of $G$ is superregular. 
Suppose also that $F$ is a $1$-regular digraph on $V_1\cup\dots\cup V_k$ which `winds around' $C$,
i.e.~each edge goes from $V_i$ to $V_{i+1}$ for some~$i$. Then we can replace the edges of $F$ from e.g.~$V_1$ to $V_2$
with edges from $G$ between these clusters to turn $F$ into a Hamilton cycle. 
(Actually, the lemma is more general and does not require all edges of $F$ to wind around $C$.) 

To prove the lemmas, we will use an idea from~\cite{CKKOsemi}.

\begin{lemma}\label{mergecycles0}
Suppose that $0<1/m\ll d'\ll \eps\ll d\ll \zeta ,1/t\le 1/2$.
Let $V_1,\dots,V_k$ be pairwise disjoint clusters, each of size $m$ and let $C=V_1\dots V_k$ be a directed cycle on these clusters.
Let $G$ be a digraph on $V_1\cup \dots\cup V_k$ and let $J\subseteq E(C)$. For each edge $V_iV_{i+1}\in J$, let $V^1_i\subseteq V_i$
and $V^2_{i+1}\subseteq V_{i+1}$ be such that $|V^1_i|=|V^2_{i+1}|\ge m/100$ and%
   \COMMENT{the condition that $\ge m/100$ is necessary as Theorem~\ref{expanderthm} requires $1/n \ll \nu$}
such that $G[V^1_i,V^2_{i+1}]$ is $(\eps,d',\zeta d',td'/d)$-superregular. 
Suppose that $F$ is a $1$-regular digraph with $V_1\cup \dots \cup V_k\subseteq V(F)$ such that the following properties hold:
\begin{itemize}
\item[\rm{(i)}]For each edge $V_iV_{i+1}\in J$ the digraph $F[V^1_i,V^2_{i+1}]$ is a perfect matching.
\item[\rm{(ii)}] For each cycle $D$ in $F$ there is some edge $V_iV_{i+1}\in J$ such that $D$ contains a vertex
in $V^1_i$.
\item[\rm{(iii)}] Whenever $V_iV_{i+1}, V_jV_{j+1}\in J$ are such that $J$ avoids all edges in the segment $V_{i+1}CV_j$ of
$C$ from $V_{i+1}$ to $V_j$, then $F$ contains a path $P_{ij}$ joining some vertex $u_{i+1}\in V^2_{i+1}$ to some
vertex $u'_j\in V^1_j$ such that $P_{ij}$ winds around~$C$.
\end{itemize}
Then we can obtain a cycle on $V(F)$ from $F$ by replacing $F[V^1_i,V^2_{i+1}]$ with a suitable perfect matching
in $G[V^1_i,V^2_{i+1}]$ for each edge $V_iV_{i+1}\in J$.
Moreover, if $J=E(C)$ then (iii) can be replaced by
\begin{itemize}
\item[\rm{(iii$'$)}] $V^1_i\cap V^2_i\neq \emptyset$ for all $i=1,\dots,k$.
\end{itemize}
\end{lemma}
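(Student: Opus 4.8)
The goal is to modify $F$ into a single cycle on $V(F)$ by rerouting, for each edge $V_iV_{i+1} \in J$, the perfect matching $F[V_i^1, V_{i+1}^2]$ to a different perfect matching inside the superregular pair $G[V_i^1, V_{i+1}^2]$. The natural device is the ``contraction trick'' referred to in the excerpt: for each $V_iV_{i+1} \in J$, we want to think of the matching $F[V_i^1, V_{i+1}^2]$ as a set of ``free'' edges that we may replace arbitrarily, and we want to choose the replacements so as to merge all the cycles of $F$ into one. First I would enumerate the edges of $J$ in cyclic order around $C$, say $V_{i_1}V_{i_1+1}, \dots, V_{i_s}V_{i_s+1}$. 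Conditions (ii) and (iii) ensure that every cycle of $F$ meets some $V_{i_r}^1$ and that consecutive ``active'' clusters are joined by a subpath of $F$ winding around $C$; this lets me contract each such winding subpath (together with the rest of $F$ between two consecutive members of $J$) down to a single vertex, so that $F$ becomes, after contraction, a disjoint union of cycles living on the ``contracted'' sets $W_r^1$ (the image of $V_{i_r}^1$) and $W_r^2$ (the image of $V_{i_r+1}^2$), glued cyclically: the matching on $G[V_{i_r}^1, V_{i_r+1}^2]$ goes from $W_r^1$ to $W_r^2$, and the contracted winding path goes from $W_r^2$ to $W_{r+1}^1$.

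The key step is then to choose, within each superregular pair $G[V_{i_r}^1, V_{i_r+1}^2]$, a perfect matching so that the union of these matchings with the contracted winding paths forms a \emph{single} cycle. I would realise this as a Hamilton cycle problem in an auxiliary digraph $G^*$ obtained by the identification/contraction just described: $G^*$ has one vertex for each vertex of $V_{i_1}^1$ (identified along the winding paths with the corresponding vertex of $V_{i_s+1}^2$, etc.), and its edges are exactly the edges of $G[V_{i_r}^1, V_{i_r+1}^2]$ for all $r$. The point of assumption $G[V_i^1, V_{i+1}^2]$ being $(\eps, d', \zeta d', td'/d)$-superregular is precisely that Lemma~\ref{regtoexpander} applies: the ``contracted'' digraph built from even one such sparse-superregular pair is a robust $(\nu,\tau)$-outexpander of minimum semidegree at least $\zeta d' m /2 \ge \alpha |G^*|$ for suitable $\nu \ll \tau \ll \alpha$ (one absorbs the effect of chaining several pairs and the winding paths using that an outexpander survives identification of in/out copies and the addition of a spanning set of paths). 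Then Theorem~\ref{expanderthm} yields a Hamilton cycle in $G^*$, which pulls back to a single cycle on $V(F)$, and on each pair $G[V_{i_r}^1, V_{i_r+1}^2]$ the Hamilton cycle restricts to a perfect matching (since $G^*$ has all its edges inside these pairs and a Hamilton cycle is $1$-regular), so the replacement has the required form.

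For the ``moreover'' clause, when $J = E(C)$ every cluster is active and the winding subpaths between consecutive active clusters are trivial, so the role of (iii) --- guaranteeing a winding connector --- is played instead by the condition $V_i^1 \cap V_i^2 \ne \emptyset$, which provides the needed overlap so that the contraction identifies a vertex of $V_i^2$ with a vertex of $V_i^1$ for each $i$; after this identification $F$ still decomposes into cycles each meeting some $V_i^1$ (condition (ii)), and the same auxiliary-outexpander argument applies.

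\textbf{Main obstacle.} The genuinely delicate point is verifying that the contracted auxiliary digraph $G^*$ really is a robust outexpander of the required semidegree, uniformly over all the (uncontrolled) choices made in forming $F$. One has to be careful that contracting long winding paths of $F$ and identifying endpoints does not destroy expansion --- intuitively it cannot, since a set $S$ in $G^*$ pulls back to a set in $G[V_{i_r}^1,V_{i_r+1}^2]$ of comparable relative size and one applies Lemma~\ref{regtoexpander} there, but one must check the bookkeeping when the single pair is replaced by a chain of pairs linked by paths (so that robust outneighbourhoods compose correctly) and when $|V_i^1|$ may be as small as $m/100$ (this is exactly why the lemma insists on $|V_i^1| \ge m/100$, so that $1/|V_i^1| \ll \nu$). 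Everything else --- the enumeration, the contraction, pulling back the Hamilton cycle --- is routine once this expansion statement is in hand.
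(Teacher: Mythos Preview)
Your plan has a genuine gap at exactly the point you flag as the ``main obstacle'': the global auxiliary digraph $G^*$ is \emph{not} a robust outexpander once $|J|>1$, so Theorem~\ref{expanderthm} does not apply to it. To see this concretely, remove all the $J$-matchings from $F$; the remaining paths are your vertices of $G^*$. Each such path ends in some $V^1_{i_r}$ and starts in some $V^2_{i_{r'}+1}$, and the out-edges of that path in $G^*$ lie entirely inside $G[V^1_{i_r},V^2_{i_r+1}]$, i.e.\ they go only to paths that start in $V^2_{i_r+1}$. Hence if $S$ is the set of all paths ending in $V^1_{i_1}$, then $RN^+_\nu(S)$ is contained in the set of paths starting in $V^2_{i_1+1}$, and these two sets have the same size $|V^1_{i_1}|=|V^2_{i_1+1}|$. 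So $|RN^+_\nu(S)|\le |S|$, violating robust outexpansion. Lemma~\ref{regtoexpander} is a statement about a \emph{single} sparse-superregular pair; it gives nothing for a cyclic chain of such pairs, and condition~(iii) only provides \emph{one} winding connector between consecutive $J$-edges, which is far too weak to restore expansion.

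The paper sidesteps this by processing the edges of $J$ \emph{one at a time}. For a fixed $V_iV_{i+1}\in J$ one defines $f:V^2_{i+1}\to V^1_i$ by following the current $1$-factor from $u\in V^2_{i+1}$ until it first returns to $V^1_i$; this $f$ is a bijection, and the auxiliary digraph $A$ on $V^2_{i+1}$ with $N^+_A(u):=N^+_{G[V^1_i,V^2_{i+1}]}(f(u))$ is built from a \emph{single} $(\eps,d',\zeta d',td'/d)$-superregular pair, so Lemma~\ref{regtoexpander} applies directly and Theorem~\ref{expanderthm} yields a Hamilton cycle in $A$, i.e.\ a new matching that merges all cycles meeting $V^1_i\cup V^2_{i+1}$ into one while keeping previously merged vertices together. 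After doing this for every $J$-edge, conditions~(ii) and~(iii) (or~(iii$'$) when $J=E(C)$) are exactly what is needed to deduce that the resulting $1$-factor is a single cycle. Your contraction idea is essentially correct for a single $J$-edge, but the attempt to globalise it is where the argument breaks.
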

\proof
For any edge $V_iV_{i+1}\in J$, let  Old$_i$ be the perfect matching $F[V^1_i,V^2_{i+1}]$.
We will first prove the following:

 \textno
For any edge $V_iV_{i+1}\in J$, we can find a perfect matching ${\rm New}_i$ in $G[V^1_i,V^2_{i+1}]=:G_i$
so that if we replace ${\rm Old}_i$ in $F$ with ${\rm New}_i$, then all
vertices of $G_i$ will lie on a common cycle in the new
$1$-factor~$F'$ thus obtained from $F$. Moreover any pair of
vertices of $F$ that were formerly on a common cycle in $F$ are still on a
common cycle in $F'$. &(\dagger)

To prove ($\dagger$), we proceed as follows. Pick $\nu$ and $\tau$ such that $1/m\ll \nu \ll \tau\ll d'$.
For every $u \in V^2_{i+1}$, we move along the cycle $C_u$ of $F$ containing $u$
(starting at $u$) and let $f(u)$ be the first vertex on $C_u$ in
$V^1_i$ (note that $f(u)$ exists by (i)). Define an auxiliary digraph $A$ on $V^2_{i+1}$ such that $N^+_A(u):=N^+_{G_i}(f(u))$.
So $A$ is obtained by identifying each pair $(u,f(u))$ into one vertex with an edge from $(u,f(u))$ to
$(v,f(v))$ if $G_i$ has an edge from $f(u)$ to $v$. So Lemma~\ref{regtoexpander} applied
with $d'$, $d/t$ playing the roles of $d$, $\mu$ implies
that $A$ is a robust $(\nu,\tau)$-outexpander. Moreover, $\delta^0(A)= \delta^0(G_i)\ge\zeta d'|V^2_{i+1}|=\zeta d'|A|$ by (Reg4).%
    \COMMENT{Actually $A$ might have loops. But after deleting them we still have a robust $(\nu,\tau)$-outexpander
with $\delta^0(A)\ge \zeta d'|A|-1$. So it's maybe better to gloss over it...}
Thus Theorem~\ref{expanderthm} implies
that $A$ has a Hamilton cycle, which clearly corresponds to a perfect matching New$_i$ in~$G_i$ with the desired property.

Now we apply ($\dagger$) to every edge $V_iV_{i+1}\in J$ sequentially. 
We claim that after repeating this for every such edge, the resulting $1$-regular digraph $F''$ is a cycle.
To see this, note that (ii) and the last part of ($\dagger$) together imply that every cycle of $F''$ contains a vertex in
$V^1_i$ for some edge $V_iV_{i+1}\in J$. Moreover, by the first part of ($\dagger$), all the vertices
in $V^1_i\cup V^2_{i+1}$ lie on a common cycle of $F''$, $C_i$ say. So all the $C_i$ together form~$F''$.
Consider any two edges $V_iV_{i+1}, V_jV_{j+1}\in J$ such that $J$ avoids all edges in $V_{i+1}CV_j$ and let $P_{ij}=u_{i+1}\dots u'_j$ be the
path guaranteed by~(iii). Since $P_{ij}$ winds around~$C$, it follows that%
    \COMMENT{Also use that $J$ avoids all edges in $V_{i+1}CV_j$ here. So we didn't change any edges of $F$ on this segement.}
$P_{ij}\subseteq F''$. Thus $u_{i+1}$ and $u'_j$ lie on a common cycle in $F''$. But 
$u_{i+1}\in C_i$ and $u'_j\in C_j$. Thus $C_i=C_j$ and so all the $C_i$ are identical. Thus $F''$ is a cycle.

It remains to check that if $J=E(C)$ then (iii) can be replaced by~(iii$'$). But this is clear since in this case we have
$i+1=j$ in~(iii) and so we can take $P_{ij}$ to be any vertex in $V^2_j\cap V^1_j$.
\endproof

We will also use the following slightly different version of Lemma~\ref{mergecycles0}, which involves the usual 
notion of $\eps$-regularity. In this paper, we will only use Lemmas~\ref{mergecycles0} and~\ref{mergecycles} in the special case when $J=E(C)$.
(So in Lemma~\ref{mergecycles} condition~(iii) can be omitted.) 
The more general version of Lemma~\ref{mergecycles} will be used in~\cite{OS}.

\begin{lemma}\label{mergecycles}
Let $0<1/m\ll \eps\ll d< 1$.
Let $V_1,\dots,V_k$ be pairwise disjoint clusters, each of size $m$ and let $C=V_1\dots V_k$ be a directed cycle on these clusters.
Let $J\subseteq E(C)$.
Let $G$ be a digraph on $V_1\cup \dots\cup V_k$ such that $G[V_i,V_{i+1}]$ is $(\eps, d)$-superregular
for every $V_iV_{i+1} \in J$. For each edge $V_iV_{i+1}\in J$ let $V^1_i\subseteq V_i$
and $V^2_{i+1}\subseteq V_{i+1}$ be sets of size at least $(1-d/2)m$ such that $|V^1_i|=|V^2_{i+1}|$.
Suppose that $F$ is a $1$-regular digraph with $V_1\cup \dots \cup V_k\subseteq V(F)$ such that the following properties hold:
\begin{itemize}
\item[\rm{(i)}]For each edge $V_iV_{i+1}\in J$ the digraph $F[V^1_i,V^2_{i+1}]$ is a perfect matching.
\item[\rm{(ii)}] For each cycle $D$ in $F$ there is some edge $V_iV_{i+1}\in J$ such that $D$ contains a vertex
in $V^1_i$.
\item[\rm{(iii)}] Whenever $V_iV_{i+1}, V_jV_{j+1}\in J$ are such that $J$ avoids all edges in the segment $V_{i+1}CV_j$ of
$C$ from $V_{i+1}$ to $V_j$, then $F$ contains a path $P_{ij}$ joining some vertex $u_{i+1}\in V^2_{i+1}$ to some
vertex $u'_j\in V^1_j$ such that $P_{ij}$ winds around~$C$.
\end{itemize}
Then we can obtain a cycle on $V(F)$ from $F$ by replacing $F[V^1_i,V^2_{i+1}]$ with a suitable perfect matching
in $G[V^1_i,V^2_{i+1}]$ for each edge $V_iV_{i+1}\in J$. Moreover, if $J=E(C)$ then (iii) can be omitted.
\end{lemma}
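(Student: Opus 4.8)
The plan is to follow the proof of Lemma~\ref{mergecycles0} essentially verbatim; the two arguments differ only in how one verifies that a certain auxiliary digraph has a Hamilton cycle, and here that step is in fact easier, since we work with the ordinary (dense) notion of $\eps$-regularity. Fix an edge $V_iV_{i+1}\in J$, write $G_i:=G[V^1_i,V^2_{i+1}]$ and $m':=|V^1_i|=|V^2_{i+1}|\ge(1-d/2)m\ge m/2$, and choose auxiliary constants $\nu,\tau$ with $1/m\ll\nu\le\tau\ll d$, $\tau\ge 3\eps$ and $\nu\le d\tau/3$ (possible since $\eps\ll d$). Since $G[V_i,V_{i+1}]$ is $(\eps,d)$-superregular and $G_i$ is obtained from it by deleting at most $dm/2$ vertices from each side, we have $\delta^0(G_i)\ge (d/2-\eps)m$; moreover the $\eps$-regularity of $G[V_i,V_{i+1}]$ is still available on all subpairs of $G_i$ whose sides have size at least $\eps m$, because such sets also have size at least $\eps|V_i|=\eps|V_{i+1}|$ and the relevant edges are the same in $G_i$ as in $G[V_i,V_{i+1}]$.

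As in the proof of Lemma~\ref{mergecycles0}, for $u\in V^2_{i+1}$ let $f(u)$ be the first vertex of $V^1_i$ met on the cycle of $F$ through $u$ (traversed from $u$); condition~(i) makes $f\colon V^2_{i+1}\to V^1_i$ a bijection. Define the auxiliary digraph $A$ on $V^2_{i+1}$ by $N^+_A(u):=N^+_{G_i}(f(u))$. Exactly as in Lemma~\ref{mergecycles0}, a Hamilton cycle $u^1\dots u^{m'}u^1$ of $A$ yields a perfect matching ${\rm New}_i:=\{f(u^j)\,u^{j+1}:1\le j\le m'\}$ of $G_i$ (indices mod $m'$), and decomposing $F-F[V^1_i,V^2_{i+1}]$ into the $m'$ paths running from each $u$ to $f(u)$ shows that replacing $F[V^1_i,V^2_{i+1}]$ by ${\rm New}_i$ places all of $V^1_i\cup V^2_{i+1}$ on one cycle while keeping any two vertices that were on a common cycle of $F$ on a common cycle; this is the analogue of assertion~$(\dagger)$. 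The one genuinely different step is to show $A$ has a Hamilton cycle. Bijectivity of $f$ gives $\delta^0(A)=\delta^0(G_i)\ge (d/2-\eps)m\ge (d/3)|A|$, and for $S\subseteq V^2_{i+1}$ with $\tau|A|\le|S|\le(1-\tau)|A|$ bijectivity again gives $RN^+_{\nu,A}(S)=\{v\in V^2_{i+1}:|N^-_{G_i}(v)\cap f(S)|\ge\nu m'\}$ with $|f(S)|=|S|\ge\tau m'\ge\eps m$; applying $\eps$-regularity of $G[V_i,V_{i+1}]$ to $f(S)$ and to the set of $v\in V^2_{i+1}$ having fewer than $(d-2\eps)|S|$ neighbours in $f(S)$ shows this latter set has size less than $\eps m$, whence (using $\nu\le d\tau/3$ and $\tau\ge 3\eps$) $|RN^+_{\nu,A}(S)|\ge m'-\eps m\ge|S|+\nu m'$. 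So $A$ is a robust $(\nu,\tau)$-outexpander of linear minimum semidegree, and Theorem~\ref{expanderthm} supplies the desired Hamilton cycle. Thus the main — and only mildly non-routine — obstacle, this robust-expansion estimate for $A$, is handled directly from dense $\eps$-regularity rather than through Lemma~\ref{regtoexpander} as in Lemma~\ref{mergecycles0}.

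With $(\dagger)$ in hand, the lemma is assembled exactly as Lemma~\ref{mergecycles0}: apply $(\dagger)$ to the edges of $J$ one at a time (for an as-yet-unprocessed edge the relevant subpair of the current $1$-factor is still $F[V^1_i,V^2_{i+1}]$, hence still a perfect matching) to obtain a $1$-factor $F''$; by~(ii) and the preservation part of $(\dagger)$ every cycle of $F''$ meets some $V^1_i$ with $V_iV_{i+1}\in J$, so $F''$ is the union of cycles $C_i\supseteq V^1_i\cup V^2_{i+1}$, and using the winding path supplied by~(iii) for two edges of $J$ consecutive around $C$ — a path that was never altered by the matching replacements — shows consecutive $C_i$ coincide, so all $C_i$ are equal and $F''$ is a single cycle, obtained from $F$ by the stated matching replacements. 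Finally, when $J=E(C)$ one has $|V^1_i\cap V^2_i|\ge 2(1-d/2)m-m=(1-d)m>0$ for every $i$, so any vertex of $V^2_i\cap V^1_i$ furnishes the trivial winding path needed in the only case ($j=i+1$) in which the hypothesis of~(iii) can arise; hence~(iii) holds automatically and may be dropped.
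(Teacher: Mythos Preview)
Your proof is correct and follows essentially the same approach as the paper: both reduce to the template of Lemma~\ref{mergecycles0}, verify that the auxiliary digraph $A$ is a robust outexpander of linear minimum semidegree directly from $\eps$-regularity (the paper uses the derived $(2\eps,d/2)$-superregularity of $G_i$ with explicit parameters $(\eps d,3\eps)$, you use the original pair with generic $\nu,\tau$), and then apply Theorem~\ref{expanderthm}. Your treatment of the ``moreover'' clause via $|V^1_i\cap V^2_i|\ge(1-d)m>0$ makes explicit what the paper leaves implicit by reference to~(iii$'$) of Lemma~\ref{mergecycles0}.
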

\proof
The proof is very similar to that of Lemma~\ref{mergecycles0}. As before, for any edge $V_iV_{i+1}\in J$ let $G_i:=G[V^1_i,V^2_{i+1}]$.
Since $|V^1_i|,|V^2_{i+1}|\ge (1-d/2)m$, the $(\eps,d)$-superregularity of $G[V_i,V_{i+1}]$
implies that $G_i$ is still $(2\eps,d/2)$-superregular. Now we proceed as before to define $A$.
Using the superregularity of $G_i$, it is easy to show that
$A$ is a robust $(\eps d,3\eps)$-outexpander with $\delta^0(A) \ge d|A|/3$. 
(Indeed, for the outexpansion it suffices to observe that for all $U \subseteq V_i^1$ with 
$|U| \ge 3\eps |A|$, the number of vertices in $V_{i+1}^2$
which receive at least $(d/2-2\eps) |U|\ge \eps d |A|$ edges from $U$ in $G_i$ is at least $(1-2\eps) |A|$.)%
    \COMMENT{Don't get $\delta^0(A) \ge d|A|/2$ since $\delta(G_i)\ge d/2-2\eps m$.}
So as before we can apply
Theorem~\ref{expanderthm} to find a Hamilton cycle in $A$, which corresponds to a matching as required in~$(\dagger)$.
The remainder of the argument is now identical.
\endproof


\section{Schemes, consistent systems, chord sequences, and exceptional factors} \label{sec:except}

\subsection{Schemes and consistent systems}
In order to simplify (and shorten) the statements of our lemmas throughout the paper, we will introduce the notions of a $(k,m,\eps,d)$-scheme
and of a consistent $(\ell^*,k,m,\eps,d,\nu,\tau,\alpha,\theta)$-system. 
$(G,\cP,R,C)$ is called a \emph{$(k,m,\eps,d)$-scheme} if the following properties
are satisfied:
\begin{itemize}
\item [(Sch1)] $G$ and $R$ are digraphs. $\mathcal{P}$ is a partition of $V(G)$ into an exceptional set $V_0$ of size at most $\eps |G|$
and into $k$ clusters of size $m$. The vertex set of $R$ consists of these clusters.
\item[(Sch2)] For every edge $UW$ of $R$ the corresponding pair $G[U,W]$ is $(\eps,\ge d)$-regular.
\item[(Sch3)] $C$ is a Hamilton cycle in $R$ and for every edge $UW$ of $C$ the corresponding pair $G[U,W]$ is $[\eps,\ge d]$-superregular.
\item[(Sch4)] $V_0$ forms an independent set in $G$.
\end{itemize}
So roughly speaking, a scheme consists of a digraph $G$ with a regularity partition $\cP$ where the corresponding reduced digraph $R$ contains a Hamilton cycle $C$.
A consistent system has several additional features: mainly, the digraph $G$ needs to be a robust outexpander and the definition involves an additional partition $\cP_0$
which is coarser than $\cP$. More precisely, 
$(G,\cP_0, R_0,C_0,\cP,R,C)$ is called a \emph{consistent
$(\ell^*,k,m,\eps,d,\nu,\tau,\alpha,\theta)$-system} if the following properties
are satisfied (for (CSys3), recall that an $\ell^*$-refinement was defined before Lemma~\ref{randompartition}):
\begin{itemize}
\item[(CSys1)] Each of the digraphs $G$, $R_0$ and $R$ is a robust $(\nu,\tau)$-outexpander. Moreover $\delta^0(G)\ge \alpha n$, where $n:=|G|$,
$\delta^0(R_0)\ge \alpha |R_0|$ and $\delta^0(R)\ge \alpha |R|$. 
\item[(CSys2)] $\mathcal{P}_0$ is a partition of $V(G)$ into an exceptional set $V_0^0$ of size at most $\eps n$
and into $k/\ell^*$ equal sized clusters. The vertex set of $R_0$ consists of these clusters. So $|R_0|=k/\ell^*$.
Similarly, $\mathcal{P}$ is a partition of $V(G)$ into an exceptional set $V_0$ of size at most $\eps n$ and into $k$ clusters of size $m$.
The vertex set of $R$ consists of these clusters. So $|R|=k$.
\item[(CSys3)] $\mathcal{P}$ is obtained from an $\ell^*$-refinement of $\mathcal{P}_0$ by removing some vertices from each cluster
of this refinement and adding them to the exceptional set $V_0^0$. So $V_0$ is the union of $V_0^0$ with the set of these
vertices.
\item[(CSys4)] For every edge $UW$ of $R$ the corresponding pair $G[U,W]$ is $(\eps,\ge d)$-regular.
\item[(CSys5)] $C_0$ is a Hamilton cycle in $R_0$. Similarly, $C$ is a Hamilton cycle in $R$ and for every edge $UW$ of $C$ the corresponding
pair $G[U,W]$ is $[\eps,\ge d]$-superregular.
\item[(CSys6)] Suppose that $W,W'$ are clusters in $\cP_0$ and $V,V'$ are clusters in $\cP$ with $V\subseteq W$ and $V'\subseteq W'$.
Then $WW'\in E(R_0)$ if and only if $VV'\in E(R)$.
\item[(CSys7)] $C$ can be viewed as obtained from $C_0$ by winding $\ell^*$ times around $C_0$, i.e.~for every edge $WW'$ of $C_0$
there are precisely $\ell^*$ edges $VV'$ of $C$ such that $V\subseteq W$ and $V'\subseteq W'$. 
\item[(CSys8)] Whenever $W$ is a cluster in $\mathcal{P}_0$ and $x\in V(G)$ is a vertex with $n^+\ge \tau |W|$ outneighbours%
     \COMMENT{Will apply this in Lemma~\ref{prelimfactor} and there we apply it with $\alpha/4$ instead of $\tau$.
But we cannot write $\alpha/4$ here since in Lemma~\ref{deletesystem} the $\alpha$ becomes smaller. For the same reason, we cannot
write $d$ or $\nu$. But $\tau$ doesn't become smaller in Lemma~\ref{deletesystem}, so we can take $\tau$ here.}
in $W$, then $x$ has at least $\theta n^+/\ell^*$ outneighbours in each cluster $V$ in $\mathcal{P}$ with $V\subseteq W$.
A similar condition holds for inneighbours of the vertices of $G$.
\item[(CSys9)] $V_0$ forms an independent set in $G$.
\end{itemize}
Note that (CSys3) and (CSys6) together imply that $R$ is an $\ell^*$-fold blow-up of $R_0$. Moreover,
if $(G,\cP_0, R_0,C_0,\cP,R,C)$ is a consistent $(\ell^*,k,m,\eps,d,\nu,\tau,\alpha,\theta)$-system
then $(G,\cP,R,C)$ is a $(k,m,\eps,d)$-scheme. We will usually denote the clusters in $\cP$ by $V_1,\dots,V_k$ and assume that
they are labelled in such a way that $C=V_1\dots V_k$. 

The next result states that if $(G,\cP_0, R_0,C_0,\cP,R,C)$ is a consistent system and one deletes only a few edges at every vertex of $G$,
then one still has a consistent system with slightly worse parameters. The analogue also holds for schemes.

\begin{lemma}\label{deletesystem}
Suppose that $0<1/n\ll 1/k\ll \eps\le \eps'\ll d\ll \nu\ll \tau\ll \alpha,\theta\le 1$.
\begin{itemize}
\item[{\rm (i)}] Let $(G,\cP_0, R_0,C_0,\cP,R,C)$ be a consistent $(\ell^*,k,m,\eps,d,\nu,\tau,\alpha,\theta)$-system
with $|G|=n$. Let $G'$ be a digraph obtained from $G$ by deleting at most $\eps' m$ outedges and at most $\eps' m$ inedges at every vertex of $G$.
Then $$(G',\cP_0, R_0,C_0,\cP,R,C)$$ is still a consistent $(\ell^*,k,m,3\sqrt{\eps'},d,\nu/2,\tau,\alpha/2,\theta/2)$-system.
\item[{\rm (ii)}] Let $(G,\cP,R,C)$ be a $(k,m,\eps,d)$-scheme
with $|G|=n$. Let $G'$ be a digraph obtained from $G$ by deleting at most $\eps' m$ outedges and at most $\eps' m$ inedges at every vertex of $G$.
Then $(G',\cP,R,C)$ is still a $(k,m,3\sqrt{\eps'},d)$-scheme.
\end{itemize}
\end{lemma}
\proof
We only prove~(i). The argument for~(ii) is similar.
It is easy to see that $(G',\cP_0, R_0,C_0,\cP,R,C)$ still satisfies (CSys2), (CSys3), (CSys6), (CSys7) and (CSys9).
Proposition~\ref{superslice} (applied with $d':=\eps'$) implies that every edge of $R$ still corresponds to an
$2\sqrt{\eps'}$-regular pair in $G'$ of density at least $d-\eps -2\sqrt{\eps'}\ge d-3\sqrt{\eps'}$. Similarly, every edge of $C$ still corresponds to an
$[2\sqrt{\eps'},d]$-superregular pair in $G'$. Thus (CSys4) and (CSys5) hold with $\eps$ replaced by $3\sqrt{\eps'}$.
Moreover, it is easy to check that $G'$ is still a robust $(\nu/2,\tau)$-outexpander with $\delta^0(G)\ge \alpha n/2$.
So (CSys1) holds with $\nu$ and $\alpha$ replaced by $\nu/2$ and $\alpha/2$.
Finally, suppose that $x$, $n^+$, $W$ and $V$ are as in (CSys8). Note that $|W|\ge \ell^* m$ and so
$$\eps ' m\le \theta \tau m/2\le \theta \tau |W|/2\ell^*\le \theta n^+/2\ell^*.$$ Thus
in the digraph $G'$ the number of outneighbours of $x$ in $V$ is at least $\theta n^+/\ell^*-\eps' m\ge \theta n^+/2\ell^*$.
So (CSys8) holds with $\theta$ replaced by $\theta/2$.
\endproof


\subsection{Shifted walks and chord sequences} \label{sec:shiftwalks}

Roughly speaking, the Hamilton cycles we will find usually wind around a blown-up cycle 
$C=V_1 \dots V_k$. Here the $V_i$ are clusters. However, we also need to incorporate the vertices of an exceptional set $V_0$
into the cycle. For each $x \in V_0$, Lemma~\ref{prelimfactor}(i) below will give suitable in- and outneighbours $x^-$ and $x^+$
which attach $x$ to the blown-up cycle. However, to build a Hamilton cycle, we need additional edges:
Suppose for example that $V_0=\{x\}$ and $x^+$ is not in the cluster succeeding the cluster containing $x^-$.
Then it is impossible to extend the path $x^-xx^+$ into a Hamilton cycle in which all other edges wind around $C$.
So we need additional edges which will `balance out' the edges $x^-x$ and $xx^+$. 
These additional edges are found via so-called `shifted walks' and their associated chord sequences, which we define next.
Shifted walks were first introduced in~\cite{kelly}, also in order to find Hamilton cycles in directed graphs.

Let $R$ be a digraph and let $C$ be a Hamilton cycle in $R$.
Given a vertex $V$ of $R$, let $V^+$ denote the vertex succeeding $V$ on $C$ and let 
$V^-$ denote the vertex preceding $V$. (Later on, the vertices of $R$ will be clusters, so
we use capital letters to denote them.) A {\em shifted walk} from a vertex $A$ to a vertex $B$ in $R$
is a walk $SW(A,B)$ of the form
\[ SW(A,B) = V_1 C V^-_1 V_2 C V^-_2 \dots V_t C V^-_t V_{t+1},\]
where $V_1=A$, $V_{t+1} = B$ and the edge $V^-_i V_{i+1}$ belongs to $R$ for each $i=1,\dots,t$.
(Here we write $V_iCV_i^-$ for the path obtained from $C$ by deleting the edge $V_i^-V_i$.) We say that $SW(A,B)$ \emph{traverses $C$ $t$ times}.
We call the edges $V^-_i V_{i+1}$ the \emph{chord edges} of $SW(A,B)$. 
If $A=B$ then $A$ is also a shifted walk from $A$ to $B$.
Without loss of generality, we may assume that an edge of $C$ is not a chord edge
according to the above definition. (Indeed, suppose that  $V^-_i V_{i+1}$ is an edge of $C$.
Then $V_{i+1}=V_i$ and so we can obtain a shorter shifted walk from $A$ to $B$.) 

For our purposes, it turns out that shifted walks contain too many edges. So we will only use their chord edges. So
given a shifted walk $$SW(A,B)=V_1 C V^-_1 V_2 C V^-_2 \dots V_t C V^-_t V_{t+1},$$
the corresponding \emph{chord sequence $CS(A,B)$} from $A$ to $B$ consists of all chord edges in $SW(A,B)$
in the same order as they appear in $SW(A,B)$.
(In~\cite{KOTkelly}, this was called a skeleton walk, but we prefer not to use this name here as the chord edges do not actually form a walk.)
We say that $V$ lies in the \emph{interior of $CS(A,B)$} if
$V\in \{V_2,V^-_2,\dots,V_t,V^-_t\}$.

The next result guarantees a short chord sequence between any two vertices in a robust outexpander. Moreover, this chord sequence
can be chosen so that its interior avoids a given small set. The proof does not require the outexpansion property to be robust.

\begin{lemma} \label{buildchord}
Let $R$ be a robust $(\nu,\tau)$-outexpander with $\delta^0(R) \geq 2\tau |R|$ and $\nu \le \tau\le 1/3$. Let
$C$ be Hamilton cycle in $R$. Given vertices $A,B\in V(R)$ and a set of vertices $\mathcal{V}'\subseteq V(R)$ with $|\mathcal{V}'| \le \nu |R|/4$,
there is a  chord sequence $CS(A,B)$ in $R$ containing at most $3/\nu$ edges
whose interior avoids $\mathcal{V}'$.
\end{lemma}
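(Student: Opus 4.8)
The plan is to build the shifted walk $SW(A,B)$ greedily, one chord edge at a time, and to show that after at most $3/\nu$ chord edges one can always arrive at $B$. Recall that a chord sequence from $A$ to $B$ corresponds to a sequence $V_1=A, V_2, \dots, V_{t+1}=B$ of vertices of $R$ such that $V_i^- V_{i+1}\in E(R)$ for each $i$; the chord edges are precisely the edges $V_i^- V_{i+1}$, and the interior of the chord sequence is $\{V_2,V_2^-,\dots,V_t,V_t^-\}$. So I need to produce such a sequence of length at most $3/\nu$ avoiding $\cV'$ in its interior.

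First I would handle trivial cases: if $A=B$ the empty chord sequence works, so assume $A\neq B$. The main step is an iterative construction. Suppose after $i-1$ chord edges we have reached a vertex $V_i$ (with $V_1=A$). Consider the set $S_i$ of vertices reachable from $V_i$ by a \emph{single} further chord edge while staying outside $\cV'$: more precisely, a vertex $W$ is reachable in one step if $V_i^- W\in E(R)$ and $W\notin \cV'\cup\{W^-: W\in\cV'\}$ — wait, more carefully, I want the new interior vertices $V_{i+1}$ and $V_{i+1}^-$ to avoid $\cV'$, so I should only move to vertices $W$ with $W\notin\cV'$ and $W^-\notin\cV'$. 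The key observation is that the number of choices of $W$ with $V_i^- W\in E(R)$ is at least $\delta^+(R)\ge 2\tau|R|$, and removing the at most $2|\cV'|\le \nu|R|/2$ forbidden vertices still leaves $\ge 2\tau|R|-\nu|R|/2\ge \tau|R|$ admissible targets. Hence from any $V_i$ we can reach, in one chord step, a set $T_i$ of admissible vertices with $|T_i|\ge \tau|R|$.

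Now I would iterate this and use robust outexpansion to force rapid growth. Let $S$ be the set of vertices reachable from $A$ by a chord sequence of length at most $j$ whose interior avoids $\cV'$. If $|S|\le (1-\tau)|R|$ and $B\notin S$, I claim $S$ grows: if $\tau|R|\le |S|\le(1-\tau)|R|$ then $|RN^+_{\nu,R}(S)|\ge |S|+\nu|R|$, and every vertex $Z\in RN^+_{\nu,R}(S)$ has $\ge\nu|R|$ inneighbours in $S$; since at most $2|\cV'|\le\nu|R|/2$ of these are forbidden, $Z$ has an inneighbour $V_i\in S$ that reaches it admissibly, so (after possibly deleting the at most $|\cV'|$ forbidden vertices of $RN^+$) we get $\ge |S|+\nu|R|/2$ reachable vertices — actually, to get clean growth I would instead simply note $RN^+_{\nu,R}(S)\setminus\cV'$ has size $\ge |S|+\nu|R|/2$ and all its vertices are reachable, modulo checking the "$W^-\notin\cV'$" condition, which costs another $|\cV'|$. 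So the reachable set grows by at least $\nu|R|/4$ (say) at each step once it has size $\ge\tau|R|$; for the initial jump from $A$ we already have $|T_1|\ge\tau|R|$. Thus after at most $1+4/\nu$ steps — I should be slightly careful with constants to land inside $3/\nu$; using $\delta^0(R)\ge 2\tau|R|$ more aggressively the first step already gives $\ge\tau|R|$, and $(1-2\tau)|R|/(\nu|R|/4)\le 4/\nu$ further steps suffice, but I expect the bound $3/\nu$ is obtained by a tighter accounting, e.g.\ growth by $\ge\nu|R|$ rather than $\nu|R|/4$ at each step when $\cV'$ is small enough that $2|\cV'|\le\nu|R|/4$ loses little, so the honest bound is roughly $\tau/\nu + O(1)\le 3/\nu$ since $\tau\le 1$ — the reachable set reaches size $>(1-\tau)|R|\ge\delta^-(R)$, whence $B$ (which has $\ge 2\tau|R|>\nu|R|$ inneighbours, of which one lies in the reachable set and is admissible) is reachable in one more step.

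The main obstacle, and the point requiring the most care, is bookkeeping the interaction between the forbidden set $\cV'$ and the "$V_i^-$" shift: each chord step forbids both $\cV'$ and its predecessor-set $(\cV')^-$, so one loses up to $2|\cV'|$ vertices per step, and one must check that $|\cV'|\le\nu|R|/4$ is small enough that this never destroys the robust-outexpansion growth estimate and that the total length stays within $3/\nu$. A secondary point is the endgame: ensuring we can actually terminate \emph{at} $B$ (not merely reach a large set), which uses $\delta^-(R)\ge 2\tau|R|>\nu|R|+2|\cV'|$ so that $B$ has an admissible inneighbour in the reachable set. I would also remark, as the lemma statement's last sentence does, that nothing here uses \emph{robust} outexpansion in an essential way beyond the single inequality $|RN^+_{\nu,R}(S)|\ge|S|+\nu|R|$, which is exactly the expansion giving the step-by-step growth.
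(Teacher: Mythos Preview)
Your approach is essentially the paper's: grow the set $S_j$ of vertices reachable from $A$ by an admissible chord sequence of length $\le j$, using robust outexpansion to force linear growth, and terminate once $S_j$ is large enough to hit a suitable inneighbour of $B$. There is one concrete slip, though. To advance from $V_i$ to $V_{i+1}$ you need $V_i^- V_{i+1}\in E(R)$, so the expansion must be applied to the \emph{predecessor set} $S^-:=\{V^-:V\in S\}$, not to $S$. Your line ``$Z\in RN^+_{\nu,R}(S)$ has an inneighbour $V_i\in S$ that reaches it admissibly'' does not work: $V_iZ\in E(R)$ is not the condition you need. The fix is painless since $|S^-|=|S|$, so the size estimates are unaffected; the paper writes the iteration as $S_i:=N^+_R(N^-_C(S_{i-1}))\cap\mathcal V^*$ for exactly this reason.

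For the constants you were unsure about: the paper defines $\mathcal V'':=\mathcal V'\cup\{V:V^-\in\mathcal V'\}$ once (so $|\mathcal V''|\le\nu|R|/2$) and simply intersects with $\mathcal V^*:=V(R)\setminus\mathcal V''$ at each step. This gives the clean growth
\[
|S_i|\ge |RN^+_{\nu,R}(S_{i-1}^-)|-|\mathcal V''|\ge |S_{i-1}|+\nu|R|-\nu|R|/2=|S_{i-1}|+\nu|R|/2,
\]
so $\lceil 2/\nu\rceil$ intermediate steps suffice to reach size $\ge(1-\tau)|R|$. One extra chord edge at the start ($A^-A_0$ with $A_0\in\mathcal V^*$, using $\delta^+(R)\ge 2\tau|R|>|\mathcal V''|$) and one at the end (some $B_0B$ with $B_0^+\in S_{\lceil 2/\nu\rceil}$, using $\delta^-(R)\ge 2\tau|R|$ and $|S_{\lceil 2/\nu\rceil}|\ge(1-\tau)|R|$) bring the total to $\lceil 2/\nu\rceil+2\le 3/\nu$. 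Your double removal of $2|\mathcal V'|$ was over-cautious; packaging the two constraints $W\notin\mathcal V'$ and $W^-\notin\mathcal V'$ into a single forbidden set $\mathcal V''$ is what makes the arithmetic land.
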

\proof
Let $\mathcal{V}''$ be the union of $\mathcal{V}'$ and the set of all those clusters $V$ for which $V^-\in \mathcal{V}'$
(where $V^-$ is the predecessor of $V$ on $C$). Thus $|\mathcal{V}''|\le \nu |R|/2$.
Let $\mathcal{V^*}:=V(R) \setminus \mathcal{V}''$. Pick any outneighbour $A_0$ of $A^-$ in~$\mathcal{V^*}$.
Let $S_1:=N^+_R(A_0^-)\cap \mathcal{V^*}$. So $|S_1|\ge 2\tau |R|-|\mathcal{V}''|\ge \nu |R|/2$. For each $i\ge 2$ let $S_i:=N^+_R(N^-_C(S_{i-1}))\cap \mathcal{V^*}$.
Thus $S_{i-1}\subseteq S_i$ and each cluster in $S_i$ can be reached by a shifted walk from $A_0$ that traverses
$C$ at most $i$ times and avoids $\mathcal{V}'$. Moreover for each $i\ge 2$ either $|S_{i-1}|< (1-\tau)|R|$ and
$$
|S_i|\ge |RN^+_{\nu,R}(N^-_C(S_{i-1}))|-|\mathcal{V}''|
\ge |N^-_C(S_{i-1})| + \nu |R|-|\mathcal{V}''|
\ge |S_{i-1}|+\nu |R|/2
$$ or
$|S_{i-1}|\ge (1-\tau)|R|$ and $|S_i|\ge |S_{i-1}|\ge (1-\tau)|R|$. But this implies that $|S_{\lceil 2/\nu\rceil}|\ge (1-\tau)|R|$.
Thus $B$ has a neighbour $B_0$ such that its successor $B^+_0$ on $C$ lies in $S_{\lceil 2/\nu\rceil}$.
Since $B^+_0\in S_{\lceil 2/\nu\rceil}$ there is a shifted walk $SW(A_0,B^+_0)$ which traverses $C$ at most $\lceil 2/\nu\rceil$ times
and which avoids $\mathcal{V}'$. Thus $ACA^-A_0\cup SW(A_0,B^+_0)\cup B^+_0CB_0B$ is a shifted walk from $A$ to $B$ which traverses $C$ at most
$\lceil 2/\nu\rceil+2\le 3/\nu$ times and which meets $\mathcal{V}'$ at most in the clusters $A^-$ and $B$. So the chord sequence corresponding to this
shifted walk is as required.
\endproof

The following proposition records the crucial property of chord sequences for later use.
As indicated earlier, it means we can use these sequences to `balance out' an arbitrary edge $x^-x^+$
(or a path $x^-xx^+$ with $x\in V_0$) to obtain a `locally balanced' set of edges.

\begin{prop} \label{balanced}
Let $(G,\cP,R,C)$ be a $(k,m,\eps,d)$-scheme. Given vertices $x^-,x^+$ which are contained in clusters $U(x^-)$ and $U(x^+)$
in $\cP$ respectively, consider any chord sequence $CS(U(x^+),U(x^-)^+)=:CS$ in $R$. 
Let $CS'$ be obtained from $CS$ by replacing each edge $UW$ of $CS$ with an edge of $G[U,W]$.
Suppose that $CS'$ is a matching which avoids both $x^-$ and $x^+$. Let $CS^*$ be obtained from $CS'$ by adding the edge $x^-x^+$.
For each cluster $U$ in $\cP$, let $U^1$ be the set of vertices of $U$ which are not an initial vertex of an edge in $CS^*$
and let $U^2$ be the set of vertices of $U$ which are not a final vertex of an edge in $CS^*$.
Then for each edge $UW$ on $C$, we have $|U^1|=|W^2|$.
\end{prop}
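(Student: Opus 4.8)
The plan is to analyse the edges of $CS^*$ cluster by cluster and show that, for each cluster $U$, the number of edges of $CS^*$ leaving $U$ equals the number of edges of $CS^*$ entering $U^+$. Since $CS^*$ is a matching, $|U^1| = m - (\text{number of edges of } CS^* \text{ leaving } U)$ and $|W^2| = m - (\text{number of edges of } CS^* \text{ entering } W)$ for each cluster $W$; so with $W = U^+$ the claim $|U^1| = |W^2|$ is equivalent to the assertion that, for every edge $UU^+$ of $C$, the out-degree of $U$ in $CS^*$ (counting edges with initial vertex in $U$) equals the in-degree of $U^+$ in $CS^*$ (counting edges with final vertex in $U^+$). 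So it suffices to prove this degree identity.

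To do this, first I would rewrite everything at the level of the reduced digraph $R$. Each edge of $CS'$ lies in some $G[U',W']$ with $U'W' \in E(CS)$, and the added edge $x^-x^+$ goes from $U(x^-)$ to $U(x^+)$. Recall the structure of a chord sequence: it is the list of chord edges $V_i^- V_{i+1}$ of a shifted walk $SW(A,B) = V_1 C V_1^- V_2 C V_2^- \cdots V_t C V_t^- V_{t+1}$, where here $A = U(x^+)$ and $B = U(x^-)^+$. The key bookkeeping observation is this: in $R$, consider the function $\delta(V) := (\text{number of edges of } CS^* \text{ with initial cluster } V) - (\text{number of edges of } CS^* \text{ with final cluster } V^+)$; I want $\delta(V) = 0$ for all $V$. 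Now the chord edges of $SW(A,B)$, taken together with the added edge $x^- x^+$ from $U(x^-)$ to $U(x^+)$, form a structure whose initial clusters are $V_1^-, V_2^-, \ldots, V_t^-, U(x^-)$ and whose final clusters are $V_2, V_3, \ldots, V_{t+1}, U(x^+)$. Since $V_1 = A = U(x^+)$ and $V_{t+1} = B = U(x^-)^+$, the multiset of initial clusters is $\{V_1^-, \ldots, V_t^-\} \cup \{U(x^-)\}$ and the multiset of final clusters is $\{V_2, \ldots, V_t, U(x^-)^+\} \cup \{U(x^+)\} = \{V_2, \ldots, V_t\} \cup \{U(x^-)^+, U(x^+)\}$. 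Comparing: the final clusters are precisely the successors (on $C$) of the initial clusters — indeed $V_i^-$ has successor $V_i$, so $\{V_1^-, \ldots, V_t^-\}$ has successor-set $\{V_1, \ldots, V_t\} = \{U(x^+), V_2, \ldots, V_t\}$, and $U(x^-)$ has successor $U(x^-)^+$; together this gives exactly $\{U(x^+)\} \cup \{V_2,\ldots,V_t\} \cup \{U(x^-)^+\}$, matching the final-cluster multiset. Hence for every cluster $V$, the number of edges of $CS^*$ with initial cluster $V$ equals the number with final cluster $V^+$, i.e. $\delta(V) = 0$.

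Transferring back to $G$: fix an edge $UU^+$ of $C$. By the previous paragraph, the number of edges of $CS^*$ with initial \emph{vertex} in $U$ (which equals the number with initial cluster $U$, since every vertex of $G$ in a cluster belongs to that cluster) equals the number of edges of $CS^*$ with final vertex in $U^+$. Call this common number $s$. Because $CS^*$ is a matching, no two of its edges share an initial vertex, so $|U^1| = |U| - s = m - s$; likewise no two share a final vertex, so $|(U^+)^2| = |U^+| - s = m - s$. Therefore $|U^1| = |(U^+)^2|$, which with $W = U^+$ is the assertion.

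The only slightly delicate point — the part I'd want to write carefully — is the combinatorial identity matching the multiset of initial clusters with the successor-multiset of the final clusters, because one has to be careful about the two "boundary" contributions: the endpoints $V_1 = U(x^+)$ and $V_{t+1} = U(x^-)^+$ of the shifted walk, and the extra edge $x^-x^+$ that is appended. Everything else is routine. One should also note the trivial degenerate case $A = B$ (i.e. $CS$ empty), where $CS^* = \{x^-x^+\}$ with $U(x^+) = U(x^-)^+$, so $x^-x^+$ leaves $U(x^-)$ and enters its successor, and the identity holds directly.
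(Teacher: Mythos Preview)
Your proof is correct and follows essentially the same approach as the paper's: both reduce the claim to the observation that whenever an edge of $CS$ ends in a cluster $W$, the next edge of $CS$ (or, for the final edge, the added edge $x^-x^+$) starts in the predecessor $W^-$ of $W$ on $C$. You phrase this as a multiset identity between final clusters and successors of initial clusters, while the paper phrases it edge-by-edge, but the underlying bookkeeping is identical.
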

\proof
This follows immediately from the fact that for every edge $W^*W$ of $CS$ (apart from the final edge of $CS$), the next edge of $CS$
will be of the form $UU^*$, where $U$ is the predecessor of $W$ on $C$. If $W^*W$ is the final edge of $CS$ then $W=U(x^-)^+$ and
so the edge $x^-x^+\in CS^*$ has its initial vertex $x^-$ in the predecessor of $W$ on $C$ (see Figure~\ref{figchordseq}).
\endproof
\begin{figure}
\centering\footnotesize
\includegraphics[scale=0.35]{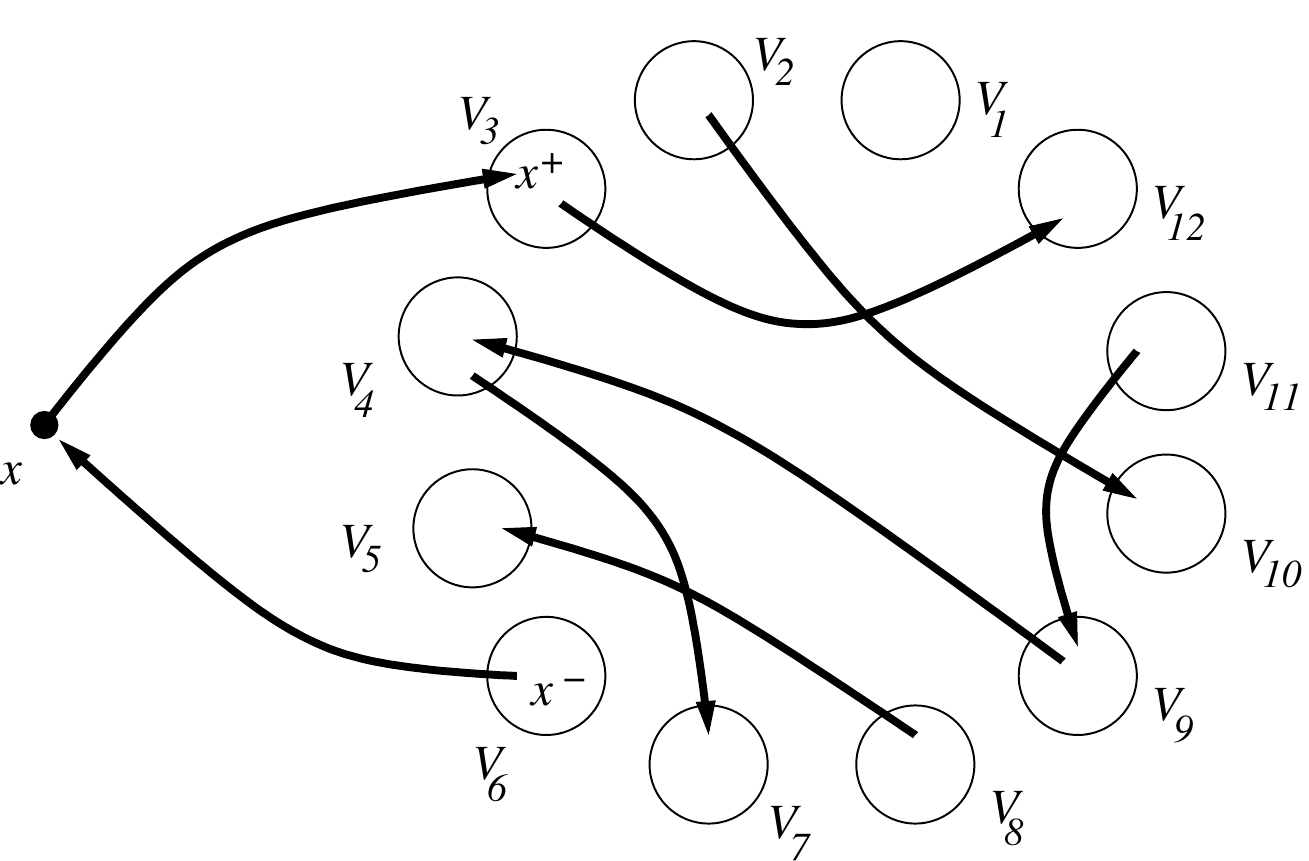}
\caption{Balancing out a path $x^-xx^+$ using a sequence $CS'$ obtained from
the chord sequence $CS(U(x^+),U(x^-)^+)=CS(V_3,V_7)=(V_2V_{10}, V_9V_4, V_3V_{12}, V_{11}V_9, V_8V_5, V_4V_7)$.
Here $C=V_1\dots V_{12}$.}\label{figchordseq}
\end{figure}

In a typical application of this observation, the assertion that $|U^1|=|W^2|$ means that it will be possible to choose a perfect matching in $G[U^1,W^2]$.
If we do this for each pair of consecutive clusters on $C$, then the union of all these matchings and the edges in $CS^*$ forms a $1$-regular digraph
$F$ covering all vertices in all clusters. We will then be able to transform $F$ into a Hamilton cycle, e.g.~using Lemma~\ref{mergecycles0} or~\ref{mergecycles}.


\subsection{Complete exceptional sequences and exceptional factors}\label{sec:ces}
Suppose that $(G,\cP,R,C)$ is a $(k,m,\eps,d)$-scheme. Let $V_1,\dots,V_k$ denote the clusters in $\cP$ such that $C=V_1\dots V_k$.
Recall that $V_0$ denotes the exceptional set. An \emph{original exceptional edge} in $G$ is an edge with one endvertex in $V_0$.
(So by (Sch4) the other endvertex lies in $V(G)\setminus V_0$.) An \emph{exceptional cover} $EC$ consists of precisely one outedge
and precisely one inedge incident to every vertex in $V_0$. Thus $|EC|=2|V_0|$.

Let $G^{\rm basic}$ be obtained from $G$ by adding all edges from $N^-(x)$ to $N^+(x)$ for every $x\in V_0$ and
by deleting $V_0$. We call each edge $yz$ from $N^-(x)$ to $N^+(x)$
that was added to $G$ in order to obtain $G^{\rm basic}$ an \emph{exceptional edge} and $x$ the \emph{vertex
associated with~$yz$}. Note that we might have $y=z$ in which case we add a loop.
Moreover, we allow $G^{\rm basic}$ to have multiple edges: if e.g.~$yz\in E(G)$ and $yz$ is an edge from $N^-(x)$ to $N^+(x)$
for precisely two exceptional vertices then $yz$ has multiplicity~3 in $G^{\rm basic}$ and precisely two of the edges from $y$ to $z$ in $G^{\rm basic}$
are exceptional edges. We sometimes write $G^{\rm orig}$ for $G$.

Given a spanning subdigraph $H$ of $G$, we define $H^{\rm basic}$ in a similar way. Conversely, if $H$ is a subdigraph of $G^{\rm basic}$, then
$H^{\rm orig}$ is the subdigraph of $G$ obtained from $H$ by replacing each exceptional edge $yz$ of $H$ with the path $yxz$, where $x\in V_0$ is the exceptional vertex
associated with~$yz$. Note that if $F$ is a $1$-factor of $G$, 
then $F^{\rm basic}$ is a 1-factor of $G^{\rm basic}$. Conversely, a $1$-factor $F$ of $G^{\rm basic}$ which contains exactly one
exceptional edge associated with every exceptional vertex corresponds to a $1$-factor $F^{\rm orig}$ in $G^{\rm orig}$.
Moreover, in this case $F$ is a Hamilton cycle if and only if $F^{\rm  orig}$ is a Hamilton cycle. 

A \emph{complete exceptional sequence} $CES$ is a matching in $G^{\rm basic}$ which consists of precisely one exceptional edge associated
with every exceptional vertex. So in particular $CES$ does not contain loops. Note that the original version $CES^{\rm orig}$ of $CES$
forms an exceptional cover. However, if $EC$ is an exceptional cover, then $EC^{\rm basic}$ might contain paths, cycles (and loops) and so it might
not form a complete exceptional sequence.

When constructing Hamilton cycles, we will usually do this by constructing a Hamilton cycle in $G^{\rm basic}$
which contains exactly one complete exceptional sequence and no other exceptional edges.
In other words, we will use the following observation, which is an immediate consequence of the above discussion.
\begin{obs} \label{basicobs}
Suppose that $D$ is a Hamilton cycle in $G^{\rm basic}$ which contains exactly one complete exceptional sequence $CES$ and no other exceptional edges.
Then $D^{\rm orig}$ is a Hamilton cycle in $G$. 
\end{obs}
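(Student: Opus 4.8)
The plan is to deduce the statement in one line from the facts about $G^{\rm basic}$ and about the operation $H\mapsto H^{\rm orig}$ already recorded in the paragraphs preceding the observation, so essentially no new work is needed. First I would note that a Hamilton cycle $D$ of $G^{\rm basic}$ is in particular a $1$-factor of $G^{\rm basic}$, and that the hypothesis that $D$ contains exactly one complete exceptional sequence $CES$ and no other exceptional edges says precisely that among the edges of $D$ there is exactly one exceptional edge associated with each exceptional vertex of $V_0$, and none besides these. Thus $D$ is exactly of the form to which the fact stated just above applies: $D^{\rm orig}$ is a well-defined $1$-factor of $G^{\rm orig}=G$, and $D$ is a Hamilton cycle if and only if $D^{\rm orig}$ is. Since $D$ is a Hamilton cycle of $G^{\rm basic}$, it follows that $D^{\rm orig}$ is a Hamilton cycle of $G$, as claimed.

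If one wishes to expand the quoted fact rather than cite it, the key point is that replacing each exceptional edge of $D$ by the corresponding path of length two neither repeats a vertex nor disconnects the cycle. Write $V_0=\{x_1,\dots,x_t\}$ and let $y_iz_i\in E(D)$ be the exceptional edge associated with $x_i$. Since $CES$ is a matching containing no loops, the vertices $y_1,z_1,\dots,y_t,z_t$ are pairwise distinct, and they all lie in clusters of $\cP$, hence are disjoint from $V_0$. Replacing $y_iz_i$ by the directed path $y_ix_iz_i$ (whose two edges $y_ix_i$ and $x_iz_i$ lie in $G$ because $y_i\in N^-(x_i)$ and $z_i\in N^+(x_i)$) therefore inserts the $t$ distinct new vertices $x_1,\dots,x_t$, each exactly once, and leaves the outdegree of each $y_i$ and the indegree of each $z_i$ unchanged and equal to $1$. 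Hence $D^{\rm orig}$ spans $V(G)$ and every vertex has in- and outdegree $1$ in it; and since subdividing the arcs of a directed cycle by internally disjoint directed paths again yields a single directed cycle, $D^{\rm orig}$ is a single Hamilton cycle of $G$.

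The only thing that needs care is this bookkeeping — that the subdivision is vertex-disjoint and covers $V_0$ exactly once — which is precisely what the matching condition in the definition of a complete exceptional sequence guarantees. No expansion, regularity or probabilistic input enters, so this is genuinely the immediate consequence of the preceding discussion that is advertised in the text.
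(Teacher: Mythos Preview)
Your proposal is correct and takes essentially the same approach as the paper: the paper simply states that the observation ``is an immediate consequence of the above discussion,'' and your first paragraph does exactly this, citing the fact (recorded just before the observation) that a $1$-factor $F$ of $G^{\rm basic}$ containing exactly one exceptional edge for each exceptional vertex is a Hamilton cycle if and only if $F^{\rm orig}$ is. Your optional expanded argument is also fine and merely makes explicit the bookkeeping that the paper leaves to the reader.
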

For our arguments it is convenient to be able to define and use a digraph  which is regular (when viewed as a subdigraph of $G^{\rm basic}$)
and contains many different complete exceptional sequences
-- each of these will be part of a different Hamilton cycle in our decomposition. The exceptional factors $EF$ defined below have the required properties.
 
Given $L\in\mathbb{N}$ which divides $k$, the \emph{canonical interval partition
of $C$ into $L$ intervals} consists of the intervals $V_{(i-1)L+1}\dots V_{iL+1}$ for all $i=1,\dots,k/L$ (where $V_{k+1}:=V_1$).
Given an interval $I$ on~$C$, we write $I^\circ$ for the interior of $I$ and $I^{\circ\circ}$
for the interior of $I^\circ$. Moreover, we write $U\in I$ if $U$ is a cluster on $I$.

Suppose that $k/L, m/K\in\mathbb{N}$ and let $\cI$ be the canonical interval partition of $C$ into $L$
intervals of equal length. A \emph{complete exceptional path system} $CEPS$ (with respect to $C$)
with parameters $(K,L)$ spanning an interval $I=U_jU_{j+1}\dots U_{j'}$ with $I \in \cI$ consists of $m/K$
vertex-disjoint paths $P_1,\dots,P_{m/K}$ in $G^{\rm basic}$ such that the following conditions hold.
\begin{itemize}
\item[(CEPS1)] Every $P_s$ has its initial vertex in $U_j$ and its final vertex in $U_{j'}$.
\item[(CEPS2)] $CEPS$ contains a complete exceptional sequence $CES$ (but no other exceptional edges) and all the edges in $CES$ avoid the
endclusters $U_j$ and $U_{j'}$ of $I$.%
    \COMMENT{use the latter condition in the proof of Lemmas~\ref{exceptseq} and~\ref{cyclebreak}}
\item[(CEPS3)] $CEPS$ contains precisely $m/K$ vertices from every cluster in $I$ and no other vertices.
\end{itemize}
Note that the above implies that $CEPS^{\rm orig}$ consists of $m/K$ vertex-disjoint paths which cover all vertices in $V_0$ as well as $m/K$ vertices in each cluster
in $I$. Moreover, every path of $CEPS^{\rm orig}$ has its initial vertex in $U_j$ and its final vertex in $U_{j'}$.
If $K=1$ (and thus the vertex set of $CEPS$ is the union of all the clusters in~$I$), then we say that $CEPS$ \emph{completely spans~$I$}.

Suppose that $\cP'$ is a $K$-refinement of $\mathcal{P}$.
For each cluster $U\in \mathcal{P}$, let $U(1),\dots,U(K)$ denote the subclusters of $U$ in $\mathcal{P}'$. 
Consider a complete exceptional path system $CEPS$ as above. We say that $CEPS$ has \emph{style $b$} if 
its vertex set is $U_{j}(b) \cup \dots \cup U_{j'}(b)$.
An \emph{exceptional factor $EF$ with parameters $(K,L)$} (with respect to $C$, $\cP'$) is a $1$-factor of $G^{\rm basic}$ which
satisfies the following properties:
\begin{itemize}
\item[(EF1)] On each of the $L$ intervals $I\in\cI$, $EF$ induces the vertex-disjoint union of $K$ complete exceptional path systems.
\item[(EF2)] Moreover, for each $I \in \cI$ and each $b=1,\dots, K$, exactly one of the exceptional path systems in $EF$ spanning $I$ has style $b$.
\end{itemize}
Note that $EF$ consists of $KL$ edge-disjoint complete exceptional path systems. Moreover, the second part of (CEPS2)
implies that the union of all the $KL$ complete exceptional sequences contained in these complete exceptional path systems
forms a matching. This will be used in the proof of Lemma~\ref{cyclebreak}.

The reason that the definition of a consistent system involves not only the reduced digraph $R_0$ but also its
refinement $R$ is that this enables us to find complete exceptional path systems within an interval $I$ of $C$
and thus we will be able to find exceptional factors (see the proof of Lemma~\ref{prelimfactor} for more details).

\subsection{Finding exceptional factors in a consistent system}\label{sec:findexcfactor}

The following lemma will be used to construct the exceptional factors defined in the previous subsection.
For (a), recall that an $\eps$-uniform $K$-refinement of a partition $\cP$ was defined before Lemma~\ref{randompartition}.
Assertion (i) guarantees a `localized' exceptional cover for $V_0$, assertion (ii) finds  chord sequences in the reduced graph which `balance out'
this exceptional cover and (iii) finds edges in $G$ which correspond to these chord sequences.

\begin{lemma}\label{prelimfactor}
Suppose that $0<1/n\ll 1/k\ll \eps\ll \eps'\ll d\ll \nu\ll \tau\ll \alpha,\theta\le 1$, that $\ell^*/L, m/K\in\mathbb{N}$,%
    \COMMENT{This implies that $k$ is a multiple of $L$.}
that $L/\ell^*\ll 1$ and $\eps\ll 1/K,1/L$.
Let $(G,\cP_0, R_0,C_0,\cP,R,C)$ be a consistent $(\ell^*,k,m,\eps,d,\nu,\tau,\alpha,\theta)$-system
with $|G|=n$. Suppose that $G'$ is a spanning subdigraph of $G$ and that $\cP'$ is a partition of $V(G)$ such that the following
conditions are satisfied:
\begin{itemize}
\item[{\rm (a)}] $\cP'$ is an $\eps$-uniform $K$-refinement of $\cP$. (So in particular $V_0$ is
the exceptional set in $\cP'$.)
\item[{\rm (b)}] Every vertex $x\in V_0$ satisfies $d^\pm_{G}(x)- d^\pm_{G'}(x)\le \eps n$.
\item[{\rm (c)}] Every vertex $x\in V(G)\setminus V_0$ satisfies $d^\pm_{G}(x)- d^\pm_{G'}(x)\le (\eps')^3m/K$.
\end{itemize}
For every cluster $U$ in $\cP$ let $U(1),\dots,U(K)$ denote all those clusters in $\cP'$ which are contained in $U$.
Let $\mathcal{I}$ be the canonical interval partition of $C$ into $L$ intervals of equal length. Consider any $I\in \mathcal{I}$ and any $j$ with $1\le j\le K$. 
Then the following properties hold:
\begin{itemize}
\item[{\rm (i)}] For every exceptional vertex $x\in V_0$ there is a pair $x^-\neq x^+$ such that $x^-\in N^-_{G'}(x)\cap \bigcup_{U\in I^{\circ\circ}}U(j)$,
$x^+\in N^+_{G'}(x)\cap \bigcup_{U\in I^{\circ\circ}}U(j)$, such that the vertices in all these pairs $x^-,x^+$ are distinct for different exceptional vertices
and where each cluster in $\cP$ contains at most $\eps^{1/4} m$ vertices in these pairs.
\item [{\rm (ii)}] Given a vertex $z\in V(G)\setminus V_0$, let $U(z)$ denote the cluster in $\cP$ which contains $z$
and let $U(z)^+$ be the successor of that cluster on $C$.
Then for each of the pairs $x^-,x^+$ guaranteed by (i) there is a chord sequence $CS_x$
from $U(x^+)$ to $U(x^-)^+$ in $R$ which contains at most $3/\nu^3$ edges such that each of these edges has both endvertices in~$I^\circ$
and such that in total no cluster in $\cP$ is used more than $4\eps^{1/4}m$ times by all these $CS_x$.
\item [{\rm (iii)}] For each $x\in V_0$ there is a sequence $CS'_x$ of edges in $G'$ obtained by replacing every edge $UU'$ in $CS_x$ by an edge
in $G'[U(j),U'(j)]$ such that $CS'_x$ forms a matching which avoids all the pairs $y^-, y^+$ guaranteed by (i) (for all $y\in V_0$)
and such that the $CS'_x$ are pairwise vertex-disjoint. (Thus $\bigcup_{x\in V_0} CS'_x$ is a matching too.) 
\item [{\rm (iv)}] For every edge $UW$ of $C$ the pair $G'[U(j),W(j)]$ is $[\eps',\ge d]$-superregular.
\end{itemize}
\end{lemma}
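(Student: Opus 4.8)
The plan is to establish the four assertions in the order (iv), (i), (ii), (iii). Throughout set $n:=|G|$, write $Y:=\bigcup_{U\in I^{\circ\circ}}U(j)$, and note that $km=n-|V_0|$, so $km\le n\le 2km$ and every cluster $W$ of $\cP_0$ has $|W|\ge\ell^*m$. Since $C$ is a single Hamilton cycle of $R$, (CSys7) forces $C$ to decompose into $\ell^*$ consecutive \emph{laps}, each lap visiting, in the cyclic order of $C_0$, exactly one cluster of $\cP$ inside each cluster of $\cP_0$; and because $\ell^*/L\in\mathbb N$ while $L/\ell^*\ll 1$, every $I\in\mathcal I$ consists of $\ge\ell^*/L\ge 4$ whole laps plus one extra cluster, so $I^{\circ\circ}$ contains at least $\ell^*/L-2\ge\ell^*/(2L)$ clusters $V\in\cP$ with $V\subseteq W$ for each cluster $W$ of $\cP_0$. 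For (iv): given $UW\in E(C)$, the pair $G[U,W]$ is $[\eps,\ge d]$-superregular by (CSys5); applying Lemma~\ref{randompartition}(i) (with $(\eps')^3$ in the role of its output parameter, legitimate since $\eps\ll(\eps')^3$) shows $G[U(j),W(j)]$ is $[(\eps')^3,\ge d]$-superregular, and then (c) together with Proposition~\ref{superslice}(iii) (with $d':=(\eps')^3$) gives that $G'[U(j),W(j)]$ is $[2(\eps')^{3/2},\ge d]$-, hence $[\eps',\ge d]$-, superregular.

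For (i), fix $x\in V_0$; we produce $x^+$, the construction of $x^-$ being symmetric. First we show $|N^+_G(x)\cap Y|\ge c_1n$ with $c_1:=\alpha\theta\tau/(16LK)$. Since $\delta^0(G)\ge\alpha n$ by (CSys1) and $\sum_W|W|\le n$, calling a cluster $W$ of $\cP_0$ \emph{rich} if $|N^+_G(x)\cap W|\ge\tau|W|$, the non-rich clusters carry at most $\tau n$ out-neighbours of $x$, so the rich ones carry at least $(\alpha-\tau)n\ge\alpha n/2$, which forces at least $\alpha k/(2\ell^*)$ rich clusters (as $|W|\le n\ell^*/k$). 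For a rich $W$ and any $V\subseteq W$ in $\cP$, (CSys8) yields $|N^+_G(x)\cap V|\ge\theta\tau|W|/\ell^*\ge\theta\tau m\ge\eps|V|$, whence (URef) gives $|N^+_G(x)\cap V(j)|\ge(1-\eps)\theta\tau m/K\ge\theta\tau m/(2K)$. Summing over the rich $W$ and, for each, over the $\ge\ell^*/(2L)$ relevant $V\subseteq W$ inside $I^{\circ\circ}$,
\[ |N^+_G(x)\cap Y|\ \ge\ \frac{\alpha k}{2\ell^*}\cdot\frac{\ell^*}{2L}\cdot\frac{\theta\tau m}{2K}\ =\ \frac{\alpha\theta\tau km}{8LK}\ \ge\ c_1n . \]
By (b), $|N^+_{G'}(x)\cap Y|\ge c_1n-\eps n\ge c_1n/2$. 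Now process the vertices of $V_0$ one at a time; when handling $x$, forbid the (at most $2|V_0|\le 4\eps km$) vertices already chosen and every vertex in a \emph{full} cluster, i.e.\ a cluster of $I^{\circ\circ}$ already containing $\ge\eps^{1/4}m$ chosen vertices. There are at most $4\eps^{3/4}k$ full clusters, contributing at most $4\eps^{3/4}km/K$ vertices to $Y$, so the forbidden set has size $<c_1n/2$ (using $\eps$ small relative to $\alpha,\theta,\tau,1/K,1/L$); hence we may pick distinct $x^+\in N^+_{G'}(x)\cap Y$ and $x^-\in N^-_{G'}(x)\cap Y$ outside it. No cluster ever reaches more than $\eps^{1/4}m$ chosen vertices, so (i) holds.

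For (ii), write $\overline V$ for the cluster of $\cP_0$ containing a cluster $V$ of $\cP$; by (i), $U(x^+)\in I^{\circ\circ}$ and $U(x^-)^+\in I^{\circ}$. We process the $x\in V_0$ in turn, maintaining for each cluster $V$ of $\cP$ the number of times it has been used so far by the sequences built (this total is $\le|V_0|\cdot 6/\nu$). When handling $x$, call a cluster of $\cP$ \emph{heavy} if it has been used $\ge\tfrac32\eps^{1/4}m$ times, and a cluster $W$ of $\cP_0$ \emph{bad} if more than half of the $\ge\ell^*/(2L)$ clusters $V\subseteq W$ in $I^{\circ\circ}$ are themselves heavy or have a heavy predecessor on $C$; a short count of the uses shows the number of bad clusters is at most $\nu|R_0|/4$, provided $\eps$ is small relative to $\nu$ and $1/L$. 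Apply Lemma~\ref{buildchord} to $R_0$ (a robust $(\nu,\tau)$-outexpander with $\delta^0(R_0)\ge\alpha|R_0|\ge 2\tau|R_0|$) with Hamilton cycle $C_0$ and with $\mathcal V'$ the set of bad clusters: this gives a chord sequence $\overline V_1^-\overline V_2,\dots,\overline V_t^-\overline V_{t+1}$ in $R_0$ from $\overline{U(x^+)}$ to $\overline{U(x^-)^+}$ with $t\le 3/\nu\le 3/\nu^3$ whose interior avoids the bad clusters. Lift it by putting $V_1:=U(x^+)$, $V_{t+1}:=U(x^-)^+$ and, for $2\le i\le t$, choosing $V_i\subseteq\overline V_i$ in $I^{\circ\circ}$ with neither $V_i$ nor its predecessor $V_i^-$ heavy (possible since $\overline V_i$ is not bad). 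Because $C$ winds $\ell^*$ times around $C_0$ we have $V_i^-\subseteq\overline V_i^-$, and $V_i^-\in I^{\circ}$ since $V_i\in I^{\circ\circ}$; as $\overline V_i^-\overline V_{i+1}\in E(R_0)$, (CSys6) gives $V_i^-V_{i+1}\in E(R)$. Thus $CS_x:=(V_1^-V_2,\dots,V_t^-V_{t+1})$ is a chord sequence in $R$ from $U(x^+)$ to $U(x^-)^+$ with at most $3/\nu^3$ edges, all of whose endvertices lie in $I^{\circ}$; and since a cluster is used at most $\tfrac32\eps^{1/4}m+6/\nu\le 2\eps^{1/4}m$ times as an interior endpoint of the $CS_x$ and, by the bound in (i), at most $2\eps^{1/4}m$ times as one of the forced endpoints $U(x^+)^-$, $U(x^-)^+$, no cluster of $\cP$ is used more than $4\eps^{1/4}m$ times in total.

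Finally, for (iii) we choose the edges of the $CS'_x$ greedily, processing $x\in V_0$ in turn and, within each $CS_x$, the edges in order. For an edge $UW$ of $CS_x$ the pair $G[U,W]$ is $(\eps,\ge d)$-regular by (CSys4), so $G[U(j),W(j)]$ is $(\eps',\ge d)$-regular by Lemma~\ref{randompartition}(ii), and by (c) with Proposition~\ref{superslice}(i) the pair $G'[U(j),W(j)]$ is $\eps_0$-regular of density at least $d/2$ for some $\eps_0\ll d$. Within $U(j)$ (and likewise $W(j)$) we must avoid: the $\le\eps^{1/4}m$ vertices lying in a pair from (i); the $\le 4\eps^{1/4}m$ vertices used by edges of earlier $CS'_y$'s (bounded by the number of uses of $U$ in (ii)); and the $\le 6/\nu^3$ vertices used by earlier edges of $CS_x$ itself. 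As this is $<(1-\eps_0)m/K$ when $\eps$ is small relative to $1/K$, the $\eps_0$-regular pair $G'[U(j),W(j)]$ has an edge between the surviving parts, which we take as the lift of $UW$. The $CS'_x$ are then matchings, pairwise vertex-disjoint, and avoid all pairs $y^-,y^+$, as required. The main obstacle is (ii): a chord sequence in $R$ is naturally found relative to a Hamilton cycle of $R$, whereas here it must stay inside the short interval $I^{\circ}$; the resolution is that $I$ already contains several full laps of $C_0$, so the sequence can be built in $R_0$ relative to $C_0$ and lifted into $I^{\circ}$ via (CSys6), the remaining work being the bookkeeping that keeps the set of clusters one must avoid below the threshold $\nu|R_0|/4$ demanded by Lemma~\ref{buildchord}. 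The density estimate $|N^+_G(x)\cap Y|\ge c_1n$ in (i), which chains (CSys8) with the $\eps$-uniformity of $\cP'$ and the lap structure of $C$, is the secondary point.
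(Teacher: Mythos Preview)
Your argument is essentially correct, but the route you take for~(ii) is genuinely different from the paper's, and it is worth recording the comparison.

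For (i), (iii) and (iv) your proof and the paper's are the same in spirit: a greedy choice of $x^\pm$ controlled by a ``full cluster'' count, a regularity calculation via Lemma~\ref{randompartition} and Proposition~\ref{superslice}, and a greedy edge-by-edge replacement inside the $(U(j),W(j))$ pairs. (Two cosmetic points: in~(i) you should pick $x^+$ and $x^-$ sequentially, updating the set of full clusters in between, to avoid overshooting the bound $\eps^{1/4}m$ by one; and in~(iii) you should invoke Lemma~\ref{randompartition}(ii) with an output parameter $\eps_0$ satisfying $\eps\ll\eps_0\ll(\eps')^3$, so that Proposition~\ref{superslice}(i) applies with $d'=(\eps')^3\ge\eps_0$.)

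For (ii), the paper proceeds differently. It lets $I^*$ be $I$ minus its final cluster, observes that $R^*:=R[I^*]$ is a $D$-fold blow-up of $R_0$ with $D=\ell^*/L$, and uses Lemma~\ref{expanderblowup} to deduce that $R^*$ is a robust $(\nu^3,2\tau)$-outexpander. It then applies Lemma~\ref{buildchord} \emph{inside} $R^*$ (with respect to the Hamilton cycle $C^*$ obtained from $I^*$ by closing it up with a single extra edge). This automatically confines the chord sequence to $I^\circ$, at the cost of the weaker expansion parameter $\nu^3$, which is why the bound is $3/\nu^3$.

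Your approach instead applies Lemma~\ref{buildchord} directly in $R_0$ (with respect to $C_0$), and then lifts the resulting chord sequence to $R$ via (CSys6), using the lap structure of $C$ over $C_0$ to place the intermediate clusters inside $I^{\circ\circ}$. This is cleaner in that it avoids Lemma~\ref{expanderblowup} altogether and actually yields shorter sequences (length $\le 3/\nu$). The price is slightly more delicate bookkeeping: you must track heaviness at the level of $\cP$ but apply the avoidance condition of Lemma~\ref{buildchord} at the level of $\cP_0$, which is exactly what your ``bad cluster'' definition handles. One small gap to patch: if $\overline{U(x^+)}=\overline{U(x^-)^+}$ but $U(x^+)\neq U(x^-)^+$, Lemma~\ref{buildchord} as stated may return the empty sequence in $R_0$, which does not lift. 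This is easily fixed (e.g.\ use the single $C_0$-edge $\overline{U(x^+)}^-\,\overline{U(x^+)}$ as a length-one chord sequence and lift it via (CSys6)), but you should say so.
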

\proof Recall that $k=|C|$ denotes the number of clusters in $\cP$. 
We will first prove (i). We will choose the pairs $x^-,x^+$ for every exceptional vertex $x$ in turn.
So suppose that for some exceptional vertices we have already chosen such pairs and that we now wish to choose such a pair for $x\in V_0$.
Let $S$ denote the set of all vertices lying in the pairs that we have chosen already. So $|S|< 2|V_0|\le 2\eps n$.
We say that a cluster of $\cP$ is \emph{full} if it contains at least $\eps^{1/4} m$ vertices from $S$.
Then the number of full clusters is at most 
$$
\frac{|S|}{\eps^{1/4}m} \le \frac{2\eps n}{\eps^{1/4}m}\le \sqrt{\eps} k.
$$
So for any vertex $x \in V_0$, the number of outneighbours of $x$ in the full clusters is 
at most $\sqrt{\eps} k m\le \sqrt{\eps} n$.

Set $k_0:=k/\ell^*$ and recall that $k_0=|R_0|=|C_0|$.
Note that the length of $I$ is $k/L =k_0\ell^*/L$ and thus the length of $I$ is a multiple of $k_0$
(as $\ell^*$ is a multiple of $L$ by assumption). But by (CSys7) $C$ was obtained from the Hamilton cycle $C_0$
on the clusters in $\cP_0$ by winding $\ell^*$ times around $C_0$. Thus for each cluster $V$ in $\cP_0$, $I$ contains at least
\begin{equation}\label{eqcountsub}
\frac{{\rm length}(I)}{k_0}=\frac{k/L}{k/\ell^*}= \frac{\ell^*}{L}
\end{equation}
clusters of the current partition $\cP$ which are contained in $V$.

We say that a cluster $V\in \cP_0$ is \emph{friendly} if $|N^+_{G}(x)\cap V|\ge \alpha |V|/3$.
(CSys1) implies that $\delta^0(G) \ge \alpha n$ and so at least $\alpha k_0/3$ clusters in $\cP_0$ are friendly. 
But (CSys8) now implies that for each such cluster~$V$, $x$ has at least
$\theta |N^+_{G}(x)\cap V|/\ell^*$ outneighbours in each cluster $U$ of $\cP$ with $U \subseteq V$ (in the digraph $G$).
Then by (a) and the condition (URef) in the definition of an $\eps$-uniform refinement, we have that $x$ has at least
$$\frac{\theta |N^+_{G}(x)\cap V|}{2\ell^*K}\ge \frac{\theta \alpha |V|}{6\ell^*K}\ge \frac{\theta \alpha n}{7\ell^*Kk_0}$$ outneighbours%
      \COMMENT{as $\theta |N^+_{G}(x)\cap V|/\ell^*\ge \theta \alpha |V|/3\ell^*\ge\theta \alpha |U|/3\ge \eps |U|$.}
in each subcluster $U(j)$ of $\cP'$ with $U(j)\subseteq V$ (again in the digraph $G$).
Together with (\ref{eqcountsub}) this implies that $x$ has at least
$$
\frac{\alpha k_0}{3}\cdot \frac{\theta \alpha n}{7\ell^*Kk_0}\cdot \frac{\ell^*}{2L}\ge \eps^{1/8} n
$$
outneighbours in $G$ which lie in $\bigcup_{U\in I^{\circ\circ}} U(j)$. (Here we multiply with $\ell^*/2L$ instead of $\ell^*/L$ since
counting outneighbours in $\bigcup_{U\in I^{\circ\circ}} U(j)$ means that we lose 4 clusters when considering $I^{\circ\circ}$ instead of $I$.)
Together with (b) and the fact that at most $\sqrt{\eps} n$ outneighbours of $x$ lie in full clusters of $\cP$ this shows that we can always find an
outneighbour $x^+\in \bigcup_{U\in I^{\circ\circ}} U(j)\setminus S$ of $x$ in $G'$ such that $x^+$ lies in a cluster of $\cP$ which is not full.
We can argue similarly to find $x^-$. This proves~(i).

\medskip

Our next aim is to prove~(ii). We will choose the sequence $CS_x$ for every exceptional vertex $x$ in turn.
So suppose that for some exceptional vertices we have already chosen such sequences and that we now wish to choose $CS_x$ for $x\in V_0$.
Call a cluster in $\cP$ \emph{crowded} if it is used at least $\eps^{1/4}m$ times by the sequences $CS_{x'}$
found so far. Thus the total number of crowded clusters in $\cP$ is at most $|V_0|\cdot \frac{6/\nu^3}{\eps^{1/4}m}\le \eps^{2/3}k\le \sqrt{\eps}|I|$.

Let $U_{\rm first}$ and $U_{\rm final}$ denote the first and the final cluster of $I$.
Let $I^*$ denote the interval obtained from~$I$ by deleting $U_{\rm final}$. As already observed in our proof of (i), the
length of $I$ is a multiple of $k_0$. So for every cluster $V$ in $\cP_0$ there are precisely ${\rm length}(I)/k_0=:D$ clusters in $I^*$ which are contained in~$V$.
Moreover, $U_{\rm first}$ and $U_{\rm final}$ are contained in the same cluster of $\cP_0$.
Consider the subgraph $R^*$ of $R$ induced by all the clusters in $I^*$. Note that $R^*$ contains an
edge $E^*$ from the final cluster $U^*_{\rm final}$ of $I^*$ to $U_{\rm first}$. (This follows since (CSys6)
implies that $R_0$ has an edge
from the cluster in $\cP_0$ containing $U^*_{\rm final}$ to the cluster in $\cP_0$ containing $U_{\rm final}$. But
$U_{\rm first}$ and $U_{\rm final}$ are contained in the same cluster of $\cP_0$. So (CSys6) now implies that $R$
(and thus also $R^*$) has an edge from $U^*_{\rm final}$ to $U_{\rm first}$.)
Let $C^*$ be the Hamilton cycle of $R^*$ which consists of $I^*$ together
with this edge~$E^*$. (CSys6) also implies that $R^*$ can be viewed as being obtained from $R_0$
by replacing each vertex of $R_0$ by $D$ vertices
and replacing each edge $WW'$ of $R_0$ by a complete bipartite graph between the two corresponding sets of $D$ vertices (where all the edges
are directed from the $D$ vertices corresponding to $W$ to the $D$ vertices corresponding to $W'$). In other words, $R^*$ is
a $D$-fold blow-up of $R_0$. Together with (CSys1)
and Lemma~\ref{expanderblowup} this in turn implies that $R^*$ is a robust $(\nu^3,2\tau)$-outexpander.
Moreover, it is easy to check that $\delta^0(R^*)\ge \alpha |R^*|$.

Apply Lemma~\ref{buildchord} to $R^*$ (with $\mathcal{V}'$ consisting of $U_{\rm first}$ and all the crowded clusters in $I^*$
and with $C^*$ playing the role of $C$) to find a chord sequence $CS_x$ from $U(x^+)$ and $U(x^-)^+$ in $R^*$ which contains most $3/\nu^3$ edges and
whose interior avoids $U_{\rm first}$ as well as all the crowded clusters. Since $x^-,x^+\in \bigcup_{U\in I^{\circ\circ}} U(j)$ by (i),
this implies that $CS_x$ uses only edges whose endclusters both lie in $I^\circ$. But $R^*\subseteq R$. 
Thus $CS_x$ is actually a chord sequence in $R$. 

We proceed in this way to choose $CS_x$ for all $x\in V_0$. We claim that in total no cluster in $\cP$
is used more than $4\eps^{1/4}m$ times by the sequences $CS_x$. To see this, note that no cluster in $\cP$
is used more than $\eps^{1/4}m+3/\nu^3\le 2\eps ^{1/4}m$ times by the interiors of the all $CS_x$ (as all these interiors avoid the crowded clusters).
Moreover, since by (i) no cluster in $\cP$ contains more than $\eps^{1/4}m$ vertices belonging to pairs $x^-,x^+$ guaranteed by (i), it follows that
no cluster in $\cP$ is used more than $2\eps^{1/4}m$ times as one of the two remaining clusters $U(x^+)^-$, $U(x^-)^+$ of $CS_x$ (for all $x\in V_0$
together). This completes the proof of~(ii).

\medskip

It remains to check (iii) and~(iv). To do this, we will first prove the following claim.

\medskip

\noindent \textbf{Claim 1.} \emph{For every edge $UU'$ of $R$ the pair $G'[U(j),U'(j)]$ is $\eps'$-regular of density at least $d/2$.
Moreover, for every edge $UW$ of $C$ the pair $G'[U(j),W(j)]$ is $[\eps',\ge d]$-superregular.}

\smallskip
To prove the first part of Claim~1, consider any edge $UU'$ of $R$. (CSys4) implies that $G[U(j),U'(j)]$ is $K\eps$-regular of density at least $d-2\eps$.
Together with (c) and Proposition~\ref{superslice}(i) this implies that $G'[U(j),U'(j)]$ is still $\eps'$-regular of density at least~$d-2\eps'\ge d/2$.

Now consider any edge $UW$ of $C$. Then (CSys5), (a) and Lemma~\ref{randompartition}(i) together imply that
$G[U(j),W(j)]$ is $[(\eps')^3,\ge d]$-superregular. 
As before, together with (c) and Proposition~\ref{superslice}(iii) this implies that $G'[U(j),W(j)]$
is still $[\eps',\ge d]$-superregular. This proves Claim~1 and thus in particular~(iv).

\medskip

In order to replace the edges of $CS_x$ to obtain $CS'_x$,
we again consider each exceptional vertex $x\in V_0$ in turn. By making $CS_x$ shorter if necessary, we may assume that every edge of $R$ occurs
at most once in each $CS_x$. Suppose that we have already chosen $CS'_{x'}$ for
some vertices $x'\in V_0$ and that we now wish to replace the edges of $CS_x$ in order to choose $CS'_x$. Moreover, suppose that we have already replaced
some edges $U''U'''$ of $CS_x$ and we now wish to replace the edge $UU'$ of $CS_x$.

For this, we let $U_*(j)$ denote the set of all those vertices $u\in U(j)$ that satisfy the following three conditions:
\begin{itemize}
\item There is no $y\in V_0$ such that $u$ lies in the pair $y^-,y^+$ guaranteed by~(i).
\item There exists no $x'\in V_0$ for which we have already defined $CS'_{x'}$ and for which $u$ is an endvertex of some edge in $CS'_{x'}$.
\item $u$ is not an endvertex of an edge of $G'[U''(j),U'''(j)]$ which was used to replace some edge $U''U'''$ of $CS_x$.
\end{itemize}
(i) implies that $U$ contains at most $\eps^{1/4}m$ vertices violating the first condition
and (ii) implies that $U$ contains at most $4\eps^{1/4}m$ vertices violating the second or third condition.
Thus
\begin{equation}\label{eq:U*j}
|U_*(j)|\ge |U(j)|-5\eps^{1/4}m=\frac{m}{K}-5\eps^{1/4}m\ge \frac{|U(j)|}{2}.
\end{equation}
Our aim is to replace $UU'$ in $CS_x$
by an edge in $G'[U_*(j),U'_*(j)]$. But by Claim~1, $G'[U(j),U'(j)]$ is $\eps'$-regular of density at least $d/2$.
Together with (\ref{eq:U*j}) this implies that $G'[U_*(j),U'_*(j)]$ contains an edge. We do this for every edge of
$CS_x$ in turn and let $CS'_x$ denote the sequence obtained in this way. Then these sequences $CS'_x$ are as required in (iii). 
\endproof

Based on the previous lemma, it is now straightforward to construct many edge-disjoint exceptional factors.

\begin{lemma}\label{exceptseq}
Suppose that
$$0<1/n\ll 1/k \ll \eps\ll d\ll \nu\ll \tau\ll \alpha,\theta\le 1; \ \ \eps \ll 1/K,1/L; \ \ Kr_0/m\ll d,$$
that $\ell^*/L, m/K\in\mathbb{N}$ and $L/\ell^*\ll 1$. Let $(G,\cP_0, R_0,C_0,\cP,R,C)$ be a consistent
$(\ell^*,k,m,\eps,d,\nu,\tau,\alpha,\theta)$-system with $|G|=n$. Let $\cP'$ be an $\eps$-uniform $K$-refinement of $\cP$.
Then there is a set $\mathcal{EF}$ of $r_0$ exceptional factors with parameters $(K,L)$
(with respect to $C$, $\cP'$) such that the original versions of all these $r_0$ exceptional factors
are pairwise edge-disjoint subdigraphs of~$G$.
\end{lemma}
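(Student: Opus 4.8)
The plan is to build the $r_0$ exceptional factors one after another, each in turn built from its $KL$ complete exceptional path systems (one for every pair $(I,b)$ with $I\in\cI$ and $1\le b\le K$), by repeatedly applying Lemma~\ref{prelimfactor}. First fix $\eps'$ with $\eps\ll\eps'\ll d$; this is compatible with also requiring $Kr_0/m\ll(\eps')^3$ since $Kr_0/m\ll d$. Throughout the construction let $G'$ be the spanning subdigraph of $G$ obtained by deleting the edges of the original versions of all complete exceptional path systems built so far. Every such path system contains, for each $x\in V_0$, exactly one exceptional edge $x^-x^+$, so its original version uses exactly two edges at $x$; hence after all of $\mathcal{EF}$ has been built we will have deleted at most $2KLr_0\le\eps n$ edges at every vertex of $V_0$ (using $\eps k\gg Ld\gg KLr_0/m$). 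Similarly each vertex of $V(G)\setminus V_0$ lies in at most two of the $L$ intervals of $\cI$, hence in at most two of the $KL$ path systems of any one exceptional factor, and on at most one path of each, so at most $4r_0\le(\eps')^3m/K$ edges are ever deleted at a non-exceptional vertex. Thus at every stage the current $G'$ together with the fixed $\eps$-uniform $K$-refinement $\cP'$ of $\cP$ satisfies (a)--(c) of Lemma~\ref{prelimfactor}, and we may apply it.

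So fix $I=U_jU_{j+1}\dots U_{j'}\in\cI$ and $b\in\{1,\dots,K\}$ and apply Lemma~\ref{prelimfactor} to the current $G'$ with its parameter $j$ equal to $b$. We obtain, for each $x\in V_0$, a pair $x^-\ne x^+$ in $\bigcup_{U\in I^{\circ\circ}}U(b)$, a chord sequence $CS_x$ of at most $3/\nu^3$ edges with all endclusters in $I^\circ$, and a realising matching $CS'_x$ of edges of $G'$ between subclusters $U(b)$ which, over all $x$, is a matching disjoint from all pairs $y^-,y^+$; moreover $G'[U(b),W(b)]$ is $[\eps',\ge d]$-superregular for every edge $UW$ of $C$ (by (iv)). Put $CES:=\{\,x^-x^+:x\in V_0\,\}$; this is a complete exceptional sequence whose edges avoid $U_j$ and $U_{j'}$, and $M^*:=CES\cup\bigcup_{x\in V_0}CS'_x$ is a matching avoiding $U_j$ and $U_{j'}$. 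By Proposition~\ref{balanced} applied to each $CS'_x\cup\{x^-x^+\}$ and summed over $x$, $M^*$ is locally balanced along the cycle used to produce the $CS_x$, and hence for every edge $UW$ of $C$ with $U,W$ in $I$ the number of vertices of $U(b)$ not used as an initial vertex of $M^*$ equals the number of vertices of $W(b)$ not used as a final vertex (for $UW=U_{j'-1}U_{j'}$ both equal $m/K$). Processing the edges $U_jU_{j+1},\dots,U_{j'-1}U_{j'}$ of $C$ in turn and adding, for each, a perfect matching between the still-unused initial vertices of the first subcluster and still-unused final vertices of the second — which exists by this balancing, by (iv), and by Propositions~\ref{superslice} and~\ref{perfmatch}, since (i) and (ii) bound the number of vertices of each $U(b)$ used by $M^*$ by $5\eps^{1/4}m$, so the unused sets have size at least $(1-d/2)m/K$ — we obtain a spanning $1$-in-$1$-out subdigraph of $\bigcup_{U\in I}U(b)$ in which the vertices of $U_j(b)$ have indegree $0$, those of $U_{j'}(b)$ outdegree $0$, and all others indegree and outdegree $1$. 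Thus it is the disjoint union of $m/K$ paths from $U_j(b)$ to $U_{j'}(b)$ together with some cycles inside the middle clusters; each such cycle contains one of the added winding edges, hence meets a superregular pair $G'[U_i(b),U_{i+1}(b)]$ with $j+1\le i\le j'-2$, so, exactly as in the proof of Lemma~\ref{mergecycles0} (via Theorem~\ref{expanderthm}), we reroute along these pairs so as to absorb every cycle into one of the paths without altering $CES$. Taking original versions, the resulting $m/K$ vertex-disjoint paths form a complete exceptional path system $CEPS_{I,b}$ of style $b$ spanning $I$: (CEPS1) and (CEPS3) are immediate, and (CEPS2) holds since $CES$ is the only complete exceptional sequence used and it avoids $U_j$ and $U_{j'}$.

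Carrying this out for all $K$ styles and all $L$ intervals produces, by (EF1) and (EF2), one exceptional factor $EF$ with parameters $(K,L)$ with respect to $C$ and $\cP'$; its $KL$ path systems are pairwise edge-disjoint because distinct pairs $(I,b)$ use distinct style-$b$ subclusters of the (essentially disjoint) clusters of the intervals, and the corresponding complete exceptional sequences live on disjoint pairs $x^-,x^+$. Deleting the edges of $EF^{\rm orig}$ from $G'$ and repeating the whole construction $r_0$ times yields the required family $\mathcal{EF}$ of $r_0$ exceptional factors whose original versions are pairwise edge-disjoint subdigraphs of $G$; the bounds above show that hypotheses (b) and (c) of Lemma~\ref{prelimfactor} survive all $KLr_0$ deletions, so every application is legitimate. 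The step requiring care is the middle one: turning the balanced matching $M^*$, the short chord sequences and the winding matchings into \emph{exactly} $m/K$ vertex-disjoint paths from $U_j$ to $U_{j'}$ that still contain $CES$ and avoid the endclusters of $I$ — this is where the local balancing of Proposition~\ref{balanced} and the cycle-absorbing argument behind Lemma~\ref{mergecycles0} do the work, and one must check that the chord sequences, the exceptional edges and the winding matchings fit together consistently. The remaining bookkeeping (edge counts and verification of the hypotheses of Lemma~\ref{prelimfactor}) is routine given the hierarchy, in particular $Kr_0/m\ll d$.
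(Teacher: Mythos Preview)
Your proof is correct and follows essentially the same approach as the paper: iterate over factors and over pairs $(I,b)$, apply Lemma~\ref{prelimfactor} to obtain the pairs $x^\pm$, the chord sequences $CS_x$, and their realisations $CS'_x$, use Proposition~\ref{balanced} for local balance, fill in with perfect matchings in the superregular pairs $G'[U(b),W(b)]$, and then merge any stray cycles into the paths via the rerouting technique behind Lemmas~\ref{mergecycles0}/\ref{mergecycles}.

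The one place where the paper is more precise than your write-up is the cycle-absorption step. Lemmas~\ref{mergecycles0} and~\ref{mergecycles} are stated for $1$-regular digraphs, not for a mixture of paths and cycles, so you cannot literally apply them to your object. The paper handles this by identifying each vertex of $U_j(b)$ with a (distinct, arbitrary) vertex of $U_{j'}(b)$; this turns the path-plus-cycle system into a $1$-regular digraph on a blow-up of the cycle $C_I$ obtained from $I$ by identifying its endpoints, so Lemma~\ref{mergecycles} applies directly and yields a Hamilton cycle of this identified digraph, which un-identifies to exactly $m/K$ vertex-disjoint paths from $U_j(b)$ to $U_{j'}(b)$ still containing $M^*\supseteq CES$. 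Your informal ``reroute to absorb every cycle into one of the paths'' can be made rigorous in this way, so there is no real gap, but the identification trick is worth knowing.
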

\proof  Choose a new constant $\eps'$ with $\eps, Kr_0/m\ll \eps'\ll d$.
We will choose the $r_0$ exceptional factors for $\mathcal{EF}$ with respect to $C,\cP'$ one by one. So suppose that for some $0\le s<r_0$
we have already chosen exceptional factors $EF_1,\dots,EF_s$ with parameters $(K,L)$ such that the original
versions of these factors are pairwise edge-disjoint from each other. So our aim is to show that we can choose
$EF_{s+1}$ such that all these properties still hold.

Let $G'$ be the digraph obtained from $G$ by deleting all the edges contained in the original versions
of all the exceptional factors chosen so far. Note that
$$
d^\pm_G(x)-d^\pm_{G'}(x)\le KL r_0\le Ld m\le Ldn/k\le \eps n.
$$
for every vertex $x\in V_0$ and
$$
d^\pm_G(x)-d^\pm_{G'}(x)\le r_0\le (\eps')^3 m/K
$$
for every vertex $x\in V(G)\setminus V_0$. 
Thus $G'$ satisfies conditions (b) and (c) of Lemma~\ref{prelimfactor}.

Let $\cI$ denote the canonical interval partition of $C$ into $L$ intervals of equal length.
For each cluster $U$ of $\cP$ let $U(1),\dots,U(K)$
denote the subclusters in $\cP'$ which are contained in $U$. Consider any interval $I\in \cI$
and any $j$ with $1\le j\le K$. In order to construct $EF_{s+1}$, it suffices to show that there is a
complete exceptional path system $CEPS$ spanning the interval $I$ whose vertex set is $\bigcup_{U\in I} U(j)$
and such that $CEPS^{\rm orig}\subseteq G'$.
To do this, we apply Lemma~\ref{prelimfactor} to find pairs $x^-$, $x^+$, chord sequences $CS_x$ in $R$ and sequences $CS'_x$
(for every vertex $x\in V_0$) satisfying (i)--(iii).

Now let $CES$ be the union of all the edges $x^-x^+$ over all exceptional vertices $x$. 
(Note that $x^-x^+$ is an exceptional edge in $(G')^{\rm basic}$ and it is irrelevant whether it lies in $G'$ or not.)
It remains to enlarge $CES$ into a complete exceptional path system $CEPS$.
For this, we first add all edges in the sequences $CS'_x$ guaranteed by (iii) (for all vertices $x \in V_0$).
Together with $CES$, this gives a matching $M$ which meets
every cluster in $\cP$ in at most $5\eps^{1/4}m$ vertices.

Suppose that $U,W\in \cP$ are consecutive clusters on $I$.
Let $U^1(j)$ be the set of all those vertices in $U(j)$ which are not the initial vertex of an edge in $M$
and let $W^2(j)$ be the set of all those vertices in $W(j)$ which are not the final vertex of an edge in $M$.
Then Proposition~\ref{balanced} implies that $|U^1(j)|=|W^2(j)|$. Also note that $|U(j)\setminus U^1(j)| \le 5\eps^{1/4}m\le \eps' |U(j)|$.
Together with Lemma~\ref{prelimfactor}(iv) and Proposition~\ref{superslice}(iii) this implies that $G'[U^1(j),W^2(j)]$ is still
$[2\sqrt{\eps'},\ge d]$-superregular and thus it contains a perfect matching $M_{UW}$ by Proposition~\ref{perfmatch}.

The union $CEPS'$ of $M$ with these matchings $M_{UW}$ for all pairs $U,W$ of consecutive clusters on $I$
contains paths satisfying (CEPS1) and (CEPS2), but in addition $CEPS'$ might contain cycles. (Note each such cycle will contain at least
one edge from $M$.) So our aim is to apply Lemma~\ref{mergecycles} in order to transform  
$CEPS'$ into a path system. To do this, we let $U_{\rm first}\in \cP$ denote the first cluster in $I$ and let $U_{\rm final}\in \cP$
denote the last cluster in $I$. Let $C_I$ be the cycle obtained from $I$ by identifying $U_{\rm first}$ and $U_{\rm final}$.
Let $G'_{Ij}$ be the digraph obtained from $\bigcup_{UW\in E(I)} G'[U(j),W(j)]$
by identifying each vertex in $U_{\rm first}(j)$ with a different vertex of $U_{\rm final}(j)$. Let $CEPS''$ be obtained from
$CEPS'$ by identifying these vertices. Note that $G'_{Ij}$ can be viewed
as a blow-up of $C_I$ in which every edge
of $C_I$ corresponds to an $[\eps',\ge d]$-superregular pair (the latter holds by Lemma~\ref{prelimfactor}(iv)). Moreover, $CEPS''$
is a $1$-regular digraph on $V(G'_{Ij})$ which has the property that every cycle $D$ in $CEPS''$
contains at least one edge in some matching $M_{UW}$ for some
pair $U,W$ of consecutive clusters on $C_I$. (To see the latter,
recall that $M$ was a matching which avoids both $U_{\rm first}$ and $U_{\rm final}$. So every vertex in $V(G'_{Ij})$
is incident to at most one edge in $M$.) So $D$ contains a vertex in $U^1(j)$.  Thus we can apply Lemma~\ref{mergecycles}
with $G'_{Ij}$, $C_I$, $U^1(j)$, $U^2(j)$ and $E(C_I)$ playing the roles of $G$, $C$, $V^1_i$, $V^2_i$ and $J$.
This shows that we can replace each matching $M_{UW}$ by a different matching in $G'[U^1(j),W^2(j)]$ in order to transform $CEPS''$
into a Hamilton cycle of $G'_{Ij}$. But this Hamilton cycle corresponds to a complete exceptional path system $CEPS$
spanning the interval $I$ whose vertex set is $\bigcup_{U\in I} U(j)$ and such that $CEPS^{\rm orig}\subseteq G'$. (Note that $CEPS$ still
contains $M$ and thus $CES$.)

We do this for each $j=1,\dots,K$ and each interval $I$ in the canonical interval partition $\mathcal{I}$ of $C$ in
turn. Then the union $EF_{s+1}$ of all these complete exceptional path systems is an exceptional factor with the
desired properties. This completes the proof of the existence of $\mathcal{EF}$.
\endproof


\section{The preprocessing step}\label{seccyclebreak}

Let $G'$ be the leftover of $G$ obtained from an application of Theorem~\ref{approxdecomp}. So $G'$ is regular and very sparse.
Roughly speaking, the aim of this section is to find a sparse `preprocessing graph' $PG$ so that $G' \cup PG$
has a set of edge-disjoint Hamilton cycles covering all edges of $G'$.
We need to do this because the edges at the exceptional vertices in the leftover $G'$ might be distributed very badly.
So the idea is to choose $PG$ at the beginning of the proof of Theorem~\ref{decomp}, to apply Theorem~\ref{approxdecomp}
to $G\setminus E(PG)$ and then to cover the leftover $G'$ by edge-disjoint Hamilton cycles in $G' \cup PG$.
In this way we replace $G'$ by the resulting leftover $PG'$ of $PG$. Moreover, the goal is that $PG'$
will have no edges incident to $V_0$. So $PG'$ will not be regular. But the chord absorber, which we will use in Section~\ref{sec:chordabsorb} to absorb the edges
of $PG'$, will contain additional edges at the exceptional vertices to compensate for this, and these edges will be nicely distributed.
(More precisely, this chord absorber will contain some additional exceptional factors.)

In order to find the Hamilton cycles covering $G'$, we decompose $G'$ into $1$-factors $H$, split each $1$-factor $H$ into small path systems $H_i$ and extend
$H_i$ into a Hamilton cycle using the edges of $PG$. As we shall see, when finding these Hamilton cycles, the most difficult part is to 
find suitable edges joining $V_0$ to the (non-exceptional) clusters. For this step we use the complete exceptional sequences defined in
Section~\ref{sec:ces}.

As we shall explain in Section~\ref{sec:prepro}, the preprocessing graph $PG$ will consist (mainly) of the edge-disjoint union of
exceptional factors and a `path system extender' $PE$, 
which we define and use in Section~\ref{sec:seqremove}. Once we have found suitable edges joining $V_0$ to the clusters, the
path system extender will be used to extend these into Hamilton cycles.


\subsection{Cycle breaking}
Suppose that $H$ is a $1$-factor of the leftover $G'$. 
As discussed above, our aim is to split $H$ into small path systems $H_i$ which we then extend into Hamilton cycles $C_i$ of $G$.
Note that $H$ contains exactly one exceptional cover $H^{\rm exc}$ and each $C_i$ must contain exactly one as well.
The most obvious way to achieve the latter might be to let $H_1$ consist of $H^{\rm exc}$, and add a new exceptional cover
(from the preprocessing graph $PG$) to each of the other 
$H_i$ when forming the $C_i$. However, $H^{\rm exc}$ might contain cycles, in which case we cannot extend $H_1=H^{\rm exc}$ to a Hamilton cycle.
So when splitting $H$ into the $H_i$, we also need to split $H^{\rm exc}$ to ensure that such cycles are `broken'.
As an intermediate step towards extending the $H_i$ into Hamilton cycles, we then add edges from (the original version of) an exceptional factor $CB$
to extend each $H_i$ into a path system $Q_i$ which contains exactly one exceptional cover.
In other words, one can split $H \cup CB^{\rm orig}$  into small path systems $Q_i$ such that each of them contains precisely one
exceptional cover. 

Let $(G,\cP,R,C)$ be a $(k,m,\eps,d)$-scheme and assume that $m/50\in\mathbb{N}$.
Consider an $\eps$-uniform $50$-refinement $\cP'$ of $\cP$ (recall again that these were defined before Lemma~\ref{randompartition}).
For each cluster $V_i$ of $\cP$, let $V_i(1),\dots,V_i(50)$ denote all those clusters in $\cP'$ which
are contained in $V_i$. Suppose $J \subseteq \{1,\dots,50\}$. Generalizing the notion of styles defined at the end of Section~\ref{sec:ces}, 
we say that a vertex $x\in V(G)\setminus V_0$ has \emph{style $J$} if $x\in \bigcup_{i=1}^{k}\bigcup_{j \in J} V_i(j)$.
We then say that a set $E$ of edges of $G^{\rm basic}$ or of $G$ has \emph{style $J$} if every endvertex $x$ of an edge in $E$ with $x\notin V_0$
has style $J$. If $E$ has style $J$, we say that $E$ has \emph{style size $|J|$} (with respect to $\cP'$).
Note that in this definition $J$ need not have minimum size, i.e.~if $E$ has style size $t$, then it also has style size $t+1$.

\begin{lemma}\label{cyclebreak}
Suppose that $0<1/n\ll 1/k\ll \eps\ll d\ll 1/s \ll 1$ and that $m/50, 50k/(s-1)\in\mathbb{N}$.
Let $(G,\cP,R,C)$ be a $(k,m,\eps,d)$-scheme with $|G|=n$.
Let $\cP'$ be an $\eps$-uniform $50$-refinement of $\cP$.
Let $H$ be a $1$-factor of $G$. Let $CB$ be an exceptional factor with parameters $(50,(s-1)/50)$ with respect to $C,\cP'$. Suppose that $H$
and $CB^{\rm orig}$ are edge-disjoint. Then the edges of $H \cup CB^{\rm orig}$ can be decomposed into edge-disjoint path systems $Q_1,\dots,Q_{s}$ so that
\begin{itemize}
\item[(i)] each $Q_i$ contains precisely one exceptional cover (and no other original exceptional edges);
\item[(ii)] each $Q_i$ has style size 5 (with respect to $\cP'$);
\item[(iii)] $|E(Q_i)| \le 1230n/s$.
\end{itemize}
\end{lemma}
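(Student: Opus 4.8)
Write $CB^{\mathrm{orig}}=P_1\cup\dots\cup P_{s-1}$, where $P_i=CEPS_i^{\mathrm{orig}}$ is the original version of the $i$-th complete exceptional path system of $CB$; say $P_i$ spans the interval $I_i$ of the canonical partition $\mathcal I$ of $C$ into $L:=(s-1)/50$ intervals and has style $b_i\in\{1,\dots,50\}$, with each style occurring exactly $L$ times among $b_1,\dots,b_{s-1}$. The unique exceptional cover $H^{\mathrm{exc}}$ contained in the $1$-factor $H$ has only $2|V_0|\le2\eps n$ edges, but it may contain cycles and its non-$V_0$ endvertices may use all $50$ subcluster styles, whereas the exceptional cover of each $P_i$ is a matching of style $\{b_i\}$. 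So no single $Q_i$ can just inherit $H^{\mathrm{exc}}$; the exceptional edges must be reshuffled, and doing this while respecting acyclicity, the style bound and path-compatibility is the crux.

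\emph{Step 1 (target styles).} Fix a balanced covering $\mathcal D$ of the $\binom{50}{2}$ pairs of $\{1,\dots,50\}$ by $5$-element sets, and let $J_1,\dots,J_s$ be the members of $\mathcal D$, each repeated $\Theta(s)$ times. Since every style $c$ occurs only $L=(s-1)/50$ times among the $b_i$ while $c$ lies in $\gg L$ of the $J_i$, Hall's theorem lets us relabel so that $b_i\in J_i$ for all $i\le s-1$. Also fix a set $\mathcal F$ of $\Theta(s)$ `flexible' indices with $s\in\mathcal F$ such that $\{J_i:i\in\mathcal F\}$ still covers every pair of styles and every style lies in $\gg L$ of these sets; for $i\notin\mathcal F$, $Q_i$ will be $P_i$ together with some arcs of $H$.

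\emph{Step 2 (reshuffling the exceptional edges).} At each $x\in V_0$ the digraph $H\cup CB^{\mathrm{orig}}$ has exactly $s$ outedges and $s$ inedges at $x$; re-pairing these at every $x\in V_0$ and following the pairings yields $s$ edge-disjoint exceptional covers $F_1,\dots,F_s$ partitioning all original exceptional edges of $H\cup CB^{\mathrm{orig}}$. I would arrange that (a) $F_i$ is the exceptional cover of $P_i$ for $i\notin\mathcal F$ (hence has style $\{b_i\}$); (b) $F_i$ has style $J_i$ for $i\in\mathcal F$, and whenever such an $F_i$ uses an exceptional edge not from $P_i$, its non-$V_0$ endvertex lies outside $V(P_i)$; (c) every $F_i$ is a union of vertex-disjoint paths. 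For (a)--(b): leave the non-flexible covers fixed, designate $|\mathcal F|-1$ of the $P_j$'s as `sacrificed', and feed the exceptional edges of $H$ together with the exceptional covers of the sacrificed $P_j$'s into the flexible covers; at each $x$ this is a bipartite matching between the flexible outedges (resp.\ inedges) at $x$ — whose style multiset is almost balanced — and the flexible slots, whose styles are confined to the respective $J_i$, and Hall's condition holds because $\{J_i:i\in\mathcal F\}$ is a covering design. The avoidance clause in (b) costs little, since a vertex of $V(G)\setminus V_0$ lies in exactly one $V(P_i)$ and is incident to at most four original exceptional edges of $H\cup CB^{\mathrm{orig}}$, so at most a bounded number of slots are ever forbidden. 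For (c): whenever continuing a pairing would close a cycle through $V_0$, the $\Theta(L)$ admissible $CB$-edges of each style present at the offending $V_0$-vertex allow a switch to a different admissible pairing, and as $|V_0|$ is tiny compared with $L$ this succeeds greedily. This step, balancing acyclicity, style and path-compatibility at once, is the main obstacle.

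\emph{Step 3 (the non-exceptional part of $H$ and assembly).} Deleting $H^{\mathrm{exc}}$ from $H$ leaves a union of paths and cycles in $V(G)\setminus V_0$. Delete from these a set $A$ of at most $n$ edges — namely every edge joining two distinct subclusters, plus one edge from each cycle lying inside a single subcluster — so that what remains is a family of arcs each contained in a single subcluster (of style size $1$, total size $\le n$), while the edges of $A$ have style size $\le2$. For each style $j$, split the style-$j$ arcs into bundles of $O(n/s)$ edges and assign these bundles to distinct indices $i$ with $j\in J_i$ (there are $\Omega(s)$ such $i$); for each pair $\{j,j'\}$ do likewise with the edges of $A$ joining styles $j$ and $j'$, assigning the bundles to indices $i$ with $\{j,j'\}\subseteq J_i$ and keeping the $A$-edges assigned to each $i$ cycle-free (easy given the freedom in the distribution). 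Choose the arcs placed into $Q_i$ to be vertex-disjoint from $V(P_i)$, from $V(F_i)$ and from the arcs already assigned to $Q_i$; this is possible because $V(P_i)\cup V(F_i)$ meets each cluster in fewer than $m/10$ vertices and each vertex is blocked for only a bounded number of the $Q_i$. Finally set $Q_i:=(P_i\setminus\text{its exceptional cover})\cup F_i\cup(\text{its arc bundles})$ for $i\le s-1$ and $Q_s:=F_s\cup(\text{its arc bundles})$. Then (i) holds since the only original exceptional edges of $Q_i$ are those of the exceptional cover $F_i$; (ii) holds since every non-$V_0$ endvertex of an edge of $Q_i$ has subcluster index in $J_i$, and $|J_i|=5$; and (iii) holds since $|E(P_i)|\le|V_0|+mk/(s-1)\le n/(s-1)+\eps n$, $|E(F_i)|=2|V_0|\le2\eps n$, and the arc bundles contribute only $O(n/s)$ further edges, which (using $\eps\ll1/s$) keeps $|E(Q_i)|$ well below $1230n/s$.
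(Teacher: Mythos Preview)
Your approach is quite different from the paper's and considerably more intricate; the paper avoids Step~2 almost entirely. The main gap is in Step~2(c): the sentence ``whenever continuing a pairing would close a cycle \dots\ allow a switch to a different admissible pairing, and as $|V_0|$ is tiny compared with $L$ this succeeds greedily'' is not a proof. You are building the $F_i$ by making independent bipartite matchings at each $x\in V_0$ (and separately for in- and out-edges), so there is no well-defined notion of ``continuing a pairing'' across different $x$. After all matchings are made, a cycle in some flexible $F_i$ is a sequence $u_{x_1}x_1v_{x_1}=u_{x_2}x_2v_{x_2}=\dots=u_{x_1}$; to break it you must swap an edge at some $x_j$ into a different $F_{i'}$, but that edge must still have its style in $J_{i'}$, must keep $F_{i'}$ acyclic, and the edge you receive in exchange must keep $F_i$ acyclic. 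You have not shown that such a swap always exists, nor that the process terminates. The bookkeeping in Step~3 has the same flavour of plausible-but-unverified disjointness claims (e.g.\ arcs versus $A$-edges within the same $Q_i$).

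The paper's argument sidesteps this completely by never reshuffling individual edges at $V_0$. Instead it splits $H^{\rm exc}$ first by its single style into $50$ parts $H_1,\dots,H_{50}$, then removes one edge from each cycle of each $H_j$ to obtain $100$ acyclic path systems $H^{\rm exc}_1,\dots,H^{\rm exc}_{100}$. To each $H^{\rm exc}_i$ one assigns a $CEPS_i$ of \emph{different} style (only five styles are needed, so all $CEPS_1,\dots,CEPS_{100}$ have style in $\{1,\dots,5\}$); then $Q_i:=H^{\rm exc}_i\cup CES'_i\cup(CEPS_i\setminus CES_i)$, where $CES'_i$ is exactly the portion of $CES_i^{\rm orig}$ needed to complete $H^{\rm exc}_i$ to an exceptional cover. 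The style difference forces $H^{\rm exc}_i$ and $CEPS_i$ to be vertex-disjoint outside $V_0$, so $Q_i$ is automatically a path system---no switching or greedy repair is needed. The leftover pieces of $CES^{\rm orig}_1,\dots,CES^{\rm orig}_{100}$ automatically form a further exceptional cover $Q_{101}$ (since $H^{\rm exc}$ was one and complementary pieces were removed), and it is a path system because $CES_1\cup\dots\cup CES_{100}$ is a matching. For the non-exceptional part, the paper simply partitions $H\setminus H^{\rm exc}$ by its $\binom{50}{2}=1225$ style pairs, cuts each class into $(s-1)/1226$ small acyclic bundles, and matches these with $CEPS_{101},\dots,CEPS_{s-1}$ via a single bipartite Hall argument so that bundle and $CEPS$ have disjoint styles---again giving acyclicity for free.
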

\proof
Let $H^{\rm exc}$ denote the set of original exceptional edges of $H$. Note that each edge in $H^{\rm exc}$
has precisely one of $50$ styles. So we can partition $H^{\rm exc}$ into $H_1,\dots,H_{50}$ such that each $H_i$ has
style~$i$. Now we split each $H_i$ into $H_i^-$ and $H_i^+$ by placing one edge from 
each cycle of $H_i$ into $H_i^-$ and all the others into $H_i^+$.
Enumerate the path systems obtained in this way as $H^{\rm exc}_1,\dots,H^{\rm exc}_{100}$.
Thus $H^{\rm exc}$ is the union of all these path systems.
Denote the complete exceptional path systems of $CB$ by $CEPS_i$, for $i=1,\dots,s-1$, and
let $CES_i$ denote the complete exceptional sequence contained in $CEPS_i$.
Without loss of generality, we may assume that $CEPS_1,\dots,CEPS_{100}$ are complete exceptional path systems
of $CB$ so that exactly 20 of them have style $j$ for $j=1,\dots,5$
(as there are many more with these styles in $CB$). Moreover, by relabeling the $CEPS_i$ if necessary, we may assume
that $H^{\rm exc}_i$ and $CEPS_i$ have different styles for all $i=1,\dots,100$.

For each $i=1,\dots,100$, let $V^+_{0,i}\subseteq V_0$ be the set of exceptional vertices $x$ so that $x$ is the initial vertex of an edge in $H_i^{\rm exc}$. 
Similarly, let $V^-_{0,i}\subseteq V_0$ be the set of exceptional vertices $x$ so that $x$ is the final vertex of an edge in $H_i^{\rm exc}$. 
Let $CES'_i$ be the set of all those (original exceptional) edges $xy$ in $CES^{\rm orig}_i$ for which
$x\in V_0\setminus V^+_{0,i}$ as well as all those (original exceptional) edges $yx$ in $CES^{\rm orig}_i$ for which
$x\in V_0\setminus V^-_{0,i}$. Let $Q_i:=H^{\rm exc}_i \cup CES'_i\cup (CEPS_i\setminus CES_i)$. Note that $Q_i$ contains the exceptional cover
$H^{\rm exc}_i \cup CES'_i$. Moreover, since  $H^{\rm exc}_i$ and $CEPS_i$ have different styles it follows that $Q_i$ is a path system,
i.e. it does not contain any cycles. Furthermore, $Q_i$ has style size $2$
and $|E(Q_i)|\le |V_0|+n/(s-1)\le 2n/s$.

Now let $Q_{101}$ consist of those edges of $CES^{\rm orig}_1,\dots,CES^{\rm orig}_{100}$ which are not contained in
any of $Q_1,\dots, Q_{100}$. 
Using the fact that $H^{\rm exc}=H^{\rm exc}_1 \cup \dots \cup H^{\rm exc}_{100}$ is an exceptional cover, 
it is easy to see that $Q_{101}$ also forms an exceptional 
cover. Moreover, as remarked after the definition of an exceptional factor,  
$CES_1 \cup \dots \cup CES_{100}$ is a matching. 
In particular, every vertex in $V(G)\setminus V_0$ is incident to at most one edge in $CES^{\rm orig}_1\cup \dots\cup CES^{\rm orig}_{100}$.
Together with the fact that $Q_{101}$ is an exceptional cover, it follows that $Q_{101}$ is a path system.
Also, $Q_{101}$ has style size $5$ and $|E(Q_{101})|= 2|V_0|\le 2\eps n\le n/s$.

Note that each edge of $H\setminus H^{\rm exc}$ has%
   \COMMENT{By definition, $H^{\rm exc}$ is already a set of edges. So can't write $H\setminus E(H^{\rm exc})$}
at least one of $\binom{50}{2}=1225$ styles $ij$ with $i\neq j$.
(Recall that if an edge has style $i$ then it also has style $ij$ for any $j$.) Partition $H\setminus H^{\rm exc}$ into
$1225$ sets $H_{i,j}$ such that all edges in $H_{i,j}$ have style $ij$.
Greedily split each set $H_{i,j}$ further into $(s-1)/1226$ sets such that each of them has size
at most $\lceil 1226n/(s-1)\rceil$ and none of them contains a cycle.%
    \COMMENT{This can be done as follows. First enumerate the edges in $H_{i,j}$ in such a way that the
edges of every cycle in $H_{i,j}$ form a segment in this enumeration. Now add the $r$th edge
into the set whose index is congruent to $r$ modulo $(s-1)/1226$ (for each $r=1,\dots, e(H_{i,j})$).}
Let $s':=1225(s-1)/1226$ and
let $H^*_1,\dots,H^*_{s'}$ denote the resulting sets of edges.

\medskip

\noindent
\textbf{Claim.} \emph{We may assume that  $CEPS_{101},\dots,CEPS_{s-1}$ are enumerated in such a way that for all
$t=1,\dots,s'$ the following property holds:
if $H^*_t$ has style $ij$ and $CEPS_{100+t}$ has style $j'$ then $j' \notin \{i,j\}$.}

\smallskip

\noindent 
To prove the claim, we consider the auxiliary bipartite graph $Y$ with vertex classes $A$ and $B$,
where $A$ consists of all the $H^*_t$ and $B$ consists of $CEPS_{101},\dots,CEPS_{s-1}$. (So $|A|=s'$ and $|B|=(s-1)-100>s'$.)
Suppose that $H^*_t$ has style $ij$ and $CEPS_\ell$ has style $j'$, where
$j' \notin \{i,j\}$. Then $H^*_t$ and $CEPS_\ell$ are connected by an edge in $Y$.
Note that every $CEPS_\ell$ has degree at least $|A|-49 \cdot (s-1)/1226 \ge |A|/2$ (since if $j'=i$ then there are
only $49$ possibilities for $j$ and since only $(s-1)/1226$ of the $H^*_t$ have the same style).
Similarly every $H^*_t$ has degree at least $(s-1-100)-2\cdot (s-1)/50 \ge |B|/2$. So
$Y$ has a matching covering $A$. (Indeed, this follows from Hall's theorem: Consider any $A'\subseteq A$. If $|A'|>|A|/2$ then
$N_Y(A')=B$ and if $|A'|\le |A|/2$ then $|N_Y(A')|\ge |B|/2\ge |A|/2\ge |A'|$.) This completes the proof of the claim.

\medskip

For $i=102,\dots,101+s'$, let $Q_i:=H^*_{i-101}\cup CEPS^{\rm orig}_{i-1}$. Then the claim implies that $Q_i$ is a path system.
Moreover, $Q_i$ has style size~$3$ and%
    \COMMENT{as each exceptional edge in $CEPS$ turns into 2 original exceptional edges in $CEPS^{\rm orig}$ we need
the $\eps n$ term}
$|E(Q_i)|\le \lceil 1226n/(s-1)\rceil +n/(s-1)+\eps n\le 1230n/s$. Finally, for
$i=102+s',\dots,s$ we let $Q_i:=CEPS^{\rm orig}_{i-1}$. Then $Q_1,\dots,Q_s$ have the desired properties.
\endproof


\subsection{Extending path systems into Hamilton cycles}\label{sec:seqremove}

Suppose that $(G,\cP,R,C)$ is a $(k,m,\eps,d)$-scheme. Recall that $V_0$ denotes the exceptional set in $\cP$.
The purpose of this subsection is to define and use the `path system extender' $PE$.  
To motivate its purpose, let $G'$ be the leftover of $G$ obtained by an application of Theorem~\ref{approxdecomp}
and let $H$ be a $1$-factor in a $1$-factorization of $G'$.
Recall that Lemma~\ref{cyclebreak} assigns edges of $H$ to edge-disjoint path systems $Q_i$.
$PE$ will be used to extend each path system $Q_i$ into a Hamilton cycle. 
Altogether this means that we will find edge-disjoint Hamilton cycles
in the union of $G'\cup PE$ with some exceptional factors $CB_i$ which cover both the edges of $G'$ and the edges of $CB_i$. Since
$PE$ will be a spanning subdigraph of $G-V_0$, this in turn implies that we have replaced $G'$ by a digraph (namely the digraph obtained from
the leftover of $PE$ by adding $V_0$)
in which every exceptional vertex is isolated. 
 
A \emph{path system extender $PE$ (for $C,R$) with parameters $(\eps,d,d',\zeta)$} is a spanning subgraph of $G-V_0$
consisting of an edge-disjoint union of two graphs $\cB(C)_{PE}$ and $\cB(R)_{PE}$ on $V(G)\setminus V_0$
which are defined as follows:
\begin{itemize}
\item[(PE1)] $\cB(C)_{PE}$ is a blow-up of $C$ in which every edge $UW$ of $C$
corresponds to an $(\eps,d',\zeta d',2d'/d)$-superregular pair $\cB(C)_{PE}[U,W]$.
\item[(PE2)] $\cB(R)_{PE}$ is a blow-up of $R$ in which every edge $UW$
of $R$ corresponds to an $(\eps,d'/k,2d'/dk)$-regular pair $\cB(R)_{PE}[U,W]$.
   \COMMENT{Need to divide by $k$ since otherwise the degrees in $PE$ would be too large.}
\end{itemize}
Note that the path system extender $PE$ is not necessarily a regular digraph.
(Reg3) and the fact that $\Delta(R) \le 2k$ imply that
\begin{equation} \label{DeltaPE}
\Delta(PE) \le 2\cdot \frac{2d'm}{d}+ \Delta(R)\cdot \frac{2d'm}{dk} \le \frac{8d'm}{d}.
\end{equation}
In order to cover the edges of the leftover of the path system extender $PE$ with Hamilton cycles in the chord-absorbing step
(Section~\ref{sec:chordabsorb}), we will need that this maximum degree is small 
compared to $\eps m$ (so $PE$ is a rather sparse graph). This is what forces us to use the above more technical notion of regularity 
when defining $PE$, rather than the usual $\eps$-regularity.

\begin{lemma}\label{findPE}
Suppose that $0<1/n\ll d'\ll 1/k\ll \eps\ll d\ll \zeta\le 1/2$.
Let $(G,\cP,R,C)$ be a $(k,m,\eps,d)$-scheme with $|G|=n$. Then $G$ contains a path system extender for $C,R$ having
parameters $(\eps^{1/25},d,d',\zeta)$.
\end{lemma}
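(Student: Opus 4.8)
The plan is to obtain $PE=\cB(C)_{PE}\cup\cB(R)_{PE}$ by extracting suitably sparse subgraphs from the (super)regular pairs supplied by the scheme, being careful only about the edges that lie both on $C$ and in $R$, since the corresponding pair of $G$ must then feed edges into both $\cB(C)_{PE}$ and $\cB(R)_{PE}$ in an edge-disjoint fashion. Throughout, write $V_1,\dots,V_k$ for the clusters of $\cP$, labelled so that $C=V_1\dots V_k$, and recall that $V(G)\setminus V_0=V_1\cup\dots\cup V_k$; all the pairs considered below are between distinct clusters, so edges belonging to different ordered cluster-pairs are automatically distinct, and it suffices to keep track of edge-disjointness within each pair $G[U,W]$.

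First I would build $\cB(C)_{PE}$. Let $UW$ be an edge of $C$. By (Sch3) the pair $G[U,W]$ is $[\eps,\ge d]$-superregular, hence $(\eps,d^*)$-superregular for some $d^*\ge d$ (with all degrees close to $d^*m$), so Lemma~\ref{randomregslice}(ii), applied with $d^*$ and $d'$ playing the roles of $d$ and $d'$, yields a spanning subgraph $\cB(C)_{PE}[U,W]$ of $G[U,W]$ that is $(\eps^{1/12},d',d'/2,3d'/(2d^*))$-superregular. Since $d^*\ge d$, $\zeta\le 1/2$ and $\eps^{1/12}\le\eps^{1/25}$, inspection of (Reg1)--(Reg4) shows that $\cB(C)_{PE}[U,W]$ is then also $(\eps^{1/25},d',\zeta d',2d'/d)$-superregular, as demanded by (PE1): indeed $3d'/(2d^*)\le 2d'/d$, $d'/2\ge\zeta d'$, and enlarging the regularity parameter only weakens (Reg1). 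Let $\cB(C)_{PE}$ be the union of these pairs over all edges of $C$; this is a blow-up of $C$, and since each $\cB(C)_{PE}[U,W]$ has minimum degree at least $\zeta d'm>0$ by (Reg4) it spans $V(G)\setminus V_0$. Note also $\Delta(\cB(C)_{PE}[U,W])\le 3d'm/(2d^*)\le 2d'm/d\ll\eps m$ by (Reg3).

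Next I would build $\cB(R)_{PE}$ so that it is edge-disjoint from $\cB(C)_{PE}$. Let $UW$ be an edge of $R$. If $UW\notin E(C)$, then by (Sch2) the pair $G[U,W]$ is $\eps$-regular of some density $d^{**}\ge d-\eps\ge 3d/4$, and Lemma~\ref{randomregslice}(i), applied with $d^{**}$ and $d'/k$ in place of $d$ and $d'$, provides a spanning subgraph of $G[U,W]$ that is $(\eps^{1/12},d'/k,3d'/(2kd^{**}))$-regular; since $d^{**}\ge 3d/4$ we have $3d'/(2kd^{**})\le 2d'/(dk)$, so this pair is $(\eps^{1/25},d'/k,2d'/(dk))$-regular. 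If instead $UW\in E(C)$, set $G^*[U,W]:=G[U,W]\setminus\cB(C)_{PE}[U,W]$; because we deleted at most $2d'm/d\ll\eps m$ edges at each vertex of $G[U,W]$, Proposition~\ref{superslice}(i) shows that $G^*[U,W]$ is $2\sqrt{\eps}$-regular of density at least $3d/4$, and Lemma~\ref{randomregslice}(i) applied to it produces a spanning subgraph which is $(\,(2\sqrt{\eps})^{1/12},d'/k,2d'/(dk)\,)$-regular and hence, since $(2\sqrt{\eps})^{1/12}\le\eps^{1/25}$ in our hierarchy, $(\eps^{1/25},d'/k,2d'/(dk))$-regular. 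In either case let $\cB(R)_{PE}[U,W]$ be the pair just produced, and let $\cB(R)_{PE}$ be their union over all edges of $R$; this is a blow-up of $R$ satisfying (PE2), and it is edge-disjoint from $\cB(C)_{PE}$ because on an edge of $R$ not on $C$ the graph $\cB(C)_{PE}$ has no edges, while on an edge of $C$ we only used edges of the leftover $G^*[U,W]$. Thus $PE:=\cB(C)_{PE}\cup\cB(R)_{PE}$ is a path system extender for $C,R$ with parameters $(\eps^{1/25},d,d',\zeta)$. Since every step uses only Lemma~\ref{randomregslice} and Proposition~\ref{superslice}, whose randomised ingredients can be derandomised, $PE$ can be found in time polynomial in $n$.

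The only real difficulty is the bookkeeping for the edges lying both on $C$ and in $R$: one must remove $\cB(C)_{PE}[U,W]$ first and then re-establish enough regularity of the leftover $G^*[U,W]$ — concretely, density at least $3d/4$ — so that Lemma~\ref{randomregslice}(i) still outputs a pair whose codegrees and maximum degree are bounded by the prescribed $2d'/(dk)$; this is the point where one has to check that $\eps\ll d$ and $d'\ll 1/k$ are strong enough. This density loss, together with the two regularity-parameter losses $\eps\mapsto\eps^{1/12}$ (from Lemma~\ref{randomregslice}) and $\eps\mapsto 2\sqrt{\eps}$ (from Proposition~\ref{superslice}), is exactly what the exponent $1/25$ in the conclusion has to absorb; everything else reduces to the routine observation that enlarging a regularity parameter, enlarging the $c$-parameter in (Reg2)--(Reg3), or shrinking the $d^*$-parameter in (Reg4) only weakens the notion in question.
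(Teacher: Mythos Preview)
Your proof is correct and follows essentially the same route as the paper: build $\cB(C)_{PE}$ via Lemma~\ref{randomregslice}(ii) on each edge of $C$, delete its edges, then build $\cB(R)_{PE}$ via Lemma~\ref{randomregslice}(i) after invoking Proposition~\ref{superslice}(i) to recover regularity. The only cosmetic difference is that the paper deletes $\cB(C)_{PE}$ from all of $G$ at once and then treats every edge of $R$ uniformly (rather than splitting into the cases $UW\in E(C)$ and $UW\notin E(C)$), which amounts to the same thing since $\cB(C)_{PE}$ has no edges on pairs outside $C$.
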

\proof
Recall that by (Sch3) every edge $UW$ of $C$ corresponds to an $[\eps,d_{UW}]$-superregular pair $G[U,W]$
for some $d_{UW}\ge d$. Thus we can apply Lemma~\ref{randomregslice}(ii) to each such $G[U,W]$ to obtain a
blow-up $\cB(C)_{PE}$ of $C$ in which every edge of $C$ corresponds to an
$(\eps^{1/12},d',d'/2,3d'/2d_{UW})$-superregular pair (which is therefore also $(\eps^{1/25},d',\zeta d',2d'/d)$-superregular).
Let $G^*$ be the digraph obtained from $G$ by deleting all the edges in $\cB(C)_{PE}$.

Now consider any edge $UW$ of $R$ and recall from (Sch2) that $G[U,W]$ is $\eps$-regular of density $d_{UW}\ge d-\eps$.
Since $G^*[U,W]$ is obtained from $G[U,W]$ by deleting at most $2d'm/d\le \eps m$ edges at every vertex,
Proposition~\ref{superslice}(i) with $d':=\eps$ implies that $G^*[U,W]$ is still $2\sqrt{\eps}$-regular of density at least $3d_{UW}/4$.
Thus we can apply Lemma~\ref{randomregslice}(i) to find a
$((4\eps)^{1/24},d'/k,2d'/d_{UW}k)$-regular spanning subgraph $G'[U,W]$ of $G^*[U,W]$
(which is therefore also $(\eps^{1/25},d'/k,2d'/dk)$-regular).
Let $\cB(R)_{PE}$ be the union of all the $G'[U,W]$ over all edges $UW$ of $R$. Then $\cB(C)_{PE}\cup \cB(R)_{PE}$
is a path system extender for $C,R$ with parameters $(\eps^{1/25},d,d',\zeta)$.
\endproof

The following lemma implies that we can use the edges of a path system extender to extend the path systems $Q_i$ obtained from Lemma~\ref{cyclebreak} 
into Hamilton cycles.

\begin{lemma}\label{findham}
Suppose that $0<1/n\ll d'\ll 1/k\ll \eps\ll 1/\ell^*\ll d\ll \nu\ll \tau\ll \alpha, \theta\le 1$,
that $d\ll \zeta\le 1/2$ and that $m/50\in \mathbb{N}$.
Let $(G,\cP_0, R_0,C_0,\cP,R,C)$ be a consistent $(\ell^*,k,m,\eps,d,\nu,\tau,\alpha,\theta)$-system
with $|G|=n$. Let $\cP'$ be a $(d')^2$-uniform $50$-refinement of $\cP$.
Let $PE$ be a path system extender with parameters $(\eps,d,d',\zeta)$ for $C$, $R$.
Let $s:=10^7/\nu^2$ and suppose that $Q$ is a path system in $G$ such that
\begin{itemize}
\item $Q$ and $PE$ are edge-disjoint;
\item $Q$ contains precisely one exceptional cover and no other original exceptional edges;
\item $Q$ has style size 5 (with respect to $\cP'$);
\item $|E(Q)| \le 1230n/s$.
\end{itemize}
Then $G$ contains a Hamilton cycle $D$ such that $Q\subseteq D\subseteq PE \cup Q$. 
\end{lemma}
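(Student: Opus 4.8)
The plan is to use the path system extender $PE$ to turn the given path system $Q$ into a Hamilton cycle in two stages: first handle the exceptional cover inside $Q$ by converting it to a complete exceptional sequence, then use the two parts $\cB(C)_{PE}$ and $\cB(R)_{PE}$ of $PE$ to extend the resulting path system in $G^{\rm basic}$ into a $1$-factor, and finally merge that $1$-factor into a single cycle. By Observation~\ref{basicobs}, it suffices to produce a Hamilton cycle $D^{\rm basic}$ in $G^{\rm basic}$ which contains exactly one complete exceptional sequence and no other exceptional edges, with $Q^{\rm basic}\subseteq D^{\rm basic}\subseteq PE\cup Q^{\rm basic}$ (where $PE$ has no exceptional edges, so it behaves the same in $G$ and $G^{\rm basic}$). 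Passing from $Q$ to $Q^{\rm basic}$: the exceptional cover of $Q$ consists of one inedge and one outedge at each $x\in V_0$; these may form paths and cycles through $V_0$. Using that $V_0$ is independent (Sch4), for each $x\in V_0$ with inedge $yx$ and outedge $xz$ we get an edge $yz$ of $G^{\rm basic}$, and these together with the non-exceptional edges of $Q$ form $Q^{\rm basic}$, a linear forest (no cycles, since the style-size condition of Lemma~\ref{cyclebreak} guaranteed $Q$ was a genuine path system, and a path cannot close up here). Crucially $|E(Q^{\rm basic})|\le 1230n/s\le \nu^2 n/8000$ is tiny, and $Q^{\rm basic}$ has style size $5$, so it lives on $10\%$ of each cluster.

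Next I would balance out the exceptional edges of $Q^{\rm basic}$ using chord sequences from $\cB(R)_{PE}$. For each exceptional edge $yz$ of $Q^{\rm basic}$ with $y\in U(y)$, $z\in U(z)$, apply Lemma~\ref{buildchord} in the reduced digraph $R$ — which by (CSys1) is a robust $(\nu,\tau)$-outexpander with $\delta^0(R)\ge \alpha|R|\ge 2\tau|R|$ — to obtain a chord sequence $CS(U(z),U(y)^+)$ with at most $3/\nu$ edges, whose interior avoids the at most $\nu|R|/4$ clusters already heavily used by previous chord sequences (there are at most $|V_0|\le\eps n$ exceptional edges, each contributing $\le 3/\nu$ clusters, so no cluster gets used more than a $o(1)$-fraction of $m$ times). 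Realize each chord-sequence edge $UW$ by an actual edge of $\cB(R)_{PE}[U,W]$: since $\cB(R)_{PE}[U,W]$ is $(\eps,d'/k,2d'/dk)$-regular by (PE2), and we need to avoid only $O(\eps^{1/4}m)$ forbidden vertices in each cluster (endpoints of $Q^{\rm basic}$, previously chosen chord edges, and the style-$5$ restriction reduces $U$ only to $50\%$), (Reg1) guarantees such edges exist and can be chosen to form a matching $M$ disjoint from $Q^{\rm basic}$. By Proposition~\ref{balanced}, after adding these chord matchings together with $Q^{\rm basic}$, the deficiency in each cluster is balanced along $C$: if $U^1$ is the set of vertices of $U$ not yet used as an initial vertex and $W^2$ those of $W$ not used as a final vertex, then $|U^1|=|W^2|$ for every edge $UW$ of $C$.

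Now I extend across $C$ using $\cB(C)_{PE}$. For each edge $UW$ of $C$, $\cB(C)_{PE}[U,W]$ is $(\eps,d',\zeta d',2d'/d)$-superregular by (PE1); restricting to $U^1$ and $W^2$ (we removed only $O(\eps^{1/4}m)$ vertices from each side, far fewer than $\eps^2 d'm$) and applying Proposition~\ref{superslice5}(ii) keeps it $(2\eps,d',\zeta d'/2,2d'/d)$-superregular, hence it has a perfect matching $M_{UW}$ by Lemma~\ref{PEmatching}(ii). Let $F$ be the union of $Q^{\rm basic}$, the chord matchings, and all the $M_{UW}$: this is a $1$-regular digraph on $V(G)\setminus V_0$ containing exactly one complete exceptional sequence. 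Finally I apply Lemma~\ref{mergecycles0} with $J=E(C)$ (using version (iii$'$), which is vacuous once the clusters $V_i^1,V_i^2$ overlap — arrange this when choosing the matchings, or note $|V_i^1|,|V_i^2|\ge m - O(\eps^{1/4}m)$ so they intersect): every cycle of $F$ meets some $V_i^1$ since the $M_{UW}$ cover everything, condition (i) holds because $F[V_i^1,V_{i+1}^2]=M_{UW}$ is a perfect matching, and condition (ii) holds trivially. The lemma lets us swap each $M_{UW}$ for another perfect matching in $\cB(C)_{PE}[V_i^1,V_{i+1}^2]$ to turn $F$ into a single Hamilton cycle of $G^{\rm basic}$; its original version $D$ contains $Q$ and lies in $PE\cup Q$, as required.

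\textbf{Main obstacle.} The delicate point is bookkeeping the style-size-$5$ constraint together with the vertex-avoidance requirements: the chord edges from $\cB(R)_{PE}$ must simultaneously respect the style restriction, avoid the (few) endpoints of $Q^{\rm basic}$, avoid one another, and leave enough room in each cluster that the final superregular pairs $\cB(C)_{PE}[U^1,W^2]$ still satisfy the hypotheses of Proposition~\ref{superslice5} and Lemma~\ref{PEmatching}. All of these are satisfied because $|E(Q)|$ is of order $n/s$ with $s=10^7/\nu^2$ large and the number of exceptional vertices is only $\eps n$, so every relevant "bad" set has size $o(m)$ — but verifying the constants line up (in particular that $\Delta(PE)\le 8d'm/d$ from~(\ref{DeltaPE}) is negligible, and that the chord sequences have total length $\ll \nu m$ per cluster) is the part that needs care.
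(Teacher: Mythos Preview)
There is a genuine gap in the balancing step. You write ``balance out the exceptional edges of $Q^{\rm basic}$'' and bound the number of chord sequences by $|V_0|\le\eps n$. But the non-exceptional edges of $Q$ need balancing too: $Q$ arises (via Lemma~\ref{cyclebreak}) from an arbitrary $1$-factor $H$ of $G$, so an edge $ab$ of $Q^{\rm basic}$ can run from any cluster $U(a)$ to any cluster $U(b)$, not just along $C$. Proposition~\ref{balanced} only yields $|U^1|=|W^2|$ if you attach a chord sequence $CS(U(b),U(a)^+)$ to \emph{every} edge $ab$ of $Q^{\rm basic}$, which is what the paper does. With only the exceptional edges balanced, the $1$-factor $F$ simply does not exist.

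Once you fix this, the quantitative picture changes completely and your superregularity argument collapses. There are now up to $1230n/s$ chord sequences, and the paper's count shows that each cluster is visited up to $21m/100$ times by $S(Q)\cup Q^{\rm basic}$ --- a constant fraction of $m$, not $O(\eps^{1/4}m)$, and certainly not ``far fewer than $\eps^2 d'm$'' (recall $d'\ll\eps$). Proposition~\ref{superslice5} is therefore inapplicable to $\cB(C)_{PE}[U^1,W^2]$. The paper's remedy is to exploit the style partition: $Q^{\rm basic}$ has style $\{1,\dots,5\}$, the chord edges are deliberately chosen inside styles $\{6,\dots,29\}$ (using Lemma~\ref{PEmatching}(i) to find large matchings, not single edges, in the restricted pairs), so styles $\{30,\dots,50\}$ are entirely untouched and $U^1\cap U^2$ contains all of them. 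The $(d')^2$-uniformity of $\cP'$ (not merely $\eps$-uniformity --- this is why the hypothesis is stated that way) then guarantees that $\cB(C)_{PE}[U^1,W^2]$ remains $(3\eps,d',\zeta d'/3,6d'/d)$-superregular, so Lemma~\ref{PEmatching}(ii) and Lemma~\ref{mergecycles0} apply. Your outline has the right architecture but is missing both the scope of the balancing and the style-based mechanism that makes the numbers work.
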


For the proof of Lemma~\ref{findham}, we add additional edges to $Q$ to obtain a `locally balanced' path system.
(To ensure the existence of these additional edges we need to work with a consistent system rather than the simpler notion of a scheme.)
This path system can then be extended into a $1$-factor using matchings between consecutive clusters of $C$.
Finally, we apply Lemma~\ref{mergecycles0} to transform this $1$-factor into a Hamilton cycle.

\proof
Recall that $Q^{\rm basic}$ is obtained from $Q$ by replacing each path of the form $x^-xx^+$ (where $x\in V_0$) by the exceptional
edge $x^-x^+\in E(G^{\rm basic})$.  
For each edge $e=ab$ of $Q^{\rm basic}$ in turn, our aim is to apply Lemma~\ref{buildchord} to find a chord sequence $CS(U(b),U(a)^+)$
in $R$ which consists of at most $3/\nu$ edges. (Here $U(b)$ denotes the cluster in $\cP$ containing $b$ and
$U(a)^+$ denotes the successor on $C$ of the cluster in $\cP$ containing $a$.)
We say that a cluster in $\cP$ is \emph{full} if it is visited at least $m/110$ times by the interiors of
the chord sequences chosen so far. (Recall that we disregard the first cluster $U(b)^-$ and the final cluster
$U(a)^+$ of $CS(U(b),U(a)^+)$ when considering its interior.)
Then the number of full clusters is at most
$$\frac{2|Q^{\rm basic}|(3/\nu)}{m/110}\le \frac{811800 n}{\nu s m} \le \frac{\nu k}{4}.
$$
After each application of Lemma~\ref{buildchord}, let  $\mathcal{V}'\subseteq V(R)$ be the set of all those clusters
which are now full. So we can apply Lemma~\ref{buildchord} to $R$ and $\mathcal{V}'$ as above to find a chord sequence
$CS(U(b),U(a)^+)$ whose interior avoids the full clusters. Thus altogether the interiors of all the
$CS(U(b),U(a)^+)$ (for all edges $ab$ of $Q^{\rm basic}$) visit each cluster in $\cP$ at most $m/110+3/\nu\le m/100$ times.

Note that since $Q$ has style size 5, every cluster in $\cP$ meets $Q^{\rm basic}$ in at most $5m/50=m/10$ vertices.
Thus every cluster plays the role of $U(b)^-$ or $U(a)^+$ for at most $2m/10$ edges $ab$ of $Q^{\rm basic}$. This implies that
the union $S(Q)$ of all the chord sequences $CS(U(b),U(a)^+)$ (over all edges $ab$ of $Q^{\rm basic}$) visits each cluster
at most 
\begin{equation} \label{visits}
m/100+2m/10=21m/100 
\end{equation}
times (where we count the edges in $S(Q)$ with multiplicities).

Now for each edge $E$ of $R$, let $s_E$ be the number of times that $E$ occurs in $S(Q)$. Note that if $E$ occurs in $CS(U(b),U(a)^+)$
then at least one of the endclusters of $E$ lies in the interior of $CS(U(b),U(a)^+)$. 
In particular, suppose that $E=UW$  occurs as an initial edge in $CS(U(b),U(a)^+)$. Then $W$ lies in the interior of the sequence.
Since altogether the interiors of all the $CS(U(b),U(a)^+)$ visit $W$ at most $m/100$ times,
this implies that altogether $E$ can occur at most $m/100$ times as an initial edge of some $CS(U(b),U(a)^+)$.
Similarly, suppose that $E=UW$  occurs in $CS(U(b),U(a)^+)$, but not as an initial edge.
Then $U$ lies in the interior of the sequence and altogether $E$ can occur at most $m/100$ times in this way.
It follows that
$s_E \le 2\cdot m/100=m/50$. 

Let $J\subseteq \{1,\dots, 50\}$ be a set of size $5$ so that $Q$ has style $J$.
Without loss of generality, we may assume that 
$J=\{1,\dots,5\}$. Let $B_E$ be the bipartite subgraph of $\cB(R)_{PE}$ which corresponds to $E$.
Let $B'_E$ be the induced bipartite subgraph of $B_E$ of style $\{6,\dots,29\}$. (So if $E=UW$
then $B'_E$ is the subgraph of $B_E$ induced by $U(6)\cup\dots\cup U(29)$ and $W(6)\cup\dots\cup W(29)$, where
$U(1),\dots,U(50)$ are the clusters in $\cP'$ contained in $U$.)

For each edge $E$ of $R$ in turn, we choose a matching $M_E$ of size $s_E$
in $B'_E$ such that $M_E$ avoids all matchings chosen previously.
To see that this can be done, recall from (PE2) that $B_E$ is $(\eps,d'/k,2d'/dk)$-regular.
Since the size of the vertex classes of $B'_E$ is precisely $24/50$ times the size of the vertex classes of $B_E$,
this implies that $B'_E$ is still $(3\eps,d'/k,6d'/dk)$-regular.%
    \COMMENT{Need to replace $2d'/dk$ by $6d'/dk$ since in the def it asks for max degree at most $cn$
where $n$ is the size of the vertex classes. So originally we had max degree at most $2d'm/dk$.
But now our vertex classes are smaller...}
Thus Lemma~\ref{PEmatching}(i) implies that $B'_E$ contains
a matching $M'_E$ of size $(1-3\eps)\cdot 24m/50\ge 23m/50$. 
But (\ref{visits}) implies that at most $2\cdot 21m/100=21m/50$ edges
of $M'_E$ are incident to an edge of a previously chosen matching
(the extra factor $2$ comes from the fact that there is a contribution from both endclusters of $E$).
So we can indeed find the required matchings $M_E$.
 
Let $Q_*$ consist of the edges of $Q^{\rm basic}$ together with the edges in the matchings $M_E$
(for all edges $E$ of $R$).
So $Q_*$ is a path system whose style is $\{1,\dots,29\}$.

Let $Q^{in}$ denote the set of final vertices of the edges in $Q_*$ and let 
$Q^{out}$ denote the set of their initial vertices.
For each cluster $U$ in $\cP$, let $U^2:=U \setminus Q^{in}$ and $U^1:=U \setminus Q^{out}$.
Consider any edge $UW$ on $C$. Then Proposition~\ref{balanced} implies that $|U^1|=|W^2|$.
Let $B^*_{UW}$ be the bipartite subgraph of $\cB(C)_{PE}[U,W]$ induced by $U^1$
and $W^2$. 
Recall from (PE1) that $\cB(C)_{PE}[U,W]$ is $(\eps,d',\zeta d',2d'/d)$-superregular.
Together with the facts that $B^*_{UW}$ contains all vertices of $U\cup W$ of style $30,\dots,50$ and
that $\cP'$ is an $(d')^2$-uniform refinement%
    \COMMENT{Since $d'\ll \eps$ an $\eps$-uniform refinement would not be enough here.}
of $\cP$ this implies that $B^*_{UW}$
is still $(3\eps,d',\zeta d'/3,6d'/d)$-superregular. 
So Lemma~\ref{PEmatching}(ii) implies that $B^*_{UW}$ contains a perfect matching~$M_{UW}$.
The union of all the $M_{UW}$ (over all edges $UW$ of $C$) together with all the edges of $Q_*$ forms a $1$-factor $F$ of $Q^{\rm basic}\cup PE$.

Since $Q_*$ is a path system, each cycle of $F$ has a vertex in $U^1$
for some cluster $U$ in $\cP$.
Now apply Lemma~\ref{mergecycles0} with $\cB(C)_{PE}$, $F$, $C$, $E(C)$, $U^1$
and $U^2$ playing the roles of $G$, $F$, $C$, $J$, $V^1_i$ and $V^2_i$
in order to modify $F$ into a Hamilton cycle $D'$ of $G^{\rm basic}$.
(Note that $U^1 \cap U^2 \neq \emptyset$ as they both contain all vertices of style $\{ 30, \dots, 50 \}$.)
Then Observation~\ref{basicobs} implies that $D:=(D')^{\rm orig}$ is a Hamilton cycle of $G=G^{\rm orig}$, as required.
\endproof

We obtain the following lemma by repeated applications of Lemma~\ref{cyclebreak} and~\ref{findham}.

\begin{lemma}\label{replacesequence}
Suppose that $0<1/n\ll r/m \ll d'\ll 1/k\ll \eps\ll 1/\ell^*\ll d\ll \nu\ll \tau\ll \alpha, \theta\le 1$
and that $d\ll \zeta\le 1/2$. Let $s:=10^7/\nu^2$. Suppose that $m/50, 50k/(s-1)\in\mathbb{N}$.
Let $(G,\cP_0, R_0,C_0,\cP,R,C)$ be a consistent $(\ell^*,k,m,\eps,d,\nu,\tau,\alpha,\theta)$-system
with $|G|=n$. Let $\cP'$ be a $(d')^2$-uniform $50$-refinement of $\cP$.
Let $H$ be an $r$-factor of $G$.  Let $CB_1,\dots,CB_{r}$ be $r$ exceptional factors with
parameters $(50,(s-1)/50)$ with respect to $C, \cP'$. Let $PE$ be a path system extender with parameters $(\eps,d,d',\zeta)$
for $C,R$. Suppose that $H$, $PE$ and the original versions of $CB_1,\dots,CB_r$ are
pairwise edge-disjoint.%
   \COMMENT{Need to say that the original versions are pairwise edge-disjoint as eg $H^{\rm basic}$ and $CB_1$ could be edge-disjoint but the original
versions of them could use the same edge at some exceptional vertex.}
Then~$G$ contains edge-disjoint Hamilton cycles $C_1,\dots,C_{sr}$ such that
\begin{itemize}
\item[(i)] altogether $C_1,\dots,C_{sr}$ contain all edges of $H  \cup CB^{\rm orig}_1 \cup \dots \cup CB^{\rm orig}_{r}$;
\item[(ii)] each $C_i$ lies in $PE \cup H \cup CB^{\rm orig}_1 \cup \dots \cup CB^{\rm orig}_{r}$.
\end{itemize}
\end{lemma}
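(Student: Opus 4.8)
The plan is to decompose $H$ into $r$ edge-disjoint $1$-factors and handle each $1$-factor together with one exceptional factor $CB_i$ by a single application of Lemma~\ref{cyclebreak} followed by $s$ applications of Lemma~\ref{findham}. First I would invoke Proposition~\ref{1factor} to write $H=H_1\cup\dots\cup H_r$ as an edge-disjoint union of $1$-factors. (Note that $H-V_0$ may be viewed as a regular digraph on $V(G)\setminus V_0$ only after passing to $G^{\rm basic}$; but since $H$ is a genuine $r$-factor of $G$, Proposition~\ref{1factor} applies directly to $H$.) I would also check that $(G,\cP,R,C)$ is a $(k,m,\eps,d)$-scheme, which follows from (CSys1)--(CSys9) as remarked after the definition of a consistent system, so that Lemmas~\ref{cyclebreak} and~\ref{findham} are applicable. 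The hierarchy in the statement is chosen so that all the constant constraints of these two lemmas (in particular $d\ll 1/s$ with $s=10^7/\nu^2$, and the divisibility conditions $m/50,50k/(s-1)\in\mathbb N$) are satisfied.

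Next, for each $i=1,\dots,r$ in turn, I would apply Lemma~\ref{cyclebreak} to the $1$-factor $H_i$ and the exceptional factor $CB_i$ (which has parameters $(50,(s-1)/50)$, exactly as required; one checks $H_i$ and $CB_i^{\rm orig}$ are edge-disjoint since $H$ and all $CB_j^{\rm orig}$ are pairwise edge-disjoint by hypothesis). This yields edge-disjoint path systems $Q_{i,1},\dots,Q_{i,s}$ whose union is $H_i\cup CB_i^{\rm orig}$, each containing precisely one exceptional cover and no other original exceptional edges, each of style size $5$ with respect to $\cP'$, and each with $|E(Q_{i,j})|\le 1230n/s$. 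Here I would need $\cP'$ to be an $\eps$-uniform $50$-refinement of $\cP$ as required by Lemma~\ref{cyclebreak}; since $\cP'$ is a $(d')^2$-uniform $50$-refinement and $(d')^2\le\eps$ (as $d'\ll 1/k\ll\eps$ forces $(d')^2\ll\eps$), a $(d')^2$-uniform refinement is in particular $\eps$-uniform, so this is fine.

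Then, for each pair $(i,j)$ with $1\le i\le r$ and $1\le j\le s$, I would apply Lemma~\ref{findham} to the path system $Q:=Q_{i,j}$, using the path system extender $PE$ (with parameters $(\eps,d,d',\zeta)$ as in the statement, matching the hypothesis of Lemma~\ref{findham}). The four bullet hypotheses of Lemma~\ref{findham} are met: $Q_{i,j}$ and $PE$ are edge-disjoint (as $PE$ is edge-disjoint from $H$ and all $CB_\ell^{\rm orig}$, and $Q_{i,j}\subseteq H_i\cup CB_i^{\rm orig}$); $Q_{i,j}$ contains exactly one exceptional cover and no other original exceptional edges; $Q_{i,j}$ has style size $5$; and $|E(Q_{i,j})|\le 1230n/s$. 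This produces a Hamilton cycle $C_{(i-1)s+j}$ of $G$ with $Q_{i,j}\subseteq C_{(i-1)s+j}\subseteq PE\cup Q_{i,j}$. Since the $Q_{i,j}$ over all $i,j$ are pairwise edge-disjoint (the $Q_{i,j}$ for fixed $i$ are edge-disjoint by Lemma~\ref{cyclebreak}, and for distinct $i$ they lie in the edge-disjoint sets $H_i\cup CB_i^{\rm orig}$), and since each $C_{(i-1)s+j}$ uses outside of $Q_{i,j}$ only edges of $PE$, I must argue the resulting Hamilton cycles are pairwise edge-disjoint; the only possible clash is two cycles sharing an edge of $PE$, and here I would need a slightly more careful bookkeeping.

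The main obstacle is exactly this last point: Lemma~\ref{findham} as stated only guarantees $D\subseteq PE\cup Q$, so naively the $sr$ Hamilton cycles could reuse edges of $PE$. I expect this is resolved by noting that Lemma~\ref{findham} is in fact applied to successively smaller path system extenders: after constructing $C_{(i-1)s+j}$, one deletes the edges of $PE$ it used and observes (via Proposition~\ref{superslice5} and the sparsity bound $\Delta(PE)\le 8d'm/d$ from~(\ref{DeltaPE}), together with $r/m\ll d'$, so that over all $sr$ steps only $O(sr d'm/d)\cdot(\text{something})=o(\eps m)$ edges are removed at any vertex) that the remaining graph is still a path system extender with marginally worse parameters, to which Lemma~\ref{findham} again applies. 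Thus the cycles are forced to be edge-disjoint. Finally, the union of all $C_{(i-1)s+j}$ contains $\bigcup_{i,j}Q_{i,j}=\bigcup_i(H_i\cup CB_i^{\rm orig})=H\cup CB_1^{\rm orig}\cup\dots\cup CB_r^{\rm orig}$, giving~(i), and each $C_{(i-1)s+j}\subseteq PE\cup Q_{i,j}\subseteq PE\cup H\cup CB_1^{\rm orig}\cup\dots\cup CB_r^{\rm orig}$, giving~(ii). Relabelling the $sr$ cycles as $C_1,\dots,C_{sr}$ completes the proof.
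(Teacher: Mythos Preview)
Your proposal is correct and follows essentially the same approach as the paper: decompose $H$ into $1$-factors via Proposition~\ref{1factor}, apply Lemma~\ref{cyclebreak} to each $1$-factor paired with one $CB_i$, then iteratively apply Lemma~\ref{findham} to the resulting path systems while deleting used edges from $PE$ and invoking Proposition~\ref{superslice5} to retain the path-system-extender property. One small quantitative slip: the number of $PE$-edges removed at any vertex after all $sr$ steps is simply at most $sr$ (one in- and one out-edge per Hamilton cycle), not $O(sr\,d'm/d)$; the paper checks $sr=10^7r/\nu^2\le \eps^2 d'm/k$, which is exactly the threshold Proposition~\ref{superslice5} needs for the $(\eps,d'/k,2d'/dk)$-regular pairs in $\cB(R)_{PE}$.
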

\proof 
Consider a $1$-factorization $F_1,\dots,F_{r}$ of $H$ (which exists by Proposition~\ref{1factor}).
So each $F_j$ contains precisely one exceptional cover (and no other original exceptional edges).
For each $j=1,\dots, r$ apply Lemma~\ref{cyclebreak} with%
    \COMMENT{In Lemma~\ref{cyclebreak} $\cP'$ is an $\eps$-uniform refinement. But that's not a problem since any $(d')^2$-uniform
refinement is an $\eps$-uniform refinement as well (since $d'\ll \eps$).}
$F_j$ and $CB_j$ playing the roles of $H$ and $CB$ to obtain path systems
$Q_{j,1},\dots,Q_{j,s}$ as described there. Relabel all these path systems as 
$Q_1,\dots,Q_{sr}$. 
Note that the $Q_i$ form a decomposition of the edges of $H \cup CB^{\rm orig}_1 \cup \dots \cup CB^{\rm orig}_{r}$.

Apply Lemma~\ref{findham} to obtain a Hamilton cycle $C_1$ in $G$ with $Q_1\subseteq C_1\subseteq PE \cup Q_1$. 
Repeat this for each of $Q_2,\dots,Q_{sr}$ in turn,
with the subdigraph $PE'$ of $PE$ obtained by deleting all the edges in the Hamilton cycles found so far playing the
role of $PE$. Note that at each stage we have removed at most $sr= 10^7 r /\nu^2 \le \eps^2 d'm/k$ outedges and at most
$sr\le \eps^2 d'm/k$ inedges at each vertex of $PE$.
So using Proposition~\ref{superslice5} it is easy to check that $PE'$ is still a path system extender with
parameters $(2\eps,d,d',\zeta/2)$. Thus we can indeed apply
Lemma~\ref{findham} in each step.
\endproof


\subsection{The preprocessing graph} \label{sec:prepro}
Let $G$ be the digraph given in Theorem~\ref{decomp}.
As described at the beginning of Section~\ref{sec:seqremove}, the purpose of Lemma~\ref{replacesequence} was to
replace the leftover $H$ of an almost decomposition of $G$ by a digraph $H'$
on $V(G)\setminus V_0$, i.e.~by a digraph in which every exceptional vertex can be thought of as being isolated. This digraph $H'$ is obtained from $PE$ by deleting all the edges in
the Hamilton cycles guaranteed by Lemma~\ref{replacesequence}. But this means that $H'$ will not be regular since $PE$ was not regular.
However, for later purposes we want to assume that this leftover $H'$ is regular.
So we will need to add another digraph $PG^\diamond$ to $H'$ whose degree sequence complements that of $PE$. Then the union of
$H'$ and $PG^\diamond$ will be regular. So the preprocessing graph defined below contains all the digraphs which we need in order
to replace $H$ by a regular digraph on $V(G) \setminus V_0$ not containing any exceptional edges.

Suppose that $(G,\cP,R,C)$ is a $(k,m,\eps,d)$-scheme and that $m/50, 50k/(s-1)\in\mathbb{N}$. Let $\cP'$ be an $\eps$-uniform 50-refinement of $\cP$.
A \emph{preprocessing graph} $PG$ with parameters $(s,\eps,d,r',r'',r,\zeta)$ (with respect to $C,R,\cP'$)
is the edge-disjoint union of two graphs $PG^*$ and $PG^\diamond$ satisfying the following conditions:
\begin{itemize}
\item[(PG1)] $PG^*$ is the edge-disjoint union of a path system extender $PG_{PE}$ with parameters
$(\eps,d,r'/m,\zeta)$ (for $C, R$) and of $r$ exceptional factors $CB_1,\dots,CB_r$ with parameters $(50, (s-1)/50)$
(with respect to $C,\cP'$).
Moreover, the original versions of these $r$ exceptional factors are pairwise edge-disjoint.
\item[(PG2)] $PG^\diamond$ is a spanning subgraph of $G-V_0$ which satisfies
$d^+_{PG^\diamond}(x)=r''-d^+_{PG^*}(x)$ and $d^-_{PG^\diamond}(x)=r''-d^-_{PG^*}(x)$.
\end{itemize}
Note that $PG$ is a spanning $r''$-regular subgraph of $G^{\rm basic}$. 
Moreover, in $PG^{\rm orig}$ we have 
\begin{equation} \label{predegrees}
d^\pm(x)=r(s-1) \ \ \forall x\in V_0\ \ \quad \mbox{and} \quad \ \ d^\pm (y) =r''\ \ \forall y\in V(G)\setminus V_0.
\end{equation}
 Note also that (\ref{DeltaPE}) implies that 
\begin{equation} \label{DeltaPG}
\Delta(PG^*) \le \frac{8r'}{d} +2r.
\end{equation}

The following corollary is an immediate consequence of Lemma~\ref{replacesequence}.
\begin{cor}\label{preprocor}
Suppose that $0<1/n\ll r/m \ll r'/m\ll r''/m\ll 1/k\ll \eps\ll 1/\ell^*\ll d\ll \nu\ll \tau\ll \alpha, \theta\le 1$
and that $d\ll \zeta\le 1/2$. Let $s:=10^7/\nu^2$. Suppose that $m/50, 50k/(s-1)\in\mathbb{N}$.
Let $(G,\cP_0, R_0,C_0,\cP,R,C)$ be a consistent $(\ell^*,k,m,\eps,d,\nu,\tau,\alpha,\theta)$-system
with $|G|=n$. Let $\cP'$ be a $(r'/m)^2$-uniform $50$-refinement of $\cP$.
Let $H$ be an $r$-factor of $G$.
Let $PG$ be a preprocessing graph with respect to $C,R,\cP'$ with parameters $(s,\eps,d,r',r'',r,\zeta)$.
Suppose that $H$ and $PG^{\rm orig}$ are edge-disjoint. 
Then $G$ contains edge-disjoint Hamilton cycles $C_1,\dots,C_{rs}$ such that the following conditions hold:
\begin{itemize}
\item[{\rm (i)}] Altogether $C_1,\dots,C_{rs}$ cover all edges of $H$ and $C_i\subseteq H\cup PG^{\rm orig}$
for each $i=1,\dots,rs$.
\item[{\rm (ii)}] Every vertex $x\in V_0$ is isolated in $PG':=PG^{\rm orig}\setminus E(C_1 \cup \dots \cup C_{rs})$
and every vertex $x\in V(G)\setminus V_0$ has in- and outdegree $r''-(s-1)r$ in $PG'$.
\end{itemize}
\end{cor}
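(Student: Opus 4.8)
The plan is to apply Lemma~\ref{replacesequence} essentially verbatim and then do the bookkeeping to read off conclusion (ii). First I would check that the hypotheses of Lemma~\ref{replacesequence} are satisfied. By (PG1), the graph $PG^*$ is the edge-disjoint union of a path system extender $PG_{PE}$ with parameters $(\eps,d,r'/m,\zeta)$ and of $r$ exceptional factors $CB_1,\dots,CB_r$ with parameters $(50,(s-1)/50)$ whose original versions are pairwise edge-disjoint. Setting $d':=r'/m$, the hierarchy $1/n\ll r/m\ll r'/m\ll r''/m\ll 1/k\ll\eps\ll 1/\ell^*\ll d\ll\nu\ll\tau\ll\alpha,\theta$ gives exactly the hierarchy $1/n\ll r/m\ll d'\ll 1/k\ll\eps\ll 1/\ell^*\ll d\ll\nu\ll\tau\ll\alpha,\theta\le 1$ required by Lemma~\ref{replacesequence} (together with $d\ll\zeta\le 1/2$ and the divisibility conditions $m/50,50k/(s-1)\in\mathbb{N}$), and since $\cP'$ is a $(r'/m)^2=(d')^2$-uniform $50$-refinement of $\cP$, condition on $\cP'$ is met as well. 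The $r$-factor $H$ of $G$ is given, and by hypothesis $H$ and $PG^{\rm orig}$ are edge-disjoint; since $PG_{PE}=PG_{PE}^{\rm orig}$ lies in $G-V_0$ and the $CB_i^{\rm orig}$ are pairwise edge-disjoint, the graphs $H$, $PG_{PE}$ and $CB_1^{\rm orig},\dots,CB_r^{\rm orig}$ are pairwise edge-disjoint. Hence Lemma~\ref{replacesequence} applies (with $PG_{PE}$ playing the role of $PE$) and yields edge-disjoint Hamilton cycles $C_1,\dots,C_{sr}$ of $G$ which together contain all edges of $H\cup CB_1^{\rm orig}\cup\dots\cup CB_r^{\rm orig}$, with each $C_i\subseteq PG_{PE}\cup H\cup CB_1^{\rm orig}\cup\dots\cup CB_r^{\rm orig}\subseteq H\cup PG^{\rm orig}$.

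This immediately gives (i): the $C_i$ cover all edges of $H$ (indeed of $H\cup CB_1^{\rm orig}\cup\dots\cup CB_r^{\rm orig}$), and $C_i\subseteq H\cup PG^{\rm orig}$ as noted.

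For (ii), I would compute degrees in $PG':=PG^{\rm orig}\setminus E(C_1\cup\dots\cup C_{sr})$. By~(\ref{predegrees}), in $PG^{\rm orig}$ every vertex $x\in V_0$ has $d^\pm(x)=r(s-1)$ and every $y\in V(G)\setminus V_0$ has $d^\pm(y)=r''$. Since the $C_i$ cover all edges of $H\cup CB_1^{\rm orig}\cup\dots\cup CB_r^{\rm orig}$, and every edge of $PG^{\rm orig}$ incident to $V_0$ lies in one of the $CB_i^{\rm orig}$ (the path system extender $PG_{PE}$ has no edges at $V_0$), all $r(s-1)$ in-edges and all $r(s-1)$ out-edges at each $x\in V_0$ are used up by $C_1\cup\dots\cup C_{sr}$; hence $x$ is isolated in $PG'$. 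For a vertex $y\in V(G)\setminus V_0$: the Hamilton cycles $C_1,\dots,C_{sr}$ are edge-disjoint, each contributes exactly one in-edge and one out-edge at $y$, and all $sr$ of these in-edges (resp. out-edges) lie in $H\cup PG^{\rm orig}$; exactly $r$ of them lie in $H$ (as $H$ is an $r$-factor and all Hamilton cycles together cover $H$), so exactly $sr-r=(s-1)r$ of the in-edges at $y$ used by the $C_i$ lie in $PG^{\rm orig}$, and likewise for out-edges. Therefore in $PG'$ the vertex $y$ has in- and outdegree $r''-(s-1)r$, as claimed. This completes the proof.

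Since Corollary~\ref{preprocor} is a direct translation of Lemma~\ref{replacesequence} via the definitions (PG1), (PG2) and~(\ref{predegrees}), there is no genuine obstacle; the only point requiring a little care is the degree count in~(ii), where one must use that every edge of $PG^{\rm orig}$ at an exceptional vertex comes from an exceptional factor (so is covered by the $C_i$), and that exactly $r$ of the $sr$ edges through each non-exceptional vertex come from $H$ rather than $PG^{\rm orig}$.
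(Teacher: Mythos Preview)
Your proof is correct and follows essentially the same approach as the paper: apply Lemma~\ref{replacesequence} with $PG_{PE}$ in the role of $PE$ to obtain the Hamilton cycles and (i), then do a degree count for (ii). One tiny point: when you argue that every edge of $PG^{\rm orig}$ incident to $V_0$ lies in some $CB_i^{\rm orig}$, your parenthetical only mentions that $PG_{PE}$ avoids $V_0$; strictly you should also note that $PG^\diamond\subseteq G-V_0$ by (PG2), but this is implicit in your use of~(\ref{predegrees}) and does not affect correctness.
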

\proof Apply Lemma~\ref{replacesequence} with $PG_{PE}$ playing the role of $PE$. This gives us edge-disjoint Hamilton
cycles $C_1,\dots,C_{rs}$ of $G$ as described there. So in particular condition~(i) of Corollary~\ref{preprocor} holds. 
By~(i) of Lemma~\ref{replacesequence}, all the $C_1,\dots,C_{rs}$ together cover
all the original exceptional edges of $PG^{\rm orig}$ (as they cover $CB^{\rm orig}_1,\dots,CB^{\rm orig}_r$
and each original exceptional edge in $PG^{\rm orig}$ is contained in one of $CB^{\rm orig}_1,\dots,CB^{\rm orig}_r$).
Since $V_0$ is an independent set in $G$ by (CSys9), it follows that every vertex $x\in V_0$ is isolated in $PG'$.
Moreover, every vertex $x\in V(G)\setminus V_0$ has indegree $r+r''$ in $H\cup PG^{\rm orig}$ and indegree $rs$ in
$C_1\cup \dots\cup C_{rs}$. So the indegree of $x$ in $PG'$ is $r+r''-rs=r''-(s-1)r$. Similarly, every vertex in
$V(G)\setminus V_0$ has outdegree $r''-(s-1)r$ in $PG'$.
Altogether, this implies condition~(ii) of Corollary~\ref{preprocor}.
\endproof

The next lemma shows that one can find a preprocessing graph within a consistent system.

\begin{lemma}\label{findprepro}
Suppose that $0<1/n\ll r/m\ll d'\ll d''\ll 1/k\ll \eps\ll \eps'\ll 1/\ell^*\ll d\ll \nu\ll \tau\ll \alpha, \theta\le 1$
and that $d\ll\zeta\le 1/2$. Let $s:=10^7/\nu^2$. Suppose that%
    \COMMENT{Need that $50\ell^*/(s-1)\in\mathbb{N}$ since we need that $\ell^*/L\in\mathbb{N}$ in Lemma~\ref{exceptseq}.
Since $k/\ell^*\in\mathbb{N}$ we automatically have that $50k/(s-1)\in\mathbb{N}$.}
$m/50, 50\ell^*/(s-1)\in\mathbb{N}$.
Let $(G,\cP_0, R_0,C_0,\cP,R,C)$ be a consistent $(\ell^*,k,m,\eps,d,\nu,\tau,\alpha,\theta)$-system
with $|G|=n$. Let $\cP'$ be an $\eps$-uniform 50-refinement of $\cP$.
Then $G^{\rm basic}$ contains a preprocessing graph with respect to $C,R,\cP'$ with parameters
$(s,\eps',d,d'm,d''m,r,\zeta)$.
\end{lemma}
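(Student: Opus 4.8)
The plan is to build the required preprocessing graph $PG$ as an edge-disjoint union $PG^*\cup PG^\diamond$, where $PG^*=PG_{PE}\cup CB_1\cup\dots\cup CB_r$, by finding the three ingredients one after another; since each of $PG_{PE}$, the $CB_i^{\rm orig}$ and $PG^\diamond$ is sparse, we only lose a little in the parameters at each step and absorb this into auxiliary constants. First I would apply Lemma~\ref{exceptseq} (with $K:=50$, $L:=(s-1)/50$ and $r_0:=r$) to obtain a set of $r$ exceptional factors $CB_1,\dots,CB_r$ with parameters $(50,(s-1)/50)$ with respect to $C,\cP'$ whose original versions are pairwise edge-disjoint subdigraphs of $G$. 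Its hypotheses hold: $m/50,\ 50\ell^*/(s-1)\in\mathbb N$ are given (so also $\ell^*/L,\ m/K\in\mathbb N$ and $50k/(s-1)\in\mathbb N$); $L/\ell^*=(s-1)/(50\ell^*)\ll 1$ and $\eps\ll 1/K,1/L$ hold because $\ell^*$ and $\eps$ are chosen after $\nu$ (and $s=10^7/\nu^2$), so $\ell^*\gg s$ and $\eps\ll\nu^2$; and $Kr_0/m=50r/m\ll d$ since $r/m\ll d'\ll\dots\ll d$.

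Next, every vertex of $V(G)\setminus V_0$ lies on at most two of the $s-1$ complete exceptional path systems of each $CB_i$ (the $K$ systems spanning an interval are vertex-disjoint and have distinct styles, and each cluster lies in at most two intervals of $\cI$), so $d^\pm_{CB_i^{\rm orig}}(x)\le 2$ for $x\in V(G)\setminus V_0$. Hence deleting the edges of $CB_1^{\rm orig}\cup\dots\cup CB_r^{\rm orig}$ from $G$ removes at most $2r\le\eps m$ in- and outedges at every vertex and affects only the pairs $G[U,W]$ with $UW\in E(C)$; by Proposition~\ref{superslice}(i),(iii) (applied with $\eps$ in place of its $d'$) the resulting digraph $G_1$ is such that $(G_1,\cP,R,C)$ is a $(k,m,2\sqrt\eps,d)$-scheme ($V_0$ stays independent). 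Applying Lemma~\ref{findPE} to this scheme (with $2\sqrt\eps$, $d'$, $d$, $\zeta$ in the roles of $\eps$, $d'$, $d$, $\zeta$) yields a path system extender $PG_{PE}$ for $C,R$ with parameters $((2\sqrt\eps)^{1/25},d,d',\zeta)$, and hence with parameters $(\eps',d,d',\zeta)$ since $(2\sqrt\eps)^{1/25}\le\eps'$. As $PG_{PE}\subseteq G_1$, it is edge-disjoint from all $CB_i^{\rm orig}$; since $PG_{PE}$ has no exceptional edges and the non-exceptional edges of each $CB_i$ lie in $E(CB_i^{\rm orig})$, the digraph $PG^*:=PG_{PE}\cup CB_1\cup\dots\cup CB_r\subseteq G^{\rm basic}$ is an edge-disjoint union as demanded by (PG1), and by~(\ref{DeltaPE}) it satisfies $\Delta(PG^*)\le 8d'm/d+2r$ at every vertex of $V(G)\setminus V_0$, in agreement with~(\ref{DeltaPG}).

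It remains to construct $PG^\diamond$. Let $G^\star$ be obtained from $G-V_0$ by deleting the (non-exceptional) edges of $PG^*$, that is, the edges of $PG_{PE}$ together with the non-exceptional edges of the $CB_i$; all of these lie in $E(PG_{PE})\cup\bigcup_i E(CB_i^{\rm orig})$, so any spanning subgraph of $G^\star$ is automatically edge-disjoint from $PG^*$ in $G^{\rm basic}$. Note $|V(G^\star)|=km$ and that $G^\star$ arises from $G$ by deleting at most $\eps n$ vertices and then at most $\Delta(PG^*)\le 8d'm/d+2r\ll\nu m$ in- and outedges at each remaining vertex; using (CSys1) it is straightforward to check that $G^\star$ is a robust $(\nu/4,4\tau)$-outexpander with $\delta^0(G^\star)\ge(\alpha/4)km$. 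For $x\in V(G)\setminus V_0$ put $n^+_x:=d''m-d^+_{PG^*}(x)$ and $n^-_x:=d''m-d^-_{PG^*}(x)$. Since every edge of $PG^*$ (including every exceptional edge) joins two vertices of $V(G)\setminus V_0$ by (Sch4)/(CSys9), we get $\sum_x n^+_x=(km)d''m-e(PG^*)=\sum_x n^-_x$; and $0\le d^\pm_{PG^*}(x)\le 8d'm/d+2r$ together with $d'\ll d\,d''$ gives $n^\pm_x=(1\pm\eps^\diamond)\xi(km)$ with $\xi:=d''/k$ and some $\eps^\diamond$ satisfying $1/(km)\ll\eps^\diamond\ll\nu$, while $1/(km)\ll\xi\le\nu^2/50$. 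Thus Lemma~\ref{regrobust} (applied with $q=1$, with $G^\star$ playing the role of both $G$ and $Q$, and with $km$, $\nu/4$, $4\tau$, $\alpha/4$, $\eps^\diamond$, $\xi$ in place of $n$, $\nu$, $\tau$, $\alpha$, $\eps$, $\xi$) yields a spanning subgraph $PG^\diamond$ of $G^\star$ with $d^+_{PG^\diamond}(x)=n^+_x$ and $d^-_{PG^\diamond}(x)=n^-_x$ for all $x$; that is, $PG^\diamond$ is a spanning subgraph of $G-V_0$ satisfying (PG2) with $r''=d''m$. Hence $PG:=PG^*\cup PG^\diamond$ is a preprocessing graph with respect to $C,R,\cP'$ with parameters $(s,\eps',d,d'm,d''m,r,\zeta)$.

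The main obstacle is the construction of $PG^\diamond$ realising the precise prescribed degree sequence, which is exactly what Lemma~\ref{regrobust} is designed to supply; the two points needing care are the parity condition $\sum_x n^+_x=\sum_x n^-_x$ (valid because all edges of $PG^*$ join vertices of $V(G)\setminus V_0$) and the verification that the leftover $G^\star$ is still a robust outexpander of linear minimum semidegree. The remaining checks — that deleting the sparse graphs $CB_i^{\rm orig}$ and $PG_{PE}$ preserves (super)regularity of the relevant pairs and robust expansion — are routine consequences of Propositions~\ref{superslice} and~\ref{superslice5} and of the constant hierarchy.
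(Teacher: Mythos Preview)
Your proof follows the same three-step approach as the paper: find the exceptional factors $CB_i$ via Lemma~\ref{exceptseq}, find the path system extender in the leftover via Lemma~\ref{findPE}, and then realise $PG^\diamond$ with the prescribed degree sequence via Lemma~\ref{regrobust} applied to $G-V_0$ minus the edges of $PG^*$. The overall argument is correct.

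Two minor inaccuracies deserve correction, though neither is fatal. First, the claim that deleting $CB_1^{\rm orig}\cup\dots\cup CB_r^{\rm orig}$ ``affects only the pairs $G[U,W]$ with $UW\in E(C)$'' is false: each complete exceptional path system built in Lemma~\ref{exceptseq} also contains the chord-sequence edges $CS'_x$, which lie in $G[U,U']$ for arbitrary edges $UU'\in E(R)$, not just edges of $C$. However, your appeal to Proposition~\ref{superslice}(i) works equally well for these pairs, so (Sch2) still survives with the new parameter $2\sqrt{\eps}$. Second, the bound of $2r$ deleted edges ``at every vertex'' fails at vertices of $V_0$, where one actually removes $(s-1)r$ in- and outedges; but since $V_0$ meets no cluster pair this does not affect (Sch2) or (Sch3), and (Sch4) is trivially preserved. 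The paper avoids both points by invoking Lemma~\ref{deletesystem} with the uniform bound $(s-1)r\le\eps m$ to keep the full consistent system, which is slightly less economical than your route but cleaner.
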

\proof
Apply Lemma~\ref{exceptseq} to find $r$ exceptional factors $CB_1,\dots,CB_r$ with parameters $(50,(s-1)/50)$
(with respect to $C,\cP'$) whose
original versions $CB_1^{\rm orig},\dots,CB_r^{\rm orig}$ are pairwise edge-disjoint.
(So we apply the lemma with $K:=50$, $L:=(s-1)/50$ and $r_0:=r$.) Let $G_1$ be obtained
from $G$ by deleting all the edges in $CB_1^{\rm orig},\dots,CB_r^{\rm orig}$. Thus $G_1$ is obtained
from $G$ by deleting at most $(s-1)r$ outedges and at most $(s-1)r$ inedges at each vertex.%
    \COMMENT{Delete exactly $r$ out/inedges at the vertices in $V(G)\setminus V_0$.}
Since $(s-1)r\le \eps m$ we can apply
Lemma~\ref{deletesystem} with $\eps$ playing the role of $\eps'$ in Lemma~\ref{deletesystem} to see that
$(G_1,\cP_0, R_0,C_0,\cP,R,C)$ is still a consistent $(\ell^*,k,m,3\sqrt{\eps},d,\nu/2,\tau,\alpha/2,\theta/2)$-system.  
Apply Lemma~\ref{findPE} to find a path system extender $PG_{PE}$ in $G_1$ with parameters $(\eps',d,d',\zeta)$
(with respect to $C,R$). Let $PG^*$ be the union of $PG_{PE}$ and  $CB_1,\dots,CB_r$.

To find $PG^\diamond$, for every vertex $x\in V(G)\setminus V_0$ let $n^+_x:=d''m-d^+_{PG^*}(x)$ and
$n^-_x:=d''m-d^-_{PG^*}(x)$. Note that (\ref{DeltaPG}) implies that 
\begin{equation} \label{DeltaPG*}
\Delta(PG^*)\le 8d'm/d+2r\le 9d'm/d. 
\end{equation}
So for every vertex $x\in V(G)\setminus V_0$ we have that
\begin{equation} \label{DeltaPG2}
\left( 1- \frac{9d'}{dd''} \right) d'' m =d''m-\frac{9d'm}{d} \le n^+_x,n^-_x\le d''m.
\end{equation}
Let $G_2$ be the digraph
obtained from $G-V_0$ by deleting all the edges in $PG^*$.
Then (\ref{DeltaPG*}) and the fact that $G$ is a robust $(\nu,\tau)$-outexpander with $\delta^0(G)\ge \alpha n$
imply that $G_2$ is still a robust $(\nu/2,2\tau)$-outexpander
with $\delta^0(G_2)\ge \alpha n/2$. 
Together with~(\ref{DeltaPG2}) this shows that we can apply Lemma~\ref{regrobust} with $q=1$, with $G_2$ playing the role of $G=Q$,
and with $d''m/|G_2|$, $9d'/dd''$ playing the roles of $\xi$, $\eps$
to obtain a spanning subgraph $PG^\diamond$ of $G_2$
satisfying $d_{PG^\diamond}^\pm(x)=n^\pm_x$ for every vertex $x$.
Then $PG^*\cup PG^\diamond$ is a preprocessing graph as required.
\endproof

The following lemma decomposes the leftover $PG'$ of the preprocessing graph obtained from Corollary~\ref{preprocor} into path systems $H_i$.
In Section~\ref{sec:chordabsorb}, these path systems will be extended into Hamilton cycles using the `chord absorber'.

Suppose that $k/g\in\mathbb{N}$ and that $C=V_1 \dots V_{k}$.
Consider the canonical interval partition of $C$ into $g$ edge-disjoint intervals of equal length and for each $i=1,\dots,g$
let $X_i$ denote the union of all clusters in the $i$th interval. So $X_i=V_{(i-1)k/g+1}\cup \dots\cup V_{ik/g+1}$.
We say that an edge of $G-V_0$ has \emph{double-type $ij$} if its endvertices are contained in $X_i\cup X_j$.
So the number of double-types is $\binom{g}{2}$. A digraph has \emph{double-type $ij$} if all its edges have double-type $ij$.

\begin{lemma} \label{splitinitcleanH}
Suppose that $0<1/n\ll 1/k,\eps,d,1/q^*,1/g\ll 1$ and that $2q^*/3g(g-1), k/g\in\mathbb{N}$.
Let $(G,\cP,R,C)$ be a $(k,m,\eps,d)$-scheme with $|G|=n$ and $C=V_1\dots V_k$.
Suppose that $H$ is a $1$-regular digraph on $V_1\cup\dots\cup V_k$.
Then we can decompose $E(H)$ into $q^*$ (possibly empty) matchings $H_1,\dots,H_{q^*}$ such that the following conditions hold.
\begin{itemize}
\item[(i)] For all $i=1,\dots,q^*$, $H_i$ consists of at most $2g^2km/q^*$ edges.
\item[(ii)] If $|i-j| \le 10$, then $H_i$ and $H_j$ are vertex-disjoint, with 
the indices considered modulo $q^*$.
\item[(iii)] Each $H_i$ consists entirely of edges of the same double-type
and for each $t\in\binom{g}{2}$ the number of $H_i$ of double-type $t$ is $q^*/\binom{g}{2}$.%
    \COMMENT{Clearly, getting the same type (i.e. being concentrated on an interval) is impossible here.
Using double-types is strong enough to find disjoint exceptional sequences.}
\end{itemize}
\end{lemma}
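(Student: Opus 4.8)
The plan is to first assign each edge of $H$ a single double-type, then edge-colour the resulting classes with only three colours each, split the colour classes evenly into the prescribed number of matchings, and finally re-order these matchings using the tenth power of a Hamilton cycle (Theorem~\ref{10thpower}) so that nearby matchings become vertex-disjoint.

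First I would fix, for every edge $xy$ of $H$, indices $a,b$ with $x\in X_a$ and $y\in X_b$ (this is forced when $a\neq b$; when $a=b$ pick any $b'\neq a$ and use the double-type $\{a,b'\}$), and let $E_t$ denote the set of edges assigned double-type $t$, for each $t\in\binom{g}{2}$. By construction the $E_t$ partition $E(H)$ and each $E_t$ consists of edges of double-type $t$. Since every vertex of $H$ has out-degree and in-degree exactly $1$, it lies on at most two edges of $H$, hence on at most two edges of $E_t$; thus $E_t$ (viewed as an undirected graph) has maximum degree at most $2$ and therefore splits into three matchings $M_t^1,M_t^2,M_t^3$ (some possibly empty). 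Put $q':=2q^*/(3g(g-1))\in\mathbb N$ and split each $M_t^j$ into $q'$ sub-matchings whose sizes differ by at most one. Being subsets of a single matching, the $q'$ sub-matchings of a fixed $M_t^j$ are pairwise vertex-disjoint. Altogether this produces $3q'\binom{g}{2}=q^*$ matchings, exactly $3q'=q^*/\binom{g}{2}$ of each double-type, which gives (iii); and each one has at most $|E_t|/q'+1\le |E(H)|/q'+1=3g(g-1)km/(2q^*)+1\le 2g^2km/q^*$ edges, using $km\ge(1-\eps)n\gg q^*$, which gives (i).

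It remains to order the $q^*$ matchings so as to secure (ii), and this is the \emph{main point}. Consider the conflict graph $\mathcal G$ on these matchings, joining two whenever they share a vertex. A matching of double-type $t$ has all its vertices in $X_t$; so if a matching of double-type $\{a,b\}$ meets one of double-type $\{c,d\}$ then $(X_a\cup X_b)\cap(X_c\cup X_d)\neq\emptyset$. Since $X_i\cap X_j\neq\emptyset$ forces the cyclic distance of $i,j$ on $\{1,\dots,g\}$ to be at most $1$, this means $\{c,d\}$ contains one of the six indices $a-1,a,a+1,b-1,b,b+1$. There are at most $6(g-1)$ such double-types and each contributes $3q'$ matchings, so $\Delta(\mathcal G)\le 18(g-1)q'=12q^*/g\le q^*/12$, as $g$ is large. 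Hence the complement $\overline{\mathcal G}$, a graph on the bounded number $q^*$ of vertices, has minimum degree at least $q^*-1-q^*/12\ge 10q^*/11$, so by Theorem~\ref{10thpower} it contains the tenth power of a Hamilton cycle. Re-indexing the matchings as $H_1,\dots,H_{q^*}$ in the cyclic order of this Hamilton cycle, whenever $|i-j|\le 10$ modulo $q^*$ the vertices $H_i,H_j$ are adjacent in $\overline{\mathcal G}$, hence non-adjacent in $\mathcal G$, hence $H_i$ and $H_j$ are vertex-disjoint. This is (ii), and re-indexing affects neither (i) nor (iii).

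The hard part is thus this last ordering step: it succeeds because a matching of double-type $t$ is confined to $X_t$ and the $X_i$ overlap only in single boundary clusters, so each matching conflicts with matchings of only $O(g)$ of the $\binom{g}{2}$ double-types, a vanishing fraction of all $q^*$ matchings since $g$ is large; Theorem~\ref{10thpower} then converts this sparsity of conflicts into the required spread-out ordering. One should also record the routine point that the hierarchy must be chosen so that $g$ and $q^*$ are large enough in absolute terms (e.g.\ $g\ge 144$ and $q^*\ge\max\{132,n_0\}$, where $n_0$ is the constant of Theorem~\ref{10thpower}).
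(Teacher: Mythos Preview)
Your proof is correct. The approach differs from the paper's in two places, both harmless.

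First, the order of splitting: the paper splits $E(H)$ into $2q^*/3g(g-1)$ nearly equal pieces, then $3$-edge-colours each piece (possible since $H$ has maximum undirected degree $2$), and only then refines by double-type; you instead partition by double-type first, then $3$-colour, then equalise sizes. Either order gives $q^*$ matchings with the right type counts and size bound.

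Second, and more substantively, the ordering step: the paper applies Theorem~\ref{10thpower} to a much smaller auxiliary graph whose vertices are the $\binom{g}{2}$ double-types themselves (two types are adjacent when their ``closures'' are disjoint), obtains a cyclic order on types, and then lists the matchings by running through this type-order repeatedly. You instead apply Theorem~\ref{10thpower} directly to the complement of the conflict graph on all $q^*$ matchings. The paper's route has the minor advantage that the graph fed to Theorem~\ref{10thpower} depends only on $g$, so one only needs $\binom{g}{2}\ge n_0$; your route needs $q^*\ge n_0$ as well. Since the hierarchy already makes both $g$ and $q^*$ large absolute constants, this costs nothing, and your argument is arguably more direct. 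Your degree bound $\Delta(\mathcal G)\le 18(g-1)q'=12q^*/g$ is correct (matchings of the same double-type are already counted among the $6(g-1)$ conflicting types), and the threshold $g\ge 144$ is comfortably inside the hierarchy.
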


\proof
First split the edges of $H$ into $2q^*/3g(g-1)$ sets whose sizes are as equal as possible.
Now we arbitrarily split each of these sets into three matchings. Finally, we split each of these matchings further into
submatchings consisting of edges of the same double-type.
Since there are $\binom{g}{2}$ different double-types this gives $\binom{g}{2} \cdot 3\cdot 2q^*/3g(g-1)=q^*$
matchings, which we denote by $H_1,\dots,H_{q^*}$. Moreover, each $H_i$ consists of at most $2g^2km/q^*$ edges.
Thus the $H_i$ satisfy~(i) and~(iii).

Given a double-type~$ij$, let $cl(ij)$ denote the set of all those numbers $s$ for which at least one
of $s-1,s,s+1$ belongs to $\{i,j\}$. (So $cl(23)=\{1,2,3,4\}$ and $cl(27)=\{1,2,3,6,7,8\}$.)

To obtain (ii), we first show the following claim:
there is a cyclic ordering of the double-types so that if two double-types $ab$ and $cd$ have distance at most 10 in the ordering, then
$cl(ab)\cap cl(cd)=\emptyset$.

To prove the claim, consider the following auxiliary graph $A$:
the vertices are the double-types (i.e.~the unordered pairs of numbers in $\{1,\dots,g\}$). We connect two double-types $ab$ and $cd$ by an edge if 
$cl(ab)\cap cl(cd)=\emptyset$. Then $A$ has minimum degree at least $\binom{g}{2}-6(g-1) \ge \frac{10}{11} \binom{g}{2}$.
So $A$ contains the $10$th power of a Hamilton cycle by Theorem~\ref{10thpower}. The ordering of the double-types on the Hamilton cycle
gives the required ordering of the double-types.

We now relabel the $H_i$ as follows: first we take one $H_i$ of each double-type in this ordering of the double-types, 
and then repeat with another $H_i$ of each double-types and so on.
\endproof



\section{The chord absorbing step} \label{sec:chordabsorb}

Recall from the previous section that we have a `leftover' digraph $G'$ which is a regular subdigraph of $G-V_0$.
($G'$ is a subdigraph of the preprocessing graph.)
The aim of this section is to define and use a `chord absorber' $CA$ in $G^{\rm basic}$ so that
\begin{itemize}
\item[(a)] $G' \cup CA$ contains a collection of Hamilton cycles $C_i$ which together cover all edges of $G'$;
\item[(b)] removing the $C_i$ from $CA$ leaves a digraph which is a blow-up of the cycle~$C=V_1 \dots V_k$;
\item[(c)] each $C_i$ contains exactly one complete exceptional sequence and thus $C_i^{\rm orig}$ corresponds to a Hamilton cycle in $G$.
\end{itemize}
Similarly to the previous section, we first split $G'$ into $1$-factors.
Then we split each such $1$-factor $H$ into small matchings $H_i$ (as described in Lemma~\ref{splitinitcleanH}).
Then each $H_i$ is extended into a Hamilton cycle $C_i$ using edges of the chord absorber $CA$.
This is achieved mainly by Lemma~\ref{absorbH}. The main difficulty compared to the argument in the previous section is that we need to achieve (b).

The chord absorber consists of a blow-up of the cycle $C=V_1\dots V_k$, some exceptional factors as well as some additional edges which are
constructed via a `universal walk'. These additional edges will be used to `balance out' the edges in the $H_i$.
The universal walk $U$ will be constructed in the next subsection using the chord sequences defined in Section~\ref{sec:shiftwalks}.

It turns out that a natural way to construct the universal walk $U$ and $CA$ would be the following
(where we ignore requirement (c) for the moment):
for each pair $V_i,V_{i+1}$ of consecutive clusters on the cycle $C$, we fix a shifted walk $SW_i$ from $V_i$ to $V_{i+1}$
(recall these were also defined in Section~\ref{sec:shiftwalks}). We then let $U$ be the concatenation of all the $SW_i$ with $1 \le i \le k$.
Then it is not hard to check that $U$ is a closed walk which visits each cluster the same number of times.
$CA$ is then defined to be the union of a regular blow-up $\cB(C)$ of $C$ (which is also $\eps$-regular) together with a regular blow-up $\cB(U)$ of $U$.
As a step towards extending each $H_i$ into a Hamilton cycle, we balance out each edge $e=x_jx_{j'}$ of $H_i$ by adding a suitable shifted walk $SW_{jj'}$
(see also the proof of Lemma~\ref{findham} for a similar argument).
We do this as follows: for any $j$, let $s(j)$ be such that $x_{j}\in V_{s(j)}$.
Then we add the shifted walk $SW_{jj'}$ consisting of the concatenation $SW_{s(j')}SW_{s(j')+1}\dots SW_{s(j)-1}SW_{s(j)}$.
If we replace each edge $E$ of each $SW_{jj'}$ used for $H_i$ with an edge from the bipartite subgraph of $\cB(U)$
corresponding to~$E$, then the union of these edges together with $H_i$ 
satisfy a `local balance property' (as described in Proposition~\ref{balanced})
and can thus be extended into a Hamilton cycle using edges of $\cB(C)$.
The crucial fact now is that since $H$ is a $1$-factor, it turns out that altogether (i.e.~when considering the union of the $H_i$),
each $SW_j$ is used the same number of times in this process. 
So for each edge $E$ of $U$, overall we use the same number $t$ of edges from the bipartite subgraph of $\cB(U)$ corresponding to $E$.
Thus $t$ is independent of $E$. This means that we can indeed choose $\cB(U)$ to be regular, which will enable us to satisfy (b).

Unfortunately, the above construction of $U$ makes $U$ too long -- $U$ would visit each cluster more than $k$ times, which would create major
technical difficulties in the proof of Lemma~\ref{absorbH}. So in Lemma~\ref{buildU} we present a `compressed' construction (based on chord sequences) which has the same set of
chord edges as the one given above, but which visits each cluster only $\ell'$ times, where $1/k \ll \eps \ll 1/\ell'$.

\subsection{Universal walks, setups and chord absorbers}\label{sec:defabsorb}
Suppose that $R$ is a digraph whose vertices are $k$ clusters $V_1,\dots,V_k$ and that $C:=V_1\dots V_k$ is
a Hamilton cycle in $R$.%
   \COMMENT{Can't talk about $(k,m,\eps,d)$-schemes here since when we apply Lemma~\ref{buildU} in the proof
of Theorem~\ref{decomp} we don't have a $(k,m,\eps,d)$-scheme yet.}
A closed walk $U$ in $R$ is a \emph{universal walk for $C$
with parameter $\ell'$} if the following conditions hold:
\begin{itemize}
\item[(U1)] For every $i=1,\dots, k$ there is a chord sequence $ECS(V_i,V_{i+1})$
from $V_i$ to $V_{i+1}$ such that $U$ contains all edges of all these chord sequences (counted with multiplicities) and all
remaining edges of $U$ lie on $C$.
\item[(U2)] Each $ECS(V_i,V_{i+1})$ consists of at most $\sqrt{\ell'}/2$ edges.
\item[(U3)] $U$ enters every cluster
$V_i$ exactly $\ell'$ times and it leaves every cluster $V_i$ exactly $\ell'$ times.
\end{itemize}
We will often view $U$ as a multidigraph.
Whenever $U$ is a universal walk for $C$ with
parameter~$\ell'$, then $ECS(V_i,V_{i+1})$ will always refer to the chord sequence from $V_i$ to $V_{i+1}$ which
is contained in $U$. We will call $ECS(V_i,V_{i+1})$ an \emph{elementary chord sequence from $V_i$ to $V_{i+1}$}
and the edges in $ECS(V_1,V_2)\cup\dots\cup ECS(V_{k},V_1)$ the \emph{chord edges of $U$}.

Note that condition~(U1) means that if an edge $V_iV_j\in E(R)\setminus E(C)$ occurs in total 5 times (say) in
$ECS(V_1,V_2),\dots,ECS(V_{k},V_1)$ then it occurs precisely 5 times in $U$. We will identify each occurrence of $V_iV_j$ in
$ECS(V_1,V_2),\dots,ECS(V_{k},V_1)$ with a (different) occurrence of $V_iV_j$ in $U$. 
Note that the edges of $ECS(V_i,V_{i+1})$ are allowed to appear in a different order within $ECS(V_i,V_{i+1})$ and within $U$.

Suppose that $F$ is a chord edge of $U$ and that $F'$ is the next chord edge of $U$.
We let $P(F)$ denote the subwalk of $U$ from $F$ to $F'$ (without the edges $F$ and $F'$). So $P(F)$ contains no chord edges and if $F_1$ and $F_2$ are
two occurrences of the same chord edge on $U$ then $P(F_1)$ and $P(F_2)$ might be different from each other.
We say that the edges in $P(F)$ are the \emph{cyclic edges associated with $F$}.
The \emph{augmented elementary  chord sequence} $AECS(V_i,V_{i+1})$ consists of all edges $F$ in 
the elementary chord sequence $ECS(V_i,V_{i+1})$ together with all the edges in the corresponding subwalks
$P(F)$. We order the edges of $AECS(V_i,V_{i+1})$ by taking the ordered sequence $ECS(V_i,V_{i+1})$ and by inserting
the edges of $P(F)$ (in their order on $U$) after each edge $F\in ECS(V_i,V_{i+1})$.
Thus the collection of all augmented elementary  chord sequences $AECS(V_i,V_{i+1})$ forms a partition of the edges of $U$
into $k$ parts. But $AECS(V_i,V_{i+1})$ might not necessarily be connected (i.e.~it might not form a walk in~$R$).

\begin{lemma} \label{buildU}
Suppose that $0<1/k \ll \nu\ll \tau\ll \alpha< 1$. Suppose that $R$ is a robust $(\nu,\tau)$-outexpander
whose vertices are $k$ clusters $V_1,\dots,V_k$ and $\delta^0(R)\ge \alpha k$. Let $C:=V_1\dots V_k$ be
a Hamilton cycle in $R$. Then there exists a universal walk $U$ for $C$ with parameter $\ell':=36/\nu^2$.
\end{lemma}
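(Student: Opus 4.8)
The plan is to construct the chord edges of $U$ first, using Lemma~\ref{buildchord} repeatedly to produce short elementary chord sequences between consecutive clusters of $C$, then to pad this collection of chord edges with suitably many copies of edges of $C$ so that the resulting edge multiset is connected and has in-degree and out-degree exactly $\ell'$ at every cluster, and finally to take $U$ to be an Euler tour of this multiset. Since an Euler tour of a connected balanced multidigraph is a closed walk traversing each edge exactly once, properties (U1)--(U3) will then be immediate from the construction.

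\textbf{Constructing the elementary chord sequences.} We build $ECS(V_1,V_2),ECS(V_2,V_3),\dots,ECS(V_k,V_1)$ one at a time. Before constructing $ECS(V_i,V_{i+1})$, let $\mathcal{V}'$ be the set of clusters which so far have been used at least $24/\nu^2$ times, counted with multiplicity, by the interiors of the chord sequences built already. A chord sequence with at most $3/\nu$ edges has interior of size at most $6/\nu$ (with multiplicity), so the total interior usage so far is at most $6k/\nu$, whence $|\mathcal{V}'|\le (6k/\nu)/(24/\nu^2)=\nu k/4=\nu|R|/4$. As $\delta^0(R)\ge\alpha k\ge 2\tau k$ and $\nu\le\tau\le 1/3$, Lemma~\ref{buildchord} (with $C$ in the role of the Hamilton cycle and this $\mathcal{V}'$) yields an elementary chord sequence $ECS(V_i,V_{i+1})$ with at most $3/\nu=\sqrt{\ell'}/2$ edges whose interior avoids $\mathcal{V}'$. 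Once all $k$ sequences have been built, the threshold argument shows each cluster is used at most $24/\nu^2+6/\nu<30/\nu^2$ times in total by the interiors of all of them. Let $\mathcal{E}$ denote the multiset union of the chord edges of the $k$ chord sequences.

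\textbf{The balance identity and the padding.} For a cluster $X$ write $\mathrm{out}_{\mathcal{E}}(X)$ and $\mathrm{in}_{\mathcal{E}}(X)$ for the number of edges of $\mathcal{E}$ with initial, respectively final, vertex $X$. The key point is that $\mathrm{out}_{\mathcal{E}}(V_j)=\mathrm{in}_{\mathcal{E}}(V_{j+1})$ for every $j$. Indeed, for a single $ECS(V_i,V_{i+1})=W_1^-W_2,W_2^-W_3,\dots,W_t^-W_{t+1}$ with $W_1=V_i$ and $W_{t+1}=V_{i+1}$, the initial vertices of its edges are $W_1^-,\dots,W_t^-$ and the final vertices are $W_2,\dots,W_{t+1}$; since $W_j^-=X$ iff $W_j=X^+$, the contribution of this sequence to $\mathrm{out}(X)-\mathrm{in}(X^+)$ is $[W_1=X^+]-[W_{t+1}=X^+]=[V_i=X^+]-[V_{i+1}=X^+]$, and summing over the $k$ consecutive pairs on $C$ telescopes to $0$. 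Moreover the initial vertices $W_1^-,\dots,W_t^-$ of $ECS(V_i,V_{i+1})$ consist of $W_1^-=V_{i-1}$ (not in its interior) together with clusters lying in its interior; as $V_j$ plays the role of $V_{i-1}$ for exactly one index $i$, we get $\mathrm{out}_{\mathcal{E}}(V_j)\le 1+30/\nu^2<36/\nu^2=\ell'$, and then $\mathrm{in}_{\mathcal{E}}(V_j)<\ell'$ too by the identity. Now set $d_j:=\ell'-\mathrm{out}_{\mathcal{E}}(V_j)\ge 1$ for each $j$ (adjusting $\nu$ slightly we may as usual assume $\ell'\in\mathbb{N}$, so these are positive integers), let $\mathcal{D}$ consist of $d_j$ copies of the edge $V_jV_{j+1}$ of $C$ for each $j$, and put $M:=\mathcal{E}\cup\mathcal{D}$.

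\textbf{Assembling $U$.} In $M$ the only edge leaving $V_j$ is $V_jV_{j+1}$ and the only edge entering $V_j$ is $V_{j-1}V_j$, so $\mathrm{out}_M(V_j)=\mathrm{out}_{\mathcal{E}}(V_j)+d_j=\ell'$ and $\mathrm{in}_M(V_j)=\mathrm{in}_{\mathcal{E}}(V_j)+d_{j-1}=\mathrm{in}_{\mathcal{E}}(V_j)+\ell'-\mathrm{out}_{\mathcal{E}}(V_{j-1})=\ell'$, using $\mathrm{out}_{\mathcal{E}}(V_{j-1})=\mathrm{in}_{\mathcal{E}}(V_j)$; thus $M$ is balanced. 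Its support contains every edge of $C$ (each $d_j\ge 1$) and is therefore connected, so $M$ has an Euler tour, which we take as $U$. Then $U$ is a closed walk in $R$ traversing each edge of $M$ once: it contains the edges of $ECS(V_1,V_2),\dots,ECS(V_k,V_1)$ with the correct multiplicities and all its other edges lie on $C$ (giving (U1)); each $ECS(V_i,V_{i+1})$ has at most $\sqrt{\ell'}/2$ edges (giving (U2)); and $U$ enters and leaves each cluster exactly $\ell'$ times (giving (U3)). The main obstacle is precisely the bookkeeping in the middle step: one must keep the chord sequences both short (via Lemma~\ref{buildchord}, whose bound $3/\nu$ is what forces $\ell'$ to be of order $\nu^{-2}$ through (U2)) and well spread out (via the greedy avoidance of overused clusters, which is exactly where the condition $|\mathcal{V}'|\le\nu|R|/4$ of Lemma~\ref{buildchord} enters), so that $\mathrm{out}_{\mathcal{E}}$ and $\mathrm{in}_{\mathcal{E}}$ stay strictly below $\ell'$ everywhere; once this is granted, the telescoping balance identity and the Euler-tour argument are routine.
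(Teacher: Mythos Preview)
Your proof is correct and follows essentially the same approach as the paper: greedily build short elementary chord sequences via Lemma~\ref{buildchord} while avoiding overused clusters (using the same threshold $2\ell'/3=24/\nu^2$), establish the balance identity $\mathrm{out}_{\mathcal{E}}(V_j)=\mathrm{in}_{\mathcal{E}}(V_{j+1})$, and then pad with copies of edges of $C$ to reach regularity~$\ell'$. The only cosmetic differences are that the paper assembles the closed walk by decomposing the padded multidigraph into $1$-factors (Proposition~\ref{1factor}) and traversing them interleaved with one extra copy of $C$, whereas you invoke an Euler tour directly; also, the phrase ``In $M$ the only edge leaving $V_j$ is $V_jV_{j+1}$'' should read ``In $\mathcal{D}$'', though the computation that follows it is correct.
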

\proof
Let $\cP:=\{V_1,\dots,V_k\}$. Our first aim is to choose the elementary chord sequences
$ECS(V_j,V_{j+1})$ greedily in such a way that they satisfy~(U2). 
Suppose that we have already chosen $ECS(V_1,V_{2}),\dots,ECS(V_{j-1},V_{j})$ such that 
each of them contains at most $3/\nu=\sqrt{\ell'}/2$ edges and together the interiors of these elementary chord sequences
visit every cluster in $\cP$ at most $2\ell'/3+3/\nu$ times. (Here the number of visits for a cluster $V_i$ is the sum of the number of
entries into $V_i$ and the number of exits from $V_i$.) We say that a cluster in $\cP$ is \emph{full}
if it is visited at least $2\ell'/3$ times by the interiors of the previously chosen chord sequences.
Since each elementary chord sequence contains at most $3/\nu$ edges, it follows that the 
number of full clusters is at most $2\cdot (3j/\nu)/(2\ell'/3) \le 9k/\nu \ell' = \nu k/4$. 
Let $\cV'$ denote the set of clusters in $\cP$ which are full.
Then we can apply Lemma~\ref{buildchord} to obtain an (elementary) chord sequence $ECS(V_j,V_{j+1})$ 
which has at most $3/\nu$ edges and whose interior avoids $\cV'$. 
We continue in this way to choose $ECS(V_1,V_{2}),\dots,ECS(V_{k},V_1)$ such that 
together their interiors visit every cluster in $\cP$ at most $2\ell'/3+3/\nu$ times and let $U^*$ be the union
of these chord sequences. Then $U^*$ visits every cluster in $\cP$ at most $2\ell'/3+3/\nu+2\le 3\ell'/4$ times.%
   \COMMENT{Get precisely two additional visits when considering $U^*$ instead of the union of the interiors of
the elementary chord sequences.} Thus $U^*$ satisfies (U2) and the first part of (U1). 

Our next aim is to add further edges to $U^*$ so that we will be able to satisfy (U3) and the second part of (U1).
For each $j=1,\dots,k$, let $n_j^{\rm out}$ denote the number of edges of $U^*$ which leave the cluster $V_j$
and let $n_j^{\rm in}$ denote the number of edges of $U^*$ which enter the cluster $V_j$.
We claim that for each $j$, we have $n_j^{\rm out}=n_{j+1}^{\rm in}$. 
To see this, suppose that $VW$ is an edge of an elementary chord sequence which is not its final edge.
Then the  next edge of this elementary chord sequence leaves the cluster $W^-$ preceding $W$ on $C$. If $VW$ is the final edge of the
elementary chord sequence, then the first edge of the next elementary chord sequence will leave $W^-$.
This proves the claim.

Now let $\ell_j:=\ell'-1-n_j^{\rm in}$ for each $j=1,\dots,k$. Note that
$\ell_j>0$. Let $U^\diamond$ be obtained from $U^*$ by adding exactly $\ell_j$ copies of the edge $V_{j-1}V_{j}$ for all $j$.
The above claim implies that $U^\diamond$ is $(\ell' -1)$-regular.

Finally we add another copy of each edge of $C$ to $U^\diamond$ and denote the resulting multidigraph by $U$. So now $U$ satisfies (U1)--(U3).
It remains to show that the edges in $U$ can be ordered so that the resulting sequence forms a (connected) closed walk in $R$.
To see this, note that since $U^\diamond$ is an $(\ell'-1)$-regular multidigraph, it has a decomposition into 1-factors
by Proposition~\ref{1factor}. We order the edges of $U$ as follows: 
We first traverse all cycles of the 1-factor decomposition of $U^\diamond$ which contain the cluster $V_1$.
Next, we traverse the edge $V_1V_2$ of $C$. Next we traverse all those cycles of the 1-factor decomposition which contain
$V_2$ and which have not been traversed so far. Next we traverse the edge $V_2V_3$ of $C$ and so on
until we reach $V_1$ again. This completes the construction of $U$.
\endproof

$(G,\cP,\cP',R,C,U,U')$ is called an \emph{$(\ell',k,m,\eps,d)$-setup} if $(G,\cP,R,C)$ is a $(k,m,\eps,d)$-scheme and
the following conditions hold:
\begin{itemize}
\item[(ST1)] $U$ is a universal walk for $C=V_1\dots V_{k}$ with parameter~$\ell'$ and $\cP'$ is an $\eps$-uniform $\ell'$-refinement of $\cP$.
\item[(ST2)] Let $V_j^1,\dots,V_j^{\ell'}$ denote the clusters in $\cP'$ which are contained
in $V_j$ (for each $j=1,\dots,k$). Then $U'$ is a closed walk on the clusters in $\cP'$ which is obtained from $U$ as follows:
When $U$ visits $V_j$ for the $a$th time, we let $U'$ visit the subcluster $V_j^a$ (for all $a=1,\dots,\ell'$).
\item[(ST3)] Each edge of $U'$ corresponds to an $[\eps,\ge d]$-superregular pair in $G$.
\end{itemize}
Since $U$ visits every cluster in $\cP$ precisely $\ell'$ times, it follows that 
$U'$ visits every cluster in $\cP'$ exactly once. So $U'$ can be viewed as a Hamilton cycle on the clusters in $\cP'$.
We call $U'$ the \emph{universal subcluster walk} (with respect to $C$, $U$ and $\cP'$).

Given a digraph $T$ whose vertices are clusters,
an \emph{$(\eps,r)$-blow-up $\cB(T)$ of $T$} is obtained by replacing each vertex $V$ of $T$ with the vertices in the cluster $V$ and replacing
each edge $VW$ of $T$ with a bipartite graph
$\cB(T)[V,W]$ with vertex classes $V$ and $W$ which satisfies the following three properties:
\begin{itemize}
\item $\cB(T)[V,W]$ is $\eps$-regular.
\item All the edges in $\cB(T)[V,W]$ are oriented towards the vertices in $W$.
\item The underlying undirected graph of $\cB(T)[V,W]$ is $r$-regular.
\end{itemize}
An \emph{$r$-blow-up of $T$} is defined similarly: we do not require the bipartite graphs to be $\eps$-regular.

We say that a digraph $CA$ on $V(G)\setminus V_0=V_1\cup\dots\cup V_k$  is a \emph{chord absorber for $C$, $U'$
with parameters $(\eps,r,r',r'',q,f)$} if 
$CA$ is the union of two digraphs $\cB(C)$ and $\cB(U')$ on $V(G)\setminus V_0$ satisfying conditions (CA1)--(CA3) below.
In (CA1),  $\cP^*$ is a $(q/f)$-refinement
of $\cP$. For (CA2), recall from (ST2) that $\cP'$ denotes the partition whose clusters correspond to the vertices of $U'$.
\begin{itemize}
\item[(CA1)] $\cB(C)$ is the union of $\cB(C)^*$ and $CA^{\rm exc}$.
$\cB(C)^*$ is an $(\eps,r)$-blow-up of $C$.
$CA^{\rm exc}$ consists of $r''$ exceptional factors with parameters $(q/f,f)$ (with
respect to $C$, $\cP^*$) whose original versions are pairwise edge-disjoint.
\item[(CA2)] $\cB(U')$ is an $r'$-blow-up of $U'$. 
Moreover, $\cB(U')$ has the following stronger property: 
for every cluster $A$ in $\cP'$ there is a partition $A_1,\dots,A_4$ of $A$ into sets of equal size
such that for every edge $AB$ of $U'$ and each $j=1,\dots,4$ there are $r'$
edge-disjoint perfect matchings
between $A_j$ and $B_j$ such that $\cB(U')[A,B]$ is the union of all these $4r'$ matchings.
\item[(CA3)] $\cB(C)^*$, $\cB(U')$ and the original version of $CA^{\rm exc}$ are pairwise edge-disjoint subdigraphs
of $G$.
\end{itemize}
Thus $CA$ is a $(r+r'+r'')$-regular subdigraph of $G^{\rm basic}$. However, in the original version
$CA^{\rm orig}=\cB(C)^*\cup \cB(U')\cup (CA^{\rm exc})^{\rm orig}$ of $CA$ we have
\begin{equation} \label{chorddegrees}
d^\pm(x) = r'' q  \ \ \forall x\in V_0\ \ \quad \mbox{and} \quad  \ \ d^\pm(y)= r+r'+r'' \ \ \forall y\in V(G)\setminus V_0.
\end{equation}

\subsection{Bi-universal walks, bi-setups and chord absorbers}\label{sec:defbiabsorb}

Suppose that $R$ is a digraph whose vertices are $k$ clusters $V_1,\dots,V_k$, where $k$ is even,
and that $C:=V_1\dots V_k$ is a Hamilton cycle in $R$.
Let $\cV_{\rm even}$ denote the set of all those clusters $V_i$ for which $i$ is even and define
$\cV_{\rm odd}$ similarly.
We will now define a bi-universal walk, which is an analogue of a universal walk for a bipartite setting.
The difference to Section~\ref{sec:defabsorb} is that now we only assume the existence of
a chord sequence from $V$ to $V'$ whenever $V,V'\in \cV_{\rm even}$ or $V,V'\in \cV_{\rm odd}$.
Roughly speaking, if $H$ is a bipartite graph whose vertex classes are $\bigcup \cV_{\rm even}$ and $\bigcup \cV_{\rm odd}$,
$U$ is a bi-universal walk and $U'$ is a bi-universal subcluster walk, then a chord absorber for $C$, $U'$ can still absorb all
edges of $H$. (Note that $C$ is also bipartite with vertex classes $\cV_{\rm even}$ and $\cV_{\rm odd}$.)
This will only be used in~\cite{paper2} and not in this paper.

A closed walk $U$ in $R$ is a \emph{bi-universal walk for $C$
with parameter $\ell'$} if the following conditions hold:
\begin{itemize}
\item[(BU1)] The edge set of $U$ has a partition into $U_{\rm odd}$ and $U_{\rm even}$.
For every $i=1,\dots, k$ there is a chord sequence $ECS^{bi}(V_i,V_{i+2})$
from $V_i$ to $V_{i+2}$ such that $U_{\rm even}$ contains all edges of all these chord sequences for even $i$ (counted with multiplicities)
and $U_{\rm odd}$ contains all edges of these chord sequences for odd $i$.
All remaining edges of $U$ lie on $C$. 
\item[(BU2)] Each $ECS^{bi}(V_i,V_{i+2})$ consists of at most $\sqrt{\ell'}/2$ edges.
\item[(BU3)] $U_{\rm even}$ enters every cluster
$V_i$ exactly $\ell'/2$ times and it leaves every cluster $V_i$ exactly $\ell'/2$ times.
The same assertion holds for $U_{\rm odd}$.%
\COMMENT{So need to make sure $\ell'$ is even when we apply this.}
\end{itemize}

Whenever $U$ is a bi-universal walk for $C$ with
parameter~$\ell'$, then $ECS^{bi}(V_i,V_{i+2})$ will always refer to the chord sequence from $V_i$ to $V_{i+2}$ which
is contained in $U$. As before, we will call $ECS^{bi}(V_i,V_{i+2})$ an \emph{elementary chord sequence from $V_i$ to $V_{i+2}$}
and the edges in $ECS^{bi}(V_1,V_3)\cup ECS^{bi}(V_2,V_4)\cup \dots\cup ECS^{bi}(V_{k},V_2)$ the \emph{chord edges of $U$}.

If $F$ is a chord edge of $U$ then $P(F)$ is defined as in Section~\ref{sec:defabsorb} and we again call
the edges in $P(F)$ are the \emph{cyclic edges associated with $F$}. The \emph{augmented elementary chord sequence}
$AECS^{bi}(V_i,V_{i+2})$ consists of all edges $F$ in 
the elementary chord sequence $ECS^{bi}(V_i,V_{i+2})$ together with all the edges in the corresponding subwalks
$P(F)$. So similarly as in Section~\ref{sec:defabsorb}, the collection of all augmented elementary  chord sequences
$AECS^{bi}(V_i,V_{i+2})$ forms a partition of the edges of $U$ into $k$ parts.

We define an \emph{$(\ell',k,m,\eps,d)$-bi-setup} $(G,\cP,\cP',R,C,U,U')$ similarly as an \emph{$(\ell',k,m,\eps,d)$-setup},
the only difference is that $U$ is a bi-universal walk for $C$ (rather than a universal walk).%
   \COMMENT{Since we assume that $C:=V_1\dots V_k$ when defining the universal walk, it is implicit that $C$ is bipartite with vertex
classes $\cV_{\rm even}$ and $\cV_{\rm odd}$.}
We call $U'$ the \emph{bi-universal subcluster walk} (with respect to $C$, $U$ and $\cP'$).
A \emph{chord absorber for $C$, $U'$ with parameters $(\eps,r,r',r'',q,f)$} is defined analogously as before.


\subsection{Finding chord absorbers} 
The first lemma of this subsection states that if one is given a setup and
one deletes a few edges at every vertex, then one still has a setup with slightly worse parameters.%
   \COMMENT{Previously had this for universal systems}

\begin{lemma}\label{deleteunivsystem}
Suppose that $0<1/n\ll 1/k\ll \eps\le \eps'\ll d\ll 1/\ell'\ll 1$.
Let $(G,\cP,\cP',R,C,U,U')$ be an $(\ell',k,m,\eps,d)$-setup
with $|G|=n$. Let $G'$ be a digraph obtained from $G$ by deleting at most $\eps' m$ outedges and at most $\eps' m$ inedges at every vertex of $G$.
Then $(G',\cP,\cP',R,C,U,U')$ is still a $(\ell',k,m,(\eps')^{1/3},d)$-setup.
The analogue holds if $(G,\cP,\cP',R,C,U,U')$ is an $(\ell',k,m,\eps,d)$-bi-setup.
\end{lemma}
\proof
We only consider the case when $(G,\cP,\cP',R,C,U,U')$ is an $(\ell',k,m,\eps,d)$-setup. The argument for bi-setups is
identical.
By Lemma~\ref{deletesystem}(ii) $(G',\cP,R,C)$ is still a
$(k,m,3\sqrt{\eps'},d)$-scheme. Moreover, (ST1) and (ST2) clearly still hold.
So we only need to check that (ST3) still holds with $\eps$ replaced by $(\eps')^{1/3}$. But 
since the clusters in $\cP'$ have size $m/\ell'$, Proposition~\ref{superslice}(iii) implies that each edge of $U'$
still corresponds to a $[2\sqrt{\eps'\ell'},\ge d]$-superregular pair in $G'$ (and thus to an $[(\eps')^{1/3},\ge d]$-superregular pair).
\endproof

The next lemma asserts that we can find a blow-up of the cycle $C$ and of the universal subcluster walk $U'$ within a setup,%
    \COMMENT{Previously the lemma was stated for universal systems rather than setups. But we can use it in the 1-factorization if
we have it for setups (and we don't need universal systems here).}
so that each edge of $C$ corresponds to a graph which is both regular and superregular and each edge of $U'$ corresponds to a regular graph.

\begin{lemma}\label{findcycleblowups}
Suppose that
$0<1/n\ll 1/k \ll \eps \ll \eps'\ll d\ll 1/\ell'\ll 1$, that $ r_0/m ,r'_0/m\ll d$
and that $m/4\ell'\in \mathbb{N}$. If $(G,\cP,\cP',R,C,U,U')$ is
an $(\ell',k,m,\eps,d)$-setup  with $|G|=n$,
then $G-V_0$ contains edge-disjoint spanning subdigraphs $\cB(C)$ and $\cB(U')$
such that 
\begin{itemize}
\item[(i)] $\cB(C)$ is an $(\eps',r_0)$-blow-up of $C$;
\item[(ii)] $\cB(U')$ is an $r'_0$-blow-up of $U'$ which satisfies (CA2) with  $r'_0$ playing the role of $r'$.
\end{itemize}
The analogue holds if $(G,\cP,\cP',R,C,U,U')$ is
an $(\ell',k,m,\eps,d)$-bi-setup.
\end{lemma}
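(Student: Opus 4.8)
The strategy is to build $\cB(C)$ and $\cB(U')$ in two successive rounds, using the superregularity of the pairs on $C$ and on $U'$ supplied by (CSys5) and (USys3), together with Lemma~\ref{randomregslice} to extract regular/superregular sparse subgraphs of prescribed density, and Lemma~\ref{randompartition} (or rather the fact that $\cP'$ is an $\eps$-uniform refinement) to control what happens when we pass to the subclusters of $\cP'$. First I would construct $\cB(C)$. For each edge $V_iV_{i+1}$ of $C$, (CSys5) guarantees that $G[V_i,V_{i+1}]$ is $[\eps,\ge d]$-superregular, hence in particular $\eps$-regular of density $d_i\ge d$. Since $r_0/m\ll d$, I would apply Lemma~\ref{randomregslice}(iii) (or (i) with an appropriate target density $d^\dagger:=r_0/m$, noting $\eps\ll d^\dagger$ fails in general, so in fact one wants the sparse version) to obtain an $(\eps',d^\dagger)$-regular, or better an $(\eps^{1/12},d^\dagger,\cdot)$-regular, spanning subgraph; then Lemma~\ref{regularsub} (applied inside the resulting pair, whose minimum degree is about $d^\dagger m$) yields a spanning $r_0$-regular subgraph $\cB(C)[V_i,V_{i+1}]$ which is still $\eps'$-regular since $r_0 = d^\dagger m$ and a $d^\dagger m$-regular subgraph of an $\eps$-regular pair of density $\approx d^\dagger$ is automatically $O(\sqrt{\eps})$-regular (Proposition~\ref{superslice}(i) with the roles suitably assigned). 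Doing this for all $k$ edges of $C$ and taking the union gives the required $(\eps',r_0)$-blow-up $\cB(C)$ of $C$; this proves (i). Delete all these edges from $G$ to obtain $G_1$; since at most $2r_0\le \eps m$ edges are removed at each vertex, $G_1$ is still a universal system with slightly worse parameters by Lemma~\ref{deleteunivsystem}, so each edge of $U'$ still corresponds to an $[(\eps)^{1/3},\ge d]$-superregular (hence $\eps'$-regular, density $\ge d/2$) pair in $G_1$.

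Now I would construct $\cB(U')$ inside $G_1$. Recall from (USys2) that $U'$ is a Hamilton cycle on the clusters of $\cP'$, so its edges form a single cycle and in particular each subcluster $A\in\cP'$ is the tail of exactly one edge of $U'$ and the head of exactly one. For the stronger property in (CA2), for each $A\in\cP'$ I would fix an arbitrary partition $A=A_1\cup\cdots\cup A_4$ into four sets of size $|A|/4=m/4\ell'$ (using $m/4\ell'\in\mathbb N$). For each edge $AB$ of $U'$ and each $j\in\{1,\dots,4\}$, the pair $G_1[A_j,B_j]$ is, by Proposition~\ref{superslice}(iii) and the $\eps$-uniformity of $\cP'$ together with the superregularity of $G_1[A,B]$, still $[\sqrt{\eps'},\ge d]$-superregular (the passage from $A$ to $A_j$ loses only a constant factor in the vertex-class size and, being a sub-refinement, preserves uniform degree/density estimates). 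Then I would apply Lemma~\ref{randomregslice}(ii) (sparse superregular version), or Lemma~\ref{regularsub2}/Proposition~\ref{perfmatch} repeatedly, to peel off $r'_0$ edge-disjoint perfect matchings between $A_j$ and $B_j$: more concretely, since $r'_0/m\ll d$, after removing any $r'_0$ matchings the remaining pair is still $(2\sqrt{\eps'},d/2)$-superregular by Proposition~\ref{superslice5}, so Proposition~\ref{perfmatch} produces another perfect matching, and this can be iterated $r'_0$ times. Taking $\cB(U')[A,B]$ to be the union of all $4r'_0$ of these matchings over $j=1,\dots,4$ gives an $r'_0$-regular, $\eps'$-regular (as a union of matchings inside an $\eps'$-regular pair the regularity is inherited, or one checks it directly via (Reg1)-type estimates) bipartite graph; the union over all edges of $U'$ is the desired $\cB(U')$. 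By construction it is edge-disjoint from $\cB(C)$ and satisfies (CA2) with $r'_0$ in place of $r'$, proving (ii).

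The main obstacle I anticipate is the bookkeeping around which notion of regularity survives each operation: the pairs on $C$ and $U'$ start out with density $\approx d$ but we want subgraphs of density $r_0/m$ and $r'_0/m$, which are $\ll d$ and in particular $\ll\eps$, so ordinary $\eps$-regularity is the wrong notion and one must route through the sparse $(\eps,d,c)$-(super)regularity of Section 3.3 (Lemma~\ref{randomregslice}(i),(ii)) and only at the very end invoke the fact that a sparse regular pair of the right density is also $\eps'$-regular in the usual sense, which holds precisely when $\eps'\gg$ the sparse-regularity parameter but the density can be tiny. A secondary subtlety is that $U'$ is a walk in which distinct clusters of $\cP'$ lying over the same cluster of $\cP$ may be joined to many different clusters, so one must check that processing the edges of $U'$ one at a time — deleting the chosen matchings as one goes — does not accumulate too many deletions at any vertex; since $U'$ is $1$-regular (a Hamilton cycle on $k\ell'$ clusters) each vertex lies in exactly one tail-edge and one head-edge of $U'$, so in fact only $4r'_0$ matchings touch any given vertex, which is $\ll \eps m$, and Proposition~\ref{superslice5} applies throughout. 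Everything else is a routine application of the tools assembled in Sections 3 and 5.
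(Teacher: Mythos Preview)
Your overall architecture matches the paper's: build $\cB(C)$ first from the $[\eps,\ge d]$-superregular pairs on $C$, delete it, then build $\cB(U')$ by splitting each $\cP'$-cluster into four parts and peeling off $r'_0$ perfect matchings between corresponding parts for every edge of $U'$. The observation that $U'$ is a Hamilton cycle on the $\cP'$-clusters, so deletions do not accumulate, is also correct and used in the paper.

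There is, however, a genuine gap in your construction of $\cB(U')$. You say you fix an \emph{arbitrary} partition $A=A_1\cup\cdots\cup A_4$ of each cluster $A\in\cP'$ and then claim that $G_1[A_j,B_j]$ is still $[\sqrt{\eps'},\ge d]$-superregular, appealing to Proposition~\ref{superslice}(iii) and the $\eps$-uniformity of $\cP'$. Neither tool gives this. Proposition~\ref{superslice}(iii) handles the removal of at most $d'm$ vertices with $d'\ll d$, not the passage to a subset of size $|A|/4$; and the $\eps$-uniformity of $\cP'$ is a property of $\cP'$ as a refinement of $\cP$, saying nothing whatsoever about a further arbitrary $4$-refinement. $\eps$-regularity is inherited by large subsets, but the minimum-degree part of superregularity is not: an arbitrary choice of $B_j$ could miss almost all neighbours of some vertex in $A_j$, and then Proposition~\ref{perfmatch} fails. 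The paper fixes this by applying Lemma~\ref{randompartition}(i) with $\ell=4$ to obtain an $\eps$-uniform $4$-refinement of $\cP'$; (URef) then guarantees that each $G[A_j,B_j]$ is $[\eps^*,\ge d]$-superregular, and after deleting $\cB(C)$ it is still $[\eps_1,\ge d]$-superregular, so the $r'_0$ matchings can be found one by one.

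A secondary point: your route to $\cB(C)$ is muddled. You correctly flag that $\eps\ll r_0/m$ is not guaranteed by the hierarchy, but Lemma~\ref{randomregslice}(i) followed by Lemma~\ref{regularsub} does not resolve this, since Lemma~\ref{regularsub} itself needs its regularity parameter to be $\ll$ its density. The paper sidesteps the issue by introducing $\eps^*$ with $\eps\ll\eps^*\ll\eps'$, applying Lemma~\ref{randomregslice}(iv) with target density $d':=r_0/(m(1-12\eps^*))+\eps^*$ (so that $d'\ge\eps^*\gg\eps$), and then applying Lemma~\ref{regularsub2}, whose output density $(1-12\eps^*)(d'-\eps^*)$ is exactly $r_0/m$; the resulting graph is $[4\sqrt{\eps^*},r_0/m]$-superregular and in particular $\eps'$-regular.
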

\proof
Choose new constants $\eps^*$ and $\eps_1$ with $\eps\ll \eps^* \ll \eps'$
and $\eps^*,r_0/m,r'_0/m\ll \eps_1\ll d$.
For each edge $VW$ of $C$ let $d_{VW}$ denote the density of $G[V,W]$.
(So $d_{VW}\ge d-\eps$.) Apply Lemma~\ref{randomregslice}(iv) with 
$$
d':=\frac{r_0/m}{1-12\cdot \eps^*}+\eps^*
$$
to each edge $VW$ of $C$ to obtain a
spanning subgraph $G'[V,W]$ of $G[V,W]$ which is $[\eps^*,d']$-superregular. 
Now apply Lemma~\ref{regularsub2} to obtain a
spanning $r_0$-regular subgraph $G''[V,W]$ of $G'[V,W]$ which is also $[\eps',r_0/m]$-superregular.
Let $\cB(C)$ be the union of all the $G''[V,W]$ over all edges $VW$ of $C$. Then $\cB(C)$ is an $(\eps',r_0)$-blow-up of~$C$.

To construct $\cB(U')$ we now proceed as follows. First we apply Lemma~\ref{randompartition}(i) with $\ell=4$ to
obtain a partition of each cluster $A$ in $\cP'$ into subclusters
$A_1\dots A_4$ such that, for every edge $AB$ of $U'$, the pair $G[A_j,B_j]$ is $[\eps^*,\ge d]$-superregular.
Let $G_1$ be the digraph obtained from $G$ by deleting every edge in $\cB(C)$.
Thus $G_1$ is obtained from $G$ by deleting $r_0$ outedges and $r_0$ inedges at every vertex in $V(G)\setminus V_0$
(and no edges at the vertices in $V_0$). Since the subclusters $A_j$ have size
$m/4\ell'$ and since $r_0 \le \eps_1^3 m/4\ell'$, this means that $G_1[A_j,B_j]$ is still $[\eps_1,\ge d]$-superregular
by Proposition~\ref{superslice}(iii).
For every edge $AB$ of $U'$ and for all $j=1,\dots,4$, we choose $r'_0$ edge-disjoint perfect matchings in $G_1[A_j,B_j]$
(recall that these are guaranteed by Proposition~\ref{perfmatch}). 
At each stage we delete the edges in all the matchings chosen so far before
we choose the next matching.  Since $r'_0 \le \eps_1 m/4\ell'$, this means that the leftover of $G_1[A_j,B_j]$ will always be
$[2\sqrt{\eps_1},\ge d]$-superregular by Proposition~\ref{superslice}(iii). 
So we can choose the next matching. The union $\cB(U')$ of all these perfect matchings
(over all edges of $U'$) is an $r'_0$-blow-up of $U'$ which satisfies (CA2).

The proof for bi-setups is identical.
\endproof

The next lemma is an immediate consequence of Lemma~\ref{findcycleblowups}.
Given suitable exceptional factors, it guarantees a chord absorber within a setup which contains these exceptional factors.
Since Lemma~\ref{exceptseq} implies the existence of such exceptional factors in a consistent system, this will
enable us to find a chord absorber.

\begin{lemma}\label{findchordabs}
Suppose that
$0<1/n\ll 1/k, qr''_0/m \ll \eps \ll \eps'\ll d\ll 1/\ell'\ll 1$, that $ r_0/m ,r'_0/m\ll d$
and that $q/f,fm/q, m/4\ell'\in \mathbb{N}$.
Let $(G,\cP,\cP',R,C,U,U')$ be an $(\ell',k,m,\eps,d)$-setup  with $|G|=n$.
Suppose that $\cP^*$ is a $(q/f)$-refinement
of $\cP$ and $EF_1,\dots,EF_{r''_0}$ are exceptional factors with parameters $(q/f,f)$ with respect to~$C$, $\cP^*$
whose original versions are pairwise edge-disjoint. Then there is a chord absorber $CA$ for $C$, $U'$ in~$G$
having parameters $(\eps',r_0,r'_0,r''_0,q,f)$ such that $CA^{\rm exc}= EF_1\cup \dots\cup EF_{r''_0}$.
The analogue holds if $(G,\cP,\cP',R,C,U,U')$ is an $(\ell',k,m,\eps,d)$-bi-setup.
\end{lemma}
\proof
We only consider the case when $(G,\cP,\cP',R,C,U,U')$ is an $(\ell',k,m,\eps,d)$-setup.
The proof for bi-setups is identical.
Let $G_1$ be the digraph obtained from $G$ by deleting every edge in $(CA^{\rm exc})^{\rm orig}$.
Thus $G_1$ is obtained from $G$ by deleting $r''_0$ outedges and $r''_0$ inedges at every vertex in $V(G)\setminus V_0$
and by deleting $qr''_0$ outedges and $qr''_0$ inedges at every vertex in $V_0$. Since $qr''_0/m\le \eps$, 
Lemma~\ref{deleteunivsystem} implies that
$(G_1,\cP,\cP',R,C,U,U')$ is still a $(\ell',k,m,\eps^{1/3},d)$-setup. So we can apply Lemma~\ref{findcycleblowups} 
to find edge-disjoint subgraphs $\cB(C)^*$ and $\cB(U')$ in $G_1$ as guaranteed by (i) and (ii).
Let $CA^{\rm exc}:= EF_1\cup \dots\cup EF_{r''_0}$. We then take $CA:=\cB(C)^*\cup \cB(U')\cup CA^{\rm exc}$. 
\endproof

\subsection{Absorbing chords into a blown-up Hamilton cycle}

The following lemma contains the key statement of this section (and of the entire proof of Theorem~\ref{decomp}).
Let $(G,\cP,\cP',R,C,U,U')$ be a $(\ell',k,m,\eps,d)$-setup.
Suppose we are given a $1$-factor $H$ of $G-V_0$ which is split into small matchings $H_i$ and that we
are given complete exceptional path systems $H''_i$ which avoid the $H_i$ (see conditions (b)--(d)).
The lemma states that we can 
extend each path system $H_i':=H_i \cup H''_i$ into a Hamilton cycle $C_i$ using edges from the chord absorber $CA$.
The crucial point is that the set of edges we are allowed to use for $C_i$ from $\cB(U')$ is predetermined,
i.e.~this set does not depend on $H$. More precisely, for each $1$-factor $H$ we will split off a regular digraph $\cB'(U')$
from $\cB(U')$. The Hamilton cycles guaranteed by Lemma~\ref{absorbH} cover $H\cup \cB'(U')$, but no other edges of $\cB(U')$.
In Lemma~\ref{absorballH} we will use this property to ensure that when we cover the leftover of the preprocessing graph from the previous section
with edge-disjoint Hamilton cycles, we use all edges of $\cB(U')$ in the process. 
So, as mentioned earlier, the leftover from the chord absorbing step is a subdigraph of $\cB(C)$.

The basic strategy is similar to that of Lemma~\ref{findham}: first we balance out the edges of each $H_i$ by adding edges corresponding to (augmented) chord sequences
(see Claims~1 and~4) in order to obtain path systems $W''_i$ containing $H_i$.
As mentioned above, we will use all edges of $\cB'(U')$ in this process. 
Claim~2 is a step towards this -- it implies that the set of edges we added to the $H_i$ in the above step are themselves `globally balanced'
in the sense that we use the same number from each bipartite graph corresponding to an edge of $U'$. 
(Note however that we achieve this `global balance' property only when considering all the $H_i$ together -- it need not hold when we consider $H_1$ on its own say.)
Then we extend the path system $W''_i$ into a $1$-factor $F_i$.
$F_i$ is then transformed into a Hamilton cycle $C_i$ using Lemma~\ref{mergecycles} (see Claim~5).

Moreover,  Lemma~\ref{absorbH} also works for $(\ell',k,m,\eps,d)$-bi-setups,
as long as $H$ is bipartite with vertex classes $\bigcup \cV_{\rm even}$ and $\bigcup \cV_{\rm odd}$.

\begin{lemma} \label{absorbH}
Suppose that
$0<1/n\ll 1/k,1/q\ll \eps\ll \phi,\eps' \ll  r_1/m\ll d\ll 1/\ell'\ll 1$ and that $m/4\ell'\in\mathbb{N}$.
Let 
\begin{equation} \label{defr2}
r_2:=12\phi \ell' q.
\end{equation}
Suppose that $(G,\cP,\cP',R,C,U,U')$ is a $(\ell',k,m,\eps,d)$-setup with $|G|=n$ and $C=V_1\dots V_k$.
Let $\cB(C)^*$ be a blow-up of~$C$ such that every edge of $C$ corresponds to
an $(\eps',r_1/m)$-superregular pair in $\cB(C)^*$. Let $\cB'(U')$ be an $r_2$-blow-up of $U'$ which satisfies the following condition:
\begin{itemize}
\item[(a)] For every cluster $A$ in $\cP'$ there is a partition $A_1,\dots,A_4$ of $A$ into sets of equal size
such that for every edge $AB$ of $U'$ and each $j=1,\dots,4$ there are $r_2$ edge-disjoint
perfect matchings $S^j_1(AB),\dots,S^j_{r_2}(AB)$
between $A_j$ and $B_j$ such that $\cB'(U')[A,B]$ is the union of all these $4r_2$ matchings.
\end{itemize} 
Suppose that $H$ is a $1$-factor of $G-V_0$ and that
$H_1,\dots,H_q$ is a partition of $H$ into matchings which satisfy the
following properties:
\begin{itemize}
\item[(b)] For each $i=1,\dots,q$ there is a complete exceptional path system%
   \COMMENT{It seems that we need not to specify the parameters - some restrictions are implicit by the condition
$|H'_i\cap V_j|\le \phi m$ in (c). Moreover, it follows from (b) and (c) that each $H_i$ consists of less than $\phi m$ edges.}
$H''_i$ (with respect to $C$) which is vertex-disjoint from $H_i$. 
\item[(c)] Write $H'_i:=H_i\cup H''_i$ for $i=1,\dots,q$. Then for all $i= 1,\dots, q$ the original
versions $(H'_i)^{\rm orig}=H_i\cup (H''_i)^{\rm orig}$ of
$H'_i$ are pairwise edge-disjoint, each $H'_i$ consists of at most $\phi m$ paths and $|H'_i\cap V_j|\le \phi m$
for every cluster $V_j$ in $\cP$.
Moreover, $H'_i$ and $H'_j$ are pairwise vertex-disjoint whenever $|i-j|\le 10$. 
\item[(d)] $\cB(C)^*$, $\cB'(U')$ and the original version of $H':=H'_1\cup\dots\cup H'_q$ are pairwise edge-disjoint
subdigraphs of $G$.
\end{itemize}
Then there are edge-disjoint Hamilton cycles $C_1,\dots,C_{q}$ in $G$ such that the following properties hold:%
   \COMMENT{The first condition is new.}
\begin{itemize}
\item $H'_i\subseteq C^{\rm basic}_i$ for all $i=1,\dots,q$.
\item All the $C_1,\dots,C_q$ together cover all
the edges of $(H')^{\rm orig}\cup \cB'(U')$ and all remaining edges
in $C_1,\dots,C_q$ are contained in $\cB(C)^*$.
\end{itemize}
The analogue holds for an $(\ell',k,m,\eps,d)$-bi-setup $(G,\cP,\cP',R,C,U,U')$ if we assume in addition that $H$ is
bipartite with vertex classes $\bigcup \cV_{\rm even}$ and $\bigcup \cV_{\rm odd}$ (where $\cV_{\rm even}$ is
the set of all those $V_i$ such that $i$ is even and $\cV_{\rm odd}$ is defined analogously).
\end{lemma}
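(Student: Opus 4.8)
Looking at this lemma, here is how I would plan the proof.

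\medskip

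\textbf{Overall strategy.} The plan is to process the matchings $H_1,\dots,H_q$ one at a time, extending each $H'_i=H_i\cup H''_i$ into a Hamilton cycle $C_i$ using only edges of $\cB(C)^*$ and $\cB'(U')$, and to arrange matters so that by the time we have processed all $q$ of them, every edge of $\cB'(U')$ has been used exactly once. The key design decision is to \emph{pre-assign} to each index $i$ a specific sub-digraph of $\cB'(U')$ which the cycle $C_i$ will absorb. Since $\cB'(U')$ is built (via (a)) from $4r_2$ edge-disjoint perfect matchings on each edge of $U'$, and since $U'$ is a Hamilton cycle on the clusters of $\cP'$, suitable unions of these matchings form $1$-regular digraphs on $V(G)\setminus V_0$; we want to partition the edge set of $\cB'(U')$ into $r_2$ such $1$-regular pieces (times $4$, or we work within one of the four "layers" at a time). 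Then, writing $r_2=12\phi\ell'q$, each $i\in\{1,\dots,q\}$ gets allocated $12\phi\ell'$ of these pieces. The reason this is the \emph{right} budget is the balance computation: each edge $e=xy$ of $H_i$ must be balanced by a chord sequence, which when transported through $\cB'(U')$ uses one edge from each bipartite graph of an (augmented) elementary chord sequence; summing over the $\le\phi m$ edges of $H_i$ and the structure of the universal walk, one checks the total demand on each edge of $U'$ is bounded by roughly $12\phi\ell'\cdot(m/4\ell')$ per layer — which is exactly what the allocated pieces supply.

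\medskip

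\textbf{The main steps for a single $i$.} Fix $i$ and work in $G^{\rm basic}$, replacing each exceptional path of $H''_i$ by its exceptional edge to obtain a path system $H_i^{\rm bas}$ of style close to the clusters of $\cP$. \emph{Step 1 (balancing via chord sequences):} for each edge $e=xy$ of $H_i$ (viewed in $G^{\rm basic}$), the edge connects $V_{s(y)}$-ish to $V_{s(x)}$-ish; balance it by the augmented elementary chord sequences $AECS(V_{s(y)},V_{s(y)+1}),\dots,AECS(V_{s(x)-1},V_{s(x)})$ of the universal walk $U$ — equivalently, by the corresponding segment of $U$. Each chord edge $VW$ and each associated cyclic edge gets realised by an actual edge of $G$: the chord edges are realised inside $\cB'(U')$ using the pre-allocated matchings $S^j_t(\cdot)$ (choosing fresh matching edges so as to get a global matching $W''_i\supseteq H_i$), and the cyclic edges will be realised later inside $\cB(C)^*$. \emph{Step 2 (local balance):} by Proposition~\ref{balanced} applied cluster-by-cluster on $C$, the resulting edge set $W''_i$ has, for each edge $UW$ of $C$, equally many "free" initial vertices in $U$ as "free" final vertices in $W$. \emph{Step 3 (complete to a $1$-factor):} for each edge $UW$ of $C$, take a perfect matching in the bipartite subgraph of $\cB(C)^*[U,W]$ induced by these free vertex sets; this is possible since removing $\le 2\phi m + O(\ell'\phi m)\ll \eps' r_1/m \cdot m$ vertices from each side keeps $(\eps',r_1/m)$-superregularity intact (Proposition~\ref{superslice}, Lemma~\ref{PEmatching}/Proposition~\ref{perfmatch}). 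The union of $W''_i$ with these matchings is a $1$-factor $F_i$ of $G^{\rm basic}$. \emph{Step 4 (merge into a Hamilton cycle):} every cycle of $F_i$ contains an edge of $H_i$ (indeed of $H'_i$) and hence a vertex in one of the "free" sets $U^1$; apply Lemma~\ref{mergecycles} with $\cB(C)^*$, $F_i$, $C$, $J=E(C)$, rewiring the $\cB(C)^*$-matchings to turn $F_i$ into a single Hamilton cycle $C'_i$ of $G^{\rm basic}$; by Observation~\ref{basicobs}, $C_i:=(C'_i)^{\rm orig}$ is a Hamilton cycle of $G$ containing $(H'_i)^{\rm orig}$.

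\medskip

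\textbf{Ensuring edge-disjointness and full coverage of $\cB'(U')$.} When processing $i$, we delete from $\cB(C)^*$ all edges already used by $C_1,\dots,C_{i-1}$; at each step we have removed $O(\phi m + \ell'\phi m)$ edges at each vertex of $\cB(C)^*$, and since $\phi\ell'\ll r_1/m$ this keeps every pair $(\eps'',r_1/m-o(1))$-superregular with $\eps''$ still small, so Steps 3–4 go through throughout. For $\cB'(U')$: the whole point of the pre-allocation is that $C_i$ is only ever allowed to use its own allotted $12\phi\ell'$ pieces, so edge-disjointness across the $C_i$ is automatic there; we must however check that the allotted pieces \emph{suffice} to realise all chord sequences needed for $H_i$, and — the genuinely delicate point — that across all $i$ we can make the choices so that \emph{every} edge of each allotted piece is used, i.e. the realisation map is surjective onto $\cB'(U')$. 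This is where the exact identity $r_2=12\phi\ell'q$ is used rather than merely an inequality: the total number of chord-edge slots demanded equals the number supplied, and a Hall/defect-Hall argument (or a careful greedy balancing, as in Claim 2 of the intended proof) lets us match demand to supply exactly. This surjectivity requirement is the main obstacle: one has to set up the counting so that the "demand" on each bipartite pair $\cB'(U')[A,B]$, summed appropriately with multiplicities coming from how often an edge of $R$ appears in the elementary chord sequences of $U$, is \emph{constant} over all edges of $U'$, matching the regularity of $\cB'(U')$; property (U1) (each edge of $U$ coming from the elementary chord sequences counted with multiplicity) and property (U3) (each cluster entered/left exactly $\ell'$ times) are precisely what makes this constant. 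Once that bookkeeping is in place, the four claims (balancing exists; the used edges are globally balanced; the perfect matchings in $\cB(C)^*$ exist; the merge via Lemma~\ref{mergecycles} works) assemble into the statement.
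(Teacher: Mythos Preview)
Your overall architecture — balance each $H_i$ with chord material from $U$, extend to a $1$-factor via $\cB(C)^*$-matchings, merge via Lemma~\ref{mergecycles} — is right, and you correctly locate the crux in covering $\cB'(U')$ exactly. But the mechanism you propose, pre-assigning to each $i$ a fixed equal package of $\cB'(U')$, does not work.

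The demand that index $i$ places on an edge $AB$ of $U'$ is not constant. If $AB$ lies in $AECS(V_j,V_{j+1})$, then the number of times $AB$ is needed to balance $H_i$ equals the number of edges $xy\in H_i$ for which $V_j$ lies on the cyclic $C$-segment from $V(y)$ to $V(x)$; this depends on both $i$ and on $j$ (hence on $AB$). What \emph{is} constant — and this is the paper's Claim~2, using crucially that $H$ is a $1$-factor so that the chord sequences around each cycle of $H$ telescope — is only the total $t:=\sum_i(\text{demand of }i\text{ on }AB)$, and $t$ is not $r_2m'$: it lies somewhere in $[m,\phi mq]$. The paper therefore has a padding step you omit: each $W'_i$ is augmented by $s'_i$ extra copies of the entire walk $U$, with the $s'_i\in[11\phi m,12\phi m]$ chosen so that $\sum_i s'_i=t'-t$ where $t':=r_2m'=12\phi mq$. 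Only after padding do the demands $s_i(AB)\in[11\phi m,13\phi m]$ sum to exactly $t'$. The edges of each $S(AB)$ are then ordered (conditions $(\beta_1)$--$(\beta_3)$) and handed out in consecutive blocks of these varying lengths, using an auxiliary bipartite matching over batches of ten consecutive indices so that the chosen edges avoid $H'_{i-10},\dots,H'_{i+10}$ and so that the resulting $W'''_i\cup H'_i$ is a genuine path system (this is where hypothesis~(c) and the four-layer structure in~(a) are actually used). A rigid partition of $\cB'(U')$ into equal $1$-regular slices cannot accommodate the variable $s_i(AB)$; note also that $r_2/q=12\phi\ell'\ll 1$ in the given hierarchy, so ``$12\phi\ell'$ pieces per $i$'' is not even an integer.

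One smaller slip: your justification for condition~(ii) of Lemma~\ref{mergecycles} (``every cycle of $F_i$ contains an edge of $H'_i$'') is not correct — a cycle of $F_i$ may lie entirely in $\cB(C)^*\cup\cB'(U')$. The correct reason is that $W^*_i$ is a path system, so any cycle of $F_i$ either consists only of $\cB(C)^*$-matching edges (and then visits every $V^1_j$) or contains a terminal vertex of some maximal $W^*_i$-subpath, which by definition lies in some $V^1_j$.
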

\proof
Recall that $C=V_1\dots V_k$. So $V_1,\dots,V_k$ are the clusters in $\cP$, $|V_j|=m$ for each $j=1,\dots,k$ and each cluster in $\cP'$ has size
\begin{equation}\label{eq:m'}
m':=m/\ell'.
\end{equation}
Given a vertex $x$ of $G-V_0$, we will write $V(x)$ for the cluster in $\cP$ containing $x$ and $V(x)^+$
for the successor of $V(x)$ on $C$. For each $i=1,\dots,q$ in turn, we will find a Hamilton cycle $C_i$ in $G$ which contains the original
version of $H'_i$. So consider any $i$. We will first add suitable edges of $R$ to $H_i$ to form a sequence $W'_i$  of edges 
which is `locally balanced'. So $W'_i$ will consist both of edges of $H$ and edges of $R$. When constructing the Hamilton cycle $C_i$,
we will replace each occurrence of an edge from $R$ in $W'_i$ by an edge in the corresponding bipartite subgraph of $G$.

We will first consider the case when $(G,\cP,\cP',R,C,U,U')$ is a $(\ell',k,m,\eps,d)$-setup.
Recall that the augmented elementary chord sequence $AECS(V_a,V_{a+1})$
was defined in Section~\ref{sec:defabsorb}.
Given clusters $V_j$ and $V_{j'}$, the \emph{augmented  chord sequence $ACS(V_j,V_{j'})$ in $U$}
from $V_j$ to $V_{j'}$ is the (ordered) sequence defined as
$$
ACS(V_j,V_{j'}):=AECS(V_j,V_{j+1}) \cup AECS(V_{j+1},V_{j+2}) \cup \dots \cup AECS(V_{j'-1},V_{j'});
$$ 
where the indices are modulo $k$. If $j=j'$ we take $ACS(V_j,V_{j'}):=\emptyset$.%
    \COMMENT{Previously in this case $ACS(V_j,V_{j'})$ had to be the concatenation of the $k$ augmented elementary chord sequences
$AECS(V_a,V_{a+1})$ for all $a=j,\dots,k+j-1$, where the indices are modulo $k$. But this does not seem to be necessary anymore since
we are now proving Claim~2 differently.}
Let $W'_i$ be obtained from $H_i$ by including the augmented
chord sequence $ACS(V(y),V(x)^+)$
after each edge $xy$ in $H_i$. Ordering the edges in $H_i$ gives us an ordering of the edges of $W'_i$.
Suppose for example that $xy$ is an edge of $H_i$ with $x,y$ both contained in the cluster $V$. Then we include
exactly $AECS(V,V^+)$. If $x \in V$ and $y \in V^+$, where $V^+$ is the successor of $V$ on $C$, then we do not include any edge
(apart from $xy$ itself).

\smallskip

\noindent
{\bf Claim 1. `Sequences are locally balanced.'} \emph{For each cluster $V_j$ in $\cP$ and each $i=1,\dots,q$,
the number of edges of $W'_i$ leaving $V_j$ equals the number of edges of $W'_i$ entering $V_{j+1}$.}

\smallskip

\noindent
Here (and below) multiple occurrences of edges in $W'_i$ are considered separately, i.e.~if the edge $AB$ appears
$u$ times in $W'_i$, then we count it $u$ times in Claim~1. Moreover, if $xy$ is an edge of $H_i$ whose endvertices
lie in the same cluster~$V$, then (here and below) we count $xy$ both as an edge leaving~$V$ and an edge entering~$V$.

\smallskip

\noindent
To prove Claim~1, consider any edge $xy$ of $H_i$. Recall that each augmented elementary chord sequence
$AECS(V_j,V_{j+1})$ consists of chord edges (namely those in the elementary chord sequence $ECS(V_j,V_{j+1})$
corresponding to $AECS(V_j,V_{j+1})$) and of cyclic edges (namely those in the subwalks $P(F)$ which were
added to $ECS(V_j,V_{j+1})$ in order to obtain $AECS(V_j,V_{j+1})$). But every cyclic edge,
joining $V_j$ to $V_{j+1}$ say, contributes both to the number of edges leaving $V_j$ and to the 
number of edges entering $V_{j+1}$. On the other hand, in the (cyclic) sequence
$$
xy\cup ECS(V(y),V(y)^+) \cup ECS(V(y)^+,V(y)^{++}) \cup \dots \cup ECS(V(x),V(x)^+)
$$
obtained from $xy\cup ACS(V(y),V(x)^+)$ by deleting all cyclic edges in the augmented elementary chord sequences, every edge entering
some cluster $V_{j+1}$ is followed by an edge leaving $V_j$. 
(This is essentially the same observation as Proposition~\ref{balanced}.)
Altogether this shows that
for each cluster $V_j$ in $\cP$ the number of edges of $xy\cup ACS(V(y),V(x)^+)$ leaving $V_j$ equals the
number of edges of $xy\cup ACS(V(y),V(x)^+)$ entering $V_{j+1}$. Thus this is also true for the union
$W'_i$ of the $xy\cup ACS(V(y),V(x)^+)$ over all edges $xy\in H_i$. So the claim follows.

\medskip

In what follows, the order of
the edges in $W'_i$ does not matter anymore. So we will view $W'_i$ as a multiset consisting of edges in $E(U)\cup E(H)$.

Recall that each occurrence of an edge in $U$ corresponds to an edge in $U'$ (and that these edges in $U'$
are different for different occurrences of the same edge from $R$ in $U$).
So we might also view each $W'_i$ as a multiset consisting of edges in $E(U')\cup E(H)$.

\smallskip

\noindent
{\bf Claim~2. `Unions of sequences are globally balanced.'} \emph{Let $W$ denote the union of $W'_1,\dots,W'_q$.  
Then there is an integer $t$ so that%
   \COMMENT{Previously also had that $t\ge m$, which we don't get anymore. We this is not needed.}
$W$ contains each edge of $U$ exactly $t$ times.
Thus if $W$ is viewed as a multiset consisting of edges in $E(U')\cup E(H)$, then $W$ also contains
each edge of $U'$ exactly $t$ times.}

\smallskip

\noindent
As before, here (and below) multiple occurrences of an edge in $U$ are considered separately, i.e.~if the edge $AB$ appears
$u$ times in $U$, then it altogether appears $ut$ times in $W$.

\smallskip
To prove Claim~2, consider the auxiliary multidigraph $D$ whose vertices are $V_1,\dots,V_k$ and which
contains an edge from $V_i$ to $V_j$ for every $ACS(V_i,V_j)$ included into~$W$.
So the multiplicity of the edge $V_iV_j$ in $D$ is the number of edges $xy$ of $H$ with $V(y)=V_i$ and
$V(x)^+=V_j$. Let $H^c$ be obtained from $H$ by first reversing the orientation of every edge and
then contracting all the vertices lying in each cluster $V_i$ into a new vertex $v_i$.
So $H^c$ is an $m$-regular multigraph (which might contain loops). Moreover, $D$ can be obtained from $H^c$ by replacing each edge $v_iv_j$ with the edge $v_iv_{j+1}$.
Thus $D$ is $m$-regular too and so it can be decomposed into edge-disjoint $1$-factors.
Consider any cycle $D'=V_{i_1}\dots V_{i_r}$ in one of these $1$-factors.
Then $W$ contains all edges in the multiset
$$S(D'):=ACS(V(x_{i_1}),V(x_{i_2}))\cup ACS(V(x_{i_2}),V(x_{i_3}))\cup \dots\cup ACS(V(x_{i_r}),V(x_{i_1})).$$
But
\begin{align*}
ACS(V_{i_j},V_{i_{j+1}}) =AECS(V_{i_j},V_{i_j+1})\cup \dots \cup AECS(V_{i_{j+1}-1},V_{i_{j+1}}).
\end{align*}
So it follows that $S(D')$ contains every $AECS(V_i,V_{i+1})$ the same number of times and thus $S(D')$ is a multiple of $E(U)$. 
(As an example, if $D'=V_9V_4V_8$, then $S(D')$ will contain each edge of $E(U)$ once.
If however $D'=V_9V_8V_4$, then $S(D')$ will contain each edge of $E(U)$ twice.)
Since $W$ is the union of the $S(D')$ over all
cycles $D'$ in the $1$-factor decomposition of $D$, this implies that $W$ is a multiple of $E(U)$, i.e.~
there exists $t$ such that $W$ contains every edge in $U$ exactly $t$ times. 

Since we consider multiple occurrences of an edge in $U$ separately and each such occurrence corresponds to an edge of $U'$ it 
follows immediately that $W$ (viewed as a multiset consisting of edges in $E(U')\cup E(H)$)
also contains each edge of $U'$ exactly $t$ times. This proves Claim~2.

\smallskip

Let
\begin{equation} \label{defs'}
s':=\phi m.
\end{equation}
Note that (\ref{eq:m'}) implies that
\begin{equation} \label{uppers'}
s'=\phi \ell' m' \le m'/10^4.
\end{equation}
We will need the following claim when replacing the edges of $U$ by edges in $G$.

\smallskip

\noindent
{\bf Claim~3. `Sequences are well spread out.'} 
\emph{For all $i=1,\dots,q$, any edge of $U'$ (and of $U$) occurs in  $W'_i$ at most $s'$ times.}

\smallskip
To prove Claim 3, note that for each $j=1,\dots,k$ each edge in $H_i$ contributes at most one occurrence of
$AECS(V_j,V_{j+1})$ in $W'_i$. But $H''_i$ and $H_i$ are vertex-disjoint by (b) and $H'_i=H_i\cup H''_i$ consists of at most $\phi m$
paths by~(c). So $H_i$ consists of at most $\phi m$ edges and thus the total number of occurrences of $AECS(V_j,V_{j+1})$ in $W'_i$ is
at most $\phi m=s'$. This proves Claim~3 since $AECS(V_1,V_2),\dots,AECS(V_k,V_1)$ forms a partition of $E(U)$.

\smallskip

\noindent
By summing over all $i$ with $1 \le  i \le q$, it immediately follows that the constant $t$ defined in Claim~2 satisfies
\begin{equation} \label{boundont}
t \le s'q.
\end{equation}
We will now add some further edges to $W$ to obtain $W'$
which contains all edges of $H$ and which uses every edge $AB$ of $U'$ (and thus of $U$) \emph{exactly} $t'$ times, where
\begin{equation} \label{equt'}
t':=r_2 m' \stackrel{(\ref{defr2})}{=} 12\phi \ell' q m'\stackrel{(\ref{eq:m'})}{=}12\phi q m \stackrel{(\ref{defs'})}{=} 12 s'q.
\end{equation}

\noindent
{\bf Claim 4.} \emph{By adding some some further edges of $U$ to each $W'_i$ we can obtain multisets $W''_i$ which satisfy the
following properties (as before, we also view $W''_i$ as a multiset consisting of edges in $E(U')\cup E(H)$):
\begin{itemize}
\item Each $W''_i$ is still locally balanced. That is, for every cluster $V_j$ in $\cP$ and each $i=1,\dots,q$, 
the number of edges of $W''_i$ leaving $V_j$ equals the number of 
edges of $W''_i$ entering $V_{j+1}$.
\item For each $i=1,\dots,q$ and each edge $AB$ of $U'$, let $s_i(AB)$ be the number of times that $W''_i$ uses $AB$. 
Then
\begin{equation} \label{sumsi}
\sum_{i=1}^q s_i(AB)= t' 
\end{equation}
and
\begin{equation} \label{sibound}
11s' \le  s_i(AB) \le 13s'
\end{equation}
for each $i=1,\dots, q$.
\end{itemize}
}

\smallskip

\noindent To prove Claim~4, first note that
\begin{equation*}\label{boundt'}
t+11 s'q \stackrel{(\ref{boundont})}{\le} 12s'q \stackrel{(\ref{equt'})}{=} t'.
\end{equation*} 
Choose integers $s'_{1},\dots,s'_{q}$ so that 
$$
\sum_{i=1}^{q} s'_i = t' -t\qquad \mbox{and} \qquad 11s' \le s'_i \le 12s' \quad \mbox{for all $i=1,\dots, q$}.
$$
(The $s'_i$ exist since $11s'q \le t'-t \le 12s'q$.)
For each $i=1,\dots, q$, let $W''_i$ be obtained from $W'_i$ by adding $s'_i$
copies of $U$. Then clearly~(\ref{sumsi}) holds. Also, Claim~3 and the bounds on $s'_i$ imply that
$$
11s' \le s_i' \le s_i(AB) \le s_i'+s' \le 13s'
$$
for each $i=1,\dots, q$. So (\ref{sibound}) holds too.

The fact that each $W''_i$ is still locally balanced immediately follows from Claim~1 and the fact that
$W''_i$ was obtained from $W'_i$ by adding copies of $U$. (Note that (U3) implies that each copy of $U$ contributes
exactly $\ell'$ to the number of edges entering a cluster and $\ell'$ to the number of edges
leaving a cluster.)
This completes the proof of Claim~4.

\medskip

Let $W'$ be the union of the multisets $W''_i$ over all $i=1,\dots,q$. Thus Claim~4 implies that in total $W'$ contains each edge
$AB$ of $U'$ exactly $t'$ times. Since $\cB'(U')$ is an $r_2$-blow-up of $U'$, it follows that
for any edge $AB$ of $U'$, $t'$ equals the number of edges in the subgraph 
\begin{equation} \label{SAB}
S(AB):=\cB'(U')[A,B]
\end{equation}
of $\cB'(U')$ spanned by the clusters $A$ and $B$. (So here $A$ and $B$ are clusters in $\cP'$.)

Next we will replace each occurrence of an edge $AB$ of $U'$ in $W''_i\setminus E(H_i)$ by an edge of $S(AB)$
to obtain a digraph $W'''_i$ with $V(W'''_i)=V(H)=V(G)\setminus V_0$ which has the following properties:
\begin{itemize}
\item[($\alpha_1$)] $W'''_i$ contains all edges in%
    \COMMENT{We really want $H_i$ here and not $H'_i$. But we do need $H'_i$ in ($\alpha_3$).}
$H_i$.
\item[($\alpha_2$)] For each edge $AB$ of $U'$, the bipartite subgraph $W'''_i[A,B]$ consisting of all edges in
$W'''_i$ from $A$ to $B$ contains exactly $s_i(AB)$ edges of $S(AB)$.
\item[($\alpha_3$)] $W'''_i \cup H'_i$ is a path system.%
\COMMENT{earlier version also had that  and $E(H'_i)$ and $E(W'''_i)\setminus E(H'_i)$ have no common endpoints -- similarly in $(\gamma_2)$.}
\item[($\alpha_4$)] For every pair $V,V^+$ of consecutive clusters on $C$ and every $i=1,\dots,q$, there is an
integer $w_i(V) \le \sqrt{\phi} m/2$ so that 
\begin{equation}\label{eqwi}
w_i(V)=\sum_{v \in V} d^+_{W'''_i}(v) = \sum_{v \in V^+} d^-_{W'''_i}(v).
\end{equation}
\item[($\alpha_5$)] $W'''_1,\dots,W'''_{q}$ are pairwise edge-disjoint.%
   \COMMENT{Don't need to say that the original versions of $W'''_1,\dots,W'''_{q}$ are pairwise edge-disjoint since
no $W'''_i$ contains exceptional edges.}
\end{itemize}
Note that $(\alpha_2)$ is equivalent to stating that 
each occurrence of $AB$ in $W''_i$ is replaced by an edge of $S(AB)$.
Moreover $(\alpha_1)$, $(\alpha_2)$, $(\alpha_5)$, (\ref{equt'}) and~(\ref{sumsi}) together imply that 
the edge sets of the $W'''_i$ form a partition $E(H \cup \cB'(U'))$.

Before describing the construction of $W'''_i$, first note that ($\alpha_4$) is an immediate consequence of ($\alpha_2$) and 
Claim~4: any edge $AB$ of $W''_i$ (where $AB\in E(U')$ and so $A$ and $B$ are clusters in $\cP'$) corresponds to an
edge in $W'''_i$ which goes from $A$ to $B$. So~(\ref{eqwi}) holds.
To check that $w_i(V) \le \sqrt{\phi} m /2$ recall from (U3) and~(ST2) that for each cluster $V$ on $C$
there are exactly $\ell'$ edges $AB$ of $U'$ leaving $V$. (\ref{sibound}) implies
that each such edge $AB$ contributes at most $13s' =13\phi m$ to $w_i(V)$.
Together with (c) this implies that $$w_i(V)  \le |H_i\cap V|+13\phi \ell' m \le \phi m+13\phi \ell' m\le \sqrt{\phi}m/2.$$ 

To construct $W'''_i$ for each $i$, we proceed as follows.
Let $u$ be the length of $U'$ and label the edges of $U'$ as $E_1,\dots,E_u$.
Consider any edge $E_a=AB$ of $U'$. For each $j=1,\dots,4$ let $S^j(AB):=S^j_1(AB)\cup \dots\cup S^j_{r_2}(AB)$.
(Recall the $S^j_i(AB)$ were defined in condition~(a) of the lemma.) Then
$S(AB)= S^1(AB) \cup \dots \cup S^4(AB)$
(where $S(AB)$ is as defined in (\ref{SAB})) and (a) implies that
\begin{equation} \label{sjab}
|S^j(AB)| = |S(AB)|/4=r_2 m'/4.
\end{equation}
Order the edges of $S(AB)$ in such a way that the following conditions hold:
\begin{itemize}
\item[($\beta_1$)] Every set of at most $m'/20$ consecutive edges in $S(AB)$ forms a matching.
\item[($\beta_2$)] If $AB\neq E_u$ then for each $j=1,2,3$ all the edges in $S^j(AB)$ precede all those in $S^{j+1}(AB)$. 
\item[($\beta_3$)] If $AB= E_u$ then all the edges in $S^3(AB)$ precede all those in $S^{4}(AB)$, which in turn precede
those in $S^1(AB)$ and all the edges in $S^2(AB)$ are at the end of the ordering.
\end{itemize}
$(\beta_3)$ will be used to ensure that $(\alpha_3)$ holds in the construction of the $W'''_i$.

To see that the above properties can be guaranteed, we use the properties of $S(AB)$ described in the assumption (a) of the lemma: 
for each edge $AB \neq E_u$ of $U'$, order the edges in $S(AB)$ so that ($\beta_2$)
is satisfied and so that within some $S^j(AB)$
all the edges of the matching $S^j_i(AB)$ come before all edges of the matching $S^j_{i+1}(AB)$ (for all $i=1,\dots,r_2-1$).
Order the edges of $S_1^j(AB)$ arbitrarily.
Given an ordering of the edges in $S^j_{i}(AB)$, order the edges of $S^j_{i+1}(AB)$ in such a way
that the first $m'/20$ edges of $S^j_{i+1}(AB)$ avoid the $m'/10$ endvertices of the final $m'/20$ edges of $S^j_i(AB)$.
This ensures that ($\beta_1$) will be satisfied.
If $AB=E_u$ then the argument is similar, but we start with an ordering of the edges in $S(AB)$ so that ($\beta_3$)
is satisfied.

We now carry out the actual construction of the $W'''_i$, where we consider the $W_i'''$ in batches of 10.
For each $a=1,\dots,q/10$ and each edge $E_j$ of $U'$ we let 
$$
u_a(E_j):=s_{10(a-1)+1}(E_j)+\dots+s_{10a}(E_j).
$$
Thus (\ref{sibound}) implies that for all $a=1,\dots,q/10$, 
\begin{equation}\label{eq:ua}
110s'\le u_a(E_j)\le 130s'.
\end{equation}
We let $S^*_a(E_j)$ denote the set of all those
edges whose position in the ordering of the edges of $S(E_j)$ lies between $1+\sum_{a'=1}^{a-1} u_{a'}(E_j)$ and $\sum_{a'=1}^{a} u_{a'}(E_j)$. 
So $u_a(E_j)=|S^*_a(E_j)|$. Together with ($\beta_1$) and the fact that $u_a(E_j)\le 130s'\le m'/20$ by (\ref{uppers'}), this implies that $S^*_a(E_j)$ forms a matching.
Note that $u_a(E_j)$ is the total number of edges in $S(E_j)$ that we need to choose for $W'''_{10(a-1)+1},\dots,W'''_{10a}$.
We will choose all these edges from $S^*_a(E_j)$. 

To choose these edges, we consider an auxiliary bipartite graph $B^*$ which is
defined as follows. The first vertex class $B_1$ of $B^*$ consists of $u_a(E_j)$ placeholders for the edges
in $S(E_j)$ that we need to choose for $W'''_{10(a-1)+1},\dots,W'''_{10a}$, so for each $i=10(a-1)+1,\dots,10a$
there will be precisely $s_{i}(E_j)$ of these placeholders for (the edges to be chosen for) $W'''_{i}$.
The second vertex class of $B^*$ is $S^*_a(E_j)$.
So
$$
110 s' \le |S^*_a(E_j)| = |B_1| \le 130 s'
$$
by~(\ref{eq:ua}).
We join an edge $e\in S^*_a(E_j)$ to a placeholder for $W'''_{i}$ if $e$ is vertex-disjoint from $H'_{i}$.
Since by condition~(c) of the lemma $H'_{10(a-1)+1},\dots,H'_{10a}$ are pairwise vertex-disjoint,
each edge $e\in S^*_a(E_j)$ can meet at most two $H'_i$ with $10(a-1)+1\le i\le 10a$ and so $e$ will be joined
to all placeholders apart from those for the corresponding two $W'''_i$. Since there are $s_{i}(E_j)\le 13 s'$ placeholders
for each $W'''_i$, this means that $e$ is joined in $B^*$ to all but at most $2\cdot 13s'\le |B_1|/2$ placeholders in $B_1$.
Similarly, since $S^*_a(E_j)$ forms a matching and since by (c) every $H'_i$ meets each of the two endclusters of $E_j$
in at most $\phi m$ vertices, each placeholder in $B_1$ is joined to all but at most $2\phi m=2s'\le |S^*_a(E_j)|/2$
edges in $S^*_a(E_j)$. Thus $B^*$ has a perfect matching. For each $i=10(a-1)+1,\dots,10a$ we add the $s_{i}(E_j)$
edges to $W'''_{i}$ which are matched to the placeholders for $W'''_{i}$. 

We carry out this procedure for every edge $E_j$ of $U'$ in turn. This completes the construction of the $W'''_i$.
Clearly, ($\alpha_1$) and ($\alpha_2$) are satisfied. To check that ($\alpha_5$) holds, note that the $W'''_i\setminus E(H_i)$ are pairwise edge-disjoint
by construction and the $H_i$ are pairwise edge-disjoint by definition
(as they form a partition of the edges of $H$ into matchings).
Also $W'''_i\setminus E(H_i)$ is edge-disjoint from any $H_j$ by (d).

So let us now check that ($\alpha_3$) is satisfied. To do this, let $W'''_i[E_j]$ denote the bipartite subdigraph of $W'''_i$ which
consists of all edges from the first endcluster of $E_j$ to the final endcluster of $E_j$. Note that by definition of $B^*$,
the edges of $W'''_i\setminus E(H_i)$ and those of $H'_i$ have no endvertices in common.
Moreover, (b) implies that $H'_i$ is a path system. So for ($\alpha_3$), it suffices to show that $W'''_i\setminus E(H_i)$ is a path system.
Now recall that the definition of $B^*$ implies that $W'''_i[E_j]\setminus E(H_i)$ forms a matching for each edge $E_j$ of $U'$.
Thus the only possibility for a cycle $C'$ in $W'''_i\setminus E(H_i)$ would be for $C'$ to `wind around' $U'$.

So in order to show that $W'''_i\setminus E(H_i)$ is a path system, it suffices 
to show that no vertex is incident to both an edge in $W'''_{i}[E_1] \setminus E(H_i)$ and an edge in%
    \COMMENT{Cannot just write that $W'''_{i}[E_1] \setminus E(H_i)$ and $W'''_{i}[E_u] \setminus E(H_i)$ are vertex-disjoint
since they share the vertices in the cluster belonging to both $E_1$ and $E_u$.}
$W'''_{i}[E_u] \setminus E(H_i)$.
But this follows from ($\beta_2$) and ($\beta_3$). Indeed, recall that when choosing the edges in $W'''_{i}[E_1] \setminus E(H_i)$
we considered all the $W'''_i$ in batches of~10. Let $a:=\lceil i/10\rceil$. So the edges in $W'''_{i}[E_1] \setminus E(H_i)$
were chosen in the $a$th batch. Let $p^{\rm first}_1$ and $p^{\rm final}_1$ denote the first and the final position of an edge from
$W'''_{i}[E_1] \setminus E(H_i)$ in the ordering of all edges of $S(E_1)$. Define $p^{\rm first}_u$ and $p^{\rm final}_u$
similarly. Note that 
\begin{equation}\label{eq:pos1}
110s'(a-1)\stackrel{(\ref{eq:ua})}{\le} p^{\rm first}_1, p^{\rm final}_1\stackrel{(\ref{eq:ua})}{\le} 130s'a.
\end{equation}
But
$$130s'a-110s'(a-1)=20s'a+110s'\le 20s'\frac{q}{10}+110s'< 3s'q \stackrel{(\ref{equt'})}{=} \frac{r_2m'}{4}
\stackrel{(\ref{sjab})}{=} |S^{j}(E_1)|.
$$
Together with (\ref{eq:pos1}) this implies that
\begin{equation}\label{eq:pos2}
110s'(a-1)\le p^{\rm first}_1, p^{\rm final}_1< 110s'(a-1)+ |S^{j}(E_1)|.
\end{equation}
Then (\ref{eq:pos2}), its analogue for $p^{\rm first}_u$ and $p^{\rm final}_u$, ($\beta_2$) and ($\beta_3$) together imply that
there is some $j\le 3$ such that 
$$
W'''_{i}[E_1] \setminus E(H_i) \subseteq  S^{j}(E_1)\cup S^{j+1}(E_1)
\mbox{ and } W'''_{i}[E_u] \setminus E(H_i) \subseteq  S^{j+2}(E_u)\cup S^{j+3}(E_u)
$$ 
(where $S^5(E_u):=S^1(E_u)$ and
$S^6(E_u):=S^2(E_u)$, see Figure~\ref{fig:Wmatch}). 
\begin{figure}
\centering\footnotesize
\includegraphics[scale=0.4]{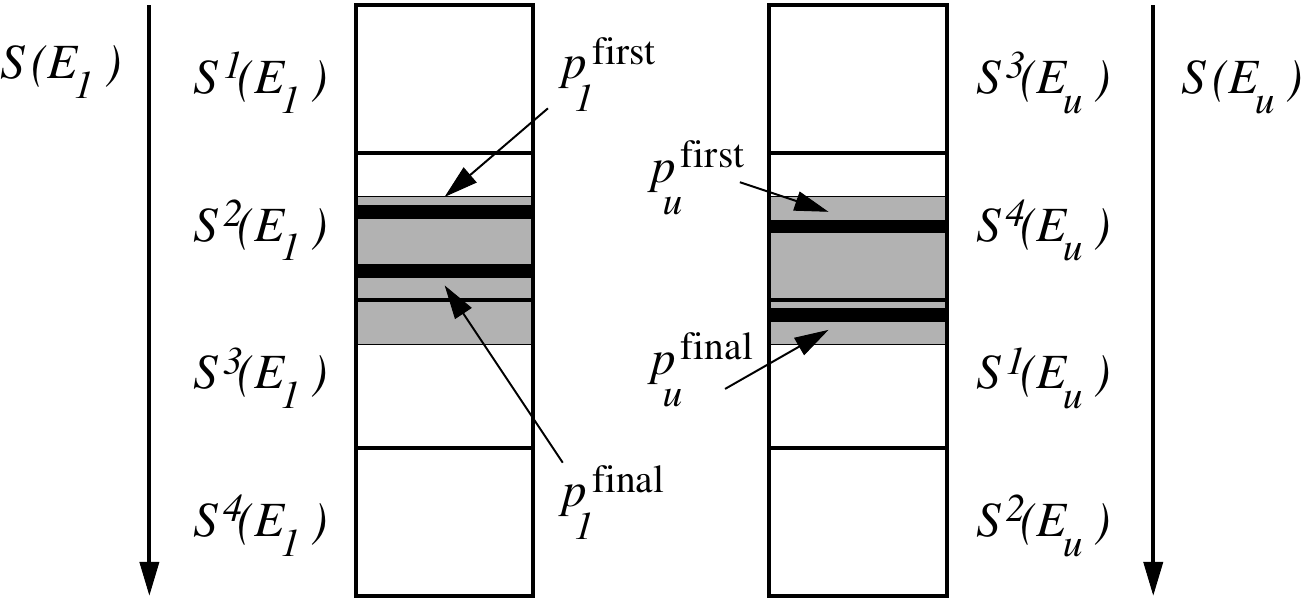}
\caption{The shaded area corresponds to the possible positions of the edges in $W'''_{i}[E_1] \setminus E(H_i)$
within the set $S(E_1)$ and the edges in $W'''_{i}[E_u] \setminus E(H_i)$
within the set $S(E_u)$.}
\label{fig:Wmatch}
\end{figure}
So no vertex is incident to both an edge in $W'''_{i}[E_1] \setminus E(H_i)$ and an edge in $W'''_{i}[E_u] \setminus E(H_i)$.
Altogether this shows that~($\alpha_3$) holds.
So we have shown that ($\alpha_1$)--($\alpha_5$) hold.

\smallskip

We now add all the edges in $H''_i=H'_i\setminus E(H_i)$ to $W'''_i$ and let $W^*_i$ denote the graph
on $V(H)=V(W'''_i)$ obtained in this way. Recall that  by (b), $H''_i$ consists of a complete
exceptional path system with respect to $C$. Thus (CEPS1) and (CEPS3) together imply that
$H''_i$ is `locally balanced', in the sense that for every cluster~$V$ in $\cP$
the number of edges in $H''_i$ leaving $V$ equals the number of edges in $H''_i$ entering $V^+$.
(c) implies that the number of edges leaving $V$ is at most $\phi m$.
Together with ($\alpha_4$) this implies that condition ($\gamma_3$) below holds. ($\gamma_1$) and ($\gamma_2$)
follow from ($\alpha_1$) and ($\alpha_3$) respectively. ($\gamma_4$) follows from ($\alpha_5$), the fact that the original versions
of the $H'_i$ (and thus of the $H''_i$) are pairwise edge-disjoint by~(c) and the fact that $W^*_i\setminus E((H''_i)^{\rm orig})$
is edge-disjoint from any $(H''_j)^{\rm orig}$ by~(d).
\begin{itemize}
\item[($\gamma_1$)] $W^*_i$ contains all edges in $H'_i$.
\item[($\gamma_2$)] $W^*_i$ is a path system.
\item[($\gamma_3$)] For every pair $V,V^+$ of consecutive clusters on $C$ and every $i$, there is an
integer $w_i(V) \le \sqrt{\phi} m$ so that 
$$
w_i(V)=\sum_{v \in V} d^+_{W^*_i}(v) = \sum_{v \in V^+} d^-_{W^*_i}(v).
$$ 
\item[($\gamma_4$)] The original versions of $W^*_1,\dots,W^*_{q}$ are pairwise edge-disjoint.
\end{itemize}
Note that for each $i=1,\dots,q$ every exceptional edge in $W^*_i$ lies in $H''_i$. 
Our next aim is to turn the original versions of $W^*_1,\dots,W^*_{q}$ into edge-disjoint Hamilton cycles by adding suitable
edges from $\cB(C)^*$.

\medskip

\noindent
{\bf Claim~5.} \emph{For all $i=1,\dots,q$, there is a Hamilton cycle $C_i$ in $G$ which contains all edges in the original version
$(W^*_i)^{\rm orig}$ of $W^*_i$ and such that all the edges in $E(C_i)\setminus (W^*_i)^{\rm orig}$ lie in $\cB(C)^*$.
Moreover, all these Hamilton cycles $C_i$ are pairwise edge-disjoint.}

\smallskip Choose a new constant $\eps''$ such that
$$\phi,\eps'\ll \eps''\ll r_1/m.
$$
Suppose that we have already transformed the original versions of $W^*_1,\dots,W^*_{i-1}$
into Hamilton cycles $C_1,\dots,C_{i-1}$. Let $\cB(C)^*_i$ denote the subdigraph 
of $\cB(C)^*$ obtained by removing all edges in $C_1,\dots,C_{i-1}$.
For every cluster $V_j\in \cP$ let $V^1_j$ be the set of all those vertices $v\in V_j$ for which
$d^+_{W^*_j}(v)=0$ and let $V^2_j$ be the set of all those vertices $v\in V_j$ for which
$d^-_{W^*_j}(v)=0$. Thus ($\gamma_3$) implies that
$$
|V^1_j|\ge m-w_i(V_j)\ge (1-\sqrt{\phi})m\ge (1-(\eps''/2)^2)m,
$$
and similarly $|V^2_j|\ge (1-(\eps''/2)^2)m$.
Proposition~\ref{superslice}(ii) applied with%
    \COMMENT{Can't take $d'=\sqrt{\phi}$ since we need $\eps\le d'$ in Proposition~\ref{superslice}(ii) (which would mean $\eps'\le \sqrt{\phi}$).}
$d'=(\eps''/2)^2$ now implies that $\cB(C)^*_i[V^1_j,V^2_{j+1}]$ is still
$(\eps'', r_1/m)$-superregular. (To see that Proposition~\ref{superslice}(ii) can be applied we use that the removal
of each $C_j$ decreases the minimum out- and indegree of every vertex of $\cB(C)^*[V_j,V_{j+1}]$
by at most $1$. Thus $\cB(C)^*_i[V^1_j,V^2_{j+1}]$ is obtained from $\cB(C)^*[V_j,V_{j+1}]$ by deleting at most
$q\le (\eps''/2)^2 m$ edges at every vertex and by removing at most $(\eps''/2)^2m$ vertices from each vertex class.)

On the other hand, ($\gamma_2$) and ($\gamma_3$) imply that $|V^1_j|=|V^2_{j+1}|$.
So we can apply Proposition~\ref{perfmatch} to find a perfect matching $M_j$ in $\cB(C)^*_i[V^1_j,V^2_{j+1}]$.
Then the union $F_i$ of the $M_j$ (for all $j=1,\dots,k$) and of $W^*_i$ is a 1-regular digraph on $V(G)\setminus V_0$.
We can now apply Lemma~\ref{mergecycles} with $F_i$, $\cB(C)^*_i$, $E(C)$, $\eps''$, $r_1/m$ playing the roles of
$F$, $G$, $J$, $\eps$, $d$ to replace each $M_j$ with a suitable other perfect matching in $\cB(C)^*_i[V^1_j,V^2_{j+1}]$
to make $F_i$ into a Hamilton cycle $C'_i$ on $V(G)\setminus V_0$. To see that (ii) of Lemma~\ref{mergecycles}
is satisfied, consider any cycle $D$ in $F_i$. If $D$ does not contain any edges from $W^*_i$, then it
meets $V^1_j$ for every $j=1,\dots,k$. So suppose that $D$ contains some edges from $W^*_i$ and let $v$
be a final vertex on a subpath in $W^*_i\cap D$ (such a vertex exists by ($\gamma_2$)).
Then $v\in V^1_j$, where $V_j$ is the cluster containing $v$. 

Let $C_i$ be the original version $(C'_i)^{\rm orig}$ of $C'_i$. Then Observation~\ref{basicobs} implies that
 $C_i$ is a Hamilton cycle of $G$.
By ($\gamma_4$) all the Hamilton cycles $C_1,\dots,C_{q}$ will be pairwise edge-disjoint.
This completes the proof of Claim~5.

\medskip

Let us now consider the case when $(G,\cP,\cP',R,C,U,U')$ is a $(\ell',k,m,\eps,d)$-bi-setup.
The argument for this case is similar, so we only highlight the places where it differs.
Given clusters $V_j$ and $V_{j'}$ such that $|j'-j|$ is even, we now define
$$
ACS(V_j,V_{j'}):=AECS^{bi}(V_j,V_{j+2}) \cup AECS^{bi}(V_{j+2},V_{j+4}) \cup \dots \cup AECS^{bi}(V_{j'-2},V_{j'}).
$$ 
Since $H$ is bipartite with vertex classes $\bigcup \cV_{\rm even}$ and $\bigcup \cV_{\rm odd}$,
every edge $xy$ of $H$ either satisfies $V(x)^+,V(y)\in \cV_{\rm even}$ or $V(x)^+,V(y)\in \cV_{\rm odd}$.
Thus we can define the $W'_i$ as before and Claim~1 still holds. 

Recall that $U_{\rm even}$ and $U_{\rm odd}$ form a partition of the edges of $U$.
Since each occurrence of an edge in $U$ corresponds to an edge in $U'$, this also defines sets $U'_{\rm even}$ and $U'_{\rm odd}$
corresponding to $U_{\rm even}$ and $U_{\rm odd}$. Moreover, $U'_{\rm even}$ and $U'_{\rm odd}$ form a partition of the edges of $U'$.
Instead of Claim~2 we now have the following claim.

\smallskip

\noindent
{\bf Claim~2$'$.} \emph{Let $W$ denote the union of $W'_1,\dots,W'_q$.  
Then there are integers $t_{\rm even}$ and $t_{\rm odd}$ so that $W$
contains each edge of $U_{\rm even}$ exactly $t_{\rm even}$ times and every edge of $U_{\rm odd}$ exactly $t_{\rm odd}$ times.
Thus if $W$ is viewed as a multiset consisting of edges in $E(U')\cup E(H)$, then $W$ also contains
each edge of $U'_{\rm even}$ exactly $t_{\rm even}$ times and every edge of $U'_{\rm odd}$ exactly $t_{\rm odd}$ times.}

\smallskip

\noindent
To prove Claim~2$'$, define an auxiliary digraph $D$ as before. Note that this time, if $V_iV_j\in E(D)$
then either both $i$ and $j$ are even or both $i$ and $j$ are odd. As before, $D$ is regular and thus there exists a decomposition of~$D$
into edge-disjoint $1$-factors. Consider any cycle $D'=V_{i_1}\dots V_{i_r}$ in one of these $1$-factors.
Then either all of $i_1,\dots,i_r$ are even or all of them odd.
Suppose first that the former holds.
Then $W$ contains all edges in the multiset
$$S(D'):=ACS(V(x_{i_1}),V(x_{i_2}))\cup ACS(V(x_{i_2}),V(x_{i_3}))\cup \dots\cup ACS(V(x_{i_r}),V(x_{i_1})).$$
But
\begin{align*}
ACS(V_{i_j},V_{i_{j+1}}) =AECS(V_{i_j},V_{i_j+2})\cup \dots \cup AECS(V_{i_{j+1}-2},V_{i_{j+1}}).
\end{align*}
So it follows that $S(D')$ contains every $AECS(V_i,V_{i+2})$ for which $i$ is even the same number of times and
thus $S(D')$ is a multiple of $E(U_{\rm even})$. 

If all of $i_1,\dots,i_r$ are odd then it follows that $S(D')$ is a multiple of $E(U_{\rm odd})$.
Since $W$ is the union of the $S(D')$ over all
cycles $D'$ in the $1$-factor decomposition of $D$, this implies Claim~2$'$.

\medskip

As before, one can show that Claim~3 holds and so instead of~(\ref{boundont}) we now have
$t_{\rm even}, t_{\rm odd} \le s'q$ and so
$$
t_{\rm even}+11 s'q, \ t_{\rm odd}+11 s'q \le 12s'q \stackrel{(\ref{equt'})}{=} t'.
$$
Choose integers $s^{\rm even}_{1},\dots,s^{\rm even}_{q}$ so that 
$$
\sum_{i=1}^{q} s^{\rm even}_i = t' -t_{\rm even}  \qquad \mbox{and} \qquad 11s' \le s^{\rm even}_i \le 12s' \quad
\mbox{for all $i=1,\dots, q$}.
$$
Define $s^{\rm odd}_{1},\dots,s^{\rm odd}_{q}$ similarly.
For each $i=1,\dots, q$, let $W''_i$ be obtained from $W'_i$ by adding $s^{\rm even}_i$
copies of $U_{\rm even}$ and $s^{\rm odd}_i$ copies of $U_{\rm odd}$. Then the $W''_i$ are as desired in Claim~4.
The remainder of the proof is now identical.
\endproof

Suppose we are given  a $1$-factor $H$ of $G-V_0$ which is split into suitable matchings $H_i$. 
We will apply the following lemma in the proof of Lemma~\ref{absorballH}
to assign a complete exceptional path system $CEPS_i$ to each $H_i$ so that $CEPS_i$ can play the role of $H''_i$ in
Lemma~\ref{absorbH}. We then use Lemma~\ref{absorbH} to extend $H_i$ into a Hamilton cycle.

\begin{lemma}\label{auxbipart}
Suppose that $0<1/n\ll 1/k, 1/q,\eps,1/f\ll 1$, that $t, k/f,fm/q\in \mathbb{N}$
and that $(G,\cP,R,C)$ is a $(k,m,\eps,d)$-scheme on $n$ vertices.
Suppose that $\cP^*$ is a $(q/f)$-refinement
of $\cP$ and that $EF_1,\dots,EF_t$ are exceptional factors with parameters $(q/f,f)$ with respect to $C$, $\cP^*$.
Let $\cI$ denote the canonical interval partition of $C$ into $f$ intervals of equal length.
Suppose that $H_1,\dots,H_{tq}$ are subdigraphs of $G$ satisfying the following properties:
\begin{itemize}
\item[{\rm (a)}] For each $i=1,\dots,tq$ there are at most $f/100$ intervals $I\in \cI$ such that
$H_i$ contains a vertex lying in a cluster on~$I$.
\item[{\rm (b)}] For each interval $I\in \cI$ there are at most $tq/100$ indices $i$ with $1\le i\le tq$
and such that $H_i$ contains a vertex lying in a cluster on~$I$.
\item[{\rm (c)}] If $|i-j| \le  10$, then $H_i$ and $H_j$ are vertex-disjoint, with the indices considered modulo~$tq$.
\end{itemize}
Then the $tq$ complete exceptional path systems contained in $EF_1,\dots,EF_t$ can be labelled
$CEPS_1,\dots,CEPS_{tq}$ such that the following conditions hold:
\begin{itemize}
\item $H_i\cup CEPS_i$ and $H_j\cup CEPS_j$ are
pairwise vertex-disjoint whenever $|i-j|\le 10$.
\item $H_i$ and $CEPS_i$ are vertex-disjoint.
\end{itemize}
\end{lemma}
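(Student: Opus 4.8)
The plan is to reduce the statement to an \emph{equitable colouring} problem on a power of a cycle, in the spirit of the use of Theorem~\ref{10thpower} in the proof of Lemma~\ref{splitinitcleanH}. First note that $EF_1,\dots,EF_t$ together contain exactly $tq$ complete exceptional path systems (each $EF_\ell$ consists of $(q/f)\cdot f=q$ of them), and that exactly $tq/f$ of these span any fixed interval $I\in\cI$. Moreover, since every complete exceptional path system spanning $I$ has all of its vertices in the clusters lying on $I$, two such systems spanning intervals $I,I'$ are vertex-disjoint whenever $I,I'$ are neither equal nor consecutive on~$C$. For each $i$ let $\mathcal{J}_i\subseteq\cI$ be the set of intervals containing a vertex of $H_i$; by~(a) we have $|\mathcal{J}_i|\le f/100$, and hence $\mathcal{J}_i^*:=\bigcup_{|i-j|\le 10}\mathcal{J}_j$ (where $\mathcal{J}_i^*$ contains $\mathcal{J}_j$ for every $j$ within distance~$10$ of~$i$, in particular $\mathcal{J}_i$ itself) satisfies $|\mathcal{J}_i^*|\le 21f/100<f/4$. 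I claim it suffices to assign to each $i$ an interval $A_i\in\cI$ with
\begin{itemize}
\item[(I)] $A_i\notin\mathcal{J}_i^*$;
\item[(II)] $A_i$ and $A_j$ are neither equal nor consecutive on~$C$ whenever $1\le|i-j|\le 10$ (indices taken modulo $tq$);
\item[(III)] each interval of $\cI$ equals $A_i$ for exactly $tq/f$ values of~$i$.
\end{itemize}
Indeed, by~(III) one can then biject, for each $I\in\cI$, the $tq/f$ indices~$i$ with $A_i=I$ to the $tq/f$ complete exceptional path systems spanning~$I$ (arbitrarily), and call $CEPS_i$ the image of~$i$ under the resulting global bijection. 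By~(I), $A_i$ contains no vertex of $H_i$ and no vertex of any $H_j$ with $|i-j|\le10$, so $CEPS_i$ (all of whose vertices lie on~$A_i$) is vertex-disjoint from $H_i$ and from all such $H_j$; by~(II), for $|i-j|\le10$ the intervals $A_i,A_j$ are neither equal nor consecutive, so $CEPS_i$ and $CEPS_j$ are vertex-disjoint; and $H_i,H_j$ are vertex-disjoint by~(c). Thus $H_i\cup CEPS_i$ and $H_j\cup CEPS_j$ are vertex-disjoint for $|i-j|\le10$, and $H_i, CEPS_i$ are vertex-disjoint, as required.

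To produce such an assignment I would first deal with~(II) and~(III) using Theorem~\ref{10thpower}, and then repair~(I). Consider the auxiliary graph $\mathcal{A}$ on vertex set $\cI$ in which $I\sim I'$ precisely when $I,I'$ are neither equal nor consecutive on~$C$; then $\delta(\mathcal{A})=f-3\ge 10f/11$ (recall $1/f\ll1$), so Theorem~\ref{10thpower} (applied to the bounded graph~$\mathcal{A}$) yields a cyclic ordering $I_1,\dots,I_f$ of the intervals in which any eleven cyclically consecutive intervals are pairwise adjacent in $\mathcal{A}$. Setting $A_i:=I_{((i-1)\bmod f)+1}$ then gives an assignment satisfying~(II) (if $1\le|i-j|\le10$ then the positions of~$i$ and~$j$ differ by at most~$10$ modulo~$f$, so lie within a window of eleven consecutive positions of the ordering, hence $A_i\sim A_j$ in $\mathcal{A}$) and~(III) (since $f\mid q$, hence $f\mid tq$, each interval is used exactly $tq/f$ times), though not~(I). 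To also obtain~(I), I would fix, one at a time, the ``bad'' indices~$i$ with $A_i\in\mathcal{J}_i^*$, by swapping $A_i$ with $A_{i'}$ for a suitably chosen good index~$i'$; such a swap automatically preserves~(III), and it preserves~(II) and removes~$i$ from the set of bad indices without creating a new one provided $A_{i'}$ is adjacent in $\mathcal{A}$ to $A_k$ for every~$k$ within distance~$10$ of~$i$, lies outside~$\mathcal{J}_i^*$, and the symmetric conditions hold with the roles of~$i$ and~$i'$ reversed. A counting argument — using $|\mathcal{J}_i^*|<f/4$, the bound~$20$ on the maximum degree of the ``window'' graph $C_{tq}^{10}$, condition~(b), and $1/f\ll1$ — shows that a positive proportion of all indices are always available as such an~$i'$, so that by iterating along short chains of swaps whenever a single swap would create a new bad index (in the style of the proof of the Hajnal--Szemer\'edi theorem) one eliminates all bad indices.

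The hard part is precisely this last step: producing an \emph{exactly} equitable assignment that simultaneously respects the lists $\cI\setminus\mathcal{J}_i^*$ and the separation condition~(II); the factor $1/100$ in hypotheses~(a) and~(b) is there to leave enough room for the swapping argument to go through (an adversary can make the naive assignment $A_i=I_{((i-1)\bmod f)+1}$ bad at many indices, which is why the repair cannot be purely local). The remaining points — that the $tq/f$ complete exceptional path systems spanning a given interval are genuinely distinct objects to be matched, and that vertices of~$V_0$ play no role since no $CEPS_i$ meets~$V_0$ — are routine.
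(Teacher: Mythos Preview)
Your route is genuinely different from the paper's. The paper builds an auxiliary bipartite graph $B$ between $\{H_1,\dots,H_{tq}\}$ and the set $\mathcal{CEPS}$ of all $tq$ complete exceptional path systems, joining $H_i$ to $CEPS$ whenever $CEPS$ is vertex-disjoint from each of $H_{i-10},\dots,H_{i+10}$, and then seeks a perfect matching in $B$ with the extra property $(\heartsuit)$ that the CEPS's matched to indices within distance~$10$ are themselves vertex-disjoint. Existence is proved by taking a maximum matching with $(\heartsuit)$ and, if it is not perfect, using (a), (b) and the fact that each CEPS shares a vertex with exactly $3t-1$ others to locate a single augmenting swap that enlarges the matching while preserving $(\heartsuit)$. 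This exploits the style structure (two CEPS's on the same or adjacent intervals are vertex-disjoint when their styles differ), which your interval-only reduction discards; you are then solving a more constrained assignment problem, but there is still enough room.

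Your repair step, however, is simpler than you fear: no Hajnal--Szemer\'edi chains are needed. The counting you gesture at already shows that for any bad index $i$ there is an $i'$ for which the \emph{single} swap $A_i\leftrightarrow A_{i'}$ makes both $i$ and $i'$ good and preserves (II). Indeed, at most $|\mathcal J_i^*|\cdot tq/f\le 21tq/100$ choices of $i'$ violate $A_{i'}\notin\mathcal J_i^*$; at most $60\,tq/f$ violate the $\mathcal A$-adjacency constraints around $i$; by (b) at most $21\cdot tq/100$ choices have $A_i\in\mathcal J_{i'}^*$; at most $60\,tq/f$ violate the $\mathcal A$-adjacency constraints around $i'$ (using that (III) holds); and $21$ have $|i-i'|\le 10$. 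The total is below $tq/2$. Since any swap preserves (II) and (III) and touches no index other than $i,i'$, these bounds remain valid throughout, and iterating single swaps removes all bad indices. Your use of Theorem~\ref{10thpower} to set up the initial cyclic assignment is a pleasant parallel to its role in Lemma~\ref{splitinitcleanH}; the paper's direct matching argument is a little shorter overall, but both approaches work.
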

\proof
Let $\mathcal{CEPS}$ denote the set of the $tq$ complete exceptional path systems contained in $EF_1,\dots,EF_t$.
In order to label them, we consider an auxiliary bipartite graph $B$ defined as follows.
The first vertex class $B_1$ of $B$ consists of all the $H_i$. 
The second vertex class $B_2$ is $\mathcal{CEPS}$. So $|B_1|=|B_2|=tq$. We join $H_i\in B_1$ to $CEPS\in B_2$ by an edge in $B$
if $CEPS$ is vertex-disjoint from each of $H_{i-10},H_{i-9},\dots, H_{i+10}$.
Our aim is to find a perfect matching in $B$ which has the following additional property:

\textno For all $1\le i<j\le tq$ with $|i-j|\le 10$ the two complete exceptional path systems which are matched to $H_i$ and
$H_j$ are vertex-disjoint from each other. &(\heartsuit)

To show that such a perfect matching exists, let $M$ be a matching of maximum size satisfying $(\heartsuit)$
and suppose that $M$ is not perfect. Pick $H_i\in B_1$ and $CEPS^*\in B_2$ such that they are not covered by $M$.
We say that an interval $I\in \cI$ is \emph{bad for $H_j$} if $H_j$ contains a vertex lying in a cluster on~$I$.
Let $\cI'$ denote the set of all those intervals $I\in \cI$ which are bad for at least one of
$H_{i-10},H_{i-9},\dots, H_{i+10}$. Thus $|\cI'|\le 21f/100$ by~(a). But every complete exceptional
path system which spans an interval $I\in \cI\setminus \cI'$ is vertex-disjoint from each of $H_{i-10},H_{i-9},\dots, H_{i+10}$.
Since for each such $I$ the set $\mathcal{CEPS}$ contains precisely $q t/f$ complete exceptional path systems spanning $I$,
it follows that the degree of $H_i$ in $B$ is at least $(1-21/100)tq$. On the other hand, for each $CEPS\in \mathcal{CEPS}$
there are precisely $3t-1$ other complete exceptional path systems in $\mathcal{CEPS}$ which are not vertex-disjoint from $CEPS$.
This implies that at most $20\cdot 3t$ neighbours $CEPS$ of $H_i$ in $B$ are not vertex-disjoint from each of the
at most $20$ complete exceptional path systems matched to $H_{i-10},\dots,H_{i-1},H_{i+1},\dots,H_{i+10}$ in $M$.
Call a neighbour $CEPS$ of $H_i$ in $B$ \emph{nice} if $CEPS$ is vertex-disjoint from each of these
at most $20$ complete exceptional path systems.
So $H_i$ has at least 
\begin{equation}\label{eq:nbdHi}
(1-21/100)tq-60t\ge (1-22/100)tq=(1-22/100)|B_2|
\end{equation}
nice neighbours in $B$. Note that each nice neighbour $CEPS$ of $H_i$ has to be covered by $M$ 
(otherwise we could enlarge $M$ into a bigger matching satisfying $(\heartsuit)$ by adding the edge between $H_i$ and $CEPS$).
Thus in particular,
\begin{equation}\label{eq:sizeM}
|M|\ge (1-22/100)|B_2|.
\end{equation}

Let $I^*\in \cI$ be the interval which $CEPS^*$ spans. Then (b) implies that $I^*$ is bad for at most $tq/100$
of the $H_j$. But this implies that there are at most $21tq/100$ indices $j$ with $1\le j\le tq$ and such that
$I^*$ is bad for at least one of $H_{j-10},H_{j-9},\dots, H_{j+10}$. Thus the degree of $CEPS^*$ in $B$ is at least $(1-21/100)tq$.
Together with (\ref{eq:sizeM}) this implies that $CEPS^*$ has at least $(1-43/100)|B_1|$ neighbours in $B$ which
are covered by $M$. We call such a neighbour $H_j$ \emph{useful} if $CEPS^*$ is vertex-disjoint from
each of the (at most) $21$ complete exceptional path systems matched to $H_{j-10},H_{j-9},\dots, H_{j+10}$ in $M$.
Recall $\mathcal{CEPS}$ contains precisely $3t-1$ other complete exceptional path systems
which are not vertex-disjoint from $CEPS^*$. But each of these can force at most $21$ neighbours $H_j$ of
$CEPS^*$ to become useless (by being matched to one of $H_{j-10},H_{j-9},\dots, H_{j+10}$). So $CEPS^*$ has at least
$$
(1-43/100)|B_1|-63t=(1-43/100)|B_1|-63|B_1|/q\ge (1-44/100)|B_1|
$$
useful neighbours which are covered by $M$. Together with (\ref{eq:nbdHi}) this implies that there is
a matching edge $e\in M$ such that its endpoint in $B_1$ is a useful neighbour of $CEPS^*$ while its endpoint in $B_2$
is a nice neighbour of $H_i$. Let $H_j$ and $CEPS$ be the endpoints of $e$. Let $M'$ be the matching
obtained from $M$ by deleting $e$ and adding the edge between $H_i$ and $CEPS$ and the edge between $H_j$
and $CEPS^*$. Then $M'$ is a larger matching which still satisfies $(\heartsuit)$, a contradiction.

This shows that $B$ has a perfect matching satisfying $(\heartsuit)$. For each $i=1,\dots,tq$
we take $CEPS_i$ to be the complete exceptional path system which is matched to~$H_i$.
Then~(c), the definition of our auxiliary graph $B$ and~$(\heartsuit)$ together imply that
the $CEPS_i$ are as desired. 
\endproof
To obtain an algorithmic version of the above proof, we simply start with an empty matching in the auxiliary graph $B$ and use the above argument
to gradually extend the matching into a perfect one.

For the final lemma of this section, we are given an $r$-factor $H$ of $G-V_0$.
$H$ is then split into $1$-factors $F_i$ and these $1$-factors are split further into small matchings $H_j^i$.
We use Lemma~\ref{auxbipart} to assign a suitable complete exceptional path system $CEPS_j^i$ to each $H_j^i$.
We then  apply Lemma~\ref{absorbH} to extend each $CEPS_j^i \cup H_j^i$ into a Hamilton cycle using edges of $CA$.
Since Lemma~\ref{absorbH} allows us to prescribe a regular subgraph $\cB'(U')$ of $\cB(U')$ whose edges will all be used for the Hamilton cycles,
this means we can use up all edges of $\cB(U')$ in the process, so the leftover of the entire process is a blow-up of $C$, as required.

\begin{lemma} \label{absorballH}
Suppose that $0<1/n\ll 1/k\ll \eps \ll 1/q \ll 1/f \ll r_1/m\ll d\ll 1/\ell',1/g\ll 1$ and
that $rk\le m/f^2$. Let
$$s:=rfk, \ \ \ r_2:=96\ell'g^2kr, \ \ \ r_3:=s/q
$$
and suppose that $k/f, k/g,q/f, m/4\ell', fm/q, 2fk/3g(g-1) \in \mathbb{N}$.
Suppose that $$(G,\cP,\cP',R,C,U,U')$$ is an $(\ell',k,m,\eps,d)$-setup with $|G|=n$ and $C=V_1\dots V_k$.
Suppose that $H$ is an $r$-factor of $G-V_0$ and that 
$CA=\cB(C)^* \cup \cB(U')\cup CA^{\rm exc}$ is a chord absorber for $C$, $U'$ with
parameters $(\eps,r_1,r_2,r_3,q,f)$ whose original version is edge-disjoint from~$H$. 
Then there are edge-disjoint Hamilton cycles $C_1,\dots,C_s$ in $G$ 
which satisfy the following conditions:%
   \COMMENT{Condition~(iii) is new.}
\begin{itemize}
\item[(i)] Altogether $C_1,\dots,C_{s}$ contain all the edges of $H\cup \cB(U')\cup (CA^{\rm exc})^{\rm orig}$.
Moreover, all remaining edges in $C_1,\dots,C_{s}$ are contained in $\cB(C)^*$.
\item[(ii)] $CA^{\rm orig}\setminus \bigcup_i E(C_i)=\cB(C)^*\setminus \bigcup_i E(C_i)$ is an $(r_1+r_2+r-(q-1)s/q)$-blow-up of $C$.
\item[(iii)] Each $C_i^{\rm basic}$ contains one of the $s$ complete exceptional path systems contained in $CA^{\rm exc}$.
\end{itemize}
The analogue holds for an $(\ell',k,m,\eps,d)$-bi-setup $(G,\cP,\cP',R,C,U,U')$ if we assume in addition that $H$ is
bipartite with vertex classes $\bigcup \cV_{\rm even}$ and $\bigcup \cV_{\rm odd}$ (where $\cV_{\rm even}$ is
the set of all those $V_i$ such that $i$ is even and $\cV_{\rm odd}$ is defined analogously).
\end{lemma}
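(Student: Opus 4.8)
The plan is to prove Lemma~\ref{absorballH} by combining the tools assembled in this section: we split $H$ into many small matchings, attach to each a complete exceptional path system supplied by the exceptional factors inside $CA^{\rm exc}$, and then invoke Lemma~\ref{absorbH} to extend each of these into a Hamilton cycle using edges of $\cB(C)^*\cup\cB(U')$ — arranging matters so that \emph{all} edges of $\cB(U')$ get used up. First I would apply Proposition~\ref{1factor} to decompose $H$ into $r$ edge-disjoint $1$-factors $F^1,\dots,F^r$. To each $F^i$ in turn I would apply Lemma~\ref{splitinitcleanH} (with suitable parameters $q^*$ and $g$, noting the divisibility hypotheses on $k/g$ and $2fk/3g(g-1)$ are tailored for this) to obtain matchings $H^i_1,\dots,H^i_{q^*}$ satisfying: each consists of few edges, matchings of index within $10$ of each other are vertex-disjoint, and each is of a single double-type with the double-types evenly distributed. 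Here one chooses $q^*$ so that $rq^*$ matches the total number $s$ of Hamilton cycles we want to produce; the key point of working with double-types (rather than types concentrated on a single interval) is that a matching of double-type $ij$ touches clusters in only two of the $g$ intervals, so it is supported on a small fraction of $C$.

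Next I would package the $\binom{g}{2}$-many double-types against the $f$ intervals $\cI$ of the canonical partition underlying the exceptional factors. Since $CA^{\rm exc}$ consists of $r_3=s/q$ exceptional factors with parameters $(q/f,f)$, it contains exactly $(s/q)\cdot q = s$ complete exceptional path systems, one for each $H^i_j$ (after relabelling all the $H^i_j$ as $H_1,\dots,H_s$). I would check the three hypotheses (a)–(c) of Lemma~\ref{auxbipart}: (a) holds because each $H_\ell$ has a single double-type and hence meets clusters on at most $2k/g\le f/100$ intervals (for $g$ large); (b) holds because the double-types are evenly spread, so any fixed interval is met by at most a $(1/100)$-fraction of the $H_\ell$; (c) is exactly property (ii) of Lemma~\ref{splitinitcleanH} transported through the relabelling (one must order the relabelling so that consecutive indices come from different $1$-factors or from the Lemma~\ref{splitinitcleanH}-spacing, which is routine bookkeeping). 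Lemma~\ref{auxbipart} then hands us, for each $H_\ell$, a complete exceptional path system $CEPS_\ell$ with $H_\ell\cup CEPS_\ell$ and $H_{\ell'}\cup CEPS_{\ell'}$ vertex-disjoint whenever $|\ell-\ell'|\le 10$, and $H_\ell$ disjoint from $CEPS_\ell$.

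Now the heart of the argument: I would split $\cB(U')$ into $r$ edge-disjoint sub-blow-ups $\cB'(U')^{(1)},\dots,\cB'(U')^{(r)}$, one attached to each $1$-factor $F^i$, each being an $r_2/r$-blow-up of $U'$ inheriting the matching structure (CA2) — this is possible since $r_2 = 96\ell' g^2 k r$ is a multiple of $r$ and (CA2) already presents $\cB(U')$ as a union of $4r_2$ perfect matchings grouped into quarters $A_1,\dots,A_4$. For each $i$, apply Lemma~\ref{absorbH} with the setup $(G,\cP,\cP',R,C,U,U')$, with $\cB(C)^*$ as given, with $\cB'(U')^{(i)}$ playing the role of $\cB'(U')$ (so $r_2^{\mathrm{Lem}}=r_2/r=96\ell'g^2k=12\phi\ell'q$ forces the choice $\phi := 8g^2k/q$, which is $\ll r_1/m$ by the hierarchy since $rk\le m/f^2$ and $q$ is large), with $H:=F^i$, and with the matchings $H^i_1,\dots,H^i_{q^*}$ and their assigned $CEPS$'s playing the roles of $H_1,\dots,H_q$ and $H''_1,\dots,H''_q$. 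One must verify hypotheses (b)–(d) of Lemma~\ref{absorbH}: (b) and the $10$-spacing part of (c) come from the previous paragraph; the bounds $|H'_i\cap V_j|\le\phi m$ and "$\le\phi m$ paths" come from Lemma~\ref{splitinitcleanH}(i) (each $H^i_j$ has $\le 2g^2km/q^*$ edges, which is $\le\phi m$ after choosing $q^*$) together with the boundedness of a $CEPS$; (d) is edge-disjointness, which holds across different $i$ by construction of the $\cB'(U')^{(i)}$ and because the $F^i$ and the exceptional factors are edge-disjoint. Lemma~\ref{absorbH} then yields, for each $i$, edge-disjoint Hamilton cycles covering $(F^i)^{\rm orig}\cup\bigcup_j (CEPS^i_j)^{\rm orig}\cup\cB'(U')^{(i)}$ with all other edges in $\cB(C)^*$; one must also arrange (by assigning the $CEPS$'s from distinct exceptional factors appropriately across the $r$ rounds) that the Hamilton cycles from different rounds remain edge-disjoint, which follows since their non-$\cB(C)^*$ edges lie in disjoint parts $(F^i)^{\rm orig}$, disjoint $\cB'(U')^{(i)}$, and disjoint $CEPS$'s, while the $\cB(C)^*$-edges can be kept disjoint by processing the rounds sequentially and deleting used edges (just as in the proof of Lemma~\ref{absorbH} itself, using Proposition~\ref{superslice}(ii) to maintain superregularity of $\cB(C)^*$). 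Concatenating over all $i$ gives the $s=rq^*$ Hamilton cycles $C_1,\dots,C_s$. Conclusion (i) is then immediate: together they cover $H=\bigcup_i F^i$, all of $\cB(U')=\bigcup_i\cB'(U')^{(i)}$, and $(CA^{\rm exc})^{\rm orig}=\bigcup_{i,j}(CEPS^i_j)^{\rm orig}$, with all remaining edges in $\cB(C)^*$. For (ii), a degree count: $CA^{\rm orig}$ has in/outdegree $r_1+r_2+r_3$-type contributions off $V_0$; more precisely every vertex of $V(G)\setminus V_0$ has degree $r_1$ in $\cB(C)^*$, $r_2$ in $\cB(U')$, and $r$ in $H$, for a total of $r_1+r_2+r$ in $\cB(C)^*\cup\cB(U')\cup H$, while it has degree $s$ in $C_1\cup\dots\cup C_s$; since all edges outside $\cB(C)^*$ are used exactly once, the leftover $\cB(C)^*\setminus\bigcup_i E(C_i)$ is regular of degree $r_1+r_2+r-(s-s/q)=r_1+r_2+r-(q-1)s/q$ and is a blow-up of $C$ because $\cB(C)^*$ is, giving (ii).

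The main obstacle is the bookkeeping needed to make the three hypotheses of Lemma~\ref{auxbipart} and the hypotheses (b)–(c) of Lemma~\ref{absorbH} hold \emph{simultaneously} with compatible constants: the $10$-spacing of the $H_\ell$ must survive the relabelling that interleaves the $r$ different $1$-factors, the double-type structure must translate into the "meets few intervals / each interval met by few indices" conditions with the right numerical slack ($f/100$ and $tq/100$), and the parameter $\phi$ forced by $r_2=12\phi\ell'q$ must be small enough ($\phi\ll r_1/m$) for Lemma~\ref{absorbH} to apply — all of which is guaranteed by the stated hierarchy $1/n\ll 1/k\ll\eps\ll 1/q\ll 1/f\ll r_1/m\ll d\ll 1/\ell',1/g\ll 1$ and the hypothesis $rk\le m/f^2$, but requires care to lay out. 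Everything else is a direct assembly of Propositions~\ref{1factor} and~\ref{superslice}, Lemma~\ref{splitinitcleanH}, Lemma~\ref{auxbipart}, and Lemma~\ref{absorbH}.
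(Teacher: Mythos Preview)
Your approach is essentially the same as the paper's: decompose $H$ into $1$-factors via Proposition~\ref{1factor}, split each via Lemma~\ref{splitinitcleanH} with $q^*=q':=fk$ and $\phi:=8g^2/f$, attach complete exceptional path systems via Lemma~\ref{auxbipart}, partition $\cB(U')$ into $r$ equal sub-blow-ups (each an $r^*$-blow-up with $r^*=r_2/r=12\phi\ell'q'$), and then apply Lemma~\ref{absorbH} once per $1$-factor, deleting used edges from $\cB(C)^*$ after each round and invoking Proposition~\ref{superslice}(ii) to maintain superregularity.

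There is one genuine slip. You propose a single global application of Lemma~\ref{auxbipart} to all $s$ matchings and claim condition~(c) can be arranged by ``ordering the relabelling so that consecutive indices come from different $1$-factors''. This does not work: matchings from different $1$-factors are edge-disjoint but need not be vertex-disjoint, so you cannot in general enforce the $10$-spacing across block boundaries. The paper avoids this by allocating $t:=q'/q=r_3/r$ of the exceptional factors to each $1$-factor $F_{i^*}$ and applying Lemma~\ref{auxbipart} separately $r$ times; condition~(c) then reduces exactly to Lemma~\ref{splitinitcleanH}(ii). This also automatically gives edge-disjointness of the $(CEPS^{i^*}_i)^{\rm orig}$ across different rounds, which you flag as requiring a separate arrangement.

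Two minor corrections: your $\phi:=8g^2k/q$ conflates the $q$ of Lemma~\ref{absorballH} with the $q$ of Lemma~\ref{absorbH}; the latter role is played by $q'=fk$, giving $\phi=8g^2/f$ (then $\phi\ll r_1/m$ follows from $1/f\ll r_1/m$, not from $rk\le m/f^2$). And your degree count for~(ii) omits the $r_3$ contribution from $CA^{\rm exc}$ in the intermediate step, though your final answer $r_1+r_2+r-(q-1)s/q$ is correct: the full count at a non-exceptional vertex is $r_1+r_2+r_3+r-s$ with $r_3=s/q$.
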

\proof
Define new constants by
$$q':=fk \ \ \text{   and  } \ \ \phi:=8g^2 k/q'=8g^2/f.
$$
Thus
$$s=rq' \ \ \text{  and  } \ \ r_2=12\ell'\phi q'r.$$
Recall that $V_1,\dots,V_k$ denote the clusters in $\cP$.
Apply Proposition~\ref{1factor} to decompose the edges of $H$ into $r$ edge-disjoint $1$-factors $F_1,\dots,F_{r}$ of $G-V_0$.
We now consider each $F_{i^*}$ with $i^*=1,\dots,r$. Apply Lemma~\ref{splitinitcleanH} with $F_{i^*}$ and $q'$ playing the
roles of $H$ and $q^*$ to decompose $F_{i^*}$ into $q'$ matchings
$H^{i^*}_1,\dots,H^{i^*}_{q'}$ which satisfy the following properties:
\begin{itemize}
\item[(a$_1$)] For all $i=1,\dots, q'$, $H^{i^*}_i$ consists of at most $2g^2 km/q'= \phi m/4$ edges.
Moreover $|H^{i^*}_i \cap V_{j}| \le 4g^2 km/q'\le \phi m/2$ for all $j=1,\dots, k$.
\item[(a$_2$)] If $|i-j| \le  10$, then $H^{i^*}_i$ and $H^{i^*}_j$ are vertex-disjoint, with the indices considered modulo~$q'$.
\item[(a$_3$)] Each $H^{i^*}_i$ consists entirely of edges of the same double-type and for each $t\in\binom{g}{2}$
the number of $H^{i^*}_i$ of double-type $t$ is $q'/\binom{g}{2}$.
\end{itemize}
Note that the `moreover part' of (a$_1$) follows immediately from the first part of (a$_1$). 
For (a$_3$), recall that we considered a canonical interval partition $\mathcal{I}_g$ of $C$ into $g$ edge-disjoint intervals
of equal length and for each $j=1,\dots,g$ we denote the union of the clusters in the $j$th interval by $X_j$. Then
$H^{i^*}_i$  has double-type $ab$ (where $a,b\le g$) if all its vertices are contained in $X_a\cup X_b$.

For each $i^*=1,\dots,r$ and each $i=1,\dots,q'$
we now assign a suitable complete exceptional path system $CEPS^{i^*}_i$ from $CA^{\rm exc}$ to $H^{i^*}_i$.
We do this in such a way that all these complete exceptional path systems are distinct from each other and
the following properties hold:
\begin{itemize}
\item[(b$_1$)] For all $i=1,\dots, q'$ and all $j=1,\dots, k$  we have $|(H^{i^*}_i\cup CEPS^{i^*}_i) \cap V_{j}| \le \phi m$.
Moreover, each $H^{i^*}_i\cup CEPS^{i^*}_i$ consists of at most $\phi m$ paths.
\item[(b$_2$)] $H^{i^*}_i\cup CEPS^{i^*}_i$ and $H^{i^*}_j\cup CEPS^{i^*}_j$ are
pairwise vertex-disjoint whenever $|i-j|\le 10$ (for each $i^*=1,\dots,r$).
\item[(b$_3$)] $H^{i^*}_i$ and $CEPS^{i^*}_i$ are vertex-disjoint.
\item[(b$_4$)] $H^{i^*}_i\cup (CEPS^{i^*}_i)^{\rm orig}$ and $H^{i^*}_j\cup (CEPS^{i^*}_j)^{\rm orig}$ are
pairwise edge-disjoint whenever $i \neq j$ (for each $i^*=1,\dots,r$).
\end{itemize}
Note that since $CA^{\rm exc}$ consists of exceptional factors with parameters $(q/f,f)$,
the $CEPS^{i^*}_i$ will always satisfy $|CEPS^{i^*}_i \cap V_{j}| \le fm/ q\le m/f\le \phi m/2$
and each $CEPS^{i^*}_i$ will always consist of $fm/ q\le \phi m/2$ paths. So (b$_1$)
will follow from (a$_1$) and~(b$_3$). (b$_4$) follows immediately from the fact that $H$ and $(CA^{\rm exc})^{\rm orig}$ are edge-disjoint.

In order to choose the $CEPS^{i^*}_i$ we proceed as follows. Let $t:=q'/q=r_3/r$.
Let $\mathcal{I}$ be the canonical interval partition of $C$ into $f$ intervals of equal length.
So $CA^{\rm exc}$ consists of $r_3$ exceptional factors $EF_1,\dots,EF_{r_3}$, where each $EF_j$ induces the disjoint union of $q/f$
complete exceptional path systems on each interval $I\in\mathcal{I}$. For each $i^*=1,\dots,r$ let $\mathcal{CEPS}^{i^*}$
denote the set of all complete exceptional path systems contained in $EF_{(i^*-1)t+1},\dots, EF_{i^*t}$.
Each of these exceptional factors contains $q$ complete exceptional path systems, so altogether we have $tq=q'$ of them in $\mathcal{CEPS}^{i^*}$.
We will take $CEPS^{i^*}_1,\dots,CEPS^{i^*}_{q'}$ to be the complete exceptional path systems in $\mathcal{CEPS}^{i^*}$.

To choose a suitable labeling of the $CEPS^{i^*}_i$, we aim to apply Lemma~\ref{auxbipart} with $H^{i^*}_1,\dots,H^{i^*}_{q'}$
playing the roles of $H_1,\dots,H_{tq}$ and $EF_{(i^*-1)t+1},\dots, EF_{i^*t}$ playing the roles of $EF_1,\dots,EF_t$.
So we need to check that conditions (a)--(c) of Lemma~\ref{auxbipart} hold. Condition~(c) follows from (a$_2$).
To check~(a), consider any $H^{i^*}_i$ and let $ab$ denote its double-type. Note that $\mathcal{I}$ can be obtained from $\mathcal{I}_g$
by splitting each interval in $\mathcal{I}_g$ into $f/ g$ intervals of equal length. Thus at most $4+2f/ g\le f/100$
intervals%
    \COMMENT{Each of $X_a$ and $X_b$ is the union of (the clusters in) $f/g$ intervals. In addition, there is one interval $I\in \mathcal{I}$ with
$I\subseteq X_{a-1}$ which also contains a (single) cluster in $X_a$ and there is another such interval in $X_{a+1}$.
Similarly, we get 2 additional intervals for $X_b$. So we have to add 4 to $2f/ g$.}
$I\in \cI$ have the property that $X_a\cup X_b\supseteq V(H^{i^*}_i)$ contains a vertex lying in a cluster on~$I$
(the extra $4$ accounts for those intervals sharing exactly one cluster with $X_a$ or $X_b$).
To check~(b), consider any interval $I\in \cI$.
Then there are at most $2g$ double-types $ab$ such that%
   \COMMENT{Let $1\le a\le g$ be such that $\bigcup_{V\in I}V\subseteq X_a$. Then there are $g-1$ choices for $b$.
But if $I$ lies on the `boundary' of $X_a$, then either $X_{a-1}$ or $X_{a+1}$ contains a cluster on $I$.
So we get a factor of 2.}
the set $X_a\cup X_b$ does not avoid all the clusters on $I$. Since by (a$_3$) for each double-type
precisely $q'/\binom{g}{2}$ of $H^{i^*}_1,\dots,H^{i^*}_{q'}$ have that double-type,
this implies at most $2gq'/ \binom{g}{2}\le q'/100$ of $H^{i^*}_1,\dots,H^{i^*}_{q'}$ contain a vertex lying in a
cluster on~$I$. Thus we can indeed apply Lemma~\ref{auxbipart} to find a labeling $CEPS^{i^*}_1,\dots,CEPS^{i^*}_{q'}$
of the complete exceptional path systems in $\mathcal{CEPS}^{i^*}$ as described there. 
Then the $CEPS^{i^*}_i$ also satisfy (b$_2$) and (b$_3$). 

Our aim now is to apply Lemma~\ref{absorbH}.
Let $r^*:=12 \ell' \phi q'$. Note that $r_2=r^* r$.
Recall that (CA2) implies that $\cB(U')$ is an $r_2$-blow-up of $U'$ so that for each edge $AB$ of $U'$, there is a partition of 
both $A$ and $B$ into four subclusters $A_1,\dots,A_4$ and $B_1,\dots,B_4$ of equal size so that $\cB(U')[A_j,B_j]$ consists of
exactly $r_2$ edge-disjoint perfect matchings between each pair $A_j,B_j$ (for all $j=1,\dots,4$). 
So we can decompose the edges of $\cB(U')$ into edge-disjoint graphs $S_1,\dots,S_{r}$ 
so that each of these contains exactly  $r^*$ of  these perfect matchings for each pair $A_j,B_j$ of subclusters of each edge $AB$.
(So $S_{i^*}$ is an $r^*$-blow-up of $U'$ for each $i^*=1,\dots,r$.)
Thus we can satisfy condition (a) of Lemma~\ref{absorbH} if we let $S_{i^*}$ and $r^*$ play the roles of $\cB'(U')$ and~$r_2$.

In particular, we can now apply Lemma~\ref{absorbH} with $S_1$ playing the role of $\cB'(U')$, $F_1$
playing the role of $H$, and $\phi$, $q'$, $2/f^{1/2}$, $r^*$ playing the roles of $\phi$, $q$, $\eps'$, $r_2$ to obtain a collection
$\mathcal{C}_1$ of $q'$ edge-disjoint Hamilton cycles in $G^{\rm orig}$.
We next apply Lemma~\ref{absorbH} for each of $F_2,\dots,F_{r}$ in turn to find collections $\mathcal{C}_2,\dots,\mathcal{C}_{r}$,
each consisting of $q'$ edge-disjoint Hamilton cycles in $G^{\rm orig}$.
For each $F_{i^*}$ we use only $S_{i^*}$, the unused part of $\cB(C)^*$ and the complete exceptional path systems $CEPS_i^{i^*}$
guaranteed by (b$_1$)--(b$_4$). Note that in each of the applications of Lemma~\ref{absorbH} the in- and outdegrees of a vertex in $\cB(C)^*$
decrease by at most $q'$. So in total the in- and outdegrees will decrease by at most $rq'= rfk \le m/f$.
Thus Proposition~\ref{superslice}(ii) applied with $d':=1/f$ implies that in each step the remainder of $\cB(C)^*$
will still be $(2/f^{1/2},r_1/m)$-superregular. So this means we can indeed apply Lemma~\ref{absorbH}.

We take $C_1,\dots,C_s$ to be the Hamilton cycles in $\mathcal{C}_1\cup\dots\cup\mathcal{C}_{r}$.
Then clearly $C_1,\dots,C_s$ are pairwise edge-disjoint and they satisfy~(i) and~(iii). To check (ii), consider any vertex
$x\in V(G)\setminus V_0$. Then $x$ has outdegree $s$ in $C_1 \cup \dots \cup C_s$ and all the $r_2+r_3+r$ outedges at $x$ in
$\cB(U') \cup (CA^{\rm exc})^{\rm orig}\cup H$ are covered by $C_1 \cup \dots \cup C_s$.
Moreover, the outdegree in $\cB(C)^*$ is $r_1$.
Thus the outdegree of $x$ in $\cB(C)^*\setminus \bigcup_i E(C_i)$ is
$$
r_1+r_2+r_3+r-s =r_1+r_2+r-\frac{(q-1)s}{q}.
$$
Since the analogue also holds for the indegree of $x$, this proves~(ii).
The proof of the bipartite analogue goes through unchanged.
\endproof


\section{Absorbing a blown-up cycle via switches} \label{sec:switches}

Our main aim in this section is to define (and find) a `cycle absorber' $CyA$ which will be removed from the original digraph $G$ at the start of the proof
of Theorem~\ref{decomp}.
We would like to find a Hamilton decomposition of the union of several cycle absorbers $CyA$ and the `leftover' $G'$ of the chord absorber obtained by an
application of Lemma~\ref{absorballH}. Recall that this leftover $G'$ is a blow-up of $C$. 
Consider a $1$-factor $H$ in a $1$-factorization of the leftover $G'$ -- so the edges of $H$ wind around $C$.
$CyA$ will also be a blow-up of $C$ (if one ignores the edges in the complete exceptional path systems which will be contained in $CyA$).
We will first find a special $1$-factorization of $H \cup CyA$ which makes use of this property. In particular, either half or all the edges of each $1$-factor will come from $CyA$. 
We will then successively switch pairs of edges between pairs of these $1$-factors of $H \cup CyA$ 
with the goal of turning each of them into a Hamilton cycle after a certain number of these switches (see Figure~\ref{fig:switch}).
These switches will always involve edges from $CyA$ and not from $H$.

However, it will turn out that if these switches only involve pairs of $1$-factors, then the parity of the total number of cycles in a $1$-factorization is preserved.
In particular, this will imply that we cannot find a Hamilton decomposition of $H \cup CyA$ if we start of with a $1$-factorization into an odd number of cycles.
So in Section~\ref{sec:parity}, we also define a `parity switcher' which involves switches between triples of $1$-factors to overcome this problem.
We then extend the cycle absorber $CyA$ into a `parity extended cycle absorber' $PCA$ and find a Hamilton decomposition of $H \cup PCA$.
We proceed in the same way for each $1$-factor $H$ in the above  $1$-factorization of $G'$.

\subsection{Definition of the cycle absorber}

Let $C_4$ denote the orientation of a $4$-cycle in which two vertices have outdegree $2$ (and thus the two other vertices have indegree~$2$).
Given digraphs $H$ and $H'$, we say they form a \emph{switchable pair} if there are vertices $x,x^+,y,y^+$ 
so that $xx^+,yy^+$ are edges of $H$ and $xy^+,yx^+$ are edges of $H'$.
So the union $C^*_4$ of these four edges forms a copy of $C_4$. 
We say that $C^*_4$ is a \emph{$HH'$-switch}.
More generally, we also say that a copy of $C_4$ in a digraph $G$ (again with the above orientation) is a \emph{potential switch}.
A \emph{$C^*_4$-exchange} consists of moving the edges $xx^+, yy^+$ from $H$ to $H'$ and moving the edges $xy^+, yx^+$ from $H'$ to $H$
(see Figure~\ref{fig:switch}). The following proposition (whose proof follows immediately from the definition of a $C^*_4$-exchange)
states the crucial property of switches.

\begin{prop} \label{switch}
Given $1$-regular digraphs $H$ and $H'$, suppose there is a $HH'$-switch $C^*_4$ and let  
$H_{\rm new}$ and $H'_{\rm new}$ be obtained from $H$ and $H'$ via a $C^*_4$-exchange.
\begin{itemize}
\item[(i)] If the two edges of $C^*_4 \cap H$ lie on the same cycle of $H$, then $H_{\rm new}$ has one more cycle than $H$.
\item[(ii)] If the two edges of $C^*_4 \cap H$ lie on different cycles $D_1$ and $D_2$ of $H$, then $H_{\rm new}$ has one less cycle than $H$.
More precisely, the set of cycles of $H_{\rm new}$ is the same as that of $H$ except that the vertices of $D_1$ and $D_2$ now lie on a common cycle.
\end{itemize}
Moreover, the analogous assertions hold for $H'$.
\end{prop}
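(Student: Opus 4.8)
The statement to prove, Proposition~\ref{switch}, is essentially a bookkeeping lemma about how performing a $C^*_4$-exchange affects the cycle structure of a $1$-regular digraph. The plan is to argue directly from the definitions, tracking the successor function of the $1$-factor.

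Here is the approach I would take. Recall that a $1$-regular digraph $H$ is precisely the functional graph of a permutation $\sigma_H$ of $V(H)$, where $\sigma_H(x)$ is the unique outneighbour of $x$; the cycles of $H$ are exactly the cycles of $\sigma_H$. Suppose $C^*_4$ is an $HH'$-switch on vertices $x,x^+,y,y^+$, so that $xx^+,yy^+\in E(H)$ and $xy^+,yx^+\in E(H')$; thus $\sigma_H(x)=x^+$ and $\sigma_H(y)=y^+$. Performing the $C^*_4$-exchange replaces these two edges of $H$ by $xy^+$ and $yx^+$, so the new permutation is $\sigma_{H_{\rm new}} = \sigma_H \circ (x\ y)$ (pre-composition with the transposition swapping $x$ and $y$), equivalently $\sigma_{H_{\rm new}}=(x^+\ y^+)\circ \sigma_H$. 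Either formulation makes the conclusion a standard fact about multiplying a permutation by a transposition.

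First I would record the transposition identity above explicitly and note that nothing outside $\{x,x^+,y,y^+\}$ has its image changed. Then I split into the two cases of the statement. In case (i), $xx^+$ and $yy^+$ lie on the same cycle $D$ of $H$; reading around $D$ starting from $x^+$ we return to $x$, then continue and return to $y$, then continue back to $x^+$, so $D$ has the form $x^+ \dots x\, ( =\sigma_H^{-1}(x^+))$ with $y$ somewhere strictly between. Multiplying by the transposition $(x\ y)$ on the domain side splits this single cycle into two cycles: one consisting of the segment from $x^+$ up to $y$ (closed up by the new edge $yx^+$), the other the segment from $y^+$ up to $x$ (closed up by the new edge $xy^+$). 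These two segments together cover exactly $V(D)$, and every other cycle of $H$ is untouched, so $H_{\rm new}$ has exactly one more cycle than $H$. In case (ii), $xx^+$ lies on a cycle $D_1$ and $yy^+$ on a distinct cycle $D_2$; multiplying by $(x\ y)$ now merges $D_1$ and $D_2$ into a single cycle on $V(D_1)\cup V(D_2)$ (concretely: follow $D_1$ from $x^+$ round to $x$, then jump via $xy^+$ into $D_2$, follow $D_2$ from $y^+$ round to $y$, then jump via $yx^+$ back), while all other cycles are unchanged; hence $H_{\rm new}$ has one fewer cycle, and the explicit description of its cycle set follows. Finally, the ``moreover'' clause for $H'$ is immediate by symmetry: the same $C^*_4$ is an $H'H$-switch with the roles of the two edge-pairs interchanged, and the $C^*_4$-exchange on $H$ is simultaneously a $C^*_4$-exchange on $H'$, so the identical argument applies with $\sigma_{H'_{\rm new}}=\sigma_{H'}\circ(x^+\ y^+)$ (the two new edges of $H'$ being $xx^+,yy^+$, which share their initial vertices $x,y$ with — wait, more carefully — $xy^+,yx^+$ are removed from $H'$ and $xx^+,yy^+$ added, so the relevant transposition acts on $\{x^+,y^+\}$); the case division according to whether the two removed edges of $H'$ lie on one or two cycles of $H'$ yields the corresponding conclusions.

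I do not expect any genuine obstacle here; the only thing requiring a little care is getting the side (domain vs.\ range) of the transposition right and phrasing the cycle-splitting/merging cleanly without a picture, which Figure~\ref{fig:switch} already supplies. The cleanest writeup simply invokes the elementary lemma that for a permutation $\sigma$ and a transposition $\varepsilon=(a\ b)$, the permutation $\sigma\varepsilon$ has one more cycle than $\sigma$ if $a,b$ lie in the same cycle of $\sigma$ and one fewer if they lie in different cycles, together with the description of the resulting cycles; this lemma is entirely standard and could even be cited, but it is short enough to prove inline in two sentences.
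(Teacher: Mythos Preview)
Your proposal is correct. The paper does not actually write out a proof of this proposition at all, stating only that it ``follows immediately from the definition of a $C^*_4$-exchange''; your permutation-theoretic framing (that a $1$-regular digraph is the functional graph of a permutation and the exchange amounts to composing with a transposition) is a clean and entirely standard way to make that immediacy precise.
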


Consider a $(k,m,\eps,d)$-scheme $(G,\mathcal{P},R,C)$. As usual, let $C=V_1,\dots,V_k$ and recall that $V_0$ denotes the
exceptional set in $\cP$. Throughout this section, when referring to `clusters', we will mean the clusters in~$\cP$, i.e.~$V_1,\dots,V_k$.
We assume that $k$ is a multiple of 14.%
    \COMMENT{$k$ even ensures top and bottom are symmetric, multiple of 7 ensures canonical intervals have equal size.
In the construction of $F$ later on we will also assume that $m$ is even.}

Given a subdigraph $F$ of $G-V_0$, we  
say the \emph{top half of $F$} is the subdigraph $F_{\rm top}$ of $F$ induced by all the vertices in 
$V_1\cup V_2\cup \dots\cup V_{k/2+1}$. The \emph{lower half of $F$} is the subdigraph $F_{\rm low}$ of $F$ induced by
all the vertices in $V_{k/2+1}\cup \dots\cup V_{k-1}\cup V_k\cup V_1$. 

Roughly speaking, a cycle absorber consists of three edge-disjoint $1$-factors $F$, $S$ and $S'$ whose edges wind around $C$
(we ignore exceptional vertices and edges in this explanatory paragraph). There will be switches $C_{4,j}$ between $F$ and $S$ and $C'_{4,j}$ between $F$ and $S'$.
Suppose we are given a $1$-factor $H$ which also winds around $C$.
In the proof of Lemma~\ref{cycleabsorb}, we will construct two $1$-factors $T:= H_{\rm low} \cup F_{\rm top}$ and 
$T':= H_{\rm top} \cup F_{\rm low}$. The switches $C_{4,j}$ between $F$ and $S$ will then correspond to switches between $T$ and $S$.
We will use these to turn $T$ into a Hamilton cycle in Lemma~\ref{cycleabsorb}. 
Moreover, after these switches, the resulting $1$-factor obtained from $S$ will be either a Hamilton cycle or will consist of two cycles.
We will proceed similarly for $T'$ and $S'$. If necessary, $S$ and $S'$ will then be transformed into Hamilton cycles using the parity switcher
in Section~\ref{sec:parity}.

A \emph{bicycle} $B$ on $V$ is a digraph with $V(B)=V$ which consists of exactly two vertex-disjoint (directed) cycles.
A \emph{spanning bicycle} $B$ in a digraph $G$ is a $1$-factor of $G$ which consists of exactly two vertex-disjoint cycles.

Let $I_1,\dots,I_7$ be a canonical interval partition of $C$ into $7$ intervals of equal length. Recall that a
complete exceptional path system $CEPS$ completely spans $I_i$ if $CEPS$ spans $I_i$ and the vertex set of $CEPS$ is
the union of all the clusters in $I_i$. Suppose that $H$ is a digraph on $V(G)\setminus V_0$ which contains
$s$ complete exceptional path systems (for some $s$) and whose other edges lie in~$G$. We say that $H$ \emph{agrees with $C$}
if for every edge $vv'$ of $H$ which does not lie in one of the $s$ complete exceptional path systems there is an $i$ 
with $1 \le i \le k$ so that $v\in V_i$ and $v'\in V_{i+1}$. So if $s=0$ then $H$ agrees with $C$ if and only if $H$ winds around~$C$.

Below we assume that the vertices in $V_1$ and in $V_{k/2+1}$ are ordered.
A \emph{cycle absorber} $CyA$ (with respect to $C$) in $G$ is a digraph on $V(G)\setminus V_0$ with the following properties:
\begin{itemize}
\item[(CyA0)] $CyA$ is the union of three $1$-regular digraphs $F$, $S$ and $S'$, each with vertex set $V(G)\setminus V_0$.
$F_{\rm top}$, $F_{\rm low}$, $S$ and $S'$ each contain a complete
exceptional path system (labelled $CEPS_3$, $CEPS_5$, $CEPS_4$ and $CEPS_2$ respectively) and $CEPS_i$ completely spans the interval $I_i$.
Moreover, all the edges of $F\cup S\cup S'$ which are not contained in $CEPS_2\cup\dots\cup CEPS_5$ lie in $G-V_0$
and each of $F$, $S$ and $S'$ agrees with $C$. Finally, $F^{\rm orig}$, $S^{\rm orig}$ and $(S')^{\rm orig}$ are pairwise edge-disjoint
subdigraphs of $G$.
\item[(CyA1)] For each $j=1,\dots,m$, let $P_j$ denote the path of length $k/7$ in $F_{\rm top}$ starting at the $j$th vertex of $V_1$ and
ending in $V_{k/7+1}$. (So each $P_j$ contains precisely one vertex from each cluster in~$I_1$.)
Then $P_j\cup P_{j+1}$ forms a switchable pair with $S$ for all $j=1,\dots, m$ (with indices considered modulo~$m$).
Denote the switch by $C_{4,j}$. 
\item[(CyA2)] There is a potential switch $C_{4,m+1}$ in $G-V_0$ so that $S$ contains two independent edges of $C_{4,m+1}$
but the other two edges of $C_{4,m+1}$ do not lie in $CyA^{\rm orig}$.
Moreover, $V(C_{4,j})\subseteq \bigcup_{V\in I_1} V$ for each $j=1,\dots,m+1$ and all the $C_{4,j}$ are pairwise vertex-disjoint.
\item[(CyA3)] For each $j=1,\dots,m+1$, denote the edges of $C_{4,j}$ which are contained in $S$ by $\ell_j$ and $r_j$.
Let $L$ be the (ordered) sequence of edges $\ell_1,\dots,\ell_{m+1}$ and
$R$ be the (ordered) sequence of edges $r_1,\dots,r_{m+1}$. 
$S$ is a bicycle on $V(G)\setminus V_0$ where one cycle contains all edges of $L$ in the given order
and the other cycle contains all edges of $R$ in the given order.
\end{itemize}
Moreover, $F_{\rm low}$ and $S'$ will satisfy the following conditions which are analogous to (CyA1)--(CyA3).
In (CyA2$'$), we define $I_{4,\rm low}$ to be the subinterval $V_{k/2+1} \dots V_{4k/7+1}$ of $I_4$.
\begin{itemize}
\item[(CyA1$'$)] For each $j=1,\dots,m$, let $P'_j$ denote the path of length $k/14$ in $F_{\rm low}$ starting at the $j$th vertex of $V_{k/2+1}$ and
ending in $V_{4k/7+1}$. (So each $P'_j$ contains precisely one vertex from each cluster in~$I_{4,\rm low}$.)
Then $P'_j\cup P'_{j+1}$ forms a switchable pair with $S'$ for all $j=1,\dots, m$ (with indices considered modulo~$m$).
Denote the switch by $C'_{4,j}$. 
\item[(CyA2$'$)] There is a potential switch $C'_{4,m+1}$ in $G-V_0$ so that $S'$ contains two independent edges of $C_{4,m+1}$
but the other two edges of $C'_{4,m+1}$ do not lie in $CyA^{\rm orig}$.
Moreover, $V(C'_{4,j})\subseteq \bigcup_{V\in I_{4,\rm low}} V$ for each $j=1,\dots,m+1$ and all the $C'_{4,j}$ are pairwise vertex-disjoint.
\item[(CyA3$'$)] For each $j=1,\dots,m+1$, denote the edges of $C'_{4,j}$ which are contained in $S'$ by $\ell'_j$ and $r'_j$.
Let $L'$ be the (ordered) sequence of edges $\ell'_1,\dots,\ell'_{m+1}$ and
$R'$ be the (ordered) sequence of edges $r'_1,\dots,r'_{m+1}$. 
$S'$ is a bicycle on $V(G)\setminus V_0$ where one cycle contains all edges of $L'$ in the given order
and the other cycle contains all edges of $R'$ in the given order.
\end{itemize}
The switches $C_{4,j}$ and $C'_{4,j}$ with $j <m$ will be used in the proof of Lemma~\ref{cycleabsorb} to `transform' a given $1$-regular graph $H$ whose edges wind around $C$
into a Hamilton cycle. 
We will not actually use the two switches $C_{4,m}$ and $C'_{4,m}$ defined above. However, they make our description of the construction of $F$
a little simpler. The potential switches $C_{4,m+1}$ and $C'_{4,m+1}$ will be used to `attach' the cycle absorber to the parity switcher defined in 
Section~\ref{sec:parity}. This will ensure that the `leftover' of the cycle absorber after the above transformation step also has a Hamilton decomposition.

Note that $CyA^{\rm orig}$ is a spanning subdigraph of $G$ in which the vertices in $V_0$ have in- and outdegree~4, while the others 
have in- and outdegree~3. However, $CyA$ is not actually a subdigraph of $G$, so saying that it is a cycle absorber in~$G$ is
a slight abuse of notation. 

\begin{figure}
\centering\footnotesize 
\includegraphics[scale=0.38]{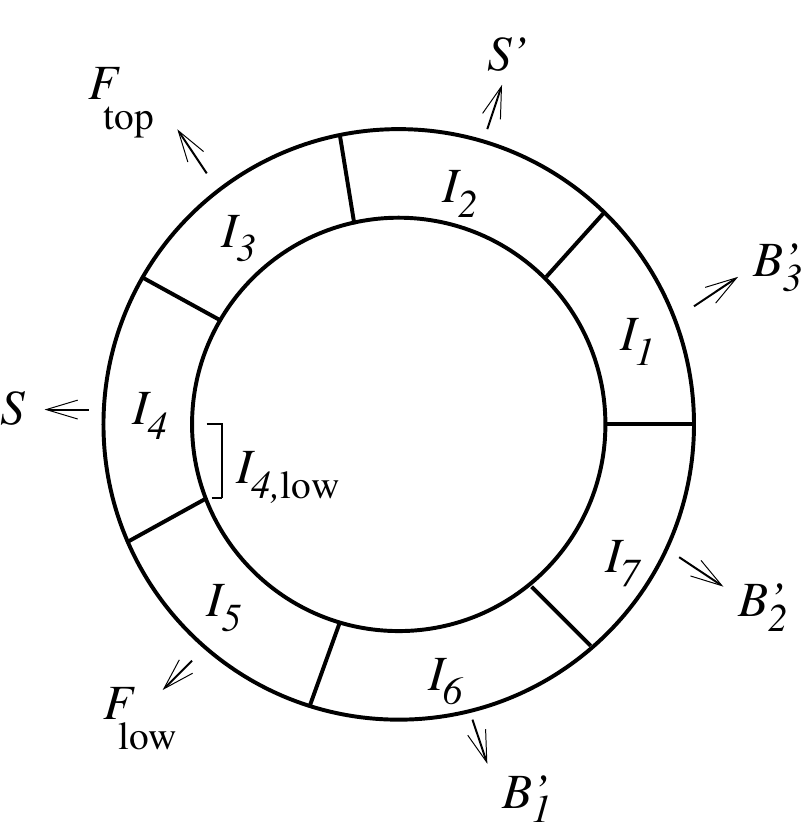}
\caption{The exceptional factor which contains the complete exceptional path systems $CEPS_i$ of the (parity extended) cycle absorber.
Each $CEPS_i$ spans the interval $I_i$ and the diagram shows how these are assigned to the $1$-factors of the cycle absorber in (CyA0) and
to the bicycles $B'_i$ of the parity switcher in the proof of Lemma~\ref{find_pa}.}
\label{figceps}
\end{figure}
The complete exceptional path systems $CEPS_i$ contained in $CyA$ will be chosen within a single exceptional factor, which has parameters $(1,7)$
(see Figure~\ref{figceps}). The assignment of the $CEPS_i$ to the different $1$-factors of the cycle absorber (and the parity switcher) is chosen in such a way that
the switches of the $1$-factor can be chosen to be vertex disjoint from the $CEPS_i$ contained in this $1$-factor.


\subsection{Using the cycle absorber}
The following lemma shows that given an arbitrary $1$-factor $H$ of $G-V_0$ which winds around $C$ and a cycle absorber $CyA$, we can `almost'
decompose $H \cup CyA$ into Hamilton cycles: we obtain a decomposition into at least two Hamilton cycles and at most two spanning bicycles in $G$.
The final step of transforming the bicycles into Hamilton cycles is done by means of a `parity switcher', defined in Section~\ref{sec:parity}.
(As discussed at the beginning of Section~\ref{sec:parity}, the difficult case is when Lemma~\ref{cycleabsorb} yields a decomposition with
exactly three Hamilton cycles and exactly one bicycle.)

\begin{lemma} \label{cycleabsorb}
Suppose that $0<1/n\ll 1/k\ll \eps\ll d\ll 1$, that $k/14\in \mathbb{N}$ and that $(G,\mathcal{P},R,C)$ is a $(k,m,\eps,d)$-scheme with $|G|=n$.
Suppose that $H$ is a $1$-factor of $G-V_0$ which winds around $C$. Let $CyA$ be a cycle absorber with respect to $C$ in $G$
such that $CyA^{\rm orig}$ and $H$ are edge-disjoint.
Then $H \cup CyA^{\rm orig}$ has a decomposition into four $1$-factors $F_1,F_2$, $F_3$ and $F_4$ of $G$
satisfying the following conditions:
\begin{itemize}
\item[{\rm (i)}]  $F_1$ contains $CEPS_3^{\rm orig}$, $F_2$ contains $CEPS_5^{\rm orig}$, $F_3$ contains $CEPS_4^{\rm orig}$
and $F_4$ contains $CEPS_2^{\rm orig}$ (where the $CEPS_i$ are as defined in~(CyA0)).
\item[{\rm (ii)}] $F_1$ and $F_2$ are Hamilton cycles in $G$.
\item[{\rm (iii)}] Each of $F_3$ and $F_4$ is either a Hamilton cycle or a spanning bicycle in $G$.
If $F_3$ is a bicycle, then one of the cycles contains $\ell_{m+1}$ and the other 
contains $r_{m+1}$. Similarly, if $F_4$ is a bicycle, then one of the cycles contains $\ell'_{m+1}$ and the other 
contains $r'_{m+1}$. 
\end{itemize}
\end{lemma}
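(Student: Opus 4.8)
The plan is to combine the two halves $T := H_{\rm low}\cup F_{\rm top}$ and $T' := H_{\rm top}\cup F_{\rm low}$ of the input with the bicycles $S$ and $S'$, and then to iterate the switch operations $C_{4,j}$ and $C'_{4,j}$ to turn $T$ and $T'$ into Hamilton cycles, while keeping track of what happens to $S$ and $S'$. First I would verify that $T$, $T'$, $S$, $S'$ is a decomposition of $H\cup CyA^{\rm orig}$ into four $1$-factors of $G$: since $F = F_{\rm top}\cup F_{\rm low}$ up to the shared cluster $V_{k/2+1}$, and $H = H_{\rm top}\cup H_{\rm low}$ similarly, while $F,S,S'$ form $CyA$ by (CyA0), one checks (using that $H$ winds around $C$ and each of $F,S,S'$ agrees with $C$, and that the exceptional path systems $CEPS_2,\dots,CEPS_5$ are assigned to the four pieces exactly as in the statement (i)) that $T$, $T'$, $S^{\rm orig}$, $(S')^{\rm orig}$ are vertex-disjoint-union $1$-factors of $G$ whose edge sets partition $H\cup CyA^{\rm orig}$. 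Note $T$ contains $CEPS_5$ (from $F_{\rm low}$) — actually I should be careful here and match the labelling in (i) to the construction: $F_1$ will be the Hamilton cycle obtained from $T$, which contains $CEPS_3\subseteq F_{\rm top}$, so $F_1\supseteq CEPS_3^{\rm orig}$; $F_2$ comes from $T'$ and contains $CEPS_5\subseteq F_{\rm low}$; $F_3$ comes from $S$ and contains $CEPS_4$; $F_4$ comes from $S'$ and contains $CEPS_2$. (If the assignment of $CEPS_3$ vs $CEPS_5$ to top vs low in (CyA0) is the reverse, one simply swaps the roles — I would pin this down from (CyA0) and Figure~\ref{figceps}.)

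Next I would run the switches. By (CyA1), the paths $P_j\subseteq F_{\rm top}\subseteq T$ satisfy: $P_j\cup P_{j+1}$ forms a switchable pair with $S$, via the switch $C_{4,j}$; since all vertices of $C_{4,j}$ lie in $\bigcup_{V\in I_1}V$ and the $C_{4,j}$ are pairwise vertex-disjoint (CyA2), and $I_1$ is disjoint from the cluster $V_{k/2+1}$ and from all intervals carrying $CEPS_2,\dots,CEPS_5$, these switches are also $TS$-switches and they are independent of each other and of the exceptional edges. I perform the $C^*_4$-exchanges $C_{4,1},\dots,C_{4,m-1}$ one after another between $T$ and $S$ (not using $C_{4,m}$). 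The point is to control the cycle structure using Proposition~\ref{switch}. Each $P_j$ is a segment of $T$, and consecutive $P_j,P_{j+1}$ start at consecutive vertices of $V_1$; by the bicycle structure of $S$ described in (CyA3) — one cycle of $S$ threads $\ell_1,\dots,\ell_{m+1}$ in order, the other threads $r_1,\dots,r_{m+1}$ in order — one shows inductively that after the $j$-th exchange $T$ has absorbed one more $V_1$-starting "thread" and the two cycles of $S$ have been spliced/unspliced appropriately. Concretely, each $C^*_4$-exchange $C_{4,j}$ reconnects $T$ so that more and more of the vertices lie on a single cycle, while in $S$ the effect is governed by whether the two $S$-edges $\ell_j,r_j$ currently lie on the same cycle or different cycles; the ordering in (CyA3) is engineered precisely so that the net effect on $T$ after $m-1$ exchanges is a single Hamilton cycle, and the net effect on $S$ is that it becomes a $1$-factor with at most two cycles, with $\ell_{m+1}$ on one cycle and $r_{m+1}$ on the other if there are two (since $C_{4,m+1}$, the potential switch of (CyA2), was never touched and its two $S$-edges $\ell_{m+1},r_{m+1}$ remain). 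I would do the analogous argument with $T'$, $S'$, $P'_j$, $C'_{4,j}$, using (CyA1$'$)–(CyA3$'$) and the subinterval $I_{4,\rm low}\subseteq I_4$, again avoiding $V_{k/2+1}$ and the $CEPS$-intervals. Taking $F_1$ := Hamilton cycle from $T$, $F_2$ := Hamilton cycle from $T'$, $F_3$ := resulting $1$-factor from $S$, $F_4$ := resulting $1$-factor from $S'$, and taking original versions, Observation~\ref{basicobs} (applied to whichever of these turned out to be a single cycle) converts $F_1,F_2$ into Hamilton cycles of $G$ containing the required $CEPS^{\rm orig}$, giving (i) and (ii); and (iii) follows from the cycle-count bookkeeping together with the remark about $\ell_{m+1},r_{m+1}$ (resp.\ $\ell'_{m+1},r'_{m+1}$) above. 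One also checks the exceptional edges are undisturbed: all switches live in $I_1$ and $I_{4,\rm low}$, which are disjoint from the intervals $I_2,\dots,I_5$ spanned by the $CEPS_i$, so each $F_t$ still contains exactly one complete exceptional sequence, as needed for Observation~\ref{basicobs}.

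The main obstacle is the cycle-count bookkeeping in the iterated switches — making precise the inductive claim that after the $C_{4,1},\dots,C_{4,m-1}$ exchanges $T$ is a single Hamilton cycle while $S$ has at most two components, and identifying which edges end up on which component of $S$. This is where (CyA1) (consecutive paths $P_j,P_{j+1}$), (CyA3) (the prescribed cyclic orders of $L$ and $R$ within the two cycles of $S$), and the fact that the $P_j$ are parallel segments of $F_{\rm top}$ all have to be used in concert: at each step one must determine whether the two switched $T$-edges lie on the same $T$-cycle (so by Proposition~\ref{switch}(i) the number of $T$-cycles goes up) or different ones (Proposition~\ref{switch}(ii), it goes down), and likewise for $S$, and the construction of $F$ and $S$ in the (not-yet-seen) lemma that produces a cycle absorber is presumably arranged so that this dance ends with exactly one $T$-cycle. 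I would set up an explicit invariant — e.g.\ "after exchange $j$, the vertices of $P_1,\dots,P_{j+1}$ together with a fixed spanning path of $T$ outside $I_1$ lie on one cycle of the current $T$, and the current $S$ restricted to $\ell_{j+2},\dots,\ell_{m+1}$ and to $r_{j+2},\dots,r_{m+1}$ looks like the tail of the original bicycle" — and push it through by induction, citing Proposition~\ref{switch} at each step. Everything else (the initial decomposition check, disjointness from exceptional edges, the final application of Observation~\ref{basicobs}) is routine.
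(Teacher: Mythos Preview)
Your framework is right and matches the paper: form $T=H_{\rm low}\cup F_{\rm top}$ and $T'=H_{\rm top}\cup F_{\rm low}$, verify that $T,T',S,S'$ (in their original versions) decompose $H\cup CyA^{\rm orig}$ into four $1$-factors carrying the correct $CEPS_i$, and use the switches $C_{4,j}$ between $T$ and $S$ (respectively $C'_{4,j}$ between $T'$ and $S'$) to turn $T$ and $T'$ into Hamilton cycles. The observation that all switches live in $I_1$ (respectively $I_{4,\rm low}$) and hence leave the complete exceptional path systems untouched is also correct.

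The genuine gap is your switching strategy. You propose to perform \emph{all} of $C_{4,1},\dots,C_{4,m-1}$ unconditionally and hope that the construction of $CyA$ is ``arranged so that this dance ends with exactly one $T$-cycle''. It cannot be: $CyA$ is fixed in advance while $H$ is arbitrary, so the initial cycle structure of $T$ is arbitrary. Concretely, suppose $T$ is already a Hamilton cycle (which certainly occurs for some $H$). Each exchange changes the number of cycles of $T$ by exactly $\pm 1$ (Proposition~\ref{switch}), so after $m-1$ exchanges the parity of the number of $T$-cycles has flipped (recall $m$ is even), and $T$ cannot end up as a single cycle. The paper's fix is to make the procedure \emph{adaptive}: call $i$ a \emph{switch index} if the $i$th and $(i{+}1)$th vertices of $V_1$ currently lie on different cycles of $T_i$, and perform the $C_{4,i}$-exchange \emph{only} when $i$ is a switch index. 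Then by Proposition~\ref{switch}(ii) each performed switch strictly reduces the number of $T$-cycles and merges the cycles through $x_i$ and $x_{i+1}$; the invariant is that $x_1,\dots,x_i$ lie on a common cycle of $T_i$ (the paper's condition~(b$_i$)), and since every cycle of $T$ contains some $x_j$ (your $(\star)$), $T_m$ is a Hamilton cycle. On the $S$ side, the ordering in (CyA3) guarantees the alternating invariant (CyA3$_i^\pm$): $S_i$ is either a bicycle with $\ell_i,\dots,\ell_{m+1}$ on one cycle and $r_i,\dots,r_{m+1}$ on the other, or a Hamilton cycle with all of $L_i$ preceding all of $R_i$; a performed switch toggles between these two states. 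Hence $S_m$ is either a Hamilton cycle or a bicycle separating $\ell_{m+1}$ from $r_{m+1}$, which is exactly (iii). Your proposed invariant is essentially (b$_i$), but it only goes through with this selective switching.
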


\proof
Similarly as for $F$ (as defined in (CyA0)), we  partition $H$ into $H_{\rm top}$ and $H_{\rm low}$.
So both $F_{\rm top}$ and $H_{\rm top}$ consist of $m$ vertex-disjoint paths from $V_1$ to $V_{k/2+1}$ 
and both $F_{\rm low}$ and $H_{\rm low}$ consist of $m$ vertex-disjoint paths from $V_{k/2+1}$ to $V_1$.
Let $T:=H_{\rm low} \cup F_{\rm top}$ and let $T':=H_{\rm top} \cup F_{\rm low}$.
Then (CyA0) implies that both $T$ and $T'$ are $1$-regular digraphs on $V(G)\setminus V_0$ which agree with $C$ and correspond to
$1$-factors $T^{\rm orig}$ and $(T')^{\rm orig}$ of~$G$.

Our first aim is to perform switches between $T$ and $S$ to transform
$T$ into a Hamilton cycle on $V(G)\setminus V_0$. ($T^{\rm orig}$ will then turn out to be a Hamilton cycle of $G$.) Suppose that $T$ is not a Hamilton cycle.
For $j=1,\dots,m$, let $P_j$ be as defined in (CyA1). Recall from (CyA0) that $CEPS_3$ is the complete exceptional
path system contained in $T_{\rm top}=F_{\rm top}$. But since $CEPS_3$ completely spans $I_3$, the paths in $CEPS_3$ link
all vertices of $V_{2k/7+1}$ to those in $V_{3k/7+1}$. It follows that every cycle $D$ in $T$ visits every cluster
on $C$ except possibly $V_{2k/7+2},\dots,V_{3k/7}$. In particular, the following assertion holds
(where the $P_j$ are as defined in (CyA1)):

 \textno For each $j=1,\dots,m$, any cycle $D$ in $T$ either contains $P_j$ or
it avoids all vertices of $P_j$. Moreover, $D$ contains at least one of the $P_j$ and so it contains
the $j$th vertex $x_j$ of $V_1$ for some $j$ with $1\le j\le m$. &(\star)

We say that $i$ with $1 \le i < m$ is a \emph{switch index for $T$} if $x_i$ and $x_{i+1}$ lie on different
cycles of $T$ (i.e.~if the initial vertices of $P_i$ and $P_{i+1}$ lie on different cycles of $T$). 
Since $T$ is not a Hamilton cycle, $(\star)$ implies that there must be an $i$ with $1 \le i < m$ which is a switch index.
Our approach will be to perform a switch between the cycles $D$ and $D'$ which contain $x_i$ and $x_{i+1}$
respectively. This will reduce the number of cycles of $T$ and turn $S$ into a Hamilton cycle.
We continue in this way until $T$ is a Hamilton cycle.
The only difference in the later steps is that $S$ might already be a Hamilton cycle, in which case it
is transformed into a bicycle after the switch.

More precisely, for $i=1,\dots, m+1$, we define $L_i:=( \ell_i,\dots,\ell_{m+1})$ and $R_i:=(r_i,\dots,r_{m+1})$.
So $L_1=L$ and $R_1=R$ (where $\ell_i,r_i,L,R$ are as defined in (CyA3)). Suppose that $1 \le i < m$ and that
$T_i$ and $S_i$ are $1$-regular digraphs on $V(G) \setminus V_0$ which satisfy (a$_i$) and (b$_i$) below as well as either (CyA3$_i^-$) or (CyA3$_i^+$):
\begin{itemize}
\item[(a$_i$)] Let $D$ be any cycle of $T_i$. For each $j=i+1,\dots,m$, $D$ either contains $P_j$ or it avoids all
vertices of $P_j$. Moreover, let $e_{i-1}$ denote the edge in $C_{4,i-1}\cap P_i$. Then $D$ either contains all edges
in $E(P_i)\setminus \{e_{i-1}\}$ (possibly $D$ even contains $P_i$) or $D$ avoids all vertices of $P_i$.
\item[(b$_i$)] All of $x_1,\dots,x_i$ lie on a common cycle in $T_i$.
\item[(CyA3$_i^+$)] 
$S_i$ is a bicycle on $V(G)\setminus V_0$ where one cycle contains all edges of $L_i$ in the given order
and the other cycle contains all edges of $R_i$ in the given order. 
\item[(CyA3$_i^-$)] 
$S_i$ is a Hamilton cycle on $V(G)\setminus V_0$ which contains all edges of $L_i$ and $R_i$ in the given order
and where all edges of $L_i$ come before all edges of $R_i$. 
\end{itemize}
Thus if $i=1$, $T_1:=T$ and $S_1:=S$, then ($\star$) and (CyA3) together imply that (a$_i$), (b$_i$) and (CyA3$_i^+$) hold.
(Note that we view $\{e_0\}$ as being the empty set, so the last part of (a$_1$) says that $D$ either contains all edges of $P_1$ or $D$ avoids all vertices of $P_1$.)
So suppose first that (CyA3$_i^+$) holds for some $1 \le i < m$. We define a switch index for $T_i$ in the same way as for $T$.
If $i$ is not a switch index, let $S_{i+1}:=S_i$ and $T_{i+1}:=T_i$. Then clearly (a$_{i+1})$, (b$_{i+1})$ and (CyA3$_{i+1}^+$) hold. If $i$ is a switch index,
let $D$ be the cycle of $T_i$ which contains $x_i$ and let $D'$ be the cycle of $T_i$ which contains $x_{i+1}$. 
Then (a$_i$) implies that $D$ contains all edges in $E(P_i)\setminus \{e_{i-1}\}$ and $D'$ contains $P_{i+1}$.
But $e_{i-1}\notin C_{4,i}$ since $C_{4,i-1}$ and $C_{4,i}$ are vertex-disjoint
by (CyA2). Thus we can carry out the $C_{4,i}$-exchange to obtain $S_{i+1}$ and $T_{i+1}$. 
Then Proposition~\ref{switch} implies that the vertices of $D$ and $D'$ now lie on a common cycle $D''$ of $T_{i+1}$
In particular, $T_{i+1}$ satisfies (b$_{i+1}$). Moreover, this new cycle $D''$ will contain all edges in $E(P_{i+1})\setminus \{e_i\}$.
Together with the fact that $C_{4,i}$ avoids all the $P_j$ for $j=i+2,\dots,m$, it follows that $T_{i+1}$ satisfies (a$_{i+1})$.
Moreover, it is easy to see that $S_{i+1}$ satisfies (CyA3$_{i+1}^-$) (see Figure~\ref{figswitches}).
\begin{figure}
\centering\footnotesize
\includegraphics[scale=0.38]{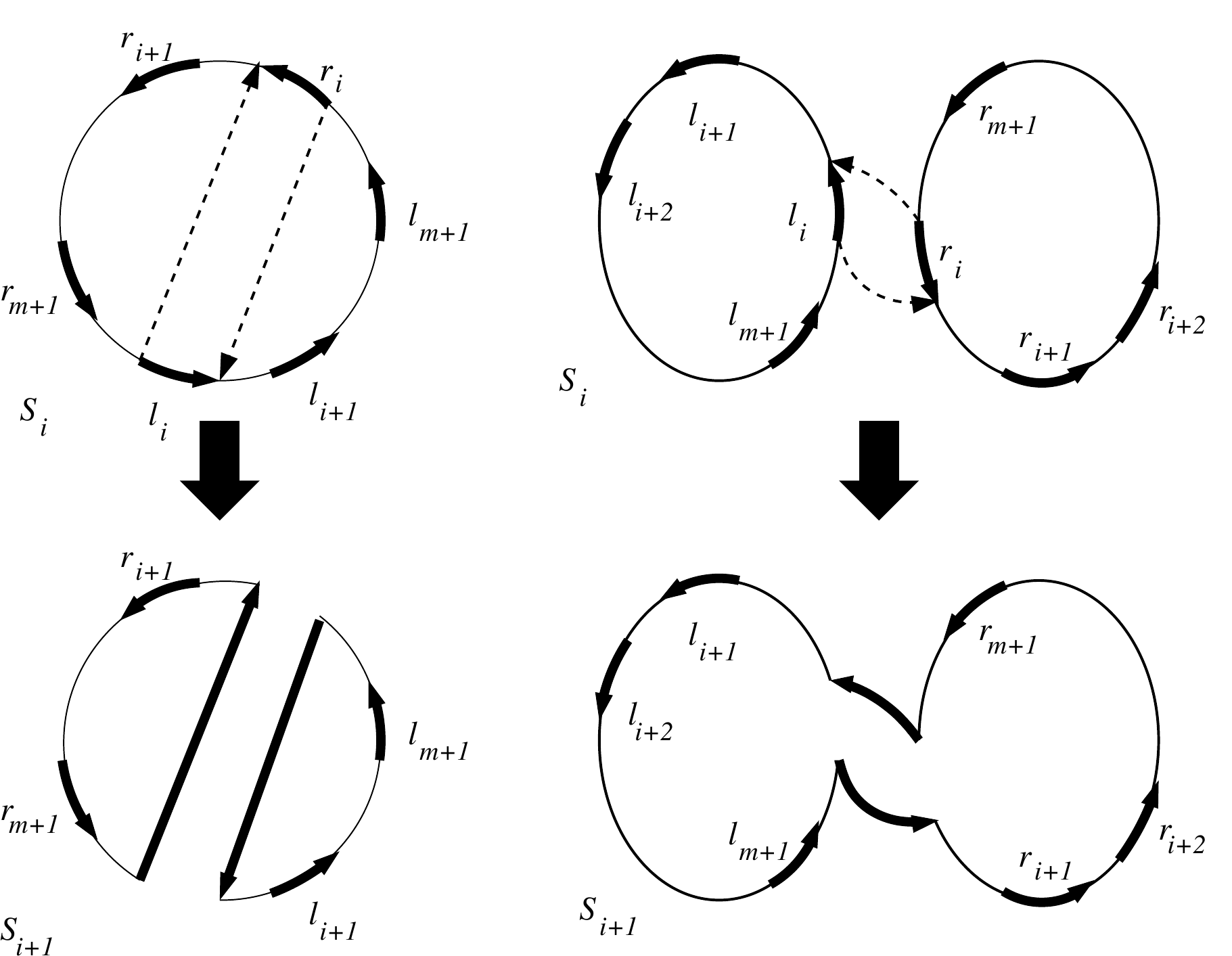}
\caption{Transforming $S_i$ into $S_{i+1}$. The left hand side illustrates the case when $S_i$ satisfies (CyA3$_i^-$) and the
right hand side illustrates the case when $S_i$ satisfies (CyA3$_i^+$).}
\label{figswitches}
\end{figure}
Suppose next that (CyA3$_i^-$) holds for some $1 \le i < m$. If $i$ is not a switch index, let $S_{i+1}:=S_i$ and $T_{i+1}:=T_i$.
Then clearly (a$_{i+1}$), (b$_{i+1}$) and (CyA3$_{i+1}^-$) hold. If $i$ is a switch index, define $D$ and $D'$ as above.
Again carry out the $C_{4,i}$-exchange to obtain $S_{i+1}$ and $T_{i+1}$. 
Again, Proposition~\ref{switch} implies that the vertices of $D$ and $D'$ now lie on a common cycle of $T_{i+1}$.
Also, it is easy to see that (a$_{i+1}$), (b$_{i+1}$) and (CyA3$_{i+1}^+$) hold (see Figure~\ref{figswitches}).

So, by induction, $T_m$ and $S_m$ satisfy (a$_m$) and (b$_m$) as well as one of (CyA3$_m^+$) and (CyA3$_m^-$).
Moreover, in both of the above cases, Proposition~\ref{switch}(ii) implies that all vertices which lie on a common cycle in
$T_i$ still lie on a common cycle in $T_{i+1}$. Together with (b$_m$) and the fact that by ($\star$) every cycle in
$T=T_1$ contains $x_j$ for some $1\le j\le m$ this means that $T_m$ is
a Hamilton cycle. Note that both $CEPS_3$ and $CEPS_4$ are edge-disjoint (actually even vertex-disjoint) from all $C_{4,j}$ (by (CyA2) and the fact that $CEPS_3$ spans $I_3$
and $CEPS_4$ spans $I_4$). Thus both $CEPS_3$ and $CEPS_4$ are unaffected by the switches we carried out and so $T_m$ and $S_m$ still contain
$CEPS_3$ and $CEPS_4$ respectively. So we can take $F_1:=T_m^{\rm orig}$ and $F_3:=S_m^{\rm orig}$.
(In particular, Observation~\ref{basicobs} implies that $F_1$ is a Hamilton cycle of $G$ and the argument for $F_3$ is similar.)%
\COMMENT{we can't apply Observation~\ref{basicobs} to $F_3$, but hopefully the reader will notice it's essentially the same thing}.

In a similar way, we obtain $F_2$ from $T'$ and $F_4$ from $S'$. This time, we let $T'_1:=T'$ and $S'_1:=S'$.
We then carry out the above procedure with $S'_i$, $T'_i$, $P'_j$ playing the roles of $S_i$, $T_i$, $P_j$ to obtain
a Hamilton cycle $T'_m$ on $V(G)\setminus V_0$. Since by (CyA2$'$) both $CEPS_2$ and $CEPS_5$ are edge-disjoint
from all $C'_{4,j}$, both $CEPS_2$ and $CEPS_5$ are unaffected by the switches we carried out and so $T'_m$ and $S'_m$ still contain
$CEPS_5$ and $CEPS_2$ respectively. We let $F_2:=(T'_m)^{\rm orig}$ and $F'_4:=(S'_m)^{\rm orig}$.
\endproof


\subsection{Finding the cycle absorber}

The following lemma guarantees the existence of a cycle absorber in a $(k,m,\eps,d)$-scheme.

\begin{lemma}\label{find_ca}
Suppose that $0<1/n\ll 1/k\ll \eps \ll d\ll 1$, that $k/14,m/2\in\mathbb{N}$ and that $(G,\mathcal{P},R,C)$ is
a $(k,m,\eps,d)$-scheme with $|G|=n$ and $C=V_1\dots V_k$. Let $I_1,\dots,I_7$ be
the canonical interval partition of $C$ into $7$ intervals of equal length. Let $EF$ be an exceptional factor with parameters $(1,7)$
with respect to $C$, $\mathcal{P}$.
For each $i=1,\dots,7$, let $CEPS_i$ be the complete exceptional path system which is contained in $EF$ and completely spans $I_i$.
Then there is a cycle absorber $CyA$ with respect to $C$ in $G$ which satisfies the following properties:
\begin{itemize}
\item[(i)] $CEPS_2,\dots,CEPS_5$ are the complete exceptional
path systems described in (CyA0).
\item[(ii)] $C_{4,m+1}\subseteq G[V_{12},V_{13}]$
and $C'_{4,m+1}\subseteq G[V_{k/2+12},V_{k/2+13}]$ (where $C_{4,m+1}$ and $C'_{4,m+1}$ are as defined in~(CyA2)
and (CyA2$'$) respectively).
\end{itemize}
\end{lemma}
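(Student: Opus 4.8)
The goal is to construct a cycle absorber $CyA = F \cup S \cup S'$ satisfying (CyA0)--(CyA3) and (CyA1$'$)--(CyA3$'$), where the complete exceptional path systems $CEPS_2,\dots,CEPS_5$ come from the given exceptional factor $EF$, and where the potential switches $C_{4,m+1}$, $C'_{4,m+1}$ sit inside the prescribed superregular pairs. The plan is to build the three $1$-factors essentially by hand as unions of perfect matchings between consecutive clusters on $C$ (which exist by Proposition~\ref{perfmatch}, since every edge of $C$ corresponds to an $[\eps,\ge d]$-superregular pair by (Sch3)), together with the given $CEPS_i$, and to engineer the matchings in the intervals $I_1$ (for $F_{\rm top}$ and $S$) and $I_{4,\rm low}$ (for $F_{\rm low}$ and $S'$) so that the switchable-pair and bicycle conditions hold.

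First I would deal with the top half. On the interval $I_1 = V_1 \dots V_{k/7+1}$ I would choose $F_{\rm top}$ to consist of $m$ vertex-disjoint paths $P_1,\dots,P_m$ where $P_j$ starts at the $j$th vertex of $V_1$; the cleanest choice is to make $F_{\rm top}$ restricted to $I_1$ the "identity" blow-up, i.e.\ if the vertices of each cluster in $I_1$ are ordered consistently then $P_j$ runs through the $j$th vertex of each cluster. For $S$ restricted to $I_1$, I would use the matchings that realise the switches: between $V_1$ and $V_2$ let $S$ contain the edges $y_j y_{j+1}^+$ (indices mod $m$) where $y_j$ is the $j$th vertex of $V_1$ and $y_{j+1}^+$ the $(j+1)$st vertex of $V_2$ — i.e.\ $S[V_1,V_2]$ is the "shift by one" matching — while on the remaining clusters of $I_1$, $S$ agrees with $F_{\rm top}$ (uses the same identity matchings). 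Then $P_j \cup P_{j+1}$ together with the two $S$-edges $y_j y_{j+1}^+$ and $y_{j+1} y_j^+$... wait, one must be slightly careful: to get a genuine $C_4$ one wants $S$ to contain $x_j x_{j+1}^+$ and $x_{j+1} x_j^+$; this is achieved by taking $S[V_1,V_2]$ to be the involution swapping consecutive pairs $(2j-1,2j)$ rather than a global shift. I would pick whichever local modification of the matchings on the first edge of $I_1$ produces, for each $j$, a copy $C_{4,j}$ of $C_4$ whose two $F_{\rm top}$-edges lie on $P_j\cup P_{j+1}$ and whose two $S$-edges are independent, with all $C_{4,j}$ pairwise vertex-disjoint (automatic since they live on disjoint vertex-pairs) and contained in $\bigcup_{V\in I_1}V$; since $d \ll 1$ we have plenty of room and can also reserve two vertices in $V_{12},V_{13}$ for $C_{4,m+1}$. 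For $C_{4,m+1}$ I would pick two vertex-disjoint edges of $S$ inside $G[V_{12},V_{13}]$ and then use $(\eps,\ge d)$-regularity (Proposition~\ref{degcodeg} or just a greedy argument) to find the two "crossing" edges in $G[V_{12},V_{13}]$ completing them to a $C_4$, choosing these crossing edges \emph{not} to lie in $F\cup S\cup S'$ — this is possible because only a bounded number of edges at each vertex have been used.

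Next comes the bookkeeping that makes (CyA3) hold: $S$ must be a bicycle whose two cycles pick up the sequences $L=(\ell_1,\dots,\ell_{m+1})$ and $R=(r_1,\dots,r_{m+1})$ in order. Here I would use the freedom in choosing the matchings of $S$ on the clusters of $C$ \emph{outside} $I_1$ (and outside the interval $I_4$ spanned by $CEPS_4$, which is forced): by Proposition~\ref{perfmatch} I can route $S$ around $C$ however I like, so I would first decide the labelling of $\ell_j, r_j$ within each $C_{4,j}$ and then close up $S$ into exactly two cycles threading $L$ and $R$ in the prescribed cyclic order — concretely, split the $m$ vertices of each cluster into two halves, send one half around $C$ linking the $\ell_j$'s and the other half linking the $r_j$'s, which is a routine "connect the dots using perfect matchings between consecutive clusters" construction, and insert $CEPS_4$ into $S$ in the segment $I_4$ (this is where (CyA0)'s requirement that $CEPS_4\subseteq S$ is used, and it is compatible because $CEPS_4$ completely spans $I_4$, so it supplies exactly the needed perfect matching-like linkage across $I_4$). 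The same recipe, applied verbatim on $I_{4,\rm low}=V_{k/2+1}\dots V_{4k/7+1}$ with $CEPS_5$ inside $F_{\rm low}$ and $CEPS_2$ inside $S'$, produces $F_{\rm low}$, $S'$, the switches $C'_{4,j}$, the potential switch $C'_{4,m+1}\subseteq G[V_{k/2+12},V_{k/2+13}]$, and (CyA1$'$)--(CyA3$'$). Throughout, $F = F_{\rm top}\cup F_{\rm low}$ is a single $1$-factor because $F_{\rm top}$ ends in $V_{k/2+1}$ and $F_{\rm low}$ starts there and we glue via a perfect matching at that cluster; one checks $F$, $S$, $S'$ each agree with $C$, contain their designated $CEPS_i$, and the three of them are pairwise edge-disjoint by making all the matching choices sequentially and deleting used edges (after each step, each $[\eps,\ge d]$-superregular pair on $C$ has lost $O(1)$ edges per vertex, so it stays superregular by Proposition~\ref{superslice} and the next matching still exists).

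I expect the main obstacle to be the simultaneous satisfaction of (CyA3) and the vertex-disjointness requirements in (CyA2): one must choose the $C_{4,j}$ inside $I_1$, fix the $\ell_j/r_j$ labelling, and then close $S$ into \emph{exactly two} cycles carrying $L$ and $R$ in the specified order — all while keeping $S$ edge-disjoint from $F$ and $S'$ and routing $CEPS_4$ through it. This is purely combinatorial (no regularity needed beyond the repeated use of Proposition~\ref{perfmatch} to realise the inter-cluster matchings, and Proposition~\ref{superslice} to keep the pairs superregular), but it requires laying out an explicit "wiring diagram" on $C$ and verifying that the number of components is right; the parity/ordering constraints on $L$ and $R$ mean the wiring must be described carefully rather than just "greedily". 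Everything else — the existence of the completing edges for the potential switches, the bound on degrees in $CyA^{\rm orig}$, the fact that all $C_{4,j}$ lie in $\bigcup_{V\in I_1}V$ — follows from the hierarchy $1/k\ll\eps\ll d\ll 1$ and the superregularity in a $(k,m,\eps,d)$-scheme by routine arguments.
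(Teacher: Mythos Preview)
Your proposal has a genuine counting gap in the placement of the switches $C_{4,j}$. You try to realise all $m$ of them inside a single superregular pair $G[V_1,V_2]$ via an involution matching for $S$. But each $C_{4,j}$ must have one edge on $P_j$ and one on $P_{j+1}$, so if $C_{4,j}\subseteq G[V_1,V_2]$ it uses the $V_1$-vertices of both $P_j$ and $P_{j+1}$. Then $C_{4,j}$ and $C_{4,j+1}$ both use the $V_1$-vertex of $P_{j+1}$, contradicting the pairwise vertex-disjointness required in (CyA2). Equivalently, $m$ vertex-disjoint copies of $C_4$ need $2m$ vertices on each side, but $|V_1|=m$. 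Your involution swapping $(2j-1,2j)$ only produces $m/2$ switches (the odd-indexed ones); the even-indexed ones cannot live in the same pair. The paper resolves this by placing $C_{4,1},C_{4,3},\dots$ in $G[V_4,V_5]$ and $C_{4,2},C_{4,4},\dots$ in $G[V_8,V_9]$, so that each pair hosts exactly $m/2$ vertex-disjoint $C_4$'s, which exactly saturates both clusters.

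There is also a tool mismatch. Proposition~\ref{perfmatch} gives \emph{some} perfect matching in a superregular pair, not a prescribed one, so you cannot simply decree an ``identity'' matching for $F$ and a compatible ``swap'' matching for $S$ in the same pair: there is no guarantee that all four edges of each intended $C_4$ lie in $G$. To obtain a spanning $C_4$-factor in $G[V_4,V_5]$ the paper invokes the Blow-up lemma (Lemma~\ref{Blow-up}); two arbitrary perfect matchings would only give a $2$-factor whose components are even cycles of uncontrolled length. Similarly, your plan to ``connect the dots using perfect matchings'' when routing $S$ into a bicycle carrying $L$ and $R$ in a prescribed order needs to link \emph{specified} endpoints across several clusters; this is exactly what Corollary~\ref{linking} (again via Blow-up) provides, and it is strictly stronger than repeated applications of Proposition~\ref{perfmatch}. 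Once you split the switches across two cluster pairs and replace Proposition~\ref{perfmatch} by the Blow-up lemma and Corollary~\ref{linking} at these two junctures, your outline becomes essentially the paper's proof.
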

\proof 
We first construct $F_{\rm top}$. Below, we assume the existence of an ordering of the vertices in any cluster $V_i$.
Let $\cB(C)$ be the union of $G[V_i,V_{i+1}]$ over all $i=1,\dots,k$. So $\cB(C)$ is a blow-up
of $C$ in which every edge of $C$ corresponds to an $[\eps,\ge d]$-superregular pair.

\smallskip

\noindent
{\bf Claim 1.} \emph{$\cB(C)$ contains a system $Q_1,\dots,Q_m$ of vertex-disjoint paths and $m$ vertex-disjoint copies
$C_{4,1},\dots,C_{4,m}$ of $C_4$ such that the following properties are satisfied:
\begin{itemize}
\item $Q_j$ joins the $j$th vertex of $V_1$ to the $j$th vertex of $V_{2k/7+1}$.
\item For each $j=1,\dots,m$, $C_{4,j}$ shares exactly one edge with $Q_j$ and one edge with $Q_{j+1}$ (where $Q_{m+1}:=Q_1$).
\item $V(C_{4,j})\subseteq \bigcup_{V\in I_1} V$ for each $j=1,\dots,m$.
\end{itemize}}

\smallskip

\noindent
To prove the claim, we first apply the Blow-up lemma (Lemma~\ref{Blow-up})%
    \COMMENT{Don't take $G[V_1,V_2]$ here since then the paths $Q_i$ won't satisfy the first property above}
to $G[V_4,V_5]$ to find $m/2$ vertex-disjoint copies $C_{4,1}, C_{4,3}, \dots, C_{4,m-1}$ of $C_4$.
Now we choose a perfect matching in each of the $C_{4,j}$.
Next apply the Blow-up lemma 
to $G[V_8,V_9]$ to find $m/2$ vertex-disjoint copies $C_{4,2}, C_{4,4}, \dots, C_{4,m}$ of $C_4$.
As before we choose a perfect matching in each of the $C_{4,j}$.
This gives a perfect matching $M_1$ in $G[V_4,V_5]$ and a perfect matching $M_2$ in $G[V_8,V_9]$.
Order the edges of each $M_i$ such that the following properties are satisfied:
\begin{itemize}
\item Edges belonging to the same $C_4$ are consecutive.
\item The edges of $C_{4,j+2}$ come directly after those of $C_{4,j}$ (modulo $m$).
\item The edges of $C_{4,1}$ are the first two edges of $M_1$ and the edges of $C_{4,2}$ are the second and third edge of $M_2$.
\end{itemize}
These matchings induce a new ordering on the vertices in the clusters $V_4,V_5,V_8,V_9$
with which we replace the orderings chosen initially.

Now we apply Corollary~\ref{linking} three times to obtain a system of $m$ vertex-disjoint paths in $\cB(C)$ which for each $j=1,\dots,m$
link the $j$th vertex of $V_{1}$ to the 
$j$th vertex of $V_{4}$, a system of $m$ vertex-disjoint paths which link the $j$th vertex of $V_{5}$ to the 
$j$th vertex of $V_{8}$ and a system of $m$ vertex-disjoint paths which link the $j$th vertex of $V_{9}$ to 
the $j$th vertex of $V_{2k/7+1}$. The union of $M_1$, $M_2$ and all these path systems forms a system $Q_1,\dots,Q_m$ of paths as required in Claim~1.

\medskip

Recall that the paths in $CEPS_3$ join $V_{2k/7+1}$ to $V_{3k/7+1}$.
Now let $Q'_1,\dots,Q'_m$ be a system of vertex-disjoint paths in $\cB(C)$ linking the vertices in $V_{3k/7+1}$ to those $V_{k/2+1}$ (in an arbitrary way).
We let $F_{\rm top}:=Q_1\cup \dots Q_m\cup CEPS_3 \cup Q'_1\cup \dots Q'_m$. 

$F_{\rm low}$ is constructed similarly, but this time we choose
$C'_{4,1}, C'_{4,3}, \dots, C'_{4,m-1}$ in $G[V_{k/2+4},V_{k/2+5}]$ and $C'_{4,2}, C'_{4,4}, \dots, C'_{4,m}$
in $G[V_{k/2+8},V_{k/2+9}]$ (and $F_{\rm low}$ will contain $CEPS_5$). 

So let us now construct $S$. Let $\cB'(C)$ be obtained from $\cB(C)$ by deleting all the edges in $F=F_{\rm top}\cup F_{\rm low}$
and define $G'$ similarly (since this decreases the in- and the outdegrees by at most $1$, the superregularity of the pairs
$G'[V_{i},V_{i+1}]$ is not affected significantly).
Choose a copy $C_{4,m+1}$ of $C_4$ in $G'[V_{12},V_{13}]$. Let $\ell_{m+1}$ and $r_{m+1}$ be a matching in $C_{4,m+1}$.
For each $j=1,\dots,m$ let $\ell_j$ and $r_j$ be the edges of $C_{4,j}$ which are not contained in $F$.

When we refer to an endvertex of an edge $e$ below, this is allowed to be either the initial or the final vertex of~$e$
(it will be clear from the context which one is meant).
For all odd $j$ with $1 \le j < m$, apply Corollary~\ref{linking} to find a system of $m$ vertex-disjoint paths in $\cB'(C)$ linking the endvertex of
$\ell_j$ in $V_5$ to the endvertex of $\ell_{j+1}$ in $V_8$ and the endvertex of $r_j$ in $V_5$ to the 
endvertex of $r_{j+1}$ in $V_8$. Apply Corollary~\ref{linking} again to find a system of $m$ vertex-disjoint paths in $\cB'(C)$ such that
one of these paths links the endvertex of $\ell_m$ in $V_9$ to the endvertex of $\ell_{m+1}$ in $V_{12}$, another path links   
the endvertex of $r_m$ in $V_9$ to the endvertex of $r_{m+1}$ in $V_{12}$ and the remaining $m-2$ paths join the remaining vertices
in $V_9$ to those in $V_{12}$. We also choose a matching $M_3$ in $G'[V_{12},V_{13}]$ which consists of $m-2$ edges and avoids
the endvertices of $\ell_{m+1}$ and $r_{m+1}$ (we find $M_3$ by applying Proposition~\ref{perfmatch} to the subgraph of $G'[V_{12},V_{13}]$
obtained by deleting the endvertices of $\ell_{m+1}$ and $r_{m+1}$).
Next we apply Corollary~\ref{linking} to find a system of $m$ vertex-disjoint paths in $\cB'(C)$ joining the
vertices in $V_{13}$ to those in $V_{3k/7+1}$. 

Let $\cQ$ denote the system of paths obtained from the union of all the paths chosen previously, of $\ell_1,\dots,\ell_{m+1}$, of
$r_1,\dots,r_{m+1}$ and of the edges in $M_3$. So each path in $\cQ$ joins some vertex in $V_4$ to some vertex in $V_{3k/7+1}$.
Consider the system $\cQ'$ of paths obtained by concatenating the paths in $\cQ$ with those in $CEPS_4$ (recall that $CEPS_4$
completely spans the interval $I_4=V_{3k/7+1}\dots V_{4k/7+1}$).
For each even $j$ with $1 \le j \le m$, let $r'_j$ denote the vertex in $V_{4k/7+1}$ which is connected to the edge
$r_j$ by a path in $\cQ'$ and define $\ell'_j$ similarly. 
Apply Corollary~\ref{linking} to find a system $\cQ''$ of $m$ vertex-disjoint paths in $\cB'(C)$ from $V_{4k/7+1}$ to $V_4$ such that
for each even $j$ with $1 \le j < m$ the vertex $r'_j$ is linked to the endvertex of $r_{j+1}$ in $V_4$
and $\ell'_j$ is linked to the endvertex of $\ell_{j+1}$ in $V_4$. Also, $\ell_m'$ is linked to $\ell_1$ and $r_m'$ is linked to $r_1$ (see Figure~\ref{figconstS}).
\begin{figure}
\centering\footnotesize
\includegraphics[scale=0.38]{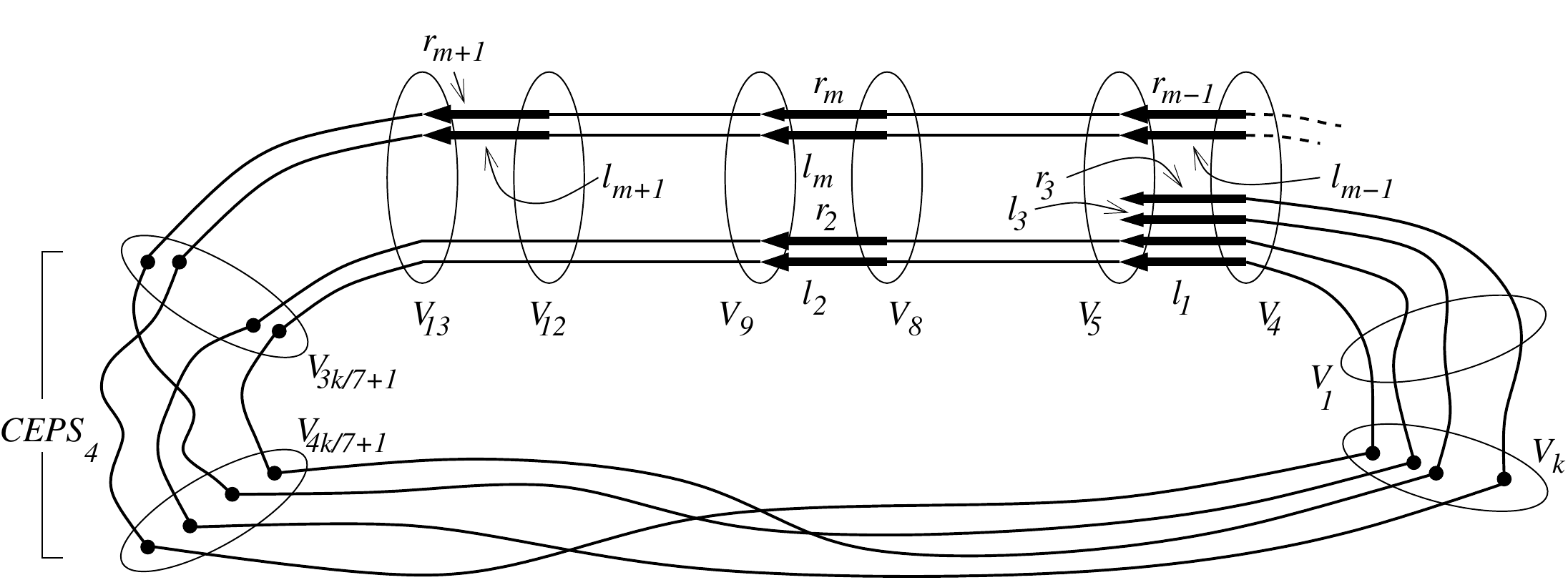}
\caption{The construction of $S$.}
\label{figconstS}
\end{figure}
We let $S$ be the union of all the paths in $\cQ'$ and $\cQ''$. Then $F$ and $S$ satisfy (CyA0)--(CyA3).

$S'$ is constructed similarly. This is possible as the switches $C'_{4,1},\dots,C'_{4,m+1}$ lie in $I_{4,{\rm low}}\subseteq I_4$
whereas the complete exceptional path system $CEPS_2$ contained in $S'$ spans $I_2$ (see Figure~\ref{figceps}).
\endproof


\subsection{The parity switcher} \label{sec:parity}
Suppose that we are given a decomposition $\cD$ of a regular digraph into $r$ edge-disjoint $1$-factors and suppose that the total number of cycles is $K$, say. 
If we carry out $C_4$-exchanges between the
cycles in these $1$-factors, then Proposition~\ref{switch} implies that the resulting total number of cycles either stays the same, increases by two 
or decreases by two after each exchange. So if e.g.~$r$ is odd and $K$ is even, then we will never be able to 
transform $\cD$ into a set of edge-disjoint Hamilton cycles if we rely only on $C_4$-exchanges between cycles in $\cD$.
The following concept of `triple switches' will allow us to change the parity of the total number of cycles in a decomposition.
In particular, the resulting parity switcher will allow us to transform the spanning bicycles which are potentially returned by 
Lemma~\ref{cycleabsorb} into Hamilton cycles.

Let $K_{2,3}$ denote the orientation of a complete bipartite graph with vertex classes of size $2$ and $3$ in which every edge
is oriented towards the vertex class of size~$3$. Note that $K_{2,3}$ is the edge-disjoint union of three matchings $M_1$, $M_2$ and $M_3$
of size~$2$. Given edge-disjoint bicycles $B_1$, $B_2$ and $B_3$ on the same vertex set, we say that they are \emph{triply-switchable} if for each $i=1,2,3$ there are
independent edges $\ell_i$ and $r_i$ lying on different cycles of $B_i$ such that the union of $\ell_i$ and $r_i$ over all $i=1,2,3$ forms a
copy $K_{2,3}^*$ of $K_{2,3}$. We say that $K_{2,3}^*$ is a \emph{$B_1B_2B_3$-switch}. A \emph{$K_{2,3}^*$-exchange} consists of
deleting the edges $\ell_i=:x x_i$ and $r_i=:y y_i$ from $B_i$ and adding the edges $x y_i$ and $y x_i$ for each $i=1,2,3$ (see Figure~\ref{K23}).
Thus for each $i=1,2,3$ one of the edges $\ell_i$, $r_i$ will be added to $B_{i+1}$ and the other edge will be added to $B_{i+2}$
(where the $B_4:=B_1$ and $B_5:=B_2$). 
\begin{figure}
\centering\footnotesize
\includegraphics[scale=0.4]{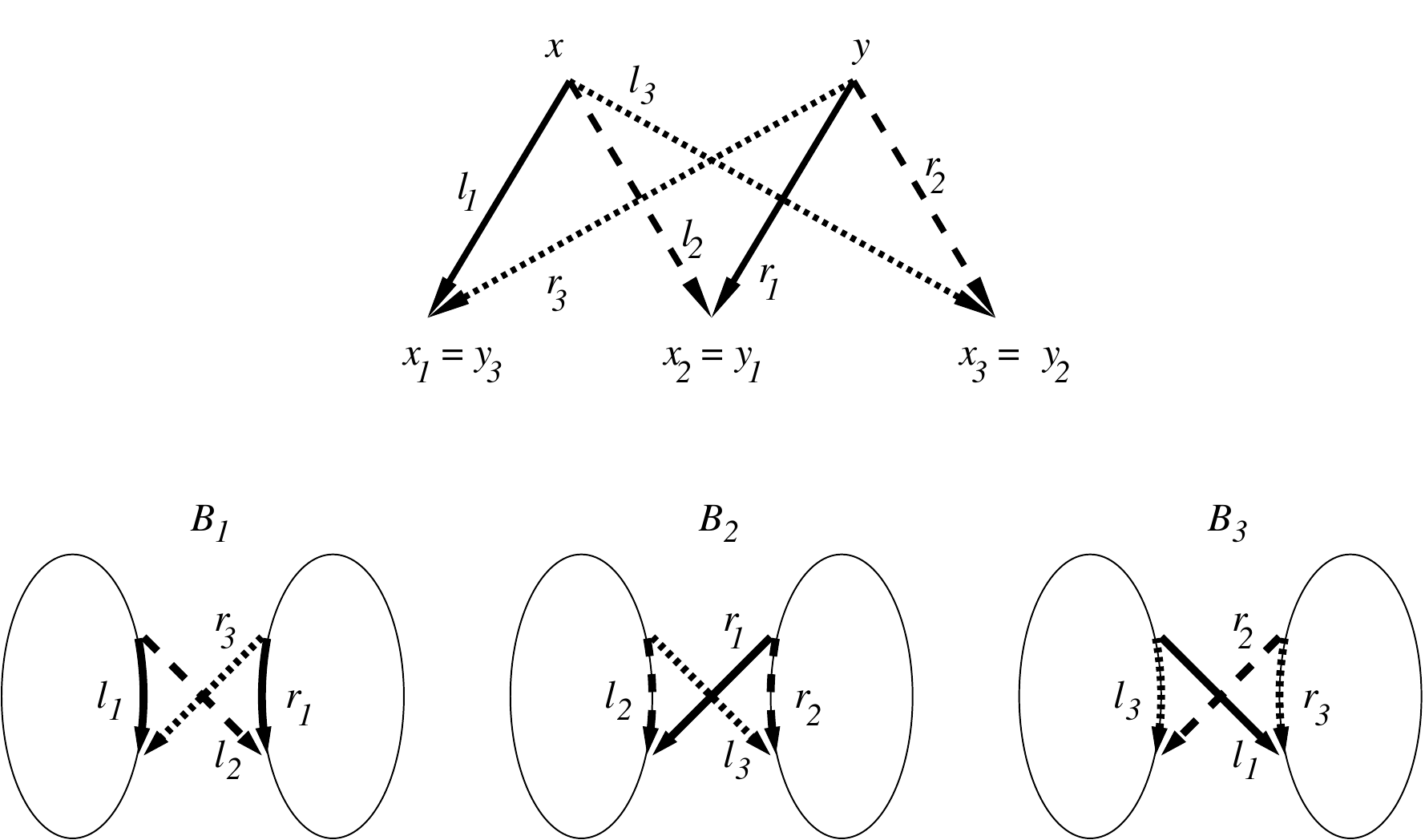}
\caption{Illustrating a $K_{2,3}^*$-exchange.}
\label{K23}
\end{figure}
Note that the digraph obtained from $B_i$ via a $K_{2,3}^*$-exchange is a Hamilton cycle
(for each $i=1,2,3$). Thus the union of three edge-disjoint triply-switchable bicycles has a decomposition into three edge-disjoint
Hamilton cycles as well as a decomposition into six cycles (two for each $B_i$). As mentioned above, this parity difference will enable us to turn the bicycle(s) 
which are potentially returned by Lemma~\ref{cycleabsorb} into Hamilton cycles (without creating any additional bicycles elsewhere).

A \emph{parity extended cycle absorber $PCA$ in $G$} with respect to $C$ is digraph on $V(G)\setminus V_0$ with the following properties:
\begin{itemize} 
\item[(PCA1)] $PCA$ is the union of two digraphs $CyA$ and $TSB$, each with vertex set $V(G)\setminus V_0$.
$CyA=F\cup S\cup S'$ is a cycle absorber in $G$. $TSB$ is the union of three $1$-regular
digraphs $B'_1$, $B'_2$ and $B'_3$ on $V(G)\setminus V_0$ such that each $B'_i$ contains a complete exceptional path system $CEPS(B'_i)$
(but no other exceptional edges) and such that $B'_i$ is a bicycle on $V(G)\setminus V_0$. Let $B_i:=(B'_i)^{\rm orig}$.
Then $CyA^{\rm orig}$, $B_1$, $B_2$ and $B_3$ are pairwise edge-disjoint subdigraphs of $G$.
\item[(PCA2)] $B_1$, $B_2$ and $B_3$ are triply-switchable. Let $K^*_{2,3}$ denote the corresponding $B_1B_2B_3$-switch
and let $\ell^*_i$ and $r^*_i$ denote the edges of $K^*_{2,3}$ contained in $B_i$.
(So $\ell^*_i$ and $r^*_i$ lie on different cycles of $B_i$.) 
\item[(PCA3)] Recall that $\ell_{m+1}$ and $r_{m+1}$ are the two edges of the switch $C_{4,m+1}$ which are contained in $S$
(where $C_{4,m+1}$ is as defined in (CyA2)). Then $B_1$ contains the other two edges $\ell_{S}$ and $r_{S}$ of $C_{4,m+1}$.
Similarly, recall that $\ell'_{m+1}$ and $r'_{m+1}$ are the two edges of the switch $C'_{4,m+1}$ which are contained in $S'$
(where $C'_{4,m+1}$ is as defined in (CyA2$'$)). Then $B_1$ also contains the other two edges $\ell_{S'}$ and $r_{S'}$ of $C'_{4,m+1}$.
\item[(PCA4)] $B_2$ and $B_3$ are pairwise switchable. Let $C_{4,B_2B_3}$-denote the corresponding switch.
\item[(PCA5)] All the switches $K^*_{2,3}$, $C_{4,m+1}$, $C'_{4,m+1}$ and $C_{4,B_2B_3}$ are pairwise
edge-disjoint. Both $C_{4,m+1}$ and $C'_{4,m+1}$ are vertex-disjoint from $CEPS(B'_1)$ while both $K^*_{2,3}$ and $C_{4,B_2B_3}$
are vertex-disjoint from $CEPS(B'_i)$ for all $i=1,2,3$.
\item[(PCA6)] One cycle of the bicycle $B_1$ contains the edges
$\ell_{S},\ell_{S'},\ell^*_1$ in that order while the other cycle of $B_1$ contains $r_{S},r_{S'},r^*_1$
in that order.  
\end{itemize}
An \emph{$s$-fold parity extended cycle absorber $PCA(s)$ with respect to $C$ in $G$} consists of $s$ parity extended cycle absorbers
whose original versions are pairwise edge-disjoint. Note that
$PCA(s)$ is a $6s$-regular spanning subdigraph
of $G^{\rm basic}$ which contains precisely $7s$ complete exceptional path systems (and no other exceptional edges).
Moreover, $PCA(s)^{\rm orig}$ is a spanning subdigraph of $G$ and
\begin{equation} \label{paritydegrees}
d^\pm(x) = 7s \ \ \forall x\in V_0 \ \ \quad \mbox{and} \quad \ \ d^\pm(y)= 6s \ \ \forall y\in V(G)\setminus V_0.
\end{equation} 

\begin{lemma} \label{cycleparity}
Suppose that $0<1/n\ll 1/k\ll \eps\ll d\ll 1$, that $k/14\in\mathbb{N}$ and that $(G,\mathcal{P},R,C)$ is a $(k,m,\eps,d)$-scheme with $|G|=n$.
Suppose that $H$ is an $s$-factor of $G-V_0$ such that $H$ is a blow-up of $C$.
Let $PCA(s)$ be an $s$-fold parity extended cycle absorber with respect to $C$ in $G$ such that $PCA(s)^{\rm orig}$ and
$H$ are edge-disjoint.
Then $H \cup PCA(s)^{\rm orig}$ has a decomposition into~$7s$ edge-disjoint Hamilton cycles of $G$.
Moreover,
each of these Hamilton cycles contains the original version of one of the $7s$ complete exceptional path systems
contained in $PCA(s)^{\rm orig}$.
\end{lemma}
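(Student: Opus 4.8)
\textbf{Proof plan for Lemma~\ref{cycleparity}.}
The strategy is to decompose $H$ into $s$ individual $1$-factors, pair up each with one of the $s$ parity extended cycle absorbers, and show that each of these $s$ pieces decomposes into exactly $7$ Hamilton cycles of $G$. More precisely, since $H$ is an $s$-factor of $G-V_0$ which is a blow-up of $C$, Proposition~\ref{1factor} lets us write $H=H_1\cup\dots\cup H_s$ where each $H_i$ is a $1$-factor of $G-V_0$ winding around $C$. Write $PCA(s)^{\rm orig}=\bigcup_{i=1}^s PCA_i^{\rm orig}$, where each $PCA_i=CyA_i\cup TSB_i$ is a parity extended cycle absorber with $CyA_i$ a cycle absorber and $TSB_i=B'_{i,1}\cup B'_{i,2}\cup B'_{i,3}$ as in (PCA1). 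It then suffices to prove the following: for a single $1$-factor $H'$ of $G-V_0$ winding around $C$ and a single parity extended cycle absorber $PCA=CyA\cup TSB$ with $PCA^{\rm orig}$ edge-disjoint from $H'$, the digraph $H'\cup PCA^{\rm orig}$ decomposes into $7$ edge-disjoint Hamilton cycles of $G$. (Here $H'\cup CyA^{\rm orig}$ has in/outdegree $4$, and each $B_i=(B'_{i})^{\rm orig}$ contributes another $1$ to each in/outdegree, for a total of $7$, consistent with (\ref{paritydegrees}); note the degrees at $V_0$ also work out since $CyA^{\rm orig}$ contributes $4$ and each $B_i$ contributes $1$.)

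For the single-absorber statement, the first step is to apply Lemma~\ref{cycleabsorb} to $H'$ and $CyA$. This gives a decomposition of $H'\cup CyA^{\rm orig}$ into four $1$-factors $F_1,F_2,F_3,F_4$ of $G$, where $F_1,F_2$ are Hamilton cycles and each of $F_3,F_4$ is either a Hamilton cycle or a spanning bicycle; moreover by Lemma~\ref{cycleabsorb}(iii), if $F_3$ is a bicycle then its two cycles are separated by $\ell_{m+1}$ versus $r_{m+1}$, and similarly $F_4$ (if a bicycle) is separated by $\ell'_{m+1}$ versus $r'_{m+1}$. So $F_1$ and $F_2$ are already Hamilton cycles; it remains to deal with $F_3$, $F_4$ and the three bicycles $B_1,B_2,B_3$, whose union has degree $5$, and to show this union of five $1$-factors decomposes into five Hamilton cycles of $G$. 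The point is that at most two of $F_3,F_4$ are `defective' (i.e.~bicycles rather than Hamilton cycles), and the triply-switchable bicycles $B_1,B_2,B_3$ — which together have a built-in parity flexibility — can be used to repair them.

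The heart of the argument is a careful case analysis according to how many of $F_3,F_4$ are bicycles. If both $F_3$ and $F_4$ are Hamilton cycles, then we simply apply a $K_{2,3}^*$-exchange to $B_1,B_2,B_3$ using the switch $K_{2,3}^*$ from (PCA2): as noted after the definition of a $K_{2,3}^*$-exchange, this turns each $B_i$ into a Hamilton cycle, so $F_1,\dots,F_4$ together with the three new Hamilton cycles give $7$ Hamilton cycles. If exactly one of $F_3,F_4$ is a bicycle, say $F_3$ (the case of $F_4$ is symmetric), we use the following chain of switches. By (PCA3) and (PCA6), $B_1$ contains the edges $\ell_{S},r_{S}$ of the potential switch $C_{4,m+1}$, whose other two edges $\ell_{m+1},r_{m+1}$ lie in $S$ and hence lie on $F_3$ on different cycles by Lemma~\ref{cycleabsorb}(iii); and by (PCA6) these $\ell_{S},r_{S}$ lie on different cycles of the bicycle $B_1$. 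Performing the $C_{4,m+1}$-exchange between $F_3$ and $B_1$ therefore merges the two cycles of $F_3$ (turning $F_3$ into a Hamilton cycle, by Proposition~\ref{switch}(ii)) while simultaneously merging the two cycles of $B_1$, so $B_1$ also becomes a Hamilton cycle. But this has consumed edges $\ell_{S},r_{S}$ of $B_1$ which were part of the $K_{2,3}^*$-switch only if $\ell_{S}=\ell^*_1$ or $r_{S}=r^*_1$ — which (PCA5) together with (PCA6) rules out (the switches $C_{4,m+1}$ and $K_{2,3}^*$ are edge-disjoint, and on the cycle of $B_1$ the edge $\ell_S$ precedes $\ell^*_1$). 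So the $K_{2,3}^*$-exchange among $B_1,B_2,B_3$ can still be carried out — but now $B_1$ is already a Hamilton cycle, not a bicycle, so we instead use the switch $C_{4,B_2B_3}$ from (PCA4) between $B_2$ and $B_3$: since $B_2$ and $B_3$ are each bicycles and $C_{4,B_2B_3}$ has one edge of $B_2$ and one edge of $B_3$ lying on different cycles, this single $C_4$-exchange turns both $B_2$ and $B_3$ into Hamilton cycles by Proposition~\ref{switch}(ii). The edge-disjointness demanded by (PCA5) ensures these operations do not interfere. Finally, if both $F_3$ and $F_4$ are bicycles, we first perform the $C_{4,m+1}$-exchange between $F_3$ and $B_1$ and then the $C'_{4,m+1}$-exchange between $F_4$ and $B_1$: by (PCA3) and (PCA6) the edges $\ell_{S'},r_{S'}$ of $C'_{4,m+1}$ contained in $B_1$ lie on the two cycles of the post-first-exchange $B_1$ (one contains $\ell_S,\ell_{S'}$, the other $r_S,r_{S'}$), so this second exchange again merges the two cycles of $F_4$ into one while restoring $B_1$ to a bicycle — we have used $F_4$'s defectiveness to `pay back' $B_1$. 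Now $B_1$ is a bicycle again, $B_2,B_3$ are bicycles, and the $K_{2,3}^*$-exchange of (PCA2) turns all three into Hamilton cycles (the edges $\ell^*_1,r^*_1$ survive by the same (PCA5)/(PCA6) positional argument). In every case we obtain $7$ edge-disjoint Hamilton cycles whose union is $H'\cup PCA^{\rm orig}$; summing over the $s$ pieces proves the lemma, and every step is constructive, so the decomposition can be found in polynomial time.

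The main obstacle is the bookkeeping in the case analysis: one must verify that the various switches prescribed by (PCA2)--(PCA6) can be applied in the right order without an edge being `used twice' or a needed edge already having been moved, and in particular that performing a $C_{4,m+1}$- or $C'_{4,m+1}$-exchange on $B_1$ leaves the edges $\ell^*_1,r^*_1$ of the $K_{2,3}^*$-switch intact and still lying on distinct cycles of $B_1$. This is exactly why (PCA6) prescribes the cyclic order $\ell_S,\ell_{S'},\ell^*_1$ on one cycle of $B_1$ and $r_S,r_{S'},r^*_1$ on the other: it guarantees that consuming $\ell_S,r_S$ (and later $\ell_{S'},r_{S'}$) only ever splits or merges $B_1$ in a controlled way and never destroys the $\ell^*_1$/$r^*_1$ separation needed for the final $K_{2,3}^*$-exchange. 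Once this positional structure is used correctly, each individual switch is an immediate application of Proposition~\ref{switch}, and the fact that $F_1$ and $F_2$ emerge already-Hamiltonian from Lemma~\ref{cycleabsorb} means no further work is needed on them.
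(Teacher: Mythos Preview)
Your approach is essentially identical to the paper's: decompose $H$ into $1$-factors, pair each with a parity extended cycle absorber, apply Lemma~\ref{cycleabsorb}, and then run the same four-case switch analysis. The sequence of switches you prescribe in each case matches the paper exactly.

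However, your tracking of $B_1$ in the ``both bicycles'' case is garbled. You write that after the $C_{4,m+1}$-exchange the edges $\ell_{S'},r_{S'}$ ``lie on the two cycles of the post-first-exchange $B_1$ (one contains $\ell_S,\ell_{S'}$, the other $r_S,r_{S'}$)''. This is wrong on two counts: after the $C_{4,m+1}$-exchange the edges $\ell_S,r_S$ are no longer in $B_1$ (they have been moved to $F_3$), and since $\ell_S,r_S$ lay on \emph{different} cycles of the original bicycle $B_1$, Proposition~\ref{switch}(ii) says the exchange \emph{merges} these cycles, so the post-first-exchange $B_1$ is a single Hamilton cycle, not a bicycle. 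The correct picture (which the paper spells out) is: by (PCA6) the resulting Hamilton cycle traverses $\ell_{S'},\ell^*_1,r_{S'},r^*_1$ in that cyclic order; the subsequent $C'_{4,m+1}$-exchange then \emph{splits} this Hamilton cycle (Proposition~\ref{switch}(i)) into a bicycle in which $\ell^*_1$ and $r^*_1$ lie on different cycles --- and only then is the $K^*_{2,3}$-exchange applicable. Your phrase ``restoring $B_1$ to a bicycle'' shows you have the right endpoint, but the justification you give for why $\ell^*_1,r^*_1$ end up separated does not work as written; it is precisely the cyclic-order clause of (PCA6) applied to the intermediate Hamilton cycle that guarantees this.
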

\proof
Let $PCA_1,\dots,PCA_s$ denote the parity extended cycle absorbers contained in $PCA(s)$.
We apply Proposition~\ref{1factor} to find a $1$-factorization of $H$ into $H_1,\dots,H_s$. Since $H$ is a blow-up of $C$,
each $H_i$ winds around~$C$.
We claim that $H_i \cup PCA_i^{\rm orig}$ has a decomposition into Hamilton cycles $C_{7(i-1)+1}, \dots, C_{7i}$ of $G$.

Note that the claim follows if we can prove it for the case $i=1$. So let $CyA$ and $TSB=B'_1\cup B'_2\cup B'_3$ be as defined in (PCA1)--(PCA6). 
Apply Lemma~\ref{cycleabsorb} to obtain a decomposition of $H_1 \cup CyA^{\rm orig}$ into $1$-factors $F_1,\dots,F_4$ satisfying the
following conditions:
\begin{itemize}
\item[(a)] $F_1$ and $F_2$ are Hamilton cycles in $G$.
\item[(b)] Each of $F_3$ and $F_4$ is either a Hamilton cycle or a spanning bicycle in $G$.
If $F_3$ is a bicycle, then one of the cycles contains $\ell_{m+1}$ and the other 
contains $r_{m+1}$. Similarly, if $F_4$ is a bicycle, then one of the cycles contains $\ell'_{m+1}$ and the other 
contains $r'_{m+1}$. 
\item[(c)] Each $F_i$ contains (the original version of) a complete exceptional path system $CEPS_i^*$ 
(which is one of those contained in $PCA(1)^{\rm orig}$).
Moreover, $CEPS_3^*$ spans $I_4$ and $CEPS_4^*$ spans $I_2$  (where $I_i$ is the $i$th interval of the canonical interval partition 
into $7$ intervals defined earlier).
\end{itemize}
If both $F_3$ and $F_4$ are Hamilton cycles, we perform the $K^*_{2,3}$-exchange to decompose $TSB^{\rm orig}=B_1\cup B_2\cup B_3$ into three
edge-disjoint Hamilton cycles of $G$. Then these Hamilton cycles together with $F_1,\dots,F_4$ form a Hamilton
decomposition of $H_1\cup PCA_1^{\rm orig}$.

So suppose next that both $F_3$ and $F_4$ are bicycles. First perform the $C_{4,m+1}$-exchange. This turns
both $F_3$ and $B_1$ into Hamilton cycles. Let $B^1_1$ denote the Hamilton cycle obtained from $B_1$. Then (PCA6) implies that
the edges $\ell_{S'},\ell^*_1,r_{S'},r^*_1$ appear on $B^1_1$ in that order. Next we perform the $C'_{4,m+1}$-exchange. This turns
$F_4$ into a Hamilton cycle and $B^1_1$ into a bicycle $B^2_1$. Note that one of the cycles of  $B^2_1$ contains $\ell^*_1$
while the other cycle contains $r^*_1$ (this is a special case of the argument illustrated in Figure~\ref{figswitches}). 
So we can now perform the $K^*_{2,3}$-exchange. This turns each of $B^2_1$, $B_2$ and $B_3$
into a Hamilton cycle. Altogether this gives a Hamilton decomposition of $H_1\cup PCA_1^{\rm orig}$.

So suppose next that $F_3$ is a bicycle but $F_4$ is a Hamilton cycle. In this case we perform the $C_{4,m+1}$-exchange. This turns
both $F_3$ and $B_1$ into Hamilton cycles. We then perform the $C_{4,B_2B_3}$-exchange. This turns both $B_2$ and $B_3$ into
Hamilton cycles. As before, altogether this gives a Hamilton decomposition of $H_1\cup PCA_1^{\rm orig}$.

The case when $F_3$ is a Hamilton cycle but $F_4$ is a bicycle is similar to the previous case, but we perform
the $C'_{4,m+1}$-exchange instead of the $C_{4,m+1}$-exchange.

The `moreover part' follows since the switches involved in the above argument are edge-disjoint from the complete exceptional path systems
of the bicycles and Hamilton cycles involved in the corresponding exchanges.
More precisely, for the Hamilton cycles originating from the $B_i$, the `moreover part follows from 
(PCA1) and (PCA5). 
For $F_1$ and $F_2$, it follows from (a) and (c) above.
For $F_3$, it follows from (CyA2) and (c), and for $F_4$ it follows from (CyA2$'$) and (c).
\endproof

\begin{lemma} \label{find_pa}
Suppose that $0<1/n\ll 1/k\ll \eps \ll d\ll 1$, that $s/m\ll d$ and that
$k/14,m/2\in \mathbb{N}$. Let $(G,\cP,R,C)$ be a $(k,m,\eps,d)$-scheme
with $|G|=n$. Suppose that $EF_1,\dots,EF_s$ are exceptional factors 
with parameters $(1,7)$ with respect to $C$, $\cP$ whose original versions are pairwise edge-disjoint.
Then there exists an $s$-fold parity extended cycle absorber $PCA(s)$ with respect to $C$ in $G$ such that
the $7s$ complete exceptional path systems contained in $PCA(s)$ are precisely those in $EF_1\cup \dots \cup EF_s$.
\end{lemma}
\proof Choose an additional constant $\eps'$ with $\eps, s/m\ll \eps'\ll d$.
Let $G'$ be the digraph obtained from $G$ by deleting all the edges
in $EF^{\rm orig}_1,\dots,EF^{\rm orig}_s$. Note that $|N^+_G(x)\setminus N^+_{G'}(x)|\le 7s \le (\eps'/3)^2 m$
for every vertex $x$ of $G$ and the analogous condition holds for the inneighbourhoods of $x$.
Thus Lemma~\ref{deletesystem}(ii) implies that
$(G',\cP,R,C)$ is still a $(k,m,\eps',d)$-scheme.

Let us first show how to find a single parity extended cycle absorber $PCA_1$ with respect to $C$ in $G'$.
Let $I_1,\dots,I_7$ be the canonical interval partition of~$C$ into $7$ intervals $I_1,\dots,I_7$ of equal length and
let $CEPS_i$ denote the complete exceptional path system in $EF_1$ which completely spans $I_i$ (for all $i=1,\dots,7$).
Apply Lemma~\ref{find_ca} to find%
     \COMMENT{Formally we apply Lemma~\ref{find_ca} to $G'\cup CEPS^{\rm orig}_2 \cup \dots\cup CEPS^{\rm orig}_5$
and not to $G'$ since these complete exceptional path systems are not complete exceptional path systems for $G'$.
But it seems better not to say this excplicitly.}
a cycle absorber $CyA$ with respect to $C$ in $G'$ which contains $CEPS_2,\dots,CEPS_5$ (in the way described in (CyA0)).

Recall that $\ell_{S}$ and $r_{S}$ denote the edges of the (potential) switch $C_{4,m+1}$ described in (CyA2) and (PCA3) which are not contained in $CyA$.
Similarly, recall $\ell_{S'}$ and $r_{S'}$ denote the edges of the (potential) switch $C'_{4,m+1}$ described in (CyA2$'$) and (PCA3)
which are not contained in $CyA$.  
Recall from Lemma~\ref{find_ca} that $\ell_{S}$ and $r_{S}$ lie in $G'[V_{12},V_{13}]$ and 
that $\ell_{S'}$ and $r_{S'}$ lie in $G'[V_{k/2+12},V_{k/2+13}]$. 
Remove the edges of $CyA^{\rm orig}$ from $G'$ to obtain $G''$. Let $\cB''(C)$ be the union of $G''[V_i,V_{i+1}]$ over all
$i=1,\dots,k$. Proposition~\ref{superslice}(iii) implies that $\cB''(C)$ is a blow-up of $C$ in which every edge of $C$
corresponds to an $[2\sqrt{\eps'},\ge d]$-superregular pair.

When we refer to an endvertex of an edge $e$ below, this is allowed to be either the initial or the final vertex of~$e$
(it will be clear from the context which one is meant).
Our next aim is to find $TSB$. We start by finding $B_1'$. First we apply Proposition~\ref{perfmatch} to choose a
perfect matching in $G''[V_{12},V_{13}]$ which extends $\ell_{S}$ and $r_{S}$.
Now apply Corollary~\ref{linking} to find a system of $m$ vertex-disjoint paths in $\cB''(C)$ which link all vertices
of $V_{13}$ to those in $V_{k/2+12}$ such that the endvertex of $\ell_S$ in $V_{13}$ is linked to the endvertex of $\ell_{S'}$ in $V_{k/2+12}$ and
the endvertex of $r_S$ in $V_{13}$ is linked to the endvertex of $r_{S'}$ in $V_{k/2+12}$. 
Choose a perfect matching in $G''[V_{k/2+12},V_{k/2+13}]$ which extends $\ell_{S'}$ and $r_{S'}$.

Next choose a copy $K^*_{2,3}$ of $K_{2,3}$
in $G''[V_{k/2+16},V_{k/2+17}]$. For each $i=1,2,3$ let $\ell^*_i$ and $r^*_i$ be two independent edges in $K^*_{2,3}$ such that
all these six edges are distinct from each other (and so $K^*_{2,3}$ is the union of all these six edges).
Now apply Lemma~\ref{linking} to find a system of $m$ vertex-disjoint paths in $\cB''(C)$ which link the vertices in
$V_{k/2+13}$ to those in $V_{k/2+16}$ in such a way that the endvertex of $\ell_{S'}$ in $V_{k/2+13}$ is linked to the endvertex of $\ell^*_1$ in $V_{k/2+16}$
and the endvertex of $r_{S'}$ in $V_{k/2+13}$ is linked to the endvertex of $r^*_1$ in $V_{k/2+16}$.
Extend $\ell^*_1$ and $r^*_1$ into a perfect matching of $G''[V_{k/2+16},V_{k/2+17}]$.
Now apply Lemma~\ref{linking} to find a system of $m$ vertex-disjoint paths in $\cB''(C)$ which (arbitrarily) link all vertices
of $V_{k/2+17}$ to those in $V_{5k/7+1}$. Altogether this gives us a system $\mathcal{Q}$ of $m$ vertex-disjoint paths
joining all vertices in $V_{12}$ to all vertices in $V_{5k/7+1}$. 

Now extend this path system by concatenating the paths in $\mathcal{Q}$ with the ones forming $CEPS_6$.
(Recall that $CEPS_6$ spans the interval $I_6=V_{5k/7+1}\dots V_{6k/7+1}$ completely.)
Denote the resulting path system by $\mathcal{Q}'$. Order the vertices of $V_{12}$ so that the first vertex is
the endvertex of $\ell_S$ and the last vertex is the endvertex of $r_S$.
For each $j=1,\dots,m$, let $q_j\in V_{6k/7+1}$ denote the vertex
which is linked to the $j$th vertex of $V_{12}$ by a path in $\mathcal{Q}'$. Thus the path in $\mathcal{Q}'$
ending in $q_1$ contains $\ell_S$, $\ell_{S'}$, $\ell^*_1$ (in that order) while the path in $\mathcal{Q}'$
ending in $q_m$ contains $r_{S}$, $r_{S'}$, $r^*_1$ (in that order).

Now apply Lemma~\ref{linking} to find a system $\mathcal{Q}''$ of $m$ vertex-disjoint paths in $\cB''(C)$ which link each $q_j$ to the $(j+2)$nd vertex
in $V_{12}$ (where the indices are considered modulo~$m$). Let $B'_1$ be the union of all the paths in $\mathcal{Q}'$ and $\mathcal{Q}''$.
Then $B'_1$ is a bicycle on $V(G)\setminus V_0$ and $B_1:=(B_1')^{\rm orig}=\mathcal{Q}\cup CEPS_6^{\rm orig}\cup \mathcal{Q}''$
satisfies (PCA3) and (PCA6).

We next show how to find $B'_2$. Remove all edges of $B_1$ from $G''$ to obtain $G'''$. Let $\cB'''(C)$ be the union of $G'''[V_i,V_{i+1}]$ over all
$i=1,\dots,k$. Extend $\ell^*_2$ and $r^*_2$ into a perfect matching of $G'''[V_{k/2+16},V_{k/2+17}]$.
Choose a copy $C_{4,B_2B_3}$ of $C_4$ in $G'''[V_{k/2+20},V_{k/2+21}]$. Choose two independent edges $\ell^\diamond_2$ and $r^\diamond_2$
of $C_{4,B_2B_3}$ and let $\ell^\diamond_3$ and $r^\diamond_3$ denote the other two edges.
Now apply Lemma~\ref{linking} to find a system of $m$ vertex-disjoint paths in $\cB'''(C)$ which link all vertices
of $V_{k/2+17}$ to those in $V_{k/2+20}$ such that the endvertex of $\ell^*_2$ in $V_{k/2+17}$ is linked to the endvertex of $\ell^\diamond_2$ in $V_{k/2+20}$ and
the endvertex of $r^*_2$ in $V_{k/2+17}$ is linked to the endvertex of $r^\diamond_2$ in $V_{k/2+20}$.
Extend $\ell^\diamond_2$ and $r^\diamond_2$ into a perfect matching of $G'''[V_{k/2+20},V_{k/2+21}]$.
Now apply Lemma~\ref{linking} to find a system of $m$ vertex-disjoint paths in $\cB'''(C)$ which (arbitrarily) link all vertices
of $V_{k/2+21}$ to those in $V_{6k/7+1}$. Altogether this gives us a system $\mathcal{S}$ of $m$ vertex-disjoint paths
joining all vertices in $V_{k/2+16}$ to all vertices in $V_{6k/7+1}$. 

Now extend this path system by concatenating the paths in $\mathcal{S}$ with the ones forming $CEPS_7$.
(Recall that $CEPS_7$ spans the interval $I_7=V_{6k/7+1}\dots V_{1}$ completely.)
Denote the resulting path system by $\mathcal{S}'$. Order the vertices of $V_{k/2+16}$ so that the first vertex is
the endvertex of $\ell^*_2$ and the last vertex is the endvertex of $r^*_2$.
For each $j=1,\dots,m$, let $s_j\in V_{1}$ denote the vertex
which is linked to the $j$th vertex of $V_{k/2+16}$ by a path in $\mathcal{S}'$.
Now apply Lemma~\ref{linking} to find a system $\mathcal{S}''$ of $m$ vertex-disjoint paths in $\cB'''(C)$ which link each $s_j$ to the $(j+2)$nd vertex
in $V_{k/2+16}$ (where the indices are considered modulo~$m$). Let $B'_2$ be the union of all the paths in $\mathcal{S}'$ and $\mathcal{S}''$.
Then $B'_2$ is a bicycle on $V(G)\setminus V_0$.
$B'_3$ can be chosen in a similar way as $B'_2$ except that it contains $\ell^*_3,r^*_3,\ell^{\diamond}_3, r^{\diamond}_3$ and $CEPS_1$ instead of
$\ell^*_2,r^*_2,\ell^\diamond_2, r^\diamond_2$ and $CEPS_7$. Let $TSB:=B_1\cup B_2\cup B_3$ and $PCA_1:=CyA\cup TSB$.
Then $PCA_1$ satisfies (PCA1)--(PCA6)

We now remove the edges of $PCA_1^{\rm orig}$ from $G'$ and repeat the above process to find the remaining $s-1$ edge-disjoint parity
extended cycle absorbers $PCA_2,\dots,PCA_s$.
To see that this is possible, denote the digraph obtained from $G'$ by the removal of the edges of $PCA^{\rm orig}_1,\dots,PCA^{\rm orig}_i$ by $G_i$
(where $i< s$). Note that $|N^+_{G'}(x)\setminus N^+_{G_i}(x)|\le 7s \le \eps' m$
for every vertex $x$ of $G'$ and%
   \COMMENT{get $7s$ instead of $6s$ for the exceptional vertices since $F\subseteq CyA$ contains 2 complete exceptional sequences}
the analogous condition holds for the inneighbourhoods of $x$.
Thus by Lemma~\ref{deletesystem}(ii)
$(G_i,\cP,R,C)$ is still a $(k,m,3\sqrt{\eps'},d)$-scheme.
\endproof

\section{Proof of Theorem~\ref{decomp}}\label{sec:stopseln}

The following lemma shows that we can cover all edges induced by a small exceptional set using a small number of edge-disjoint 
Hamilton cycles.

\begin{lemma} \label{coverV0}
Suppose that 
$0<1/n \ll \eps  \ll \nu\le \tau\ll\alpha\le 1$.
Let  $G$ be a robust $(\nu,\tau)$-outexpander with $\delta^0(G) \ge \alpha n$ and
let $V_0$ be a set of vertices in $G$ with $|V_0| \le \eps n$.
Then there is a set of $\eps n$ edge-disjoint Hamilton cycles in $G$ which contain all edges of $G[V_0]$.
\end{lemma}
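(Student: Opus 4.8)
The plan is to deduce the lemma from the single fact (Theorem~\ref{expanderthm}) that a robust outexpander of linear minimum semidegree has a Hamilton cycle, applied $\eps n$ times to suitably modified digraphs. The first step is to decompose $E(G[V_0])$ into edge‑disjoint \emph{path systems} $P_1,\dots,P_t$ (each a vertex‑disjoint union of directed paths, all of whose vertices lie in $V_0$) with $t\le \eps n$. This is elementary: every vertex of $G[V_0]$ has at most $|V_0|-1\le\eps n$ out‑ and inneighbours in $V_0$, so König's edge‑colouring theorem applied to the bipartite ``split'' of $G[V_0]$ (two disjoint copies of $V_0$, with an edge $vv'$ for each arc $v\to v'$) decomposes $E(G[V_0])$ into at most $|V_0|$ subdigraphs of maximum out‑ and indegree at most $1$, i.e.\ vertex‑disjoint unions of directed paths and directed cycles inside $V_0$; removing one edge from each (vertex‑disjoint) cycle turns these into path systems. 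A little care — the case of bounded $|V_0|$ being trivial since then $e(G[V_0])$ is bounded, and the borderline case where $|V_0|$ is within a constant factor of $\eps n$ being handled via Tillson's theorem~\cite{till} (the complete digraph on $|V_0|$ vertices decomposes into $|V_0|-1$ Hamilton directed cycles, each of which minus an edge is a directed Hamilton path) — shows that $t\le \eps n$ can be arranged; in any case $t=O(\eps n)$, which is all that is used downstream.

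Next I would absorb the $P_i$ into Hamilton cycles one at a time. Suppose edge‑disjoint Hamilton cycles $C_1,\dots,C_{i-1}$ of $G$ have been found with $P_j\subseteq C_j$ and $E(C_j)\cap E(G[V_0])=E(P_j)$ for $j<i$. Let $G^{(i)}$ be obtained from $G$ by deleting the edges of $C_1,\dots,C_{i-1}$ together with all edges of $G[V_0]$ not lying in $P_i$; this removes at most $(i-1)+|V_0|\le 2\eps n$ out‑ and inedges at each vertex, so (as $\eps\ll\nu$) $G^{(i)}$ still has $\delta^0(G^{(i)})\ge\alpha n/2$ and is still a robust $(\nu/2,2\tau)$‑outexpander. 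Now contract: for each path $Q=a_Q\dots b_Q$ of $P_i$, identify $a_Q$ and $b_Q$ into a new vertex $w_Q$ whose outneighbourhood is that of $b_Q$ in $G^{(i)}-V(P_i)$ (plus $w_{Q'}$ whenever $b_Q\to a_{Q'}$) and whose inneighbourhood is that of $a_Q$, and delete all internal vertices of $P_i$ and any loops created; call the result $G^\#$, on $n^\#\ge n-|V_0|\ge n/2$ vertices. A Hamilton cycle of $G^\#$ expands to a Hamilton cycle $C_i$ of $G^{(i)}\subseteq G$ containing $P_i$ and no other edge of $G[V_0]$. Since $\delta^0(G^\#)\ge\delta^0(G^{(i)})-|V_0|\ge\alpha n/3\ge\alpha n^\#/3$, and (mapping each $w_Q$ back to $b_Q$ and comparing robust outneighbourhoods, using $|V_0|\le\eps n$) $G^\#$ is a robust $(\nu/4,4\tau)$‑outexpander, Theorem~\ref{expanderthm} yields the required Hamilton cycle. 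Iterating gives edge‑disjoint Hamilton cycles $C_1,\dots,C_t$ of $G$ whose union contains $E(G[V_0])$; if $t<\eps n$, repeatedly apply Theorem~\ref{expanderthm} to $G$ minus the edges used so far (which, by the same estimates, stays a robust $(\nu/2,2\tau)$‑outexpander with $\delta^0\ge\alpha n/2$) to produce $\eps n-t$ further edge‑disjoint Hamilton cycles.

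Conceptually there is little here: the argument is bookkeeping around one application of Theorem~\ref{expanderthm} per path system. The two points that need genuine (but routine) care are, first, the claim in the decomposition step that $t\le\eps n$ — this is essentially tight when $G[V_0]$ is a nearly complete digraph on $\approx\eps n$ vertices, and is where Tillson's theorem is needed — and, second, the verification that the two operations performed on $G$, deleting $O(\eps n)$ edges at each vertex and contracting the $O(\eps n)$ vertices touched by $P_i$, each preserve robust outexpansion and linear minimum semidegree so that Theorem~\ref{expanderthm} keeps applying. I expect the contraction estimate to be the fiddliest step: one must check that every set $S$ in $G^\#$ with $\tau^\#n^\#\le|S|\le(1-\tau^\#)n^\#$ pulls back (by replacing each $w_Q\in S$ with $b_Q$) to a set in $G^{(i)}$ of essentially the same size whose robust outneighbourhood is, up to a loss of at most $|V_0|=O(\eps n)$ vertices, contained in the pushforward of the robust outneighbourhood of $S$ in $G^\#$.
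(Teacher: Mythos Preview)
Your approach is correct and follows essentially the same strategy as the paper: decompose $E(G[V_0])$ into at most $\eps n$ pieces, contract each piece, and apply Theorem~\ref{expanderthm} to the contracted digraph. The only difference is in the decomposition step, where the paper takes a shorter route: it views $G[V_0]$ as an undirected graph of maximum degree less than $\eps n$ and applies Vizing's theorem to obtain $t:=\eps n$ matchings $M_1,\dots,M_t$ directly. Each piece is thus already a path system consisting of length-one paths, so there are no cycles to break and your detour through Tillson's theorem is unnecessary. The contraction is then just contracting single directed edges (replace $ab$ by a vertex inheriting the outneighbours of $b$ and the inneighbours of $a$), which makes the verification that the contracted digraph remains a robust outexpander of linear minimum semidegree slightly easier than in your path-contraction version. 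Your extra precaution of deleting $E(G[V_0])\setminus E(P_i)$ before contracting is also absent from the paper, which simply relies on removing the edges of $C_1,\dots,C_{i-1}$ from $G$ to ensure edge-disjointness.
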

\proof
Note that when viewed as an undirected graph, $G[V_0]$ has maximum degree less than $\eps n$.
So by Vizing's theorem, we can partition the edges of $G[V_0]$ into  $t:=\eps n$ matchings
$M_1,\dots,M_t$ (some of these may be empty). 
For each matching $M_i$ in turn, we find a Hamilton cycle $C_i$ which contains all edges of $M_i$ and which is 
edge-disjoint from $C_1,\dots,C_{i-1}$. Suppose that we have found $C_1,\dots,C_{i-1}$.
Let $G_i$ be the graph obtained from $G$ by removing the edges of $C_1,\dots,C_{i-1}$.
Note that $G_i$ is still a robust $(\nu/2,\tau)$-outexpander with $\delta^0(G_i) \ge \alpha n- (i-1) \ge \alpha n/2$.
Let $G_i'$ be the graph obtained from $G_i$ by contracting the edges of $M_i$.
More precisely, we successively replace each directed edge $ab$ of $M_i$ by a vertex whose outneighbours are the current outneighbours of $b$
and whose inneighbours are the current inneighbours of $a$.
Then $G_i'$ is still a robust $(\nu/3,2\tau)$-outexpander with $\delta^0(G_i') \ge \alpha n/3$.
Thus $G_i'$ contains
a Hamilton cycle $C_i'$ by Theorem~\ref{expanderthm}. 
But $C_i'$ corresponds to a Hamilton cycle $C_i$ in $G_i$ (and thus $G$) containing $M_i$. 
So we can continue until we have found $C_1,\dots,C_t$ as required.
\endproof

Before proving Theorem~\ref{decomp}, we will combine the chord absorbing step and the cycle absorbing step
into a single `robust decomposition' lemma. Roughly speaking, this means that we can find a 
sparse `robustly decomposable digraph' $G^{\rm rob}$,
so that for an arbitrary very sparse regular digraph $H$ on the same vertex set, the digraph $H \cup G^{\rm rob}$ always has a Hamilton decomposition.
We will state a variant of this lemma in Section~\ref{sec:rdeclemma}. This variant will be used in~\cite{paper2,paper1}.
The main advantage of combining the chord absorbing step and the cycle absorbing step
into a single lemma is that in future applications one will not
need to know the definitions of the chord absorber and 
(the more involved) definition of the cycle absorber in order to apply it.

\begin{lemma} \label{hcdec}
Suppose that $0<1/n\ll 1/k\ll \eps \ll 1/q \ll 1/f \ll r_1/m\ll d\ll 1/\ell',1/g\ll 1$ and
that $rk^2\le m$. Let%
   \COMMENT{Need $rk^2\le m$ rather than $rk\le m/f^2$ to ensure that $qr_3/m\ll \eps$ which we need to apply Lemma~\ref{findchordabs}.}
$$r_2:=96\ell'g^2kr, \ \ \ r_3:=rfk/q, \ \ \ r^\diamond:=r_1+r_2+r-(q-1)r_3, \ \ \ s':=rfk+7r^\diamond
$$
and suppose that $k/14, k/f, k/g, q/f, m/4\ell', fm/q, 2fk/3g(g-1) \in \mathbb{N}$.
Suppose that $(G,\cP,\cP',R,C,U,U')$ is an $(\ell',k,m,\eps,d)$-setup with $|G|=n$ and $C=V_1\dots V_k$.
Suppose that $\cP^*$ is a $(q/f)$-refinement
of $\cP$ and that $EF_1,\dots, EF_{r_3}$ are exceptional factors with parameters $(q/f,f)$ 
with respect to $C$, $\cP^*$ whose original versions are pairwise edge-disjoint.
Let $\mathcal{EF}$ be the union of the $EF_i$ over all $i=1,\dots,r_3$.
Then there exists a spanning subdigraph $CA^\diamond(r)$ of $G-V_0$ for which the following holds:
\begin{itemize}
\item[(i)] $CA^\diamond(r)$ is
an $(r_1+r_2)$-regular spanning subdigraph of $G-V_0$ which is edge-disjoint from $\mathcal{EF}^{\rm orig}$.
\item[(ii)] Suppose that $EF'_1,\dots, EF'_{r^\diamond}$ are exceptional factors with parameters $(1,7)$
with respect to $C$, $\cP$ such that the original versions of all these exceptional factors are pairwise edge-disjoint
from each other and edge-disjoint from $CA^\diamond(r)\cup \mathcal{EF}^{\rm orig}$.
Let $\mathcal{EF}'$ be the union of the $EF'_i$ over all $i=1,\dots,r^\diamond$.
Then there exists a spanning subdigraph $PCA^\diamond(r)$ of $G-V_0$ for which the following holds:
\begin{itemize}
\item[(a)] $PCA^\diamond(r)$ is
a $5r^\diamond$-regular spanning subdigraph of $G-V_0$ which is edge-disjoint from
$CA^\diamond(r)\cup (\mathcal{EF}\cup \mathcal{EF}')^{\rm orig}$.
\item[(b)] Let $\mathcal{CEPS}$ be the set consisting of all the $s'$ complete exceptional path systems
contained in $\mathcal{EF}\cup \mathcal{EF}'$.
Whenever $H$ is an $r$-factor of $G-V_0$ which is edge-disjoint from $G^{\rm rob}:=CA^\diamond(r)\cup PCA^\diamond(r)\cup (\mathcal{EF}\cup \mathcal{EF}')^{\rm orig}$,
then $H\cup G^{\rm rob}$ has a decomposition into $s'$
edge-disjoint Hamilton cycles $C_1,\dots,C_{s'}$ of $G$.
Moreover, for each $i=1,\dots,s'$, the basic version $C^{\rm basic}_i$ of $C_i$ contains one of the complete exceptional path systems from
$\mathcal{CEPS}$.
\end{itemize}
\end{itemize}
The analogue holds for an $(\ell',k,m,\eps,d)$-bi-setup $(G,\cP,\cP',R,C,U,U')$ if we assume in addition that $H$ is
bipartite with vertex classes $\bigcup \cV_{\rm even}$ and $\bigcup \cV_{\rm odd}$ (where $\cV_{\rm even}$ is
the set of all those $V_i$ such that $i$ is even and $\cV_{\rm odd}$ is defined analogously).
\end{lemma}

Note that the definition of an exceptional factor and~(i) together imply that
in the original version $CA^\diamond(r)\cup \mathcal{EF}^{\rm orig}$ of $CA^\diamond(r)\cup \mathcal{EF}$ we have
\begin{equation}\label{CAdiamonddegrees}
d^\pm(x)=r_3q \ \ \forall x\in V_0 \ \ \ \text{and} \ \ \ d^\pm(y)=r_1+r_2+r_3 \ \ \forall y\in V(G)\setminus V_0.
\end{equation}
Similarly, in the original version $PCA^\diamond(r)\cup (\mathcal{EF}')^{\rm orig}$ of $PCA^\diamond(r)\cup \mathcal{EF}'$ we have
\begin{equation}\label{PCAdiamonddegrees}
d^\pm(x)=7r^\diamond \ \ \forall x\in V_0 \ \ \ \text{and} \ \ \ d^\pm(y)=6r^\diamond \ \ \forall y\in V(G)\setminus V_0.
\end{equation}
In order to construct $CA^\diamond(r)$ we will choose a chord absorber $CA$ with $CA^{\rm exc}=\mathcal{EF}$.
Similarly, in order to construct $PCA^\diamond(r)$ we will choose a parity extended cycle absorber $PCA(r^\diamond)$ such that the complete
exceptional path systems contained in $PCA(r^\diamond)$ are those in $\mathcal{EF}'$.

\removelastskip\penalty55\medskip\noindent{\bf Proof of Lemma~\ref{hcdec}. }
Choose new constants $\eps_1,d_1$ such that $\eps \ll \eps_1\ll 1/q \ll 1/f \ll r_1/m\ll d_1\ll d$. 
Note that
\begin{equation}\label{eq:rs}
r/m,r_2/m,qr_3/m\ll \eps \ \ \ \text{and} \ \ \ r^\diamond\le d_1m.
\end{equation}
We first apply Lemma~\ref{findchordabs} with $\eps_1,r_1,r_2,r_3$ playing the roles of $\eps',r_0,r'_0, r''_0$ to find a chord absorber $CA$ for $C$, $U'$
with parameters $(\eps_1,r_1,r_2,r_3,q,f)$ such that $CA^{\rm exc}= \mathcal{EF}$.
Let $CA^\diamond(r):=CA\setminus CA^{\rm exc}$. Then~(\ref{chorddegrees}) and the fact that 
$CA^{\rm exc}$ is $r_3$-regular imply that $CA^\diamond(r)$ satisfies~(i).

Note that by definition of a setup, $(G,\cP,R,C)$ is a $(k,m,\eps,d)$-scheme.
Let $G_1$ be obtained from $G$ by deleting all edges in $CA^{\rm orig}$. Thus~(\ref{chorddegrees}) implies that $G_1$ is obtained from
$G$ by deleting $r_3q$ outedges and $r_3q$ inedges at every vertex in $V_0$ and deleting $r_1+r_2+r_3$ outedges and
$r_1+r_2+r_3$ inedges at every vertex in $V(G)\setminus V_0$. But $r_3q\le \eps m$ and $r_1+r_2+r_3\le d_1m$ by~(\ref{eq:rs}).
So Lemma~\ref{deletesystem} implies
that $(G_1,\cP,R,C)$ is still a $(k,m,3\sqrt{d_1},d)$-scheme.
Since $r^\diamond/m \le d_1 \ll d$ by~(\ref{eq:rs}), we can apply Lemma~\ref{find_pa}
to $(G_1,\cP,R,C)$ to obtain an $r^\diamond$-fold parity extended cycle absorber $PCA(r^\diamond)$ such that the complete exceptional path systems
contained in $PCA(r^\diamond)$ are precisely those in $\mathcal{EF}'$.
Let $PCA^\diamond(r):=PCA(r^\diamond)\setminus \mathcal{EF}'$.
Then~(\ref{paritydegrees}) implies that  $PCA^\diamond(r)$ satisfies~(ii)(a).

To check~(ii)(b), suppose that $H$ is an $r$-factor of $G-V_0$ which is edge-disjoint from
$G^{\rm rob}=CA^{\rm orig}\cup PCA(r^\diamond)^{\rm orig}$.
Note that an $(\ell',k,m,\eps,d)$-setup is also an $(\ell',k,m,\eps_1,d)$-setup.
So we can apply Lemma~\ref{absorballH} to the $(\ell',k,m,\eps,d)$-setup $$(G,\cP,\cP',R,C,U,U')$$
and the chord absorber $CA=\cB(C)^*\cup \cB(U')\cup CA^{\rm exc}$ with parameters $(\eps_1,r_1,r_2,r_3,q,f)$
chosen before. This gives us a set $\cC_1$ of $rfk$ edge-disjoint
Hamilton cycles in $G$ such that the following conditions hold:
\begin{itemize}
\item Altogether the Hamilton cycles in $\cC_1$ contain all the edges of $H\cup \cB(U')\cup (CA^{\rm exc})^{\rm orig}$.
Moreover, all remaining edges of these Hamilton cycles are contained in $\cB(C)^*$.
\item The digraph $H_1$ obtained from $CA^{\rm orig}$ by deleting all the edges lying on Hamilton cycles in $\cC_1$
is a regular blow-up of $C$ of degree $(r_1+r_2+r-(q-1)rfk/q)=r^\diamond$.
\item The basic version of each cycle in $\cC_1$ contains one of the $s$ complete exceptional path systems contained
in $CA^{\rm exc}= \mathcal{EF}$.
\end{itemize}
Finally, we apply Lemma~\ref{cycleparity} to the $(k,m,\eps,d)$-scheme $(G,\cP,R,C)$
with $H_1$ playing the role of $H$ and with the $r^\diamond$-fold parity extended cycle absorber
$PCA(r^\diamond)$ chosen before to find a Hamilton decomposition $\cC_2$ of $H_1\cup PCA(r^\diamond)^{\rm orig}$
such that each Hamilton cycle in $\cC_2$ contains one of the $7r^\diamond$ complete exceptional path systems contained
in $PCA(r^\diamond)$, and thus one of the complete exceptional path systems contained in $\mathcal{EF}'$.
Then $\cC_1\cup \cC_2$ is a Hamilton decomposition of $H\cup G^{\rm rob}$ as required in~(ii)(b).

For the bipartite analogue, we apply the bipartite version of Lemmas~\ref{findchordabs} and~\ref{absorballH} in the above argument.
Since a $(\ell',k,m,\eps,d)$-bi-setup is a $(k,m,\eps,d)$-scheme, the remainder of the proof is the identical.
\endproof 

Finally, we can put together all of the previous results in order to prove Theorem~\ref{decomp}. First we apply Szemer\'edi's regularity lemma to $G$.
Based on the resulting partition, we find a consistent system and a universal walk (so that we have a corresponding setup).
Within these, we find and remove a preprocessing graph $PG$, a chord absorber $CA$ and a parity extended cycle absorber $PCA$.
(Actually, instead of choosing $CA$ and $PCA$ separately, we choose suitable exceptional factors and apply Lemma~\ref{hcdec} to find a 
robustly decomposable graph $G^{\rm rob}$ which is essentially the union of $CA$ and $PCA$.)
Then we apply Lemma~\ref{coverV0} to cover all edges of $G[V_0]$. Next we  
find an approximate Hamilton decomposition using Theorem~\ref{approxdecomp} in the remainder $G^\#$ of $G$, which leaves a
sparse leftover $H_0$. We then find a Hamilton decomposition of $H_0 \cup PG^{\rm orig} \cup G^{\rm rob}$.

\removelastskip\penalty55\medskip\noindent{\bf Proof of Theorem~\ref{decomp}. }
Let $\tau^*:=\tau(\alpha/2)$ be as defined in Theorem~\ref{approxdecomp}. Choose $\tau\le \tau^*$ 
such that $0 < \tau \ll \alpha$. 
Note that whenever $\nu'\le \nu$, every robust $(\nu,\tau)$-outexpander is also a robust $(\nu',\tau)$-outexpander.
So we may assume that $0 \ll \nu \ll \tau$.
Choose $n_0 \in \mathbb{N}$ so that $0 < 1/n_0 \ll \nu$.
($\tau$ and $n_0$ will be the constants returned by Theorem~\ref{decomp}.)
Now let $G$ be an $r$-regular robust $(\nu,\tau)$-outexpander on $n\ge n_0$ vertices with $r\ge \alpha n$.
We have to show that $G$ has a Hamilton decomposition. Choose additional positive constants so that 
\begin{align} \label{hierarchy}
0<1/n_0 & \ll \eta\ll d'_0\ll d''_0 \ll 1/k^*_2 \ll 1/k^*_1\ll \eps_0 \ll \eps_1 \ll \eps_2 \ll \eps \ll \eps'\ll \nonumber\\
 & \ll 1/q \ll 1/f \ll d_1 \ll  d \ll \nu \ll \tau \ll \alpha<1  \ \ \text{and} \ \ d\ll 1/g\ll \zeta\le 1/2,
\end{align}
and where $n_0,k_1^*,k_2^*,q,f,g \in \mathbb{N}$. Let
\begin{equation} \label{defconst}
\ell':=\frac{64\cdot 36}{\nu^6}, \ \  s:=\frac{64\cdot 10^7}{\nu^6}  \ \ \text{and} \ \ \ell^*:=f^2.
\end{equation}
Note that we can choose $q,f \in \mathbb{N}$ and $\nu$ in such a way that $q/f, 50 \ell^*/(s-1),\ell^*/7 \in \mathbb{N}$.
As before, we can replace $\nu$ with a suitable $\nu'<\nu$ if necessary (and then prove the theorem for all $(\nu',\tau)$-outexpanders),
so we can ensure that these divisibility conditions hold.

Apply Szemer\'edi's regularity lemma (Lemma~\ref{regularity_lemma}) 
with parameters $\eps_0,d, k^*_1$ to obtain a partition $\mathcal{P}_0=\{V'_0,\dots,V'_{k_0}\}$ of the vertices of $G$
into $k_0$ clusters and an exceptional set $V'_0$, where $1/k^*_2 \ll 1/k_0 \le 1/k^*_1$ and $|V'_0| \le \eps_0 n$.
Note that by adding at most $42g(g-1)f$ clusters to the exceptional set if necessary, we may assume that 
$k_0/14,k_0/f, k_0/g, 2fk_0/3g(g-1) \in\mathbb{N}$. Moreover, by moving at most $\ell^*$ vertices from each cluster $V'_i$ to the exceptional
set we may assume that the cluster size is divisible by $\ell^*$. Note that we still have that $|V'_0| \le 2\eps_0 n$.
Let $R_0$ denote the corresponding reduced digraph.
So every edge of $R_0$ corresponds to an $(2\eps_0,\ge d)$-regular pair. 
Let $$k:= \ell^* k_0.$$
So $$
1/k_2^* \le 1/k \le 1/k_1^*
$$
and $k/14,k/f, k/g,  2fk/3g(g-1), 50k/(s-1) \in\mathbb{N}$.
By Lemma~\ref{robustR}, $R_0$ is a robust $(\nu/2,2\tau)$-outexpander with minimum semidegree at least $\alpha k_0/2$. 
So by Theorem~\ref{expanderthm}, $R_0$ contains a Hamilton cycle $C_0$. 

Apply Lemma~\ref{randompartition} with $C_0$ playing the role of $C$ to obtain an $\eps_0$-uniform 
$\ell^*$-refinement $\mathcal{P}=\{V'_0,V_1\dots,V_k\}$ of $\mathcal{P}_0$.
Let $R$ be the digraph obtained from $R_0$ by replacing every $V'_i$ by the $\ell^*$ subclusters in $\cP$ which are contained
in $V'_i$ and by replacing every edge $V'_iV'_j$ of $R_0$ by a complete bipartite graph $K_{\ell^*,\ell^*}$ between
the two corresponding sets of subclusters in $\cP$, where all the edges of $K_{\ell^*,\ell^*}$ are oriented towards
those subclusters which are contained in $V'_j$. So $R$ is an $\ell^*$-fold blow-up of $R_0$ and $|R|=k$.
By Lemma~\ref{randompartition}(ii) each edge of $R$ corresponds to an $(\eps_1,\ge d)$-regular pair in $G$.
Moreover, Lemma~\ref{expanderblowup} implies that $R$ is still a robust $(\nu^3/8,4\tau)$-outexpander with minimum semidegree at least $\alpha k/2$.
Let $C$ be a Hamilton cycle in $R$ obtained from $C_0$ by winding $\ell^*$ times around $C_0$.
By relabeling the clusters in $\cP$ if necessary, we may assume that $C=V_1\dots V_k$.
Later on will use that this construction satisfies (CSys8)
with $1/2$ playing the role of $\theta$ (since $\cP$ is an $\eps_0$-uniform $\ell^*$-refinement of $\cP_0$).

Apply Lemma~\ref{buildU} to $R$ and $C$ in order to obtain a universal walk $U$ for $C$ with parameter $\ell'$.
Let $H$ be the spanning subgraph of $R$ which consists of all the edges contained in $C\cup U$.
Thus $\Delta(H)\le 2(1+\ell')\le 1/\nu^7$.
Since each edge of $R$ corresponds to an $(\eps_1,\ge d)$-regular pair in $G$, Lemma~\ref{superreg}
implies that we can move $\sqrt{\eps_1} |V_i|$ vertices from each $V_i$ to the exceptional set $V'_0$
to achieve that every edge of $H$ (and thus of $C\cup U$) corresponds to an $[\eps_2/2,\ge d]$-superregular pair%
    \COMMENT{Below we will move some vertices from the clusters to $V_0$. So cannot write $\eps_2$ instead of $\eps_2/2$.}
in $G$. We denote the modified exceptional set by $V_0$ and still denote the modified clusters by $V_1,\dots,V_k$.
From now on, we will view $C$ as a cycle on these clusters and $U$ as a universal walk on these clusters.
We also still write $\cP$ for the partition of $V(G)$ into the exceptional set $V_0$ and clusters $V_1,\dots,V_k$.
Let $$m:=|V_1|=\dots=|V_k|.$$ 
By adding at most $200\ell'q/f$ vertices from each cluster in $\cP$ to $V_0$ if necessary, we may assume that $m/50,m/4\ell',fm/q \in \mathbb{N}$
(in particular, $m$ is even).
Note that
\begin{equation} \label{boundV0}
|V_0| \le |V_0'|+\sqrt{\eps_1} n +200\ell'q/f \le  \eps_2 n
\end{equation}
and that every edge of $R$ still corresponds to an $(\eps_2,\ge d)$-regular pair in $G$ and every edge of $C\cup U$ still corresponds to an
$[\eps_2,\ge d]$-regular pair in $G$. Let
\begin{equation}\label{defconst1}
r_0:=\eta r, \ \ r'_0:=d'_0m, \ \ r''_0:=d''_0m   \ \ \text{and} \ \  r_1:=d_1m.
\end{equation}
(\ref{hierarchy}) and the fact that $\eta \alpha n\le r_0=\eta r\le \eta n$ together imply that 
\begin{equation}\label{eq:r}
1/n_0\ll r_0/m\ll  r'_0/m\ll r''_0/m\ll 1/k.
\end{equation}
Let $G_0$ denote the digraph obtained from $G$ by deleting all the edges between vertices in $V_0$.
Then $$(G_0,\cP_0,R_0,C_0,\cP,R,C)$$ is a consistent $(\ell^*,k,m,\eps_2,d,\nu^3/8,4\tau,\alpha/2,1/2)$-system.

Apply Lemma~\ref{randompartition}
to obtain an $\eps_2$-uniform $\ell'$-refinement $\cP'$ of $\cP$.
Let $U'$ be the universal subcluster walk with respect to $C$, $U$ and $\cP'$. (So $U'$ satisfies (ST2).)
Then $$(G_0,\cP,\cP',R,C,U,U')$$ is an $(\ell',k,m,\eps,d)$-setup.
Here we use that Lemma~\ref{randompartition}(i) implies that (ST3) is satisfied.

Apply Lemma~\ref{randompartition} to obtain an $\eta$-uniform $50$-refinement $\cP''$ of $\cP$.
Apply Lemma~\ref{findprepro} to the consistent system $(G_0,\cP_0,R_0,C_0,\cP,R,C)$ to find a preprocessing graph $PG$ in $G_0^{\rm basic}$
with parameters $(s,\eps',d,r'_0,r''_0,r_0,\zeta)$ with respect to $C$, $R$, $\cP''$.
(Here~(\ref{hierarchy}) and~(\ref{eq:r}) imply that the conditions of the lemma are satisfied.)

Let $G_1$ be obtained from $G_0$ by deleting all edges in $PG^{\rm orig}$. Thus~(\ref{predegrees}) implies that $G_1$ is obtained from $G_0$
by deleting $r_0(s-1)$ outedges and $r_0(s-1)$ inedges at every vertex in $V_0$ and by deleting $r''_0$ outedges and $r''_0$ inedges at every
vertex in $V(G)\setminus V_0$. But $r_0(s-1), r''_0\le \eps m$ by~(\ref{hierarchy}) and~(\ref{eq:r}). So Lemma~\ref{deletesystem}(i) implies that 
$$(G_1,\cP_0,R_0,C_0,\cP,R,C)$$ is still a consistent $(\ell^*,k,m,3\sqrt{\eps},d,\nu^3/16,4\tau,\alpha/4,1/4)$-system.
Furthermore, Lemma~\ref{deleteunivsystem} implies
that $(G_1,\cP,\cP',R,C,U,U')$ is still an $(\ell',k,m,\eps^{1/3},d)$-setup.
Let
$$
r^*:= r''_0-(s-1) r_0, \ \ r_2:=96\ell' g^2kr^* \ \ \text{and} \ \ r_3:=\frac{r^*fk}{q}.
$$
Note that
\begin{align}
r^*k^2  \le r''_0 k^2 & \le r''_0 (k^*_2)^2=d''_0m(k^*_2)^2\le d_1m= r_1 \label{eq:r*a}\\
  & \le m. \label{eq:r*b}
\end{align}
In particular, (\ref{eq:r*a}) implies that 
\begin{equation} \label{r123}
r^*,r''_0,r_2,r_3 \le r_1 \stackrel{(\ref{defconst1})}{=} d_1m.
\end{equation}
So
\begin{equation}\label{eq:r's}
\frac{r_2}{m} \stackrel{(\ref{r123})}{\le} 
d_1 \stackrel{(\ref{hierarchy})}{\ll} d \ \ \ 
\text{and} \ \ \ \frac{r_3}{m}\le  \frac{r^* k}{m} \stackrel{(\ref{eq:r*a})}{\le} \frac{r''_0k^*_2}{m}
\stackrel{(\ref{defconst1})}{=} d''_0k^*_2 \stackrel{(\ref{hierarchy})}{\ll} \eps.
\end{equation} 
Also,  note that (\ref{defconst}) implies that $f/\ell^*=1/f \ll 1$.  Moreover, $r_3q/fm=r^*k/m\ll d$ by~(\ref{eq:r's}).
Apply Lemma~\ref{randompartition} to obtain an $\eps$-uniform $q/f$-refinement $\cP'''$ of $\cP$.
Apply Lemma~\ref{exceptseq} to $(G_1,\cP_0,R_0,C_0,\cP,R,C)$ in order
to obtain exceptional factors $EF_1,\dots,EF_{r_3}$ with parameters $(q/f,f)$
with respect to $C$, $\cP'''$ such that the original versions of all these exceptional factors are pairwise edge-disjoint.
Let $\mathcal{EF}$ denote the union of the $EF_i$ over all $i=1,\dots,r_3$.
Let
$$r^\diamond:=r_1+r_2+r^*-(q-1)r_3.$$
Note that 
\begin{equation}\label{eq:rdiam}
r^\diamond/m\le (r_1+r_2+r^*)/m \stackrel{(\ref{r123})}{\le} 3d_1  \stackrel{(\ref{hierarchy})}{\ll} d.
\end{equation}
(\ref{eq:r*b}) guarantees that we can now apply Lemma~\ref{hcdec} to the $(\ell',k,m,\eps^{1/3},d)$-setup $(G_1,\cP,\cP',R,C,U,U')$
with $r^*$ playing the role of $r$ and with the exceptional factors $EF_1,\dots,EF_{r_3}$
chosen before to find a spanning subdigraph $CA^\diamond(r^*)$.

Let $G_2$ be obtained from $G_1$ by deleting all edges in $CA^\diamond(r^*)\cup \mathcal{EF}^{\rm orig}$. Thus (\ref{CAdiamonddegrees})
implies that $G_2$ is obtained from $G_1$ by deleting $r_3q$ outedges and $r_3q$ inedges at every vertex in $V_0$ and by deleting $r_1+r_2+r_3$ outedges and
$r_1+r_2+r_3$ inedges at every
vertex in $V(G)\setminus V_0$. But $r_3q=r^*fk\le d_1m$ by (\ref{hierarchy}) and~(\ref{eq:r*a}) while $r_1+r_2+r_3\le 3d_1m$ by~(\ref{r123}).
So Lemma~\ref{deletesystem} implies that 
$(G_2,\cP_0,R_0,C_0,\cP,R,C)$ is still a consistent $(\ell^*,k,m,3\sqrt{3d_1},d,\nu^3/32,4\tau,\alpha/8,1/8)$-system.
Together with~(\ref{eq:rdiam}) this shows that we can apply Lemma~\ref{exceptseq} to $(G_2,\cP_0,R_0,C_0,\cP,R,C)$ in order to obtain
exceptional factors $EF'_1,\dots,EF'_{r^\diamond}$ with parameters $(1,7)$
with respect to $C$, $\cP$ such that the original versions of all these exceptional factors are pairwise edge-disjoint.
Let $\mathcal{EF}'$ denote the union of the $EF'_i$ over all $i=1,\dots,r^\diamond$.

Let $PCA^\diamond(r^*)$ be as guaranteed by Lemma~\ref{hcdec}(ii).
Let $$
G^{\rm rob}:=CA^\diamond(r^*)\cup PCA^\diamond(r^*)\cup (\mathcal{EF}\cup \mathcal{EF}')^{\rm orig}  \ \ \ \mbox{and}  \ \ \ G^{\rm absorb}:=PG^{\rm orig}\cup G^{\rm rob}.
$$
Let $r^{\rm abs}_0$ be the outdegree of the exceptional vertices (i.e.~those in $V_0$) in $G^{\rm absorb}$. 
Then (\ref{predegrees}), (\ref{CAdiamonddegrees}) and (\ref{PCAdiamonddegrees}) imply that 
$r^{\rm abs}_0$ is also the indegree of the exceptional vertices. Moreover, they imply that
$$
r_0^{\rm abs}= r_0(s-1) + r_3 q+ 7r^\diamond.
$$
Let $r^{\rm abs}$ be the outdegree of the non-exceptional vertices in $G^{\rm absorb}$.
Again, (\ref{predegrees}), (\ref{CAdiamonddegrees}) and (\ref{PCAdiamonddegrees}) imply that $r^{\rm abs}$ is also the indegree of the non-exceptional vertices 
Moreover,
$$
r^{\rm abs}= r''_0+ (r_1+r_2+r_3)+ 6r^\diamond.
$$
But
\begin{align*}
r_0^{\rm abs}-r^{\rm abs}& =r_0(s-1)+(q-1)r_3+r^\diamond-r''_0-r_1-r_2\\
& =r_0(s-1)+(q-1)r_3+\left(r_1+r_2+r^*-(q-1)r_3\right)-r''_0-r_1-r_2\\
& =r_0(s-1)+r^*-r''_0=0.
\end{align*}
So $G^{\rm absorb}$ is $r^{\rm abs}$-regular. 
Moreover, 
\begin{equation} \label{boundrabs}
r^{\rm abs} \le r_0'' + (r_1+r_2+r_3)+6(r_1+r_2+r^*) \stackrel{(\ref{r123})}{\le} 22d_1m \stackrel{(\ref{hierarchy})}{\ll} dm \le dn.
\end{equation}
Let $G^\triangle$ denote the digraph obtained from $G$ by removing the edges of $G^{\rm absorb}$.
Let $r^\triangle :=r-r^{\rm abs}$ be the degree of $G^\triangle$.
Note that (\ref{boundrabs}) implies that $G^\triangle$ is still a robust $(\nu/2,\tau)$-outexpander with 
$\delta^0(G^\triangle)=r^\triangle \ge \alpha n/2$. 
Let $r^\#:=r^\triangle-\eps n$. Then~(\ref{boundV0}) implies that
we can apply Lemma~\ref{coverV0} to obtain a set $\cC^\triangle$ of $\eps n$ edge-disjoint Hamilton cycles in $G^\triangle$
which cover all the edges in $G^\triangle[V_0]$. Let $G^\#$ be the $r^\#$-regular digraph obtained from $G^\triangle$ by deleting all the
edges in these Hamilton cycles.

Note that $r-r^\#=r^{\rm abs} +\eps n \le dn + \eps n \le 2d n$ by (\ref{boundrabs}). Thus $G^\#$ is still a robust $(\nu/2,\tau)$-outexpander
(and thus also a robust $(\nu/2,\tau^*)$-outexpander) with 
$\delta^0(G^\#)=r^\# \ge \alpha n/2$. Together with our choice of $\tau^*, \nu, \eta$ and $n_0$ this shows that we can apply Theorem~\ref{approxdecomp}
to obtain a set $\cC^\#$ of $r^\#-\eta r=r^\#-r_0$ edge-disjoint Hamilton cycles in $G^{\#}$. Let $H_0$ be the digraph obtained
from $G^{\#}$ by deleting all the edges in these Hamilton cycles. Note that our application of Lemma~\ref{coverV0} ensures that $V_0$
forms an independent set in $H_0$ and so $H_0$ is a $r_0$-regular subdigraph of $G_0$. Together with
(\ref{hierarchy}) and~(\ref{eq:r}) this ensures that we can apply Corollary~\ref{preprocor} to $(G_0,\cP_0,R_0,C_0,\cP,R,C)$ with $H_0$, $\cP''$ playing the roles of $H$, $\cP'$
and with the preprocessing graph $PG$ with parameters $(s,\eps',d,r'_0,r''_0,r_0,\zeta)$
chosen before to obtain a set $\cC_0$ of $r_0s$ edge-disjoint Hamilton cycles such that
the following conditions hold:
\begin{itemize}
\item Altogether the Hamilton cycles in $\cC_0$ contain all edges of $H_0$ and each of these Hamilton cycles lies in $H_0\cup PG^{\rm orig}$.
\item Let $PG'$ be the digraph obtained from $PG^{\rm orig}$ by deleting all the edges lying in the Hamilton cycles in
$\cC_0$. Then every vertex $x\in V_0$ is isolated in $PG'$ and every vertex $x\in V(G)\setminus V_0$ has in- and outdegree $r''_0-(s-1)r_0=r^*$ in $PG'$.
\end{itemize}
The second condition above implies that $H_1:=PG'-V_0$ is an $r^*$-regular subdigraph of $G_0-V_0$.
Lemma~\ref{hcdec}(ii)(b) guarantees that $H_1\cup G^{\rm rob}$
has a Hamilton decomposition $\cC_1$.
Then $\cC^\triangle\cup \cC^\#\cup \cC_0\cup \cC_1$ is a Hamilton decomposition of $G$, as required.
\endproof


\section{Statement of the robust decomposition lemma for further use}\label{sec:rdeclemma}

We now present a `standalone' version (Lemma~\ref{rdeclemma}) of the robust decomposition lemma (Lemma~\ref{hcdec})
which is suitable for further use. 
Instead of exceptional edges and exceptional path systems, it involves `fictive edges' and `special path systems'.
One can use these in the same way as in the current proof to deal with edges at exceptional vertices.
A crucial additional advantage is that one can also use them to deal with a small number of edges connecting $G$ to another digraph.
In particular, in~\cite{paper1} we can use it for a digraph $G^*$ which consists of robust expanders $G$ and $G'$ which are connected by a small number of edges
(so $G^*$ is not a robust expander).
Similarly, in the bipartite version, we can apply it to an `almost bipartite' digraph and use the fictive edges to deal e.g.~with the small number of edges 
which do not respect the (approximate) bipartition. This is the case in~\cite{paper2}.

Suppose that $(G,\cP,R,C)$ is an $(k,m,\eps,d)$-scheme with $C=V_1\dots V_k$.
The next definition is a generalization of a complete exceptional path system.
Suppose that $k/L, m/K\in\mathbb{N}$ and let $\cI$ be the canonical interval partition of $C$ into $L$
intervals of equal length. A \emph{special path system $SPS$} (with respect to $C$) 
with parameters $(K,L)$ spanning an interval $I=U_jU_{j+1}\dots U_{j'}$ with $I \in \cI$ consists of $m/K$
vertex-disjoint paths $P_1,\dots,P_{m/K}$ such that the following conditions hold.
\begin{itemize}
\item[(SPS1)] Every $P_s$ has its initial vertex in $U_j$ and its final vertex in $U_{j'}$.
\item[(SPS2)] $SPS$ contains a matching ${\rm Fict}(SPS)$ such that all the edges in ${\rm Fict}(SPS)$ avoid the
endclusters $U_j$ and $U_{j'}$ of $I$ and such that $E(P_s)\setminus {\rm Fict}(SPS)\subseteq E(G)$.
\item[(SPS3)] $SPS$ contains precisely $m/K$ vertices from every cluster in $I$ and no other vertices.
\end{itemize}
The edges in ${\rm Fict}(SPS)$ are called \emph{fictive edges of} $SPS$.
Note that a complete exceptional path system $CEPS$ containing a complete exceptional sequence $CES$ is
a special path system where $CES$ plays the role of the set of fictive edges.

Suppose that $\cP^*$ is a $K$-refinement of $\mathcal{P}$.
For each cluster $U\in \mathcal{P}$, let $U(1),\dots,U(K)$ denote the subclusters of $U$ in $\mathcal{P}^*$. 
Consider a special path system $SPS$ as above. We say that $SPS$ has \emph{style $b$} if 
its vertex set is $U_{j}(b) \cup \dots \cup U_{j'}(b)$.
A \emph{special factor $SF$ with parameters $(K,L)$} (with respect to $C$, $\cP^*$) is a $1$-regular digraph on $V(G)\setminus V_0$
satisfying the following properties:
\begin{itemize}
\item[(SF1)] On each of the $L$ intervals $I\in\cI$, $SF$ induces the vertex-disjoint union of $K$ special path systems.
\item[(SF2)] Moreover, for each $I \in \cI$ and each $b=1,\dots, K$, exactly one of the special path systems in $SF$ spanning $I$ has style $b$.
\end{itemize}
We write ${\rm Fict}(SF)$ for the union of the sets ${\rm Fict}(SPS)$ over all the 
$KL$ special path systems $SPS$ contained in $SF$ and call the edges in ${\rm Fict}(SF)$ \emph{fictive edges of $SF$}.
Note that an exceptional factor $EF$ is a special factor
where the  exceptional edges in $EF$ play the role of the fictive edges.

We will always view fictive edges as being distinct from each other and from the edges in other digraphs.
So if we say that $SF_1,\dots,SF_r$ are pairwise edge-disjoint from each other and from some digraph $Q$ on $V(G)\setminus V_0$,
then this means that $Q$ and all the $SF_i\setminus {\rm Fict}(SF_i)$
are pairwise edge-disjoint, but for example there could be an edge from $x$ to $y$ in $Q$ as well as in ${\rm Fict}(SF_i)$ for five indices $i$ (say).
But these are the only instances of multiedges that we allow, i.e.~if there is more than one edge from $x$ to $y$, then all but
at most one of these edges are fictive edges.

Given two multidigraphs $M$ and $M'$ on the same vertex set, we write $M+M'$ for the multidigraph whose vertex set is $V(M)=V(M')$
and in which the multiplicity of $xy$ is the sum of the multiplicities of $xy$ in $M$ and in $M'$ (for all $x,y\in V(M)$).
So in the above example $Q+SF_1+\dots+SF_r$ contains six edges from $x$ to $y$.

We can now state the variant of the robust decomposition lemma.
The proof is the same as that of Lemma~\ref{hcdec} -- the special factors play the role of the exceptional factors
and the Hamilton cycles in Lemma~\ref{rdeclemma} correspond to the basic versions of the Hamilton cycles returned by
Lemma~\ref{hcdec}. The existence of fictive edges means that we formally consider multidigraphs (rather than digraphs) at several steps.
However, this does not affect the argument. Indeed,
fictive edges only occur within (pre-defined) special path systems and these are fixed building blocks that are never 
modified during the construction of the Hamilton cycles.
The only other difference is that in Lemma~\ref{rdeclemma}, $H$ need not be a subdigraph of $G$,
but this does not affect the proof either.
 
\begin{lemma} \label{rdeclemma}
Suppose that $0<1/n\ll 1/k\ll \eps \ll 1/q \ll 1/f \ll r_1/m\ll d\ll 1/\ell',1/g\ll 1$ and
that $rk^2\le m$. Let
$$r_2:=96\ell'g^2kr, \ \ \ r_3:=rfk/q, \ \ \ r^\diamond:=r_1+r_2+r-(q-1)r_3, \ \ \ s':=rfk+7r^\diamond
$$
and suppose that $k/14, k/f,  k/g, q/f, m/4\ell', fm/q, 2fk/3g(g-1) \in \mathbb{N}$.
Suppose that $(G,\cP,\cP',R,C,U,U')$ is an $(\ell',k,m,\eps,d)$-setup with $|G|=n$, $V_0=\emptyset$ and $C=V_1\dots V_k$.
Suppose that $\cP^*$ is a $(q/f)$-refinement of $\cP$ and that $SF_1,\dots, SF_{r_3}$ are edge-disjoint special factors with parameters $(q/f,f)$ 
with respect to $C$, $\cP^*$. Let $\mathcal{SF}:=SF_1+\dots+SF_{r_3}$.
Then there exists a spanning subdigraph $CA^\diamond(r)$ of $G$ for which the following holds:
\begin{itemize}
\item[(i)] $CA^\diamond(r)$ is an $(r_1+r_2)$-regular spanning subdigraph of $G$ which is edge-disjoint from $\mathcal{SF}$.
\item[(ii)] Suppose that $SF'_1,\dots, SF'_{r^\diamond}$ are special factors with parameters $(1,7)$
with respect to $C$, $\cP$ which are edge-disjoint from each other and from $CA^\diamond(r)+ \mathcal{SF}$.
Let $\mathcal{SF}':=SF'_1+\dots+SF'_{r^\diamond}$.
Then there exists a spanning subdigraph $PCA^\diamond(r)$ of $G$ for which the following holds:
\begin{itemize}
\item[(a)] $PCA^\diamond(r)$ is a $5r^\diamond$-regular spanning subdigraph of $G$ which
is edge-disjoint from $CA^\diamond(r)+ \mathcal{SF}+ \mathcal{SF}'$.
\item[(b)] Let $\mathcal{SPS}$ be the set consisting of all the $s'$ special path systems
contained in $\mathcal{SF}+ \mathcal{SF}'$.
Whenever $H$ is an $r$-regular digraph on $V(G)$ which is edge-disjoint from $G^{\rm rob}:=CA^\diamond(r)+ PCA^\diamond(r)+ \mathcal{SF}+ \mathcal{SF}'$,
then $H+ G^{\rm rob}$ has a decomposition into $s'$
edge-disjoint Hamilton cycles $C_1,\dots,C_{s'}$.
Moreover, $C_i$ contains one of the special path systems from $\mathcal{SPS}$, for each $i=1,\dots,s'$.
\end{itemize}
\end{itemize}
The analogue holds for an $(\ell',k,m,\eps,d)$-bi-setup $(G,\cP,\cP',R,C,U,U')$ if we assume in addition that $H$ is
bipartite with vertex classes $\bigcup \cV_{\rm even}$ and $\bigcup \cV_{\rm odd}$ (where $\cV_{\rm even}$ is
the set of all those $V_i$ such that $i$ is even and $\cV_{\rm odd}$ is defined analogously).
\end{lemma}


\section{Proofs of Theorems~\ref{orientcor} and~\ref{regdigraph}}\label{sec:proofs2}

The next result implies that a regular oriented graph with minimum semidegree at little larger than
$3n/8$ is a robust outexpander. Together with Theorem~\ref{decomp} this implies Theorem~\ref{orientcor}.

\begin{lemma}\label{38robust}
Let $0<1/n\ll \nu\ll \tau\le \eps/2\le 1$ and suppose that $G$ is an oriented graph on $n$ vertices with
$\delta^+(G)+\delta^-(G)+\delta(G)\ge 3n/2+\eps n$. Then $G$ is a robust $(\nu,\tau)$-outexpander.
\end{lemma}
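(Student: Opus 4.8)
The plan is to verify the robust outexpansion directly from the degree condition. Suppose $S \subseteq V(G)$ with $\tau n \le |S| \le (1-\tau)n$, and set $T := V(G) \setminus RN^+_{\nu,G}(S)$, so every vertex of $T$ has fewer than $\nu n$ inneighbours in $S$. The goal is to show $|RN^+_{\nu,G}(S)| \ge |S| + \nu n$, equivalently that $|T|$ is small (at most $n - |S| - \nu n$). The key is a double-counting of the edges of $G$ from $S$ to $T$: on the one hand $e_G(S,T) < \nu n |T|$ by definition of $T$; on the other hand, the degree hypothesis forces many edges to leave $S$ and land in $T$ unless $|T|$ is small. First I would record the standard consequence of the semidegree condition for oriented graphs: since $G$ has no digons, for any two vertices $x,y$ the number of common `connections' is constrained, and more usefully, for any vertex $v$ and any set $W$, the number of outneighbours of $v$ in $W$ is at least $d^+(v) - (n - |W|) \ge d^+(v) - n + |W|$.

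Concretely, I would argue as follows. Fix $S$ and $T$ as above and suppose for contradiction that $|T| > n - |S| - \nu n$, i.e. $|S| + |T| > n - \nu n$. Write $a := |S|$, $b := |T|$. Counting edges into $T$ from $S$: each vertex $t \in T$ has $d^-(t) \ge \delta^-(G)$ inneighbours in total, of which at most $n - a$ lie outside $S$, hence at least $\delta^-(G) - (n-a)$ lie in $S$; summing, $e_G(S,T) \ge b(\delta^-(G) - n + a)$. Combining with $e_G(S,T) < \nu n b$ gives $\delta^-(G) - n + a < \nu n$, i.e. $a < n - \delta^-(G) + \nu n$. Symmetrically, counting edges out of $S$ into $T$: each $s \in S$ has at least $\delta^+(G) - (n - b)$ outneighbours in $T$, so $e_G(S,T) \ge a(\delta^+(G) - n + b) $, whence $\delta^+(G) - n + b < \nu n$ provided $a \ge 1$ — but this only bounds $b$, which is not directly what we want. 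The cleaner route is to instead use all three degree quantities together, exploiting that $S$ being `stuck' means both $S$ and $T$ are large while the bipartite density between them is tiny.

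So the main step is a careful case analysis on the size of $|S|$, mirroring the structure one expects: when $|S|$ is between $\tau n$ and roughly $n/2$, the outdegree bound $\delta^+(G) \ge$ (something close to) $3n/8$ should already force $|RN^+_{\nu}(S)|$ to be substantially larger than $|S|$, because the edges leaving $S$ — there are at least $\delta^+(G)|S|$ of them — cannot be absorbed by a small robust outneighbourhood (a vertex outside $RN^+_\nu(S)$ receives $<\nu n$ of them, and a vertex inside receives at most $|S|$, but also at most $n$, of them). The pigeonhole inequality $\delta^+(G)|S| \le e_G(S, V(G)) \le |RN^+_\nu(S)| \cdot |S| + \nu n \cdot (n - |RN^+_\nu(S)|)$ — more precisely $e_G(S,V(G)\setminus RN^+_\nu(S)) < \nu n (n - |RN^+_\nu(S)|)$ and the edges into $RN^+_\nu(S)$ number at most $|S|\cdot |RN^+_\nu(S)|$ — rearranges to a lower bound on $|RN^+_\nu(S)|$ in terms of $|S|$, $\delta^+(G)$, $\nu$ and $n$. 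When $|S|$ is close to $n$, one instead works with $T = V(G) \setminus S$ and its in-neighbourhoods, using $\delta^-(G)$, noting that a vertex fails to be in $RN^+_\nu(S)$ only if almost all its inedges come from the small set $T$, which the sparsity of $G[T]$ (again via the no-digon/degree condition) prevents. I expect the technical heart — and the place where the precise constant $3n/2 + \eps n$ and the hierarchy $\nu \ll \tau \le \eps/2$ are used — to be making these pigeonhole estimates tight enough in the `middle' regime $|S| \approx n/2$; there one must combine the $\delta^+$ and $\delta^-$ bounds with the third quantity $\delta(G) = \delta^+ + \delta^-$ constraint to squeeze out the needed $+\nu n$ of expansion, and checking that the $\nu n$ error terms are genuinely negligible against the $\eps n$ slack. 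The rest is bookkeeping: verifying the boundary cases $|S| = \tau n$ and $|S| = (1-\tau)n$ and confirming monotonicity so the estimate at the endpoints suffices for the whole range.
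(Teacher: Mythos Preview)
Your proposal is not a proof but a sketch that leaves the crucial case unresolved. The pigeonhole bound you derive,
\[
\delta^+(G)\,|S| \le e_G(S,V(G)) \le |S|\,|RN^+_\nu(S)| + \nu n^2,
\]
only yields $|RN^+_\nu(S)| \ge \delta^+(G) - \nu n/\tau$. This suffices when $|S|$ is small, but in the middle regime $|S|\approx n/2$ you need $|RN^+_\nu(S)| \ge n/2 + \nu n$, and the hypothesis does \emph{not} guarantee $\delta^+(G) \ge n/2$: it only bounds the sum $\delta^+(G)+\delta^-(G)+\delta(G)$, and $\delta^+(G)$ alone could be well below $3n/8$. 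Your dual bound using $\delta^-(G)$ has the same defect. You acknowledge that the middle regime is the heart of the matter and say one must ``combine'' the three degree quantities, but you give no mechanism for doing so. (Also, $\delta(G)$ is $\min_x(d^+(x)+d^-(x))$, not $\delta^+(G)+\delta^-(G)$; these can differ substantially.)

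The missing idea is precisely how the oriented-graph hypothesis enters. You mention ``no digons'' but never exploit it where it matters. The paper's proof partitions $V(G)$ into four sets $A=X\cap RN^+_\nu(X)$, $B=RN^+_\nu(X)\setminus X$, $D=X\setminus RN^+_\nu(X)$, $C$ the rest, and proves three inequalities, one for each of $\delta^+,\delta^-,\delta$. The one for $\delta^+$ uses that $e(A,A)\le |A|^2/2$ (since $G$ is oriented) to find a vertex in $A$ with few outneighbours in $A\cup C\cup D$, giving $|A|+2|B|\gtrsim 2\delta^+(G)$. This halving of $|A|$ is exactly what makes the constant $3n/2$ (rather than $2n$) work. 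Summing the three inequalities gives $3n \ge 2(\delta^+ +\delta^- +\delta) > 3n$, a contradiction. Your bipartite edge-counts between $S$ and $T$ never access this internal structure of $RN^+_\nu(S)\cap S$, which is why they stall.
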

\proof
Suppose not and let $X\subseteq V(G)$ be a set of vertices such that $\tau n\le |X|\le (1-\tau)n$ and $|RN^+_\nu (X)|<|X|+\nu n$.
Let $A:=X\cap RN^+_\nu (X)$, $B:=RN^+_\nu (X)\setminus X$, $D:=X\setminus RN^+_\nu (X)$ and $C:=V(G)\setminus (A\cup B\cup D)$.
Note that
\begin{equation}\label{eqBD}
|B|<|D|+\nu n.
\end{equation}

\noindent
\textbf{Claim~1.} $|A|+|B|+|D|\ge 2\delta^+(G)-2\tau n$

\smallskip

\noindent
To prove the claim, let us first assume that $|A|\ge \tau n/2$. Note that $e(A,C\cup D)\le \nu n(|C|+|D|)$ since
every vertex in $C\cup D$ has at most $\nu n$ inneighbours in $X\supseteq A$. Thus
\begin{align*}
e(A,A\cup C\cup D)& =e(A,A)+e(A,C\cup D)\le \frac{|A|^2}{2}+\nu n(|C|+|D|)\\
& \le \frac{|A|^2}{2}+\frac{2\nu}{\tau} |A|(|C|+|D|)\le \frac{|A|^2}{2}+|A|\frac{\tau n}{2}.
\end{align*}
So there exists a vertex $x\in A$ such that $|N^+(x)\cap (A\cup C\cup D)|\le |A|/2+\tau n/2$.
Hence $\delta^+(G)\le d^+(x)\le |A|/2+|B|+\tau n/2$. Together with (\ref{eqBD}) this implies Claim~1
in this case.

So let us next assume that $|A|\le \tau n/2$. Then $|D|\ge \tau n/2$ and
\begin{align*}
e(D,A\cup C\cup D)& =e(D,A)+e(D,C\cup D)\le |D|\frac{\tau n}{2}+\nu n(|C|+|D|)\\
& \le |D|\frac{\tau n}{2}+\frac{2\nu}{\tau} |D|(|C|+|D|)\le |D|\frac{3\tau n}{4}.
\end{align*}
So there exists a vertex $x\in D$ such that $|N^+(x)\cap (A\cup C\cup D)|\le 3\tau n/4$.
Hence $\delta^+(G)\le d^+(x)\le |B|+3\tau n/4$. As before, together with (\ref{eqBD}) this implies Claim~1.

\medskip

\noindent
\textbf{Claim~2.} $|B|+|C|+|D|\ge 2\delta^-(G)-3\nu n$

\smallskip

\noindent
To prove Claim~2, we first consider the case when $C\neq \emptyset$. An averaging argument shows that there is a
vertex $x\in C$ with $|N^-(x)\cap C|\le |C|/2$. But since $|N^-(x)\cap X|\le \nu n$ this means that
$\delta^-(G)\le d^-(x)\le |B|+|C|/2+\nu n$. Together with (\ref{eqBD}) this implies Claim~2.

So let us now assume that $C=\emptyset$. Together with the fact that $|A\cup B|=|RN^+_\nu(X)|<|X|+\nu n\le (1-\tau)n+\nu n<n$
this implies that $D\neq \emptyset$. But each vertex $x\in D$ satisfies $|N^-(x)\cap X|\le \nu n$ and so
$\delta^-(G)\le d^-(x)\le \nu n+|B|$. Together with (\ref{eqBD}) this implies Claim~2.

\medskip

\noindent
\textbf{Claim~3.} $|A|+|B|+|C|\ge \delta(G)-2\nu n$

\smallskip

\noindent
This clearly holds if $D=\emptyset$ (since $\delta(G)<n$). So suppose that $D\neq \emptyset$.
Then $e(D,D)\le e(X,D)\le \nu n|D|$ and so there is a vertex $x\in D$ with $|N^+(x)\cap D|\le \nu n$.
But since $D\cap RN^+_\nu(X)=\emptyset$ we also have that $|N^-(x)\cap D|\le \nu n$.
Thus $d(x)\le |A|+|B|+|C|+2\nu n$, which in turn implies Claim~3.

\medskip

Now Claims~1--3 together imply that
$$
3n\stackrel{(\ref{eqBD})}{\ge }3|A|+4|B|+3|C|+2|D|-\nu n\ge 2(\delta^+(G)+\delta^-(G)+\delta(G))-3\tau n>3n,
$$
a contradiction.
\endproof

\removelastskip\penalty55\medskip\noindent{\bf Proof of Theorem~\ref{orientcor}. }
Let $\tau^*:=\tau(3/8)$, where $\tau(3/8)$ is as defined in Theorem~\ref{decomp}.
Choose new constants $n_0\in\mathbb{N}$ and $\nu, \tau$ such that $0<1/n_0\ll \nu\ll \tau\le \eps/2,\tau^*$. 
Lemma~\ref{38robust} implies that $G$ is a robust $(\nu,\tau)$-outexpander and thus
also a robust $(\nu,\tau^*)$-outexpander. So we can apply Theorem~\ref{decomp} with $\alpha:=3/8$ to
find a Hamilton decomposition of $G$.
\endproof

The next result implies that a regular digraph with minimum semidegree at little larger than
$n/2$ is a robust outexpander. Similarly as before, together with Theorem~\ref{decomp} this implies Theorem~\ref{regdigraph}.

\begin{lemma}\label{regdiexpander}
Suppose that $0<\nu\le \tau\le \eps<1$ are such that $\eps\ge 2\nu/\tau$.
Let $G$ be a digraph on $n$ vertices with minimum semidegree $\delta^0(G)\ge (1/2+\eps)n$.
Then $G$ is a robust $(\nu,\tau)$-outexpander.
\end{lemma}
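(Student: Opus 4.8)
\textbf{Proof plan for Lemma~\ref{regdiexpander}.}
The plan is to argue by contradiction and to count edges leaving a purported bad set. Suppose $G$ is not a robust $(\nu,\tau)$-outexpander, so there is a set $S\subseteq V(G)$ with $\tau n\le |S|\le (1-\tau)n$ and $|RN^+_{\nu,G}(S)|<|S|+\nu n$. Write $RN:=RN^+_{\nu,G}(S)$ and let $T:=V(G)\setminus RN$. By definition of the $\nu$-robust outneighbourhood, every vertex of $T$ receives fewer than $\nu n$ edges from $S$, so $e_G(S,T)<\nu n|T|\le \nu n^2$. On the other hand, every vertex of $S$ has outdegree at least $(1/2+\eps)n$, so it sends at least $(1/2+\eps)n-|RN|$ edges into $T$ (all of its outneighbours outside $RN$ land in $T$). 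Hence $e_G(S,T)\ge |S|\bigl((1/2+\eps)n-|RN|\bigr)$.

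Combining the two bounds on $e_G(S,T)$ gives $|S|\bigl((1/2+\eps)n-|RN|\bigr)<\nu n^2$, i.e.
\[
|RN|>\left(\tfrac12+\eps\right)n-\frac{\nu n^2}{|S|}\ge \left(\tfrac12+\eps\right)n-\frac{\nu n}{\tau},
\]
using $|S|\ge \tau n$. Since $\eps\ge 2\nu/\tau$ we have $\eps-\nu/\tau\ge \eps/2\ge \nu/\tau$, so $|RN|>(1/2+\eps-\nu/\tau)n\ge n/2$; in particular $|RN|>n/2$. Now I would run the symmetric argument on the complement: since $|RN|<|S|+\nu n<n$, the set $T=V(G)\setminus RN$ is nonempty. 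Every vertex $x\in T$ has $|N^-_G(x)\cap S|<\nu n$ by definition of $RN$, while $\delta^-(G)\ge (1/2+\eps)n$, so $x$ receives at least $(1/2+\eps)n-|S|$ edges from $V(G)\setminus S$, and more to the point counting indegrees into $T$ from $V(G)\setminus S$ forces $|V(G)\setminus S|$ to be large; more cleanly, count $e_G(V(G)\setminus S, T)$ or simply count edges from $RN$ to exploit that $|RN|+|S|$ and $|T|$ are constrained. The cleanest route is: apply the already-proved outexpansion-failure inequality to deduce $|RN|>n/2$, and then observe this contradicts $|RN|<|S|+\nu n\le (1-\tau)n+\nu n$ only when $\tau$ is not too small — so instead one should push the edge count a little harder to get $|RN|\ge(1-\tau)n+\nu n$ directly.

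To get the sharper bound, note that for the vertices in $D:=S\setminus RN$ one has both few inedges from $S$ (since $D\cap RN=\emptyset$) — but here all we need is the outdegree count above together with the observation that actually $|S|$ can be taken $\ge \tau n$ and if $|S|$ is close to $n$ the bound $\nu n^2/|S|\le \nu n$ improves things further, whereas if $|S|$ is close to $\tau n$ we still win because $|RN|\ge |S|$-type slack is not what we need — we need $|RN|\ge|S|+\nu n$. Writing $|S|=\sigma n$ with $\tau\le\sigma\le 1-\tau$, the inequality reads $|RN|\ge (1/2+\eps)n-\nu n/\sigma$. Comparing with the required $|RN|\ge (\sigma+\nu)n$: it suffices that $1/2+\eps-\nu/\sigma\ge \sigma+\nu$, i.e. $\eps\ge \sigma-1/2+\nu+\nu/\sigma$. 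Since $\sigma\le 1-\tau$ and $\nu\le\tau\le\eps$, the right side is at most $(1-\tau)-1/2+2\nu/\tau = 1/2-\tau+2\nu/\tau\le 1/2-\tau+\eps$, which is not quite $\le\eps$. So the naive counting is slightly lossy near $\sigma=1$, and the fix is to also use the \emph{indegree} condition in that regime: when $\sigma>1/2$, apply the counting argument instead to $T=V(G)\setminus RN$ using $\delta^-(G)$, bounding $e_G(S,T)$ from below by $|T|(\delta^-(G)-|V(G)\setminus S|) = |T|((1/2+\eps)n - (1-\sigma)n)$ and from above by $\nu n|T|$, which gives $(1/2+\eps)-(1-\sigma)\le\nu$, i.e. $\sigma\le 1/2-\eps+\nu<1/2$, contradicting $\sigma>1/2$. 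Thus in all cases we reach a contradiction: for $\sigma\le 1/2$ (or more generously $\sigma\le 1-\tau$ with a little room) the outdegree count suffices, and for $\sigma$ large the indegree count rules it out. The main obstacle is simply bookkeeping the case split on $|S|$ so that the two edge counts together cover the whole range $\tau n\le|S|\le(1-\tau)n$ with the slack provided by $\eps\ge 2\nu/\tau$; no deep idea is needed beyond the double edge count.
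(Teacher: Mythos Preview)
Your argument is correct and is essentially the paper's proof, just arrived at by contradiction and with several unnecessary detours: the paper splits directly into the case $|S|\ge n/2$ (where every vertex has at least $\delta^-(G)-|V(G)\setminus S|\ge\eps n\ge\nu n$ inneighbours in $S$, so $RN=V(G)$) and the case $|S|\le n/2$ (where your outdegree edge count gives $|RN|\ge(1/2+\eps-\nu/\tau)n\ge(1/2+\nu)n\ge|S|+\nu n$). Once you strip out the false starts in the middle of your plan, your final two-case split is exactly this.
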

\proof
Consider any set $S\subseteq V(G)$ with $\tau n\le |S|\le  (1-\tau)n$.
Let $RN:=RN^+_{\nu,G}(S)$. We have to show that $|RN|\ge |S|+\nu n$. Suppose first that $|S|\ge n/2$.
Then every vertex of $G$ has at least $\eps n\ge \nu n$ inneighbours in $S$. So $RN=V(G)$. So we may assume that
$|S|\le n/2$. But
\begin{align*}
(1/2+\eps)n|S| \le e(S,V(G)) & =e(S,RN)+e(S,V(G)\setminus RN)\le |S||RN|+\nu n^2\\
& \le |S||RN|+\frac{\nu}{\tau}n|S|
\end{align*}
and so $|RN|\ge (1/2+\eps-\nu/\tau)n\ge (1+\eps)n/2\ge |S|+\nu n$, as required.
\endproof 

\removelastskip\penalty55\medskip\noindent{\bf Proof of Theorem~\ref{regdigraph}. }
Let $\tau^*:=\tau(1/2)$, where $\tau(1/2)$ is as defined in Theorem~\ref{decomp}.
Choose new constants $n_0\in\mathbb{N}$ and $\nu, \tau$ such that $0<1/n_0\ll \nu\ll \tau\le \eps,\tau^*$. 
Lemma~\ref{regdiexpander} implies that $G$ is a robust $(\nu,\tau)$-outexpander and thus
also a robust $(\nu,\tau^*)$-outexpander. So we can apply Theorem~\ref{decomp} with $\alpha:=1/2$ to
find a Hamilton decomposition of $G$.
\endproof

\section{Acknowledgements}

We would like to thank John Lapinskas for an idea which led to a simplification of the cycle absorbing argument.
We are extremely grateful to Allan Lo for his detailed comments on a draft of this paper.

\medskip

{\footnotesize \obeylines \parindent=0pt

Daniela K\"{u}hn, Deryk Osthus 
School of Mathematics
University of Birmingham
Edgbaston
Birmingham
B15 2TT
UK
}
\begin{flushleft}
{\it{E-mail addresses}:
\tt{\{d.kuhn,d.osthus\}@bham.ac.uk}}
\end{flushleft}

\end{document}